\newtheorem{thm}{Theorem}[section]
\newtheorem{lem}[thm]{Lemma}
\newtheorem{prp}[thm]{Proposition}
\newtheorem{cor}[thm]{Corollary}
\newtheorem{dfn}[thm]{Definition}
\newtheorem{que}[thm]{Question}
\theoremstyle{definition}
\newtheorem{example}[thm]{Example}
\newtheorem{remark}[thm]{Remark}
\theoremstyle{plain}
\newcommand{\Step}[1]{\smallskip\noindent {\it Step #1.}} 
\newcommand{\rem}[1]{}
\newcommand{\C}{\mathbb{C}}
\newcommand{\F}{\mathbb{F}}
\newcommand{\N}{\mathbb{N}}
\newcommand{\Q}{\mathbb{Q}}
\newcommand{\Z}{\mathbb{Z}}
\newcommand{\HH}{{\mathrm{H}}}
\newcommand{\frakm}{{\mathfrak{m}}}
\newcommand{\frakp}{{\mathfrak{p}}}
\newcommand{\frakq}{{\mathfrak{q}}}
\newcommand{\calF}{{\mathcal{F}}}
\newcommand{\calM}{{\mathcal{M}}}
\newcommand{\calO}{{\mathcal{O}}}
\newcommand{\calU}{{\mathcal{U}}}
\newcommand{\calV}{{\mathcal{V}}}
\newcommand{\bbA}{{\mathbb{A}}}
\newcommand{\bbP}{{\mathbb{P}}}
\newcommand{\catC}{{\mathscr{C}}}
\newcommand{\catS}{{\mathscr{S}}}
\newcommand{\vphi}{\varphi}
\newcommand{\embeds}{\hookrightarrow}
\newcommand{\suchthat}{\,:\,}
\newcommand{\where}{\,|\,}
\newcommand{\quo}[1]{\overline{#1}}
\newcommand{\Circs}[1]{\left( #1 \right)}
\newcommand{\Trings}[1]{\left< #1 \right>}
\newcommand{\ceil}[1]{\lceil {#1} \rceil}
\DeclareMathOperator{\Br}{Br} %
\DeclareMathOperator{\Char}{char} %
\DeclareMathOperator{\codim}{codim} %
\DeclareMathOperator{\coker}{coker} %
\DeclareMathOperator{\GL}{GL} %
\DeclareMathOperator{\Hom}{Hom} %
\DeclareMathOperator{\id}{id} %
\DeclareMathOperator{\im}{im} %
\DeclareMathOperator{\Max}{Max}
\DeclareMathOperator{\Mor}{Mor} %
\DeclareMathOperator{\rank}{rank}
\DeclareMathOperator{\Span}{span} %
\DeclareMathOperator{\Spec}{Spec} %
\newcommand{\nGL}[2]{\mathrm{GL}_{#2}({#1})}
\newcommand{\nSL}[2]{\mathrm{SL}_{#2}({#1})}
\newcommand{\nMat}[2]{\mathrm{M}_{#2}(#1)}
\newcommand{\trans}{{\mathrm{t}}}
\newcommand{\uGL}{{\mathbf{GL}}}
\newcommand{\uPGL}{{\mathbf{PGL}}}
\newcommand{\uSL}{{\mathbf{SL}}}
\newcommand{\uO}{{\mathbf{O}}}
\newcommand{\uSp}{{\mathbf{Sp}}}
\newcommand{\uU}{{\mathbf{U}}}
\newcommand{\nGm}[1]{{\mathbf{G}}_{\mathbf{m},{#1}}}
\newcommand{\nGa}[1]{{\mathbf{G}}_{\mathbf{a},{#1}}}
\newcommand{\umu}{{\boldsymbol{\mu}}}
\newcommand{\Zar}{\mathrm{Zar}}
\newcommand{\et}{\mathrm{\acute{e}t}}
\newcommand{\fppf}{\mathrm{fppf}}
\newcommand{\units}[1]{{#1^\times}}
\newcommand{\clpnt}[1]{{#1}_{\mathrm{cl}}}
\newcommand{\Gr}{{\mathbf{Gr}}} 
\newcommand{\sIso}{{\mathcal{I}so}}
\newcommand{\sEnd}{{\mathcal{E}nd}}
\newcommand{\sHom}{{\mathcal{H}om}}
\newcommand{\Sch}{{\mathsf{Sch}}}
\newcommand{\Aff}{{\mathsf{Aff}}}
\newcommand{\ftAff}{{\mathsf{ftAff}}}
\newcommand{\nAff}{{\mathsf{nAff}}}
\newcommand{\qProj}{{\mathsf{qProj}}}
\newcommand{\Tors}{{\mathsf{Tors}}}
\newcommand{\VB}{{\mathsf{VB}}}
\renewcommand{\Step}[1]{\noindent {\it Step #1.}} 
\title{Highly Versal Torsors}
\author{Uriya A.\ First$^*$}
\address{$^*$University of Haifa}
\email{uriya.first@gmail.com}
\keywords{
group scheme, 
linear algebraic group, 
torsor, 
principal homogeneous space,
versal torsor, 
vector bundle,
Azumaya algebra,
Galois extension,
symbol length
}
\subjclass[2020]{
14L15, 
14L30 
}
\begin{document}

\maketitle

\begin{abstract}
Let $G$ be a linear algebraic group over an infinite field
$k$.
Loosely speaking, a $G$-torsor over $k$-variety is said to be versal
if it specializes to every $G$-torsor over any $k$-field. The existence
of versal torsors is well-known.
We show that there exist $G$-torsors  that admit even stronger
versality properties. For example, for every $d\in\N$,
there exists a $G$-torsor  over a smooth quasi-projective $k$-scheme
that   specializes to every
torsor over a quasi-projective   $k$-scheme after removing some codimension-$d$
closed subset from the latter. Moreover,
such specializations are abundant in a well-defined
sense. Similar results hold if we replace $k$ with an arbitrary base-scheme.
In the course of the proof we show that every globally
generated rank-$n$ vector bundle over a $d$-dimensional $k$-scheme of finite
type
can be generated by $n+d$ global sections.

When $G$ can be embedded in a group scheme of unipotent upper-triangular
matrices, we further show that there exist $G$-torsors specializing
to every $G$-torsor over any affine $k$-scheme.
We show that the converse holds when $\Char k=0$.

We apply our highly versal torsors  to show that, for fixed $m,n\in\N$,
the symbol length of any degree-$m$ period-$n$ Azumaya algebra  over any local $\Z[\frac{1}{n},e^{2\pi i/n}]$-ring 
is uniformly bounded. A similar statement holds in the semilocal case, but under mild restrictions on the base ring.
%
\end{abstract}

\setcounter{tocdepth}{1}
\tableofcontents

\section{Introduction}

Let $k$ be an infinite field and let $G$ be a linear algebraic group over $k$,
i.e., an affine group scheme of finite type over $k$.
Informally, a $G$-torsor $E\to X$, where $X$ is a $k$-variety, is said to be \emph{versal}
if $E\to X$ specializes to every $G$-torsor over a    $k$-field.
The question of how abundant are the  specializations   leads to several variations
of this notion, e.g., see \cite{Duncan_2015_versality_of_alg_grp_actions}.
In particular, one says that:
\begin{enumerate}[label=(\roman*)]
	\item $E\to X$ is \emph{weakly versal}
	if for every   $k$-field $K$
	and $G$-torsor $E'\to \Spec K$, there is a morphism
	$f:\Spec K\to X$ such that $E'\cong \Spec K\times_{f,X} E$
	as $k$-schemes with a $G$-action;
	\item $E\to X$ is \emph{versal}
	if for every   $k$-field $K$
	and $G$-torsor $E'\to \Spec K$, there is a morphism
	$f$ as in (i), and the joint image  of all such $f$ is dense in $X$; and
	\item $E\to X$ is \emph{strongly versal}\footnote{
		This is called \emph{very versal} in \cite{Duncan_2015_versality_of_alg_grp_actions}.
	}
	if for every  $k$-field $K$
	and $G$-torsor $E'\to \Spec K$, there is a dominant $G$-equivariant
	rational map $\bbA^n_k\times_k E'\dashrightarrow E$.
\end{enumerate}
Restricting $K$ to be a finitely generated $k$-field does not affect
the validity of (i)--(iii).
The torsor $E\to X$ is also called \emph{generic} if it is versal
and $X$ is a rational $k$-variety.

The study of versal torsors has a rich history and a large range of applications.
It goes back to  the   universal bundles introduced
by topologists in the first half of the 20th century,
through   Amitsur's work on the construction of
central-simple algebras that are not crossed products   \cite{Amitsur_1972_central_division_algebras},
to the theory of essential dimension initiated by Z.~Reichstein and collaborators,
and the theory of cohomological invariants 
of algebraic groups \cite{Serre_2003_cohomological_invariants}.
We refer to reader to the introduction and appendix of \cite{Duncan_2015_versality_of_alg_grp_actions}
for a detailed historical survey.

We further note that torsors over   particular algebraic groups $G$ are sometimes equivalent to algebraic objects
of a certain type, e.g., for a     constant finite
algebraic group $\Gamma$, the $\Gamma$-torsors are equivalent to $\Gamma$-Galois extensions,
$\uPGL_n(k)$-torsors are equivalent to Azumaya algebras of degree $n$, 
etcetera. (This can be formally
stated as an equivalence between two categories fibered over $k$-schemes --- the category
of $G$-torsors and the [opposite] category of objects of the given type with specializations
as its morphisms.)
One can therefore discuss versal $\Gamma$-Galois extensions, versal Azumaya algebras --- e.g., see
\cite[p.~73]{Saltman_1999_lectures_on_div_alg} ---,
and so forth; in fact, much of the theory of versal torsors originated from 
such special cases.

\medskip

In recent years, several examples of $G$-torsors, or equivalent 
algebraic objects, which admit versality properties going beyond $k$-fields
have been exhibited. By this we mean that these $G$-torsors  specialize to every
$G$-torsor $E'\to X'$ with $X'$ ranging over a class of $k$-schemes
that goes beyond the class of  $k$-fields.
We shall informally refer to such objects as ``highly versal''.
The earliest   such example that we are aware of goes back to Saltman
\cite[Theorem~2.3]{Saltam_1978_noncrossed_products}, who constructed for every $p$-group
$G$ a $G$-Galois extension of rings specializing to any other $G$-Galois extension of 
$p$-torsion rings. 
Another such construction was given  by Fleischmann and Woodcock \cite{Fleischmann_2011_non_linear_actions_of_p_groups}. 
Saltman also constructed highly versal cyclic Galois 
extension in arbitrary and mixed characteristic (but with more restricted specialization properties)
in the recent paper \cite{Saltman_2022_mixed_char_cyclic_matters_preprint}.
Highly versal Azumaya algebras with involution 
were constructed by  Auel, the author and B.~Williams in \cite[\S5.1]{Auel_2019_Azumaya_algebras_without_inv}
(and applied in \cite{First_2022_Brauer_class_of_Az_alg_preprint}).
Taking a step further, in \cite{First_2022_generators}, the author, Z.~Reichstein and B.~Williams
showed that   for every  linear algebraic group $G$ over $k$
and every $d\in\N\cup\{0\}$,
there exists a $G$-torsor $E\to X$, where $X$ is a $k$-algebraic space
(or a $k$-scheme if $G$ is reductive),
which specializes to every $G$-torsor over a 
finite type affine $k$-scheme of dimension $\leq d$;
such torsors were called weakly $d$-versal in [op.\ cit.].
In fact, predating all of these works, a
theorem of  O.\ F\"orster \cite{Forster_1964_number_of_generators}
on the number of generators of a projective module over a noetherian (commutative) ring
can be restated as saying that
the  generic rank-$n$ projective
module generated by $(n+d)$-elements studied by Raynaud \cite{Raynaud_1965_universal_proj_module}
(equiv., its corresponding $\uGL_n(\Z)$-torsor)
is highly versal in the sense that it specializes
to any rank-$n$ projective module over a  noetherian ring of Krull dimension $\leq d$;
see Section~\ref{sec:generators}.

\medskip

The present work is a   systematic  study 
of the higher versality properties that were glimpsed on in 
the works just mentioned. 
In particular, we introduce notions of weak, ordinary and strong
higher-versality, analogous to  (i)--(iii) above,
construct examples of torsors having these properties,
and apply them to prove finiteness results on the  \emph{symbol length} of local
and semilocal rings.

While our definition  of higher versality applies
to    group schemes over  arbitrary base schemes,
for the sake of simplicity and  easier 
comparability with the classical
definitions, \emph{we give here a slightly weaker
form of the definitions,
and also restrict them to linear algebraic groups
$G$ over a field $k$}; the general definition is given in  Section~\ref{sec:definitions}.

Let $d\in\N\cup\{0\}$ and let $\catC$ be a class
of   $k$-schemes, e.g.,  $\nAff/k$ ---
the class of affine noetherian $k$-schemes ---,
or $\qProj/k$ --- the class of quasi-projective $k$-schemes. 
Given a $G$-torsor $E\to X$, with $X$ a $k$-algebraic space, we say that:
\begin{enumerate}[label=(\roman*$'$)]
	\item $E\to X$ is \emph{weakly $d$-versal} for $\catC$
	if for every $G$-torsor $E'\to X'$ with $ X'\in\catC$,
	there is an open subscheme $U\subseteq X'$ and a $k$-morphism
	$f:U\to X$
	such that $X'-U$ is of codimension $>d$ in $X'$
	and there is a $G$-torsor isomorphism $E'|_U\cong f^*E$;\footnote{
		This definition is more restrictive than 
		the weak $d$-versality discussed in \cite{First_2022_generators}.	
	}
	\item $E\to X$ is \emph{$d$-versal} for $\catC$
	if for every $G$-torsor $E'\to X'$ with $\emptyset\neq X'\in\catC$,
	there is an open subscheme $U\subseteq X'$
	and $k$-morphisms $\{f_i :U \to X\}_{i\in I}$
	such that  $X'-U$ is of codimension $>d$ in $X'$,
	the induced map $\bigsqcup_{i\in I} U\to X$
	is schematically dominant, and for every $i\in I$,
	there is a $G$-torsor isomorphism
	$E'|_U\cong f^*_iE$; and
	\item $E\to X$ is \emph{strongly $d$-versal} for $\catC$
	if for every $G$-torsor $E'\to X'$ with $\emptyset\neq X'\in\catC$,
	there is an open subscheme $U\subseteq X'$,  $n\in\N$
	and a schematically dominant 
	morphism
	$f:\bbA^n_k \times_k U\to  X$
	such that   $X'-U$ is of codimension $>d$ in $X'$
	and there is a $G$-torsor isomorphism  $\bbA^n_k\times_k  E'|_U\cong f^*E$.
\end{enumerate}
Here, we have written $E'|_U$ for the $G$-torsor $U\times_{X'} E'$
and $f^*E$ for the $G$-torsor which is the pullback of $E\to X$ along $f:U\to X$.
A $k$-morphism  $f:Y\to X$ is schematically dominant if the induced
map $\calO_X\to f_*\calO_Y$ is a monomorphism (see \cite[\S11.10]{EGA_iv_1967}
or Section~\ref{sec:schematically-dom}); when $X$ and $Y$
are $k$-varieties, this is the same as being dominant.
By convension, the dimension of the empty  scheme  is $-\infty$, so
we must have $U=X'$ if $\dim X'\leq d$. 
In particular, we must take $U=X'$ if $\catC$ consists of schemes
of dimension $\leq d$.

\begin{example}
Let $\catC$ be the class of affine irreducible
$k$-varieties.
Then a $G$-torsor is  weakly  $0$-versal for $\catC$ if and only
if it is  weakly  versal in the sense   (i) above. 
To see this, take the $k$-field $K$ from   (i) 
to be the fraction field of $X'\in \catC$ and spread out.

The same reasoning shows that a $G$-torsor which is $0$-versal  for $\catC$  is versal in the sense of (ii),
but {\it a priori}, $0$-versality for $\catC$ is more restrictive than versality.
The reason is that if $K=\mathrm{Frac}(X')$ and $\{f_i:\Spec K\to X\}_{i\in I}$ are morphisms such
that their joint image is dense in $X$, then, while we can extend each $f_i$ to some dense open $U_i\subseteq X'$,
it is not clear why we can take all the $U_i$ to be the same dense open subscheme.

Being strongly $0$-versal for $\catC$ is also {\it a priori} stronger
than being strongly versal in the sense of (iii) above,
because in (iii$'$) we insist on having a morphism
$\bbA^n_k\times_k U\to X$ for some dense open $U\subseteq X'$, rather than a rational map $\bbA^n_k\times_k X'\dashrightarrow X$.
\end{example}

Let $S$ be a scheme,  let  $G$ be a linear group scheme over $S$
that is flat and locally of   finite presentation, and let $\nAff /S$ denote
the class of affine noetherian schemes equipped with a morphism to $S$.\footnote{
	Warning: In general, this is \emph{not} the class of noetherian $S$-schemes $Y$
	whose structure morphism $Y\to S$ is affine.
}
In 
Section~\ref{sec:existence},
we give explicit constructions
of $G$-torsors
$E\to X$ which are strongly $d$-versal for $\nAff/S$, for any prescribed $d\in\N $.
For example, if $G$ is a   subgroup of $\uGL_n(S)$, then the quotient morphism
\[
\label{EQ:example-of-d-versal-torsor}
E=\uGL_{n+d}(S)/\begin{bmatrix}
1_{n\times n} & \bbA^{n\times d}_S \\
0_{d\times n} & \uGL_d(S)
\end{bmatrix}
\to
\uGL_{n+d}(S)/\begin{bmatrix}
G & \bbA^{n\times d}_S \\
0_{d\times n} & \uGL_d(S)
\end{bmatrix} =X
\]
is such a $G$-torsor (Theorem~\ref{TH:highly-versal-II}),
which also enjoys the following \emph{extension property}
(Theorem~\ref{TH:extension}):
If $E'\to X'$ is a $G$-torsor with $X'$ affine of dimension $\leq d$,
$X''$ is a closed subscheme of $X'$
and $E''\to X''$ is the pullback of $E'\to X'$
along $X''\to X'$, then every specialization of $E $
to $E'' $ extends to a specialization of $E $
to $E' $.
 
In general, the base $X$ in our constructions is an $S$-algebraic space. However, if $G$
is reductive  or finite and $S$ is noetherian, then $X$ is an $S$-scheme. 
In fact, in the reductive and finite cases,
we can    even make $X$ affine at the cost of replacing $ \nAff/S$ with its subclass of schemes of dimension $\leq d$
(Theorem~\ref{TH:highly-versal-I}).
When $S$ is the spectrum of an infinite field $k$, things become even better in the sense
that  there exist $G$-torsors over
quasi-projective $k$-schemes which are strongly $d$-versal for the class of
quasi-projective $k$-schemes (Theorem~\ref{TH:highly-versal-III}).

\begin{remark}
	Our Theorem~\ref{TH:highly-versal-II} implies  in particular  that for every linear algebraic group
	over a  field $k$ (possibly finite), and every $d\in\N$, there exists a finite-type smooth $k$-scheme $X$
	and a $G$-torsor $E\to X$ which specializes to every $G$-torsor
	$E'\to X'$ such that $X'$ is an affine $k$-variety of dimension $\leq d$.
	However, when $d>0$, the promised specialization $X'\to X$ is generally
	far from being an immersion. 
	Indeed, we can choose $X'$ to be a curve admitting
a closed point $x'$ such that $\dim T_{x'} X'>\dim T_x X$ for every $x\in X$,
which means that there is no immersion $X'\to X$. Also, if 
$k$ is finite, then  we can arrange that $|X'(k)|>|X(k)|$,
again making it impossible to define an immersion of $X'$ in $X$. 
With this in mind, the   existence of weakly $d$-versal $G$-torsors
whose base scheme is of finite type of $k$ may seem  surprising.
Informally, it also suggests that there should be limitations on the 
``complexity'' of $G$-torsors
over $d$-dimensional   affine $k$-schemes.
\end{remark}

We prove the existence of strongly   $d$-versal  
$G$-torsors  by reducing it to the existence
of  \emph{weakly} $d$-versal rank-$n$ vector bundles 
which admit some symmetry properties (Corollary~\ref{CR:symmetric-weak-versal-is-strong}). 
We then show that some constructions
of ``generic'' rank-$n$
vector bundles generated by $d+n$ global sections,
e.g., those 
considered in \cite{Raynaud_1965_universal_proj_module}
and (implicitly) in \cite{First_2022_generators},
satisfy the required conditions. The heart of the argument
is a theorem of O.\ F\"orster   \cite{Forster_1964_number_of_generators}
stating that a rank-$n$ vector bundle over a $d$-dimensional
noetherian affine scheme is generated by $n+d$ global sections, and its
improvements in \cite{Swan_1967_num_of_generators_of_module}, 
\cite{First_2017_number_of_generators}, \cite{First_2022_generators},
which we need to strengthen even further
for our purposes (Theorems~\ref{TH:fr-vec-bundles}, \ref{TH:fr-variant}, \ref{TH:generators-bound-any-scheme}). For example, letting $k$ denote an infinite field,
we show that every 
globally generated rank-$n$ vector bundle over a finite type $k$-scheme 
is generated by $d+n$ global sections away from some closed subscheme of codimension $>d$.

\medskip

Returning to group schemes over a general base $S$,
we also consider the question
of whether there exist
$G$-torsors   over finite type $S$-schemes which are $\infty$-versal
for $ \nAff/S$.
We show that the answer is ``yes''   if $G$ can be embedded in a group 
of upper-triangular unipotent matrices over $S$ (Theorem~\ref{TH:infinity-versal}),
and that the converse holds when $S$ is the spectrum of a field
of characteristic $0$ (Theorem~\ref{TH:converse}).
This gives a conceptual explanation of why, for a field $k$ of characteristic $p>0$,
every finite $p$-group $G$ admits a $G$-Galois extension of $k$-rings which specializes
to any  other $G$-Galois extension of $k$-rings \cite{Saltam_1978_noncrossed_products},
while such $G$-Galois extensions do not exist   when $\Char k=0$ and $G$ is nontrivial.

\medskip

Finally, we apply one of our constructions of highly versal torsors
together with Hoobler's extension of the Merkurjev--Suslin Theorem
to semilocal rings \cite{Hoobler_2006_Merkurjev_Suslin_over_semilocal}
to prove finiteness results about the symbol length of semilocal (commutative) rings.
In more detail, let $n,m\in\N$,
and let $R$ be a ring that is a $\Z[\frac{1}{n},e^{2\pi i/n}]$-algebra.
Recall that the $(n,m)$-symbol length of $R$ is the minimal $L\in\N\cup \{0,\infty\}$ 
such that every degree-$m$ period-$n$ Azumaya algebra over $R$
is Brauer equivalent to the tensor product of at most $L$ degree-$n$ \emph{symbol Azumaya algebras} over $R$.
We show in Theorem~\ref{TH:symbol-length-II} that for every $n,m,s\in\N$,
there is   $L\in \N$ such that
for every  semilocal $\Z[\frac{1}{n},e^{2\pi i/n}]$-ring $R$ having at most $s$ maximal ideals,
the  $(n,m)$-symbol length of $R$ is at most $L$.
(In particular, the $(n,m)$-symbol length of 
any semilocal $\Z[\frac{1}{n},e^{2\pi i/n}]$-ring  is finite.)
A similar uniform upper bound on the symbol length also exists
if, instead of bounding the number of maximal ideals of the semilocal ring $R$,
we assume that $R$ has infinite residue fields
and is an algebra over some fixed semilocal ring $R_0$; see Theorem~\ref{TH:symbol-length-I}.
Our results imply in particular that
the $(n,m)$-symbol length of all fields (regardless of
the characteristic) is uniformly bounded ---
a result known to experts.
%

\medskip

The structure of the paper is as follows:
Sections~\ref{sec:torsors} and~\ref{sec:vec-bundles}
are preliminary and collect facts about torsors and vector bundles.
In Section~\ref{sec:schematically-dom} we establish
some facts about schematically dominant morphisms.
In Section~\ref{sec:definitions} we introduce the definition
of weakly, ordinary and strongly $d$-versal torsors and study some of
their properties. In particular, we reduce
the existence of \emph{strongly} $d$-versal torsors for linear group schemes
to the existence of \emph{weakly} $d$-versal vector bundles 
admitting some symmetry properties.
The subject matter of Sections~\ref{sec:generic}
and~\ref{sec:generators}
is to show existence of such $d$-versal vector bundles;
their construction is given in Section~\ref{sec:generic}
and their desired properties are established in Section~\ref{sec:generators}.
This is used in Section~\ref{sec:existence}
to prove the existence of torsors that are strongly $d$-versal for some classes of schemes.
We apply this to particular group schemes
in  Section~\ref{sec:examples} to deduce the existence of strongly
$d$-versal Galois extensions, Azumaya algebras and so on.
Section~\ref{sec:infty-versal} concerns with $\infty$-versal torsors.
Finally, in Section~\ref{sec:symbol},
we apply some of our earlier results to bounding the symbol length of Azumaya
algebras over semilocal rings.

\subsection*{Acknowledgments} 

We are grateful
to A.\ Chapman,  Z.\ Reichstein, 
D.\ Saltman and B.\ Williams for comments and suggestions regarding this work.
We also thank A.\ Duncan for an inspiring conversation.

\subsection*{Notation}

Throughout, a ring means a commutative (unital,
associative) ring. Algebras are not necessarily commutative.

\medskip

As is common in algebraic geometry, the dimension of a topological space $X$
is the supremum of the set of all $d\in \N\cup\{0 \}$ for which
there is 
chain of closed irreducible   subsets
$X_0\subsetneq \dots\subsetneq X_d$. By convention, the supremum of the empty
set is $-\infty$, and so the dimension of the empty space is $-\infty$.
The space $X$ is noetherian if every ascending chain of open subsets eventually stabilizes.
Recall that if $Y$ is a subspace of $X$, then $\dim Y\leq \dim X$
and $Y$ is noetherian if $X$ is. 
The set of closed points of $X$ is denoted $\clpnt{X}$ and given the subspace topology.
When $X=\Spec R$ for a ring $R$, the subspace $\clpnt{X}$ is just the subspace
of maximal ideals of $R$, denoted $\Max R$.

If $X$ is irreducible and $Y$ is a closed
irreducible subset of $X$, then  the codimension of $Y$
in $X$ --- denoted $\codim(Y,X)$ --- is defined to be the supremum
of the set of $d\in \N\cup\{0\}$  for which
there is 
chain of closed irreducible   subsets
$Y=Y_0\subsetneq \dots\subsetneq Y_d\subseteq X$.
We extend this definition to general $X$ and $Y$
by letting $\codim(Y,X)$ be the infimum of   $\codim(Y',X')$
when $X'$ ranges over the irreducible components of $X$
and $Y'$ ranges over the irreducible components of $Y\cap X'$.\footnote{
	Caution: This definition   differs from the definition in
	\cite[Tag \href{https://stacks.math.columbia.edu/tag/02I3}{02I3}]{stacks_project}
	even when $Y$ is irreducible.  
}
When $X$ is catenary (see \cite[Tag \href{https://stacks.math.columbia.edu/tag/02I1}{02I1}]{stacks_project}; e.g., a finite type scheme over a field), 
$\codim(Y,X)\geq d$ if and only if
$\dim X'\geq \dim (X'\cap Y)+d$ for every irreducible component $X'$ of $Y$.\footnote{
	The catenarity assumption is needed to guarantee that for every
	pair of irreducible closed subsets $Y'\subseteq X'$ of $X$,
	we have  $\codim(Y',X')= \dim X'-\dim Y'$  and   the three quantities used in the equality
	are finite.
}
By convention, the infimum of the empty set is $\infty$, and so 
$\codim(\emptyset, X)=\infty$ (even if $X=\emptyset$).

\medskip

Let $X$ be a scheme.
The residue field of $X$ at $x\in X$ is denoted $k(x)$. If $\calM$ is an $\calO_X$-module,
we write $\calM_x$ for the stalk of $\calM$ at $x$ and $\calM(x)=\calM_x\otimes_{\calO_{X,x}}k(x)$.
Given an open neighborhood $U$ of $x$ and $m\in \Gamma(U,\calM)$, we also write
$m(x)$ for the image of $m$ in $\calM(x)$.
As usual,  the rank of $\calM$
is the function $\rank \calM :X\to \textsf{cardinals}$ 
sending $x\in X$
to $\dim_{k(x)}\calM(x)$.
A vector bundle over $X$ is a locally free $\calO_X$-module of (point-wise) finite rank.
A line bundle is a vector bundle of rank $1$.

If not otherwise said, irreducible components of $X$ are given the reduced induced
subscheme structure.

\medskip

Let $S$ be a base scheme. Given
$S$-schemes $X$ and $S_1$, 
we write $X_{S_1}$ for $X\times_SS_1$  viewed as an $S_1$-scheme.
If $f:X\to Y$ is a  morphism of $S$-schemes, then $f_{S_1}:X_{S_1}\to Y_{S_1}$
denotes the pullback of $f$ along the structure morphism $S_1\to S$.
Given a class   $\catC$   of $S$-schemes, we write $\catC/S_1$
for the class of $S_1$-schemes with underlying $S$-scheme in $\catC$.

Recall that a collection of $S$-morphisms $\{f_i:Y_i\to X\}_{i\in I}$
is  an fppf covering if each $f_i$ is flat and locally of finite presentation (abbreviated: fppf)
and $X=\bigcup_{i\in I}f_i(Y_i)$ as sets.
The category $\Sch/S$ of   $S$-schemes together with the Grothendieck topology generated
by fppf coverings is
the \emph{large fppf site} of $S$, denoted $(\Sch/S)_{\fppf}$.  
We will freely identify  schemes and algebraic spaces over $S$
with the sheaves that they
represent on the site $(\Sch/S)_{\fppf}$. 
In particular,
given an $S$-algebraic space $X$ and an $S$-scheme $U$, we shall write $X(U)$ for the set of $S$-morphisms
from $U$ to $X$ and call this set the $U$-sections of $X$ (over $S$). 
Morphisms between $S$-algebraic spaces
will usually be defined by specifying their action on $U$-sections
for every $S$-scheme $U$.

\medskip

An $S$-group means a group scheme over $S$. An algebraic group over a field
$k$ is a $k$-group of finite type.
Let $G$  be an $S$-group.
A subgroup of $G$ is a subgroup $S$-scheme of $G$. 
If $G$ acts from the right on an $S$-algebraic space $X$, we let $X/G$
denote the quotient of $X$ by $G$ in category of sheaves over $(\Sch/S)_\fppf$,
i.e., $X/G$ is the fppf-sheafification of the functor $U\mapsto X(U)/G(U)$
from $S$-schemes to sets. The quotient $X/G$ is not always representable by an 
$S$-scheme, but we will treat it as an $S$-scheme when it is. Likewise for $S$-algebraic spaces.

An $S$-group $G$ with structure morphism $p:G\to S$
is said to be \emph{finite and locally free} if $p$ is finite
and $p_*\calO_G$ is a locally free $\calO_S$-module.
 
We write $\uGL_n(S)$ for the $S$-group representing
the sheaf of invertible $n\times n$
matrices over $S$, which is sometimes denoted $\uGL_n(\calO_S)$;
for an $S$-scheme $U$, we have $\uGL_n(S)(U)=\nGL{\Gamma(U,\calO_U)}{n}$.
By default,  sections of $\calO_S^n$ are viewed as column vectors,
so that   $\uGL_n(S)$ acts on $\calO_S^n$ from the left by  matrix multiplication.
The  subgroup of $\uGL_n(S)$
consisting of  matrices of determinant $1$ is denoted $\uSL_n(S)$.
When $S=\Spec R$ for a ring $R$, we  write $\uGL_n(R)$ for $\uGL_n(S)$ 
and $\uSL_n(R)$ for $\uSL_n(S)$.

An $S$-group $G$
is said to be \emph{linear} if there is an $S$-group monomorphism $G\to \uGL_{n }(S)$
for some $n\in\N$.\footnote{
	Caution: Some texts call an $S$-group linear if there is 
	an $S$-vector bundle $V$  and a monomorphism $G\to \sEnd_{\calO_S}(V)$.
	This is  equivalent to our definition when $S$ is affine.
} It is \emph{strictly linear} if 
there is an $S$-group morphism $G\to \uGL_n(S)$ that is also a closed immersion.

\medskip

We write $\mathbb{A}^{n\times m}_S$ for the affine space
of $n\times m$ matrices over $S$, which is just $\mathbb{A}^{nm}_S$
but with the coordinates reorganized into an $n\times m$ matrix.
Given $n,d\in\N\cup\{0\}$
and $S$-schemes $A,B,C,D$    equipped with monomorphisms $A\to \mathbb{A}^{n\times n}_S$,
$B\to \mathbb{A}^{n\times d}_S$, $C\to \mathbb{A}^{d\times n}_S$, $D\to \mathbb{A}^{d\times d}_S$,
we write $[\begin{smallmatrix} A & B \\ C & D\end{smallmatrix}]$
for the scheme $A\times B\times C\times D$
endowed with the monomorphism $(a,b,c,d)\mapsto
[\begin{smallmatrix} a & b \\ c & d\end{smallmatrix}]$ into
$\bbA^{(n+d)\times (n+d)}_S$.
When $A=\Spec S$ and $A\to \mathbb{A}^{n\times n}_S$ is the zero matrix (resp.\ unit matrix) section,
we shall write $[\begin{smallmatrix} 0_{n\times n} & B \\ C & D\end{smallmatrix}]$
(resp.\ $[\begin{smallmatrix} 1_{n\times n} & B \\ C & D\end{smallmatrix}]$) instead of 
$[\begin{smallmatrix} A & B \\ C & D\end{smallmatrix}]$, and likewise
with $B$, $C$, $D$. The subscript $n\times n$ will be dropped from $0_{n\times n}$
when it is clear from the context. (For example, such notation was used on page
\pageref{EQ:example-of-d-versal-torsor}.)

\section{Preliminaries on Torsors}
\label{sec:torsors}

Let $S$ be a scheme. Throughout this section, all schemes and algebraic spaces
are over $S$, and morphisms are $S$-morphisms.  

Let $G$ be an $S$-group and let $X$ be an $S$-algebraic space.
For our purposes, a $G$-torsor
over $X$ consists of a \emph{sheaf}
$E$ on $(\Sch/S)_{\fppf}$
together with a 
right $G$-action $E\times_S G\to E$ and a morphism $p:E\to X$
such that 
\begin{enumerate}[label=(\arabic*)]
	\item $p(e\cdot g)=p(e)$ for every $S$-scheme $U$, $g\in G(U)$ and $e\in E(U)$,
	and 
	\item there is an fppf covering $X'\to X$  such that $X'\times_X E\cong X'\times_S G$ 
	as sheaves with a right $G$-action. 
\end{enumerate} 
(Otherwise said, a $G$-torsor over $X$ is a right sheaf $G$-torsor for the fppf
topology.) When these conditions hold we will simply say that $p:E\to X$, or just $E$,
is a $G$-torsor over the base $X$. In this case, the induced
sheaf morphism $[e]\mapsto p(e):E/G\to X$ is an isomorphism. 
We sidestep the question of which $G$-torsors are representable by $S$-schemes 
or $S$-algebraic spaces and satisfy
with noting that  if $G\to S$ is affine and $X$ is a scheme, then all $G$-torsors are schemes
\cite[III, Theorem~4.3]{Milne_1980_etale_cohomology}.

A morphism from a
$G$-torsor $p_1:E_1\to X_1$
to another $G$-torsor
$p_2:E_2\to X_2$
is a pair $(\hat{f},f)$
consisting of a $G$-equivariant $S$-morphism $\hat{f}:E_1\to E_2$
and an $S$-morphism $f:X_1\to X_2$ such that $p_2\circ \hat{f}=f\circ p_1$.
Since $X_i\cong E_i/G$, every $G$-equivariant
morphism $\hat{f}:E_1\to E_2$
determines and is determined by a morphism from $E_1\to X_1$ to $E_2\to X_2$.
The collection of $G$-torsors
and their morphisms form a category denoted  $\Tors(G)$. 
The subcategory of $\Tors(G)$ consisting of $G$-torsors over a fixed
$S$-algebraic space $X$ and morphisms of the form $(\hat{f},\id_X)$
is denoted $\Tors(G,X)$ and called the category of $G$-torsors over $X$. 
We write $E_1\cong E_2$ when  $E_1\to X_1$ 
and $E_2\to X_2$ are isomorphic in $\Tors(G)$. Isomorphism
in the subcategory $\Tors(G,X)$ is denoted by $\cong_X$.

Let $p:E\to X$ be a $G$-torsor and let $f:Y\to X$ be an $S$-morphism.
Then the pullback $p_f: Y\times_X E\to Y$ of $p$ along $f$ is a $G$-torsor
($G$ acts on $Y\times_X E$ via   $E$).
We often abbreviate $Y \times_X E$ to $f^*E$. If $f:U\to X$ is an open immersion,
we also write $E|_U$ instead of $f^*E$. 

\begin{remark}\label{RM:morphism-pullback-correspondence}
Let $p:E \to X$ and $p':E'\to X'$ be $G$-torsors.
There is a one-to-one correspondence between
morphisms $(\hat{f},f)$ from $E'\to X'$ to $E\to X$
and pairs $(\hat{u},f)$ consisting of an $S$-morphism $f:X'\to X$
and an isomorphism $ \hat{u}=(\hat{u},\id_X) : f^*E\to E'$ in $\Tors(G,X)$;
the pairs $(\hat{u},f)$ are   called \emph{specializations}
of $E$ to $E'$.
Explicitly, given $(\hat{f},f)$, define $\hat{u}$ to
be the inverse of $e'\mapsto (p'(e'),\hat{f}(e')):E'\to f^*X$,
and given $(\hat{u},f)$,
let $\hat{f}$ be the composition of $\hat{u}^{-1}$ and the second projection
$f^*X=X'\times_X E\to E$.
\end{remark}

Let $\vphi:H\to G$ be a 
morphism of $S$-groups,
and let $p:E\to X$ be an $H$-torsor. 
Then $p$ and $\vphi$
give  rise to a $G$-torsor $p^G:E\times^{\vphi,H} G\to X$. Here, $E\times^{\vphi,H} G$ denotes the quotient of the sheaf $E\times G$
by the equivalence relation $E\times H\times G\to (E\times G)\times (E\times G)$
given by $(e,h,g)\mapsto((e h,g),(e,\vphi(h) g))$ on sections,
and $p^G$ is given by $p^G([e,g])=p(e)$ on sections.
There is a natural $H$-equivariant morphism $E\to E\times^{\vphi,H} G$
given by $e\mapsto [e,1_G]$ on sections. When $\vphi$
is clear from the context, we will write $E\times^H G$
instead of $E\times^{\vphi,H}G$.

A left action of an $S$-group $H$ on a
$G$-torsor $p:E\to X$ consists of left actions of $H$ on $E$
and $X$ such that $p$ is $H$-equivariant and the actions of $H$ and $G$ on $E$
commute.

\medskip

Let $G$ be an   $S$-group acting from the right on  an $S$-algebraic space $E$.
We say that $G$ acts freely on $E$ if $G(U)$ acts freely on $E(U)$ for every $S$-scheme $U$.
In general, the sheaf $E/G$ on $(\Sch/S)_{\fppf}$
may not be an $S$-scheme even when $E$ is, but under mild assumptions
on $G$, it is an $S$-algebraic space and $E\to E/G$ is a $G$-torsor. 
(In fact, this is the   reason why
we need to consider algebraic spaces in this work.)

\begin{lem}\label{LM:free-action-quotient}
Let $G$ be a faithfully flat locally of finite presentation $S$-group 
acting freely from the right on an $S$-algebraic space $E$.
Then the quotient sheaf $E/G$ is an $S$-algebraic space and $E\to E/G$ is a $G$-torsor.
\end{lem}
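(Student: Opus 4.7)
The plan is to realize $E/G$ as the quotient of $E$ by an fppf equivalence relation and invoke a standard representability theorem for such quotients. Consider the action morphism
\[
\sigma \colon E\times_S G \longrightarrow E\times_S E, \qquad (e,g)\longmapsto (e, e\cdot g).
\]
The freeness of the action means precisely that $\sigma$ is a monomorphism of fppf sheaves: the fiber of $\sigma$ over a section $(e,e')$ of $E\times_S E$ is either empty or a torsor under the stabilizer of $e$, which is trivial. Hence the image $R := \sigma(E\times_S G) \subseteq E\times_S E$ is representable by the $S$-algebraic space $E\times_S G$, and the group-action axioms translate into saying that $R$ is an equivalence relation on $E$.

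Next I would verify the fppf hypothesis on the two projections $s,t \colon R\to E$. Via the identification $R \cong E\times_S G$, the first projection $s$ is the projection $E\times_S G\to E$, while $t = \sigma_2\circ\sigma^{-1}$ corresponds to the action map $E\times_S G\to E$, $(e,g)\mapsto eg$, which is the composition of the automorphism $(e,g)\mapsto (eg, g^{-1})$ of $E\times_S G$ with the first projection. Both are therefore base changes of $G\to S$, and so are faithfully flat and locally of finite presentation by our hypotheses on $G$.

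At this point I would cite the representability theorem for quotients by flat equivalence relations on algebraic spaces (e.g.\ Stacks Project, Tag~\href{https://stacks.math.columbia.edu/tag/04S6}{04S6}, or the analogous statement in Laumon--Moret-Bailly or Artin): given an algebraic space $E$ with an fppf equivalence relation $R\rightrightarrows E$ whose source and target maps are flat and locally of finite presentation, the fppf-sheaf quotient $E/R$ is an $S$-algebraic space and $E\to E/R$ is itself faithfully flat and locally of finite presentation. Applied here, this gives that $E/G = E/R$ is an $S$-algebraic space and that $E\to E/G$ is an fppf cover.

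Finally, I would check the torsor condition by computing $E\times_{E/G} E$. By construction of the quotient as a coequalizer in sheaves, this fiber product equals $R$, which by the identification $\sigma$ is $E\times_S G$ with the first projection as one map and the action as the other. This is precisely the statement that the natural map $E\times_S G \to E\times_{E/G} E$, $(e,g)\mapsto (e,eg)$, is an isomorphism, i.e.\ that $E\to E/G$ trivializes over itself as a $G$-torsor. Since $E\to E/G$ is an fppf cover and trivializes over itself, it is a $G$-torsor in the fppf sense, completing the proof. The only non-bookkeeping ingredient is the cited representability theorem; all other steps are direct verifications with the action map.
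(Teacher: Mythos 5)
Your proof is correct and takes essentially the same approach as the paper: both identify the equivalence relation $R\cong E\times_S G$ via freeness, check that $s,t$ are flat and locally of finite presentation (being pullbacks of $G\to S$), invoke a representability theorem for fppf quotients, and conclude the torsor property from $E\times_{E/G}E\cong E\times_S G$. The only cosmetic difference is that the paper routes through the algebraic stack $[E/R]$ (citing Stacks 06FI, 04SZ, 06FH) while you cite the consolidated quotient-by-equivalence-relation theorem directly.
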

 
\begin{proof}
Consider the groupoid object $(E,R,s,t,c)$ in $S$-algebraic spaces 
given by 
$R(U)=\{(x,xg)\where x\in E(U), g\in G(U)\}$ (note that $R\cong E\times G$
because $G$ acts freely on $E$), 
$s_U,t_U:R(U)\to E(U)$ being
the first and second projections and $c_U=[x\mapsto (x,x)]:E(U)\to R(U)$.
Since $G$ acts freely on $E$, both $s$ and $t$ are isomorphic to the first projection $E\times_S G
\to E$,
so $s$ and $t$   are flat and locally of finite
presentation (because $G\to S$ is).
Thus, the stack $[E/R]$ is algebraic 
\cite[Tag \href{https://stacks.math.columbia.edu/tag/06FI}{06FI}]{stacks_project}.
The freeness of the action further implies
that   $[E/R]$ is an $S$-algebraic space
\cite[Tag  
\href{https://stacks.math.columbia.edu/tag/04SZ}{04SZ}]{stacks_project},
which   must coincide with the quotient sheaf $E/G$. 
By \cite[Tag  
\href{https://stacks.math.columbia.edu/tag/06FH}{06FH}]{stacks_project},
$E\to E/G$ is an fppf covering. Since the morphism
$(e,g)\mapsto (e,eg):E\times_S G \to E\times_{E/G} E $ 
is a $G$-equivariant isomorphism, it follows that $E\to E/G$ is a $G$-torsor.
\end{proof}

The question of whether $E/G$ is a scheme when $G$ acts freely on $E$
is difficult in general, even if we impose
the  necessary condition that $(g,e)\mapsto (ge,e):G\times_S E\to E\times_S E$ is an immersion.
We refer the reader to   Seshadri \cite{Seshardri_1977_geometric_reductivity}, 
Thomason \cite[\S3]{Thomason_1987_equivariant_resolution_linearlization} and related works
for an extensive discussion of the case where $G$ is reductive\footnote{
	An $S$-group $G$ is called reductive if $G\to S$ is smooth
	and the geometric fibers of $G\to S$ are \emph{connected} reductive algebraic groups.
} 
(see also  Anantharaman \cite[Appendix, Theorem 6]{Anantharaman_1973_group_schemes_homogeneous_spaces}
for the relation between the geometric quotients of \cite{Seshardri_1977_geometric_reductivity} and 
the sheaf quotients we consider here).
In this work, we will only be interested in the case where $E$ is a strictly
linear $S$-group and $G$
is a   subgroup of $E$, where we have the following positive result.

\begin{thm}\label{TH:quo-of-subgroups}
	Let $S$ be a locally noetherian scheme,
	let $G$ be a closed subgroup of $\uGL_n(S)$
	and let $H$ be a subgroup of $G$.
	Suppose that there is an exact sequence  
	$1\to H'\to H\to H''\to 1$ of group 
	sheaves on $(\Sch/S)_{\fppf}$ with $H'$ a reductive $S$-group  
	and $H''$ a finite and locally free  $S$-group.
	Then $G/H$ is (represented by) an  $S$-scheme of finite type
	and $G/H\to S$ is affine.
	If $H'=1$, then the assumption that $S$ is locally noetherian can be dropped.
\end{thm}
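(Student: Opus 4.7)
The plan is to form the quotient in two stages using the exact sequence $1\to H'\to H\to H''\to 1$: first divide by the reductive subgroup $H'$, then by the finite locally free quotient $H''$. Since $G$ is a closed subgroup of $\uGL_n(S)$, the structure morphism $G\to S$ is affine and of finite presentation, and the right translation action of $H$ on $G$ is free because $H\subseteq G$ and $G$ acts freely on itself by translation.

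For the first stage, $H'$ is a reductive $S$-group acting freely on the affine $S$-scheme $G$, so I would appeal to the results of Seshadri \cite{Seshardri_1977_geometric_reductivity} and Thomason \cite[\S3]{Thomason_1987_equivariant_resolution_linearlization} (valid for $S$ locally noetherian) to conclude that the sheaf quotient $E_1:=G/H'$ is representable by an affine $S$-scheme of finite type and that $G\to E_1$ is an $H'$-torsor. The right $H$-action on $G$ then descends to a right action of $H''=H/H'$ on $E_1$. This descended action is free: after pullback along the fppf cover $G\to E_1$, it becomes identified with the right action of $H''$ on $G\times_S H''$ coming from the free $H$-action on $G$, so freeness follows by fppf descent.

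For the second stage, $H''$ is finite and locally free, and the classical theory of quotients by free actions of finite flat group schemes on affine schemes (via invariants of the comodule structure on the coordinate ring; see e.g.\ SGA 3, Exp.~V) shows that $E_1/H''$ exists as an affine $S$-scheme of finite type and that $E_1\to E_1/H''$ is an $H''$-torsor. Identifying $E_1/H''$ with $G/H$ as sheaves on $(\Sch/S)_\fppf$ (which follows formally from $H''=H/H'$ and Lemma~\ref{LM:free-action-quotient}) yields the theorem. When $H'=1$, the first stage is vacuous and the second applies directly to $G$; since the invariant-ring construction does not require noetherian hypotheses on $S$, the final clause follows.

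The main obstacle is invoking and correctly aligning the representability results of \cite{Seshardri_1977_geometric_reductivity} and \cite{Thomason_1987_equivariant_resolution_linearlization}: those papers construct geometric or categorical quotients under specific hypotheses, and one must verify that for a free reductive action on an affine $S$-scheme these coincide with the fppf sheaf quotient considered here, so that $G\to G/H'$ is genuinely an $H'$-torsor and the induced $H''$-action on $E_1$ is well-defined and free in the sheaf-theoretic sense needed to iterate the construction.
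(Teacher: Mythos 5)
You take the same two-stage route as the paper: quotient first by the reductive $H'$, then by the finite locally free $H''$. However, the central technical point that you yourself flag as ``the main obstacle'' is exactly what remains unproved. Seshadri's theorem produces a \emph{geometric} quotient $\Spec B^{H'}$ (where $B=\Gamma(G,\calO_G)$), but one must still show that this scheme represents the \emph{fppf sheaf} quotient $G/H'$, equivalently that $G\to \Spec B^{H'}$ is an $H'$-torsor; Thomason~\S3, which you cite here, concerns equivariant embedding and linearization, not this identification. The paper closes the gap by invoking Anantharaman \cite[Appendix, Theorem~6]{Anantharaman_1973_group_schemes_homogeneous_spaces}, which asserts that, under the present hypotheses --- $H'$ flat of finite presentation over $S$, and $(h,g)\mapsto (gh,g):H'\times_S G\to G\times_S G$ a closed immersion, the latter holding because $H'$ is closed in $G$ by \cite[XVI, Corollaire~1.5]{SGAiii} --- the geometric quotient does represent the sheaf quotient. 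Until you supply something equivalent, the first stage of your argument is incomplete.

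Two smaller points. To apply Seshadri, $G$ must sit as a closed $H'$-invariant subscheme of a linear $H'$-representation; since $\uGL_n(S)$ is only open, not closed, in $\bbA^{n\times n}_S$, the paper first embeds $\uGL_n(S)\embeds\uSL_{n+1}(S)$ so that $G$ becomes closed in an affine space carrying a linear $H'$-action, a reduction your sketch omits. Your treatment of the second stage --- freeness of the descended $H''$-action on $G/H'$, then quotient by the finite locally free $H''$ to obtain an affine finite-type scheme and an $H''$-torsor --- is sound, and citing SGA~3, Exp.~V is a perfectly good alternative to the paper's references (Stacks Tag~07S7 and Milne's Theorem~B.18).
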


\begin{proof}
	It is enough to show this after passing to an affine covering of $S$,
	so we may assume that $S$ is affine and noetherian.

	We first show that $G/H'$ is an affine $S$-scheme
	and $G/H'\to S$ is      of finite type.
	Since $H'$ is reductive, it is closed in $G$ \cite[XVI, Corollaire~1.5]{SGAiii}.
	By embedding $\uGL_n(S)$ in $\uSL_{n+1}(S)$ via $x\mapsto [\begin{smallmatrix}
	x & 0 \\ 0 & (\det x)^{-1}\end{smallmatrix}]$,
	we may assume that $G$ is closed inside the affine space $V=\mathbb{A}^{n\times n}_S$. 
	Observe that $H'$ acts linearly on $V$ because $G$ does.
	Write $G=\Spec B$ and define the fixed ring $B^{H'}$ as in \cite[p.~229]{Seshardri_1977_geometric_reductivity}.
	(Briefly, writing $S=\Spec A$, the ring $B^{H'}$ consists of those $b\in B$ whose image in $B\otimes_A A'$
	is fixed under  $H'(A')$ for any $A$-ring $A'$.)
	Since $H'$ is closed in $G$, 
	for every algebraically closed field $k$ over $S$, the orbits of the right action of $H'_k$
	on $G_k$ are closed.
	Now,  \cite[Theorem~3]{Seshardri_1977_geometric_reductivity} tells
	us that $\Spec B^{H'}$ is a \emph{geometric quotient} of $G$ by $H'$.
	Consequently,
	$\Spec B^{H'}$ is the quotient of $\Spec B$ by $G$ in the category of ringed spaces.
	The fact that $H'$ is closed in $G$
	also tells us that the map
	$(h,g)\mapsto (gh,g):H'\times G\to G\times G$ is a closed immersion.
	Since $H'$ is flat of finite presentation over $S$ (because it is reductive), we may
	apply   \cite[Appendix I, Theorem 6]{Anantharaman_1973_group_schemes_homogeneous_spaces},
	which says that $\Spec B^{H'}$ represents the sheaf $G/H'$ on $(\Sch/S)_{\fppf}$.
	This means that $G/H'$ is an affine scheme. By Lemma~\ref{LM:free-action-quotient}, 
	$G\to G/H'$ is also an fppf covering.
	Since  $G\to S$ is of finite type, this means that $G/H'\to S$ is also of finite type 
	\cite[Tag \href{https://stacks.math.columbia.edu/tag/0367}{0367}]{stacks_project}.
	
	Next, observe that $H''$ acts freely on the affine scheme $X:=G/H'$, and $X/H''=G/H$.
	Thus, by 
	\cite[Tag \href{https://stacks.math.columbia.edu/tag/07S7}{07S7}]{stacks_project}
	and	
	\cite[Theorem~B.18]{Milne_2017_algebraic_groups}	
	(for instance; see also p.~604 in [op.\ cit.]), $X/H''$ is an affine scheme and $X\to X/H''$ is an $H''$-torsor.
	As in the previous paragraph, $G/H=X/H''\to S$ is also of finite type.
\end{proof}

As for the question of which affine $S$-groups $G$ can be realized
as closed subgroups of $\uGL_n(S)$, we have:

\begin{thm}[{Thomason \cite[Corollary~3.2]{Thomason_1987_equivariant_resolution_linearlization}}]
	\label{TH:Thomason}
	Let $S$ be an affine noetherian scheme, and let $G$ be an affine $S$-group
	of finite type. Then there is $n\in\N$ such that $G$ is isomorphic to a closed
	subgroup of $\uGL_n(S)$ if one of the following hold:
	\begin{enumerate}[label=(\arabic*)]
		\item $S$ is regular and $\dim S\leq 1$ (e.g., if $S=\Spec \Z$),
		\item $S$ is regular, $\dim S\leq 2$ and $G\to S$ is smooth with connected fibers,
		\item $G$ is semisimple, 
		\item $S$ is normal and $G$ is reductive. 
	\end{enumerate}
\end{thm}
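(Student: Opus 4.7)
The plan is to cite \cite[Corollary~3.2]{Thomason_1987_equivariant_resolution_linearlization} essentially verbatim. Thomason's corollary, stated in the language of coherent sheaves with a $G$-action, asserts that under each of the listed hypotheses the $S$-group $G$ admits a faithful representation on a \emph{free} $\calO_S$-module of finite rank, i.e., a closed $S$-group immersion $G\hookrightarrow \uGL(V)$ for some free $V$. Since $\uGL(\calO_S^n)=\uGL_n(S)$, this is precisely what we want, so the only task is to match the notation.

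To explain why Thomason's result takes this form, I would outline the common strategy. The first step, which is standard and works for any affine group scheme of finite type over an affine noetherian $S$, is to produce a faithful $G$-action on some coherent locally free $\calO_S$-module $V_0$. This comes from the regular representation: writing $G=\Spec A$, the comultiplication gives a natural coaction of $A$ on itself, and one extracts a finitely generated $G$-stable $\calO_S$-submodule $V_0$ of $A$ generating $A$ as an $\calO_S$-algebra; locally freeness can be arranged because $S$ is noetherian and one can shrink. At this point one has a closed immersion $G\hookrightarrow \uGL(V_0)$, but $V_0$ is only a vector bundle, not necessarily free.

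The main obstacle, and the content of Thomason's argument, is to upgrade $V_0$ to a free module while preserving faithfulness. The natural move is to find a $G$-representation $W$ (with trivial or controlled action) such that $V_0\oplus W$ is free; then $G$ still acts faithfully on $V_0\oplus W$. In case~(1), with $S$ regular of dimension $\leq 1$, this follows from the classical fact that every finitely generated projective module over a Dedekind ring is stably free after tensoring with a suitable line bundle, and one handles the line bundle summand by representation-theoretic tricks. In cases~(3) and~(4), one exploits that (semi)simple and reductive groups over normal bases have a rich supply of representations, so one can replace $V_0$ by a $G$-stable direct summand of a free representation. Case~(2), where $\dim S\leq 2$ and $G$ is smooth with connected fibers, is the delicate one: here Thomason uses equivariant resolution of $G$-sheaves and the connectedness of the fibers to guarantee that the appropriate stabilization becomes free, and this is the technical heart of his paper.

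Since none of this work needs to be redone for the present application, I would simply write: ``This is \cite[Corollary~3.2]{Thomason_1987_equivariant_resolution_linearlization}, together with the identification $\uGL(\calO_S^n)=\uGL_n(S)$.''
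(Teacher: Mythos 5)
The paper gives no proof of this result---it is stated purely as a citation of Thomason's Corollary~3.2---and your proposal correctly ends by doing exactly the same, so the two approaches coincide. One caveat about your explanatory sketch (which does not affect the proof-by-citation): over a general noetherian $S$ one cannot simply ``shrink'' a finitely generated $G$-stable $\calO_S$-submodule of $A$ to make it locally free; producing a faithful representation on a vector bundle, let alone a free module, is precisely the content of Thomason's theorem rather than a preliminary step to it, and his argument instead proceeds via the equivariant resolution property established earlier in that paper.
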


See   \cite{Gille_2022_when_is_reductive_grp_sch_linear} for more information about the reductive case.

When $S$ is the spectrum of a field, things simplify significantly, and we have the following well-known
stronger versions of Theorems~\ref{TH:quo-of-subgroups} and~\ref{TH:Thomason}.

\begin{thm}\label{TH:quotient-of-groups-over-a-field}
	Let $k$ be a field, let $G$ be an algebraic group over $k$
	and let $H$ be an algebraic   subgroup of $G$. Then:
	\begin{enumerate}[label=(\roman*)]
		\item $G/H$ is a finite type $k$-scheme. When $G$ is affine,
		$G/H$ is quasi-projective over $k$,
		and if
		the neutral connected component of $H$ is moreover reductive, then $G/H$ is   affine.
		\item If $G$ is affine, then  it is  strictly linear.
	\end{enumerate}
\end{thm}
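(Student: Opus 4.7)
The plan is to establish (ii) first via a standard regular-representation argument, and then deduce the three parts of (i) by combining (ii) with two inputs: Chevalley's theorem (for quasi-projectivity) and the already-proven Theorem~\ref{TH:quo-of-subgroups} (for affineness). Representability of the quotient $G/H$ by a finite-type $k$-scheme together with $G\to G/H$ being an $H$-torsor is classical in the theory of algebraic groups over a field, and I would just cite it.

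For (ii), I would write $G=\Spec A$ with $A$ a finitely generated commutative Hopf algebra over $k$. The right regular action makes $A$ into an $A$-comodule, and comodules over a coalgebra are locally finite, so any finite generating set of $A$ as a $k$-algebra lies in some finite-dimensional $G$-subcomodule $V\subseteq A$. The associated representation $\rho\colon G\to \uGL(V)$ is then a closed immersion, because $V$ generating $A$ as an algebra translates into surjectivity of the induced map $\Sym(V^*)\to A$ on coordinate rings.

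For (i), assume $G$ is affine, so by (ii) it embeds as a closed subgroup of $\uGL_n(k)$. For quasi-projectivity, I would invoke Chevalley's theorem in its scheme-theoretic form: there exist a finite-dimensional $G$-representation $W$ and a line $L\subseteq W$ such that the scheme-theoretic stabilizer of $L$ in $G$ is exactly $H$. The orbit map $g\mapsto g\cdot L\colon G\to \bbP(W)$ then descends to a $G$-equivariant locally closed immersion $G/H\hookrightarrow \bbP(W)$, yielding quasi-projectivity. For affineness when $H^\circ$ is reductive, I would use that over a field $k$ the identity component $H^\circ$ is open and closed in $H$ and the component group $\pi_0(H)=H/H^\circ$ is a finite \'etale $k$-group scheme, hence finite and locally free over $\Spec k$. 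The resulting exact sequence
\[
1\to H^\circ \to H \to \pi_0(H)\to 1
\]
of fppf sheaves satisfies the hypotheses of Theorem~\ref{TH:quo-of-subgroups}, so that theorem, applied with the embedding $G\hookrightarrow \uGL_n(k)$ from (ii), delivers affineness of $G/H$.

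The main obstacle is the scheme-theoretic version of Chevalley's theorem, i.e.\ producing the line $L\subseteq W$ so that $H=\mathrm{Stab}_G(L)$ as closed subgroup schemes (not merely set-theoretically), which requires care when $H$ is non-reduced. The standard argument starts from a faithful representation of $G$, places the defining ideal $I_H\subseteq A$ inside a finite-dimensional $G$-subcomodule $U\subseteq A$, and then passes to an exterior power of $U$ to convert the stabilization of a subspace into the stabilization of a line; this is by now well-documented in modern treatments of group schemes and is the most substantive step of the proof.
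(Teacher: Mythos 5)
Your argument is correct, and for the first assertion of (i), the quasi-projectivity, and (ii), it spells out exactly the standard arguments that the paper simply cites (Milne for representability and the regular-representation/locally-finite-comodule proof of strict linearity; Chevalley's theorem in scheme-theoretic form for quasi-projectivity, which is what Milne, \S 7e does). A small slip: the surjection you want in (ii) is $\calO(\uGL(V))=\Sym(\End(V)^*)[\det^{-1}]\to A$, not $\Sym(V^*)\to A$; the matrix coefficients $a_{ij}$ of the coaction on $V$ generate $A$ because $v_j=\sum_i\veps(v_i)a_{ij}$, and that is what gives the closed immersion $G\hookrightarrow\uGL(V)$. The one place where you genuinely diverge from the paper is the affineness clause of (i): the paper invokes Matsushima's theorem via Richardson's article, whereas you deduce affineness from the paper's own Theorem~\ref{TH:quo-of-subgroups} by writing $1\to H^\circ\to H\to\pi_0(H)\to 1$ with $H^\circ$ reductive (hence closed) and $\pi_0(H)$ finite \'etale, hence finite and locally free over $\Spec k$; together with the closed embedding $G\hookrightarrow\uGL_n(k)$ from (ii) and $\Spec k$ being noetherian, the hypotheses of Theorem~\ref{TH:quo-of-subgroups} are met. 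This is a legitimate alternative and, arguably, tidier: it keeps the argument internal to the paper and makes the affineness of $G/H$ a corollary of the Seshadri--Anantharaman machinery already deployed, at the cost of re-proving a classical fact rather than citing the standard reference. Both routes ultimately rest on geometric invariant theory for reductive groups, so neither is more elementary than the other.
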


\begin{proof}
	See \cite[Theorems~5.28]{Milne_2017_algebraic_groups}
	(for instance) for the first assertion of (i),
	[op.\ cit., \S7e, proof of Theorem~7.18]  for the second assertion of (i),
	and [op.\ cit., Corollaries~4.10, 3.35] 
	for (ii). The last assertion of (i) is part of Matsushima's Theorem,
	see \cite{Richardson_1977_affine_coset_spaces}, for instance.
\end{proof}

\section{Preliminaries on Vector Bundles}
\label{sec:vec-bundles}

Let $X$ be a scheme. Recall that a vector bundle $V$ on $X$
is a locally free $\calO_X$-module of finite rank.
A \emph{basis} for $V$ is a set of global sections $\{v_1,\dots,v_n\}\subseteq \Gamma(X,V)$
such that the map $(\alpha_1,\dots,\alpha_n)\mapsto \sum_i \alpha_i v_i:\calO_X^n\to V$
is an isomorphism.

As usual, if $f:X'\to X$ is a morphism, then we let
$f^*V$ denote $f^{-1}V\otimes_{f^{-1}\calO_{X}}\calO_{X'}$,
which is a vector bundle over $X'$.
We write $V_{X'}$ instead of $f^*V$ when $f$ is clear from the context.
When $f$ is an open immersion, we also write $V|_{X'}$ for $f^*V$.
The sheaf $V$ extends to a representable sheaf on $(\Sch/X)_{\fppf}$
by setting $V(X')=\Gamma(X,V_{X'})$ for every $X$-scheme $X'$,
and we shall occasionally use $V$ denote the scheme representing this sheaf.
From this point of view, $V_{X'}$ is just $V\times_X X'$ as schemes over $X'$.

The dual of a vector bundle $V$ over $X$ is the $\calO_X$-sheaf $V^\vee:=\sHom_{\calO_X}(V,\calO_X)$
on  $(\Sch/X)_{\fppf}$ sending an $X$-scheme $X'$ to $\Hom_{\calO_{X'}}(V_{X'},\calO_{X'})$.

Fix $n\in\N$. We let $\VB_n(X)$ denote the category of rank-$n$
vector bundles over $X$ with $\calO_X$-module isomorphisms as morphisms.
It is well-known that $\VB_n(X)$ is equivalent to $\Tors(\uGL_n(X),X)$, the category of $\uGL_n(X)$-torsors
over $X$. The equivalence is given
by sending a rank-$n$ vector bundle $V$ to $\sIso(\calO_X^n,V)$, the sheaf
mapping an $X$-scheme $X'$ to the set of $\calO_{X'}$-isomorphisms from $\calO_{X'}^n$ to $V_{X'}$.
Since $\uGL_n(X)=\sIso(\calO_X^n,\calO_X^n)$, composition
defines a right action of $\uGL_n(X)$ on $\sIso(\calO_X^n,V)$, making it into a torsor over this $X$-group.
The equivalence $\VB_n(X)\to \Tors(\uGL_n(X),X)$ is compatible with pullbacks. That is,
if $f:X'\to X$ is a morphism, $V$ is a vector bundle over $X$ and $E$ is its corresponding $\GL_n(X)$-torsor,
then there is a natural isomorphism between $f^*E$ and the $\uGL_n(X')$-torsor corresponding to $f^*V$.

Suppose now that $S$ is a scheme and $X$ is an $S$-scheme.
Then $\Tors(\uGL_n(X),X)$ is equivalent to $\Tors(\uGL_n(S),X)$.
The following lemma describes the functor of points of the $\uGL_n(S)$-torsor (viewed as an $S$-scheme)
corresponding to  a rank-$n$ vector bundle over $X$.

\begin{lem}\label{LM:GLn-torsor-of-bundle}
	Let $S$ be a scheme, let $X$ be an $S$-scheme,
	let $V$ be a rank-$n$ vector bundle over $X$,
	and let $p:E\to X$ be the corresponding $\uGL_n(S)$-torsor.
	Then the functor of points of $E$
	is isomorphic to the functor sending an $S$-scheme $Y$
	to the set of 
	tuples $(f,v_1,\dots,v_n)$ consisting of 
	a $S$-morphism $f:Y\to X$
	and an ordered basis $v_1,\dots,v_n$ for $f^*V$.	
	Under this isomorphism,
	the (right) action of $\uGL_n(S)$ on $E$  
	is given by
	$(f,v_1,\dots,v_n)\cdot (g_{ij})_{i,j}=(f,\sum_{i=1}^n g_{i1} v_i,\dots,\sum_{i=1}^n g_{in} v_i)$
	on sections. 
\end{lem}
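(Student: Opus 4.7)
The plan is to unwind the definitions of the equivalences used to construct $E$ and then trace through a $Y$-section. By definition, the rank-$n$ vector bundle $V$ corresponds to the $\uGL_n(X)$-torsor $E_X := \sIso(\calO_X^n,V)$ over $X$, the sheaf on $(\Sch/X)_{\fppf}$ sending $X' \to X$ to the set of $\calO_{X'}$-isomorphisms $\calO_{X'}^n \to V_{X'}$. The $\uGL_n(S)$-torsor $E$ is obtained by viewing $E_X$ as a sheaf on $(\Sch/S)_{\fppf}$ via the structure morphism $X \to S$, with the action of $\uGL_n(S)$ factoring through the natural morphism $\uGL_n(S)_X = \uGL_n(X)$.

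First I would describe the $S$-sections of $E$. An $S$-morphism $Y \to E$ is the same as an $X$-scheme structure $f : Y \to X$ together with an $X$-morphism $Y \to E_X$, i.e., an element of $E_X(Y) = \sIso(\calO_X^n,V)(Y)$. By the definition of this sheaf as extended along $f$, such an element is an $\calO_Y$-module isomorphism $\phi : \calO_Y^n \to f^*V$. Giving $\phi$ is equivalent to giving the images $v_j := \phi(e_j)$ of the standard basis vectors, which is precisely an ordered basis $(v_1,\dots,v_n)$ of $f^*V$. This yields the claimed bijection $E(Y) \cong \{(f,v_1,\dots,v_n)\}$, and the bijection is clearly natural in $Y$.

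Next I would check the action formula. The right action of $\uGL_n(S)$ on $E_X = \sIso(\calO_X^n,V)$ is by precomposition: given $g = (g_{ij})_{i,j} \in \uGL_n(S)(Y) = \nGL{\Gamma(Y,\calO_Y)}{n}$ and $\phi \in E_X(Y)$, the product $\phi \cdot g$ is $\phi \circ g$, where $g$ acts on $\calO_Y^n$ on the left by matrix multiplication on column vectors (per the conventions on page \pageref{EQ:example-of-d-versal-torsor}). Since $g(e_j) = \sum_{i=1}^n g_{ij} e_i$, we obtain
\[
(\phi \circ g)(e_j) \;=\; \phi\Bigl(\sum_{i=1}^n g_{ij} e_i\Bigr) \;=\; \sum_{i=1}^n g_{ij} v_i,
\]
which is exactly the $j$-th entry in the tuple $(f, v_1,\dots,v_n) \cdot g$ as stated in the lemma. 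Naturality and functoriality are automatic from the fact that all the identifications are performed sheaf-theoretically.

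There is no serious obstacle here: once one identifies $E$ with $\sIso(\calO_X^n,V)$ (which is a restatement of the equivalence $\VB_n(X) \simeq \Tors(\uGL_n(X),X)$ recorded earlier in the section), the rest is a direct unwinding. The only care needed is to keep the column-vector convention straight so that the index $g_{ij}$ appears with $i$ summed over (rows) and $j$ labeling the position in the new ordered basis, matching the displayed formula.
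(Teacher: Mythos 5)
Your proof is correct and follows essentially the same route as the paper's: identify $E$ with $\sIso(\calO_X^n,V)$, split an $S$-morphism $Y\to E$ into the structure map $f:Y\to X$ plus an $X$-morphism $Y\to E$, extract the ordered basis by evaluating on the standard basis vectors, and read off the $\uGL_n(S)$-action as precomposition. You are slightly more explicit in verifying the index pattern $(\phi\circ g)(e_j)=\sum_i g_{ij}v_i$, which the paper leaves as a routine check, but there is no substantive difference.
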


\begin{proof}
	We just noted that the sheaf that $E$
	represents on $(\Sch/X)_{\fppf}$
	is  
	$\sIso(\calO_X^n,E)$ 
	and the  right action of  $\uGL_n(X)=\sIso(\calO_X^n,\calO_X^n)$ is   given
	section-wise by composition.
	Let $e_1,\dots,e_n$
	denote the standard basis of $\calO_X^n$.
	Then, for every $X$-scheme $Y$,
	the map  $\psi\mapsto (\psi(e_1|_Y),\dots,\psi(e_n|_Y))$
	defines a natural bijection between $E(Y)=\sIso(\calO_Y^n,V_Y)$ ($E$ is viewed as an $X$-scheme)
	and the set of ordered bases of $V_Y$.
	
	Now let $Y$ be an $S$-scheme. The data of an $S$-morphism $\tilde{f}:Y\to E$
	is equivalent to the data of a morphism $f:Y\to X$ and, viewing
	$Y$ as an $X$-scheme via $f$, an $X$-morphism
	$f':Y\to E$; the correspondence is given by $\tilde{f}\mapsto (f,f'):= (p\circ \tilde{f},\tilde{f})$.
	Combining this with the previous paragraph gives the lemma.
\end{proof}

\begin{lem}\label{LM:torsor-of-dual}
	Let $S$ be a scheme, let $X$ be an $S$-scheme,
	let $V$ be a rank-$n$ vector bundle over $X$, 
	and  let $E$ be its corresponding $\uGL_n(S)$-torsor.
	Then, up to isomorphism in $\Tors(\uGL_n(S),X)$, the $\uGL_n(S)$-torsor corresponding to 
	the dual bundle $V^\vee$
	is $E$ endowed with the right $\uGL_n(S)$-action given by
	$(x,g)\mapsto x(g^{\trans})^{-1}$ on sections.
\end{lem}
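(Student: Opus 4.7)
The plan is to invoke Lemma~\ref{LM:GLn-torsor-of-bundle} for both $V$ and $V^\vee$, and then exhibit the desired isomorphism via the classical duality of bases, with the transpose-inverse appearing naturally from the change-of-basis formula for dual bases.

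First, by Lemma~\ref{LM:GLn-torsor-of-bundle}, the $Y$-points of $E$ are tuples $(f,v_1,\dots,v_n)$ consisting of an $S$-morphism $f\colon Y\to X$ and an ordered basis $(v_i)$ of $f^*V$, and $(v_1,\dots,v_n)\cdot(g_{ij})=(v'_1,\dots,v'_n)$ where $v'_j=\sum_i g_{ij}v_i$. Let $F$ denote the $\uGL_n(S)$-torsor corresponding to $V^\vee$; applying the same lemma, its $Y$-points are tuples $(f,w_1,\dots,w_n)$ with $(w_i)$ an ordered basis of $f^*(V^\vee)$, which I identify with $(f^*V)^\vee$ via the canonical pullback isomorphism for duals of vector bundles.

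Next, I would define a natural transformation $\phi\colon E\to F$ on $Y$-points by sending $(f,v_1,\dots,v_n)$ to $(f,v_1^*,\dots,v_n^*)$, where $(v_i^*)$ is the basis of $(f^*V)^\vee$ dual to $(v_i)$, characterized by $v_i^*(v_j)=\delta_{ij}$. Using the canonical isomorphism $V\cong V^{\vee\vee}$ for rank-$n$ vector bundles, the assignment ``take the dual basis'' is a natural bijection between ordered bases of $f^*V$ and ordered bases of $(f^*V)^\vee$, so $\phi$ is a natural bijection on $Y$-points covering $\id_X$, and hence an isomorphism of sheaves over $X$.

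Finally, the only substantive step is to track the action. Writing $v'_j=\sum_i g_{ij}v_i$ and expanding the dual basis as $(v'_j)^*=\sum_i h_{ij}v_i^*$, the defining relation $(v'_j)^*(v'_k)=\delta_{jk}$ becomes $\sum_i h_{ij}g_{ik}=\delta_{jk}$, i.e., $h^{\trans} g=I$, so $h=(g^{\trans})^{-1}$. Therefore $\phi\bigl((f,v_i)\cdot g\bigr)=\phi(f,v_i)\cdot(g^{\trans})^{-1}$. Endowing $E$ with the twisted right action $(x,g)\mapsto x\cdot(g^{\trans})^{-1}$ turns $\phi$ into a $\uGL_n(S)$-equivariant morphism over $X$, and hence an isomorphism in $\Tors(\uGL_n(S),X)$, as claimed. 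The main (and only) obstacle is this bookkeeping of the transpose-inverse in the action formula; the rest of the argument is formal, supplied by the functor-of-points description of the previous lemma.
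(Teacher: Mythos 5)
Your proposal is correct and matches the paper's own proof in approach: both describe $E$ and the torsor of $V^\vee$ via the functor-of-points description of Lemma~\ref{LM:GLn-torsor-of-bundle}, define the isomorphism by sending an ordered basis to its dual basis, and read off the twisted action. The only difference is that you carry out the transpose-inverse computation explicitly, whereas the paper leaves that step as a "one readily checks."
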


\begin{proof}
	Let $Y$ be an $S$-scheme.
	By Lemma~\ref{LM:GLn-torsor-of-bundle}, 
	$E(Y)$ is naturally isomorphic to  the set of tuples $(f,v_1,\dots,v_n)$ where  $f:Y\to X$
	is an $S$-morphism 
	and $v_1,\dots,v_n$ is a basis of $f^*V$.
	Likewise, if $E'$ is the $\uGL_n(S)$-torsor corresponding to $V^\vee$,
	then $E'(Y)$ is the set of tuples $(f,u_1,\dots,u_n)$ consisting
	of an $S$-morphism $f:Y\to X$
	and a basis  $u_1,\dots,u_n$  of $f^*(V^{\vee})\cong (f^*V)^\vee$.
	There is a natural bijection between $E(Y)$ and $E'(Y)$
	mapping $(f,v_1,\dots,v_n)$ to $(f,u_1,\dots,u_n)$,
	where $u_1,\dots,u_n$ is the dual basis of $v_1,\dots,v_n$.
	Thus, $E\cong E'$ as $S$-schemes. One readily checks that
	pulling the $\uGL_n(S)$-action on $E'$ via this isomorphism
	gives the $\uGL_n(S)$-action on $E$ in the lemma.
\end{proof}

\begin{prp}\label{PR:quo-of-GLn-torsor-is-a-scheme}
	Let $S$ be scheme, and let $G$ be a subgroup of
	$\uGL_n(S)$ 
	that is flat and locally of finite presentation over $S$,
	and 
	such   that   $\uGL_n(S)/G$  
	is an $S$-scheme.
	Let $E $ be a $\uGL_n(S)$-torsor over an $S$-scheme $X$. Then the $S$-algebraic space $E/G$ 
	(see Lemma~\ref{LM:free-action-quotient}) is a scheme.
	Moreover, if $X$ and $\uGL_n(S)/G$
	are   of finite type (resp.\ locally of finite type) over $S$, then so is $E/G$.
\end{prp}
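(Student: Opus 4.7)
The plan is to reduce to a Zariski-local situation on $X$ and observe that, locally, the quotient $E/G$ is a base-change of the scheme $\uGL_n(S)/G$. The crucial input is that $\uGL_n(S)$-torsors over a scheme correspond to rank-$n$ vector bundles (see Section~\ref{sec:vec-bundles}) and are hence Zariski-locally trivial. I can therefore choose a Zariski open cover $\{U_i\}_{i\in I}$ of $X$ such that $E|_{U_i} \cong U_i \times_S \uGL_n(S)$ as $\uGL_n(S)$-torsors; concretely, by Lemma~\ref{LM:GLn-torsor-of-bundle}, any local basis of the associated vector bundle yields a trivializing section of the torsor.

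Given this, I would first apply Lemma~\ref{LM:free-action-quotient} to conclude that $E/G$ is an $S$-algebraic space and that $E\to E/G$ is a $G$-torsor: $G$ is faithfully flat over $S$ (being flat and admitting the identity section), locally of finite presentation, and acts freely on $E$ (since it acts freely on $\uGL_n(S)$). Next, for each $i$, I would compute
\[
	U_i \times_X (E/G) \;\cong\; E|_{U_i}/G \;\cong\; (U_i \times_S \uGL_n(S))/G \;\cong\; U_i \times_S (\uGL_n(S)/G),
\]
which is a scheme by hypothesis. Since $E\to E/G$ is surjective and the $\{U_i\}$ cover $X$, these open subspaces form a Zariski cover of $E/G$ by schemes, so $E/G$ is itself an $S$-scheme. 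For the finite-type assertions: being locally of finite type is Zariski-local, hence automatic; and if $X$ and $\uGL_n(S)/G$ are of finite type, then $X$ is quasi-compact, so I can extract a finite subcover, exhibiting $E/G$ as a finite union of finite-type open subschemes.

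I do not expect a substantive obstacle; the only point requiring care is that the sheaf quotient by $G$ commutes with the open-immersion base change $U_i\hookrightarrow X$, which is immediate. Conceptually, what is happening is that the Zariski-local triviality of $\uGL_n(S)$-torsors sidesteps the general failure of fppf descent for schemes when the descent datum has non-quasi-affine fibers, and this is precisely what allows the scheme-theoretic hypothesis on $\uGL_n(S)/G$ to suffice (rather than requiring something stronger such as affineness).
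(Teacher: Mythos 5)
Your proposal is correct and follows essentially the same strategy as the paper: trivialize the $\uGL_n(S)$-torsor $E$ over a Zariski cover $\{U_i\}$ of $X$ (using the correspondence with vector bundles), identify $E|_{U_i}/G$ with $U_i\times_S(\uGL_n(S)/G)$ on each piece, and conclude that $E/G$ is covered by open subschemes. The only cosmetic difference is how you justify that $E|_{U_i}/G\to E/G$ is an open immersion: you invoke commutation of sheaf quotients with base change along $U_i\hookrightarrow X$ (which is valid since base change preserves colimits of sheaves), whereas the paper deduces it via fppf descent along $E\to E/G$; these two routes are equivalent, and the quasi-compactness step for the finite-type assertion matches the paper as well.
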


\begin{proof}
	Let $V$ be the vector bundle corresponding to $E$.
	Then there is an open covering $\{U_i\}_{i\in I}$
	of $X$ such that $V|_{U_i}\cong \calO_{U_i}^n$ for all $i\in I$.
	Thus, if $U$ is one of the $U_i$, or an intersection $U_i\cap U_j$,
	we have a $\uGL_n(S)$-equivariant isomorphism 
	$E|_U\cong U\times_S \uGL_n(S)$. This means that $E|_U/G\cong U\times_S (\uGL_n(S)/G)$,
	which is a scheme by our assumption.
	Observe that the pullback
	of $E|_U/G\to E/G$ along $E\to E/G$ is the open immersion $E|_U\to E$.
	Since $E\to E/G$ is an fppf covering
	(Lemma~\ref{LM:free-action-quotient}),
	this means that $E|_U/G\to E/G$ is an open immersion of algebraic spaces
	\cite[Tag \href{https://stacks.math.columbia.edu/tag/041X}{041X}]{stacks_project}.
	Similarly, $E|_{U_i\cap U_j}/G$ is open in $E|_{U_i}/G$ for all $i,j\in I$.
	This shows that $E/G$ can be obtained by gluing the schemes $E|_{U_i}/G$
	($i\in I$)
	along their open subschemes   $E|_{U_i\cap U_j}/G$, so $E/G$ is a scheme.
	
	If $X$ and $\uGL_n(S)/G$ are 
	locally of finite type over $S$, then so are
	the schemes $E|_U/G\cong U\times (\uGL_n(S)/E)$ and it follows
	that $E/G$ is also locally of finite type.
	
	Suppose now that $X$ and $\uGL_n(S)/G$ are 
	of finite type over $S$, i.e., they are locally
	of finite type and quasi-compact over $S$.
	We need to show that $E/G\to S$ is quasi-compact. To that end,
	it is enough to consider the case where $S$ is affine.
	Then $X$ is quasi-compact, and thus 
	we may choose the covering $\{U_i\}_{i\in I}$ to be a 
	\emph{finite} covering. Since $\uGL_n(S)/G$
	is quasi-compact, each of the $E|_{U_i}/G$
	is quasi-compact, and since $I$ is finite, $E/G$ would be quasi-compact as well.
\end{proof}

\section{Schematically Dominant   Morphisms}
\label{sec:schematically-dom}

In order to adapt the theory of versal torsors over fields to 
a general base scheme, we need to replace   dominant morphisms 
with \emph{schematically dominant} morphisms
and \emph{universally schematically dominant} morphisms. 
This section is recalls these notions
and establishes  some  related facts  
that will be needed in the sequel.

Throughout,   $S$ is a fixed base scheme.

\medskip

Schematically dominant morphisms of schemes 
are discussed in detail in   EGA${}_{\mathrm{IV}}$ \cite[\S11.10]{EGA_iv_1967}.
The definition extends naturally to $S$-algebraic spaces as follows.

A  morphism of $S$-algebraic spaces $f:Y\to X$ is said to be 
\emph{schematically dominant} if the induced  morphism $\calO_X\to f_*\calO_Y$ is a monomorphism
of sheaves on the (small) \emph{\'etale} site of $X$.
Equivalently, for every \'etale morphism $U\to X$ of $S$-algebraic spaces,
the induced map $f^*:\Gamma(U,\calO_X)\to \Gamma(Y\times_X U,\calO_Y)$ is injective.
More generally, a family of   morphisms $\{f_i:Y_i\to X\}_{i\in I}$
is said to be (jointly)  schematically dominant if the   morphism $\bigsqcup_{i\in I}Y_i\to X$
restricting to $f_i$ on $Y_i$
is schematically dominant.

The morphism $f:Y\to X$ is said to be \emph{$S$-universally schematically
dominant} if for every $S$-algebraic space $S'$, the pullback
$f_{S'}:Y_{S'}\to X_{S'}$ is schematically dominant.
It is said to be \emph{universally schematically dominant} if 
for every morphism of $S$-algebraic spaces $h:X'\to X$, the pullback $f_h:Y\times_XX'\to X'$
is schematically dominant.
Similarly, a family of morphisms $\{f_i:Y_i\to X\}_{i\in I}$
is said to be  ($S$-)uni\-versally  schematically dominant  if the induced 
morphism $\bigsqcup_{i\in I}Y_i\to X$
is ($S$-)uni\-versally schematically dominant.

One readily sees that the composition of schematically dominant morphisms
is schematically dominant. If $g:Z\to Y$ and $f:Y\to X$ are two morphisms of $S$-algebraic spaces
and $f\circ g$ is schematically dominant, then so is $f$. Similar statements hold
for ($S$-)universally schematically dominant morphisms.

\begin{example}\label{EX:sch-dom-basic-examples}
	(i) Suppose $S=\Spec R$ for a ring $R$
	and  $\vphi:A\to B$ is a morphism of $R$-rings.
	Then the induced morphism $f:\Spec B\to \Spec A$ is schematically
	dominant if and only if $\vphi$ is injective. (Indeed, if $U:=\Spec A'\to \Spec A$
	is \'etale, then $\Gamma(U,\calO_{\Spec A})\to \Gamma({\Spec B}\times_{\Spec A} U,\calO_{\Spec B})$
	is just the   map $a'\mapsto 1_B\otimes a':A'\to B\otimes_A A'$, and it is injective 
	whenever $\vphi$ is, because
	$A'$ is flat over $A$.)
	
	(ii) Any fpqc covering of $S$-algebraic spaces $f:Y\to X$ is universally
	schematically dominant. This follows  from fpqc descent for the structure
	sheaf of $X$ on the large fpqc site of $X$ 
	(\cite[Tag \href{https://stacks.math.columbia.edu/tag/04P2}{04P2}]{stacks_project};
	the structure sheaf is represented by $\bbA^1_X$).  
\end{example}

\begin{remark}
	In EGA${}_{\mathrm{IV}}$ \cite[\S11.10]{EGA_iv_1967},
	a morphism of $S$-schemes $f:Y\to X$ is called schematically
	dominant when $\calO_X\to f_*\calO_Y$ is a monomorphism
	of sheaves on the (small) \emph{Zariski} site of $X$,
	and $S$-universally schematically dominant if 
	$f_{S'}:Y_{S'}\to X_{S'}$ is schematically dominant for every
	\emph{$S$-scheme} $S'$. This agrees with our definition
	by the next proposition. Consequently, any $S$-morphism of $S$-schemes $f:Y\to X$
	that is schematically dominant is dominant, and the converse holds if $X$ is reduced. 
\end{remark}

\begin{prp}\label{PR:sch-dom-equiv-with-SGA}
	Let $S$ be a scheme and let $f:Y\to X$ be a morphism of $S$-schemes. 
	Write $\calO_X^{\Zar}$ (resp.\ $\calO_X^{\et}$)
	for the structure sheaf of $X$ on the small Zariski (resp.\ \'etale)
	site of $X$.
	Then:
	\begin{enumerate}[label=(\arabic*)]
		\item The   map   $\calO^{\Zar}_X\to f_*\calO^{\Zar}_Y$ induced by $f$
		is a monomorphism
		if and only if the  map 
		$\calO^{\et}_X\to f_*\calO^{\et}_Y$ induced by $f$ is a monomorphism.
		\item If $f_{S'}:Y_{S'}\to X_{S'}$ is schematically dominant for every
		$S$-scheme $S'$, then this is also true for every $S$-algebraic space $S'$.
	\end{enumerate}
\end{prp}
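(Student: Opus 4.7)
Both parts rest on the principle that schematic dominance is \'etale-local on the target. Concretely, I will first establish the following: if $g\colon T\to W$ is a surjective \'etale morphism and $h\colon Z\to W$ is any morphism of $S$-algebraic spaces, then $h$ is schematically dominant iff $h_T\colon Z\times_W T\to T$ is. For this, \'etale base change identifies $g^*(h_*\calO_Z)$ with $(h_T)_*\calO_{Z\times_W T}$ canonically (valid for \'etale $g$ without quasi-compactness hypotheses, since $g$ is flat and the identification can be checked \'etale-locally on $W$), so pulling back $\calO_W\to h_*\calO_Z$ along $g$ gives $\calO_T\to (h_T)_*\calO_{Z\times_W T}$. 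Because $g$ is a surjective \'etale morphism, $g^*$ on \'etale sheaves is exact (so preserves monomorphisms) and faithful (so reflects them); this yields the equivalence.

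Given this principle, part (2) is immediate: choose an \'etale cover $S''\to S'$ with $S''$ an $S$-scheme; the base change $X_{S''}\to X_{S'}$ is a surjective \'etale morphism, and the base change of $f_{S'}$ along it is $f_{S''}$, schematically dominant by hypothesis, whence $f_{S'}$ is schematically dominant.

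For part (1), the direction $(\Leftarrow)$ is immediate since restriction from the \'etale site to the Zariski sub-site preserves section-wise injectivity, and the two structure sheaves $\calO^\Zar_X$ and $\calO^\et_X$ agree on Zariski opens. For $(\Rightarrow)$, by the same \'etale-local principle applied to the scheme $X$, it suffices to show that Zariski schematic dominance of $\phi_\Zar\colon\calO_X\to f_*\calO_Y$ is stable under \'etale base change, i.e., that for every \'etale $g\colon U\to X$ the induced map $\calO_U\to (f_U)_*\calO_{Y_U}$ is a Zariski monomorphism on $U$. Working locally, I reduce to $X=\Spec A$ and $U=\Spec B$ with $B$ \'etale (and, by shrinking to the image of $U\to X$, faithfully flat) over $A$. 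The affine morphism $Y\times_X U\to Y$ identifies $\Gamma(Y\times_X U, \calO)$ with $\Gamma(Y, f^*\widetilde B)$. The Zariski hypothesis gives $\calO_X\hookrightarrow f_*\calO_Y$; flatness of $B$ over $A$ combined with a local-on-$Y$ analysis via an affine open cover and the sheaf property then yields $B\hookrightarrow\Gamma(Y,f^*\widetilde B)$. The main technical obstacle is this last sheaf-theoretic step without quasi-compact/quasi-separated hypotheses on $f$; it is handled by combining flatness of $B/A$ with the sheaf property on an affine open cover of $Y$, together with the injectivity of the Zariski map on each piece of the cover compatibly.
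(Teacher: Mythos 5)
Your part (2) is correct, and it takes a mildly different route than the paper. The paper argues via composition and right cancellation: the composite $h_X\circ f_{S''}:Y_{S''}\to X_{S'}$ is schematically dominant because both $f_{S''}$ (by hypothesis) and the \'etale covering $h_X$ are, and since this composite factors as $f_{S'}\circ h_Y$, the right factor $f_{S'}$ is schematically dominant. You instead establish and apply an \'etale descent statement (schematic dominance is reflected by base change along a surjective \'etale morphism), using the observation that $f_{S''}$ is the pullback of $f_{S'}$ along $X_{S''}\to X_{S'}$. Both work; yours packages the descent into a reusable principle, while the paper's argument avoids having to verify the identification $g^*(h_*\calO_Z)\cong (h_T)_*\calO_{Z\times_W T}$ and the exactness/faithfulness of $g^*$.

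For part (1), the $(\Leftarrow)$ direction matches the paper. The issue is $(\Rightarrow)$. Your reduction is fine: since every open of an \'etale $X$-scheme is again \'etale over $X$, showing that $\calO_X^{\et}\to f_*\calO_Y^{\et}$ is a monomorphism is equivalent to showing that $f_U:Y\times_X U\to U$ is Zariski-schematically dominant for every \'etale $U\to X$. (This reduction is just unwinding the definition; your invocation of the ``\'etale-local principle'' here is a red herring.) The substance is then exactly the statement the paper gets from \cite[Theorem~11.10.5(ii)]{EGA_iv_1967}: Zariski schematic dominance is preserved by flat, locally finitely presented base change on the target. You attempt to prove this directly, but your last paragraph concedes that the key step --- deducing $B\hookrightarrow\Gamma(Y\times_A B,\calO)$ from $\calO_{\Spec A}\hookrightarrow f_*\calO_Y$ when $Y$ is not quasi-compact --- is ``the main technical obstacle,'' and the stated resolution (``combining flatness of $B/A$ with the sheaf property on an affine open cover of $Y$... compatibly'') does not actually go through as written. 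Covering $Y$ by affines $V_i=\Spec C_i$ and using the sheaf property embeds $\Gamma(Y_B,\calO)$ into $\prod_i (C_i\otimes_A B)$, but flatness of $B$ only gives injectivity of $B\to (\prod_i C_i)\otimes_A B$; the natural map $(\prod_i C_i)\otimes_A B\to\prod_i(C_i\otimes_A B)$ is not injective in general for infinite $I$, and indeed the preservation of scheme-theoretic dominance under merely flat base change fails without quasi-compactness hypotheses on $f$ (e.g.\ $Y=\bigsqcup_p\Spec\F_p\to\Spec\Z$ is schematically dominant, yet its pullback along $\Spec\Q\to\Spec\Z$ is empty). What rescues the situation is the local structure of \'etale (or, more generally, flat and locally finitely presented) morphisms --- reduce to standard \'etale, i.e.\ a localization of a finite free $A$-algebra, and use that $-\otimes_A D$ commutes with arbitrary products when $D$ is finite free --- but your sketch never invokes finite presentation, and as written the argument has a genuine gap. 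You should either carry out this reduction carefully or, as the paper does, simply cite the EGA result.
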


\begin{proof}
	(i) The ``if'' part is clear.
	As for the ``only if'' part, by \cite[Theorem 11.10.5(ii)]{EGA_iv_1967}, for every morphism $g:X'\to X$
	that is flat and locally of finite presentation, the pullback $f_g:Y\times_XX'\to X'$ 
	also satisfies the condition that $\calO_{X'}^{\Zar}\to (f_g)_*\calO^{\Zar}_{Y\times X'}$ is a mononmorphism. 
	Thus, $\calO_X(X')\to \calO_{Y}(Y\times_XX')$ is injective.
	Letting $g:X'\to X$ range over the \'etale morphisms to $X$ shows that
	$\calO^{\et}_X\to f_*\calO^{\et}_Y$ is a monomorphism.
	
	(ii) Let $g:S'\to S$ be a morphism of $S$-algebraic spaces.
	Then there is an $S$-scheme $S''$ and an \'etale covering $h:S''\to S'$.
	By assumption, 
	$f_{S''}:Y_{S''}\to X_{S''}$	
	is schematically
	dominant. Furthermore, since $h_X:X\times_SS''\to X\times_S S'$ is an \'etale covering
	of $S$-algebraic spaces, it is schematically dominant (Example~\ref{EX:sch-dom-basic-examples}(ii)).
	Thus, the composition $h_X\circ f_{S''}:Y\times_S S''\to X\times_S S'$ is schematically
	dominant. The morphism $h_X\circ f_{S''}$ also factors as $f_{S'}\circ h_Y$, so  
	$f_{S'}:Y_{S'}\to X_{S'}$ is also   schematically dominant.
\end{proof}

\begin{thm}[{\cite[Theorem 11.10.5]{EGA_iv_1967}}] \label{TH:sch-dom-at-the-fibers}
	Let $X$ be a locally noetherian $S$-scheme and 
	let $\{f_i:Z_i\to X\}_{i\in I}$ be morphisms of $S$-schemes  
	such that each $Z_i$ is flat over $S$. For $s\in S$, put $X_s=X\times_S \Spec k(s)$
	and define $(Z_i)_s$, $(f_i)_s$ similarly. Then the family
	$\{f_i:Z_i\to X\}_{i\in I}$ is $S$-universally schematically
	dominant if and only if, for every $s\in S$, the family 
	$\{(f_i)_s:(Z_i)_s\to X_s\}$ is schematically   dominant.
\end{thm}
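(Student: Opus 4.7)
The ``only if'' direction is immediate: pulling back the family along $\Spec k(s)\to S$ for each $s\in S$ preserves schematic dominance, so each $\{(f_i)_s\}$ is schematically dominant. For the converse, I would first merge the $Z_i$ into the single $S$-flat morphism $f\colon Z:=\bigsqcup_{i\in I}Z_i\to X$. By Proposition~\ref{PR:sch-dom-equiv-with-SGA}(ii), it suffices to verify universal schematic dominance on $S$-schemes $S'\to S$, and since schematic dominance is Zariski-local on the target, one reduces to the affine setting $S=\Spec A$, $X=\Spec B$ (with $B$ a noetherian $A$-algebra), $S'=\Spec A'$. Replacing the base by $A'$ preserves the fiberwise hypothesis: for $\mathfrak{p}'\in\Spec A'$ lying over $\mathfrak{p}\in\Spec A$, the fiber $(f_{A'})_{\mathfrak{p}'}$ is the base change of $f_{\mathfrak{p}}$ along the flat field extension $k(\mathfrak{p})\hookrightarrow k(\mathfrak{p}')$, and schematic dominance passes through flat base change. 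It therefore suffices to prove that $f$ itself is schematically dominant.

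Let $\varphi\colon B\to N:=\Gamma(Z,\calO_Z)$ be the induced homomorphism and $K:=\ker\varphi$. Replacing $Z$ by its disjoint union of affine opens preserves both schematic dominance and its fiberwise version (the structure sheaf $\calO_Z$ embeds into the pushforward from such a cover) and arranges that the relevant local sections of $N$ are $A$-flat $B$-modules. Writing $M:=\varphi(B)$, the fiberwise hypothesis $\varphi\otimes_A k(\mathfrak{p})\colon B\otimes_A k(\mathfrak{p})\hookrightarrow N\otimes_A k(\mathfrak{p})$ factors through the surjection $B\otimes_A k(\mathfrak{p})\twoheadrightarrow M\otimes_A k(\mathfrak{p})$, forcing that surjection to be an isomorphism; equivalently, $K\otimes_A k(\mathfrak{p})=0$ for every $\mathfrak{p}\in\Spec A$. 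For each prime $\mathfrak{q}\in\Spec B$ with $\mathfrak{p}:=\mathfrak{q}\cap A$, the stalk $K_{\mathfrak{q}}$ is finitely generated over the noetherian local ring $B_{\mathfrak{q}}$ and satisfies $K_{\mathfrak{q}}=\mathfrak{p}K_{\mathfrak{q}}\subseteq\mathfrak{q}K_{\mathfrak{q}}$; Nakayama's lemma forces $K_{\mathfrak{q}}=0$, so $K=0$ and $\varphi$ is injective.

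For universal injectivity, i.e., injectivity of $\varphi\otimes_A A'$ for every $A\to A'$, it remains to show that $N/M$ is $A$-flat, which makes the sequence $0\to B\to N\to N/M\to 0$ preserved under $-\otimes_A A'$. Combining the fiberwise injectivity $M\otimes_A k(\mathfrak{p})\hookrightarrow N\otimes_A k(\mathfrak{p})$ with $A$-flatness of $N$ in the long exact Tor sequence yields $\mathrm{Tor}_1^A(N/M,k(\mathfrak{p}))=0$ for every $\mathfrak{p}$, and the local criterion of flatness, applied stalk-wise on $X$ where the relevant modules acquire the needed finiteness properties, gives $A$-flatness of $N/M$. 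The main obstacle is precisely this last step: the module $N/M$ is not {\em a priori} finitely presented over $A$, and one must carefully arrange the affine reductions and perhaps a limit argument so that the local flatness criterion applies in the form required. This is the technical heart of the proof in EGA${}_{\mathrm{IV}}$; the earlier reductions and the Nakayama step giving $K=0$ are comparatively routine once this flatness has been established.
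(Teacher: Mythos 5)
The paper does not actually supply a proof of this statement; it is quoted verbatim from EGA${}_{\mathrm{IV}}$, Théorème~11.10.5, so there is no in-paper argument against which to match yours. Your attempt is therefore a blind reconstruction of the EGA proof, and it contains a genuine logical gap.

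The problem is the sentence ``forcing that surjection to be an isomorphism; equivalently, $K\otimes_A k(\mathfrak{p})=0$.'' These two statements are \emph{not} equivalent. Tensoring $0\to K\to B\to M\to 0$ with $k(\mathfrak{p})$ gives only a right-exact sequence
\[
\operatorname{Tor}_1^A(M,k(\mathfrak{p}))\longrightarrow K\otimes_A k(\mathfrak{p})\longrightarrow B\otimes_A k(\mathfrak{p})\longrightarrow M\otimes_A k(\mathfrak{p})\longrightarrow 0,
\]
and the bijectivity of the last map tells you only that the \emph{image} of $K\otimes_A k(\mathfrak{p})$ in $B\otimes_A k(\mathfrak{p})$ is zero, i.e.\ that $K_{\mathfrak{q}}\subseteq \mathfrak{p}B_{\mathfrak{q}}$; it does \emph{not} give $K_{\mathfrak{q}}=\mathfrak{p}K_{\mathfrak{q}}$. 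Nakayama therefore does not apply at this stage. To upgrade ``image zero'' to ``$K\otimes k(\mathfrak{p})=0$'' you need $\operatorname{Tor}_1^A(M,k(\mathfrak{p}))=0$, and since $M$ sits in $0\to M\to N\to N/M\to 0$ with $N$ flat, this is equivalent to $\operatorname{Tor}_2^A(N/M,k(\mathfrak{p}))=0$ — which you only get once you have already shown that $N/M$ is $A$-flat (flatness of $N/M$ forces $M$ flat, hence $\operatorname{Tor}_1^A(M,-)=0$). So the logical order must be: first establish $A$-flatness of $N/M$, then conclude $K\otimes k(\mathfrak{p})=0$, then apply Nakayama. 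You have labeled the Nakayama step ``comparatively routine'' and placed it \emph{before} the flatness step, but it genuinely depends on it.

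Two further remarks. First, the reduction ``it therefore suffices to prove that $f$ itself is schematically dominant'' relies on being able to re-apply the single-fiber argument over every base change $A\to A'$; but $B\otimes_A A'$ need not be noetherian, so the Nakayama step (which uses noetherianity of $B_{\mathfrak{q}}$) does not transfer. This is another reason the flatness of the cokernel is unavoidable, rather than a dispensable alternative. Second, you are right that the technical heart is the local flatness criterion applied to a module $N/M$ that is not a priori finitely generated over any noetherian ring in sight; EGA handles exactly this via the refined local criterion for a homomorphism $u:M\to N$ with $M$ a finitely generated $B$-module and $N$ an $A$-flat module (EGA${}_{\mathrm{IV}}$, 11.3.7 and 11.3.10), which simultaneously yields injectivity of $u$ and flatness of $\operatorname{coker}(u)$ from the fiberwise injectivity. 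Your proof should be restructured so that this lemma does all the work, with the Nakayama computation appearing as its corollary rather than as an independent preliminary step.
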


This theorem has a number of corollaries which we record.

\begin{cor}\label{CR:mor-of-var-is-sch-dom}
	Let $k$ be a field, let $X$ be a reduced locally noetherian $k$-scheme,
	and let $f:Y\to X$ be a morphism
	of $k$-schemes. Then $f$ is dominant if and only if it is $k$-universally
	schematically dominant.
\end{cor}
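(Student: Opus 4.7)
The plan is to apply Theorem~\ref{TH:sch-dom-at-the-fibers} with $S=\Spec k$ to reduce $k$-universal schematic dominance to ordinary schematic dominance, and then to invoke the standard equivalence, valid when the target is reduced, between schematic dominance and dominance.

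First I would note that since $k$ is a field, the $k$-scheme $Y$ is automatically flat over $\Spec k$, and $\Spec k$ consists of a single point $s$ with $k(s)=k$, $Y_s=Y$, $X_s=X$, and $f_s=f$. Applied to the singleton family $\{f:Y\to X\}$, Theorem~\ref{TH:sch-dom-at-the-fibers} asserts that $f$ is $k$-universally schematically dominant if and only if $f$ itself is schematically dominant. Hence it suffices to prove that, for reduced $X$, the morphism $f$ is dominant if and only if it is schematically dominant.

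The implication ``schematically dominant $\Rightarrow$ dominant'' holds with no reducedness hypothesis: if $U\subseteq X$ is a non-empty open disjoint from $f(Y)$, then $f^{-1}(U)=\emptyset$ and $(f_*\calO_Y)(U)=0$, whereas $\calO_X(U)\neq 0$, contradicting injectivity of $\calO_X\to f_*\calO_Y$ on sections. For the converse, assume $f$ is dominant and let $s\in \calO_X(U)$ be a section with $f^*s=0$ in $\calO_Y(f^{-1}(U))$. Viewing $s$ as a morphism $s:U\to \bbA^1_k$, let $Z(s)\subseteq U$ denote its scheme-theoretic zero locus; the vanishing of $f^*s$ forces $f^{-1}(U)\to U$ to factor through $Z(s)$. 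Dominance of $f$ implies that the image of this factorization is dense in $U$, so $Z(s)$ has the same underlying topological space as $U$, and reducedness of $X$ then forces $Z(s)=U$, i.e., $s=0$. This shows $\calO_X\to f_*\calO_Y$ is injective on sections, hence a monomorphism on the small Zariski (equivalently, \'etale) site of $X$ by Proposition~\ref{PR:sch-dom-equiv-with-SGA}.

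I expect no step here to be a genuine obstacle: the main conceptual observation is that the fibral criterion of Theorem~\ref{TH:sch-dom-at-the-fibers} becomes vacuous over $\Spec k$, since $\Spec k$ has a single point, so over a field $k$-universal schematic dominance is no stronger than plain schematic dominance. The only place the reducedness hypothesis on $X$ is genuinely used is in the elementary argument that dominance implies schematic dominance; without reducedness one cannot pass from ``$Z(s)$ and $U$ agree set-theoretically'' to ``$s=0$''.
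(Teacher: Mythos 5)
Your proof is correct and executes exactly the derivation the paper implicitly intends: the corollary is placed directly after Theorem~\ref{TH:sch-dom-at-the-fibers} with no written proof, and the intended argument is precisely (a) specialize Theorem~\ref{TH:sch-dom-at-the-fibers} to $S=\Spec k$, where the fibral criterion collapses because $S$ has a single point and every $k$-scheme is automatically flat over $k$, and then (b) observe that over a reduced target, schematic dominance and dominance coincide. Both of your steps are sound: the ``schematically dominant $\Rightarrow$ dominant'' argument via an open set with empty preimage, and the converse via the scheme-theoretic zero locus $Z(s)$ together with the fact that a closed subscheme of a reduced scheme with full underlying space must equal the whole scheme.

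One small caveat worth flagging, shared by the paper's own statement rather than introduced by you: Theorem~\ref{TH:sch-dom-at-the-fibers} is quoted from EGA~IV~11.10.5 with the hypothesis that $X$ be \emph{locally noetherian}, yet Corollary~\ref{CR:mor-of-var-is-sch-dom} imposes no noetherian condition on the reduced $k$-scheme $X$. As written, your appeal to Theorem~\ref{TH:sch-dom-at-the-fibers} (and the paper's) therefore needs $X$ locally noetherian. In the paper's applications this is always the case, so it is likely implicit, but it would be worth either stating the hypothesis or noting that the $S=\Spec k$ case can also be handled directly (tensoring an injection of $k$-algebras with a $k$-algebra over the field $k$ stays injective), which avoids invoking the noetherian form of the EGA result.
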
 

\begin{cor}\label{CR:sections-of-An}
	Suppose that $S$ is an affine   noetherian   scheme with infinite residue fields
	and let $n\in\N\cup\{0\}$.
	Let $\{f_i:S\to \bbA^n_S\}_{i\in I}$ be the $S$-sections of $\bbA^n$.
	Then the family $\{f_i:S\to \bbA^n_S\}_{i\in I}$  is $S$-universally schematically dominant.
\end{cor}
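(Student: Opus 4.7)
The plan is to apply Theorem~\ref{TH:sch-dom-at-the-fibers} to reduce $S$-universal schematic dominance to a fibrewise density statement, and then to verify the latter by a standard polynomial-nonvanishing argument.

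Write $S=\Spec R$, so that the $S$-sections of $\bbA^n_S$ are in natural bijection with $R^n$; for $a=(a_1,\dots,a_n)\in R^n$ let $f_a\colon S\to\bbA^n_S$ denote the corresponding section. Each source $S$ is trivially flat over $S$, so Theorem~\ref{TH:sch-dom-at-the-fibers} reduces the claim to the assertion that, for every $s\in S$, the fibre family $\{(f_a)_s\colon\Spec k(s)\to\bbA^n_{k(s)}\}_{a\in R^n}$ is schematically dominant. Since $\bbA^n_{k(s)}$ is reduced, Corollary~\ref{CR:mor-of-var-is-sch-dom} identifies this with topological dominance, i.e., with the joint image being Zariski-dense in $\bbA^n_{k(s)}$.

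Let $\frakp\subseteq R$ be the prime corresponding to $s$, and write $A$ for the image of $R\to k(s)$, which is isomorphic to $R/\frakp$. The point $(f_a)_s$ is $(\bar a_1,\dots,\bar a_n)\in A^n\subseteq k(s)^n$, so the joint image of the fibre family is exactly $A^n$. It therefore suffices to show that the subring $A\subseteq k(s)$ is infinite, and then to invoke the classical fact that for an infinite subring $A$ of a field $K$ the subset $A^n$ is Zariski-dense in $\bbA^n_K$; this is proved by induction on $n$, the case $n=1$ being that a nonzero univariate polynomial has only finitely many roots in $K$, and the inductive step writing any nonzero $f$ as $\sum_i g_i(x_1,\dots,x_{n-1})x_n^i$, choosing $(a_1,\dots,a_{n-1})\in A^{n-1}$ inductively with some $g_i(a_1,\dots,a_{n-1})\ne 0$, and applying the $n=1$ case to the resulting nonzero univariate specialization.

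The remaining point is that $A$ is infinite. If $s$ is a closed point then $A=k(s)$ is infinite by hypothesis. If $s$ is non-closed, then $\frakp$ is non-maximal, so $A=R/\frakp$ is an integral domain that is not a field; since any finite integral domain is a field, $A$ must be infinite. The main obstacle is essentially unpacking definitions: once the fibrewise reduction via Theorem~\ref{TH:sch-dom-at-the-fibers} and the identification with topological density via Corollary~\ref{CR:mor-of-var-is-sch-dom} are in place, the density assertion reduces to elementary commutative algebra.
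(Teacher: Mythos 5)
Your proof is correct and follows essentially the same route as the paper: reduce to fibrewise schematic dominance via Theorem~\ref{TH:sch-dom-at-the-fibers}, identify the fibre family with $\bar R^n\subseteq k(s)^n$ where $\bar R=\im(R\to k(s))$, observe that $\bar R$ is infinite, and conclude density (hence schematic dominance, since $\bbA^n_{k(s)}$ is reduced). The only small difference is that you spell out the two cases (closed versus non-closed $s$) when arguing that $\bar R$ is infinite and you write out the polynomial-nonvanishing induction, both of which the paper leaves to the reader; this is fine and even slightly more careful about what ``infinite residue fields'' is being used for.
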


\begin{proof}
	Write $S=\Spec R$.
	The  $S$-sections of $\bbA^n_S$ can be naturally identified with $R^n$.
	Let $s\in S$, $k=k(s)$ and put $\quo{R}=\im (R\to k)$.
	Then, once viewing $k^n$
	as the set of $k$-sections
	of $\bbA^n_k$,
	the family  
	$\{(f_i)_s:\Spec k\to \bbA^n_k\}_{i\in I}$
	is the subset $\quo{R}^n$.
	Since $k$ is the fraction field of $\quo{R}$, the ring $\quo{R}$ is infinite.
	It is now routine to check that $\{(f_i)_s:\Spec k\to \bbA^n_k\}_{i\in I}$ is
	dominant, and thus schematically dominant because $\bbA^n_k$ is reduced.
	We finish by applying Theorem~\ref{TH:sch-dom-at-the-fibers}.
\end{proof}

\begin{cor}\label{CR:sections-of-GLn}
	Let $k$ be an infinite field, let $n\in\N$ 
	and let $\{f_i:\Spec k\to \uGL_n(k)\}_{i\in I}$
	denote the $k$-sections of the $k$-scheme $\uGL_n(k)$.
	Then $\{f_i:\Spec k\to \uGL_n(k)\}_{i\in I}$ is $k$-universally
	schematically dominant.
\end{cor}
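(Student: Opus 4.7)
The plan is to reduce to Corollary~\ref{CR:sections-of-An} by means of the open immersion $\uGL_n(k)\hookrightarrow \bbA^{n\times n}_k$. First I would observe that $\uGL_n(k)$, as an open subscheme of the integral scheme $\bbA^{n\times n}_k$, is itself reduced (and in fact integral). Consequently, by Corollary~\ref{CR:mor-of-var-is-sch-dom} applied to the induced morphism $\bigsqcup_{i\in I}\Spec k\to \uGL_n(k)$, $k$-universal schematic dominance of the family $\{f_i\}_{i\in I}$ is equivalent to mere dominance, i.e.\ to Zariski density of its set-theoretic image, which is exactly $\GL_n(k)\subseteq \uGL_n(k)$.

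Next I would verify this density directly. Any nonempty open subscheme $U\subseteq \uGL_n(k)$ is also a nonempty open subscheme of $\bbA^{n\times n}_k$. By Corollary~\ref{CR:sections-of-An} applied with $S=\Spec k$ (and $\bbA^{n\times n}_k\cong \bbA^{n^2}_k$), together with the reduced case of Corollary~\ref{CR:mor-of-var-is-sch-dom}, the $k$-sections $k^{n\times n}$ meet every nonempty open of $\bbA^{n\times n}_k$. Hence $U$ contains a point corresponding to some matrix $g\in k^{n\times n}$, and the containment $U\subseteq \uGL_n(k)$ forces $\det g\neq 0$, so $g\in \GL_n(k)$. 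Therefore $\GL_n(k)$ meets every nonempty open of $\uGL_n(k)$ and is dense, completing the argument.

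No step is expected to present genuine difficulty; the whole argument rests on the infinitude of $k$, which enters solely through Corollary~\ref{CR:sections-of-An}. The one mildly subtle point worth remarking on is that Corollary~\ref{CR:mor-of-var-is-sch-dom} has no finiteness hypotheses on the source, so it may be freely applied even when the indexing set $I$ (and hence the coproduct $\bigsqcup_{i\in I}\Spec k$) is very large, e.g.\ when $k$ is uncountable.
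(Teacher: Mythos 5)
Your proof is correct and follows the route the paper intends (the corollary is stated without proof, as an immediate consequence of the material just before it). Reducing $k$-universal schematic dominance to ordinary dominance via Corollary~\ref{CR:mor-of-var-is-sch-dom}, and then checking Zariski density of $\GL_n(k)$ in $\uGL_n(k)$, is exactly the expected argument; the only cosmetic remark is that routing the density of $\GL_n(k)$ through Corollary~\ref{CR:sections-of-An} is a small detour, since density of $k$-points in $\bbA^{n^2}_k$ (and hence in any nonempty open, in particular in $\uGL_n(k)$) is the elementary fact that a nonzero polynomial over an infinite field does not vanish identically on $k^{n^2}$. Your observation that Corollary~\ref{CR:mor-of-var-is-sch-dom} carries no finiteness hypotheses on the source, so the possibly very large coproduct $\bigsqcup_{i\in I}\Spec k$ causes no trouble, is apt.
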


We say that a   morphism of 
$S$-algebraic spaces $f:Y\to X$ is \emph{$S$-fppf surjective} if it is surjective
when viewed as a morphism of sheaves on $(\Sch/S)_\fppf$.
This equivalent to saying that for every $S$-scheme
$V$ and $x\in X(V)$, there is an fppf covering $V'\to V$
and $y\in Y(V')$ such that $f(y)=x|_{V'}$.

\begin{lem}\label{LM:fppf-surj-is-univ-sch-dom}
	Let $f:Y\to X$ be an $S$-fppf surjective morphism of $S$-algebraic spaces. Then
	$f$ is  universally schematically dominant.
\end{lem}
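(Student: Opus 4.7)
The plan is to reduce the claim to Example~\ref{EX:sch-dom-basic-examples}(ii), which says that any fpqc covering is universally schematically dominant, together with the factorization principle noted just before that example: if $g\circ h$ is universally schematically dominant, then so is $g$.

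The first step is to choose an \'etale covering $u:U\to X$ by a scheme $U$, which exists because $X$ is an $S$-algebraic space; here $U$ is naturally an $S$-scheme via its structure morphism. The identity map viewed as a $U$-section $u\in X(U)$ then witnesses, by the hypothesis that $f:Y\to X$ is $S$-fppf surjective, the existence of an fppf covering $p:V\to U$ of $S$-schemes together with a section $v\in Y(V)$ satisfying $f\circ v = u\circ p$ as morphisms $V\to X$.

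The second step is to observe that the composition $u\circ p:V\to X$ is the composition of an fppf covering and an \'etale covering, hence is itself an fppf (in particular fpqc) covering. By Example~\ref{EX:sch-dom-basic-examples}(ii), $u\circ p$ is universally schematically dominant. Since $u\circ p=f\circ v$, the factorization principle recalled above forces $f:Y\to X$ itself to be universally schematically dominant, completing the proof.

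The only point requiring mild care is the passage from the purely sheaf-theoretic statement of fppf surjectivity to an honest fppf covering $V\to U$ fitting into the commutative square; this is immediate from the equivalent formulation of $S$-fppf surjectivity given in the paragraph preceding the lemma. No substantive obstacle is anticipated, as the argument is essentially a formal manipulation of covering data.
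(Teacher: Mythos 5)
Your proof is correct, and it follows a genuinely different route from the paper's. The paper first reduces universal schematic dominance to plain schematic dominance by observing that $S$-fppf surjectivity is stable under arbitrary base change (viewing everything as sheaves on $(\Sch/S)_\fppf$), and then verifies schematic dominance directly: for any \'etale $U\to X$ the map $\Gamma(U,\calO_X)\to\Gamma(Y\times_XU,\calO_Y)$ is identified with $\Mor(U,\bbA^1_S)\to\Mor(Y\times_XU,\bbA^1_S)$, which is injective because $Y\times_XU\to U$ is an epimorphism of fppf sheaves. You instead pick a single \'etale covering $u:U\to X$ by a scheme, use $S$-fppf surjectivity once to produce an fppf covering $p:V\to U$ together with a lift $v:V\to Y$, note that $u\circ p=f\circ v$ is then itself an fppf (hence fpqc) covering and therefore universally schematically dominant by Example~\ref{EX:sch-dom-basic-examples}(ii), and finish with the factorization principle (if $f\circ v$ is universally schematically dominant, so is $f$). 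Both arguments rest on the same underlying fact --- that epimorphisms of fppf sheaves induce injections on morphisms into any sheaf, $\bbA^1$ in particular --- but you outsource it to the already-established fpqc-descent statement of Example~\ref{EX:sch-dom-basic-examples}(ii), whereas the paper redoes it inline. Your version has the advantage of being a purely formal manipulation of covering data given the two preliminary facts; the paper's has the (minor) advantage of working with arbitrary \'etale $U\to X$ of algebraic spaces without having to choose a scheme atlas, and of making the base-change stability of $S$-fppf surjectivity explicit rather than burying it in the factorization lemma.
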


\begin{proof}
	Let $g:X'\to X$ be a morphism of $S$-algebraic spaces.
	Let $\catS$ denote the category of sheaves on $(\Sch/S)_\fppf$.
	By viewing $X$, $Y$ and $X'$ as objects of $\catS$,
	one readily checks that $f_g:Y\times_XX'\to X'$ is also $S$-fppf surjective.
	It is therefore enough to show that  $f$ is schematically dominant.
	Let $U\to X$ be an \'etale morphism. We need
	to show that $\Gamma(U,\calO_X)\to \Gamma(Y\times_X U,\calO_Y)$ is injective.
	This is equivalent to saying that $\Mor_\catS(U,\bbA^1_S)\to\Mor_\catS(Y\times_XU, \bbA^1_S)$ is injective,
	and this holds because $Y\times_XU\to U$ is $S$-fppf surjective, i.e., surjective in $\catS$.
\end{proof}

The following results give several examples of $S$-fppf surjective morphisms.
By Lemma~\ref{LM:fppf-surj-is-univ-sch-dom}, these are also examples
of   universally schematically dominant morphisms.

Given a ring $A$, an element $a\in A$
and distinct  $i,j\in\{1,\dots,n\}$, we write $e_{ij}(a)$ for  
the matrix in $\nSL{A}{n}$ having $1$-s on the diagonal, $a$ in the $(i,j)$-entry,
        and $0$ in all other entries.

	\begin{prp}\label{PR:SLn-generation}
		For every $m\in\N$, there exists $r\in\N\cup\{0\}$ and
		a sequence of pairs of distinct integers $(i_1,j_1),\dots,(i_r,j_r)\in\{1,\dots,m\}^2$
		such that the morphism $f:\bbA^r_\Z\to \uSL_m(\Z)$ given by
		\[
		f( a_1,\dots ,a_r)=e_{i_1j_1}(a_1)\cdots e_{i_rj_r}(a_r)
		\]
		on sections is $\Z$-fppf surjective, and therefore universally schematically dominant.
    \end{prp}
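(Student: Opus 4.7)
The plan is to build $f$ by performing Gaussian elimination with partial pivoting over $\Z$: for any ring $A$ and any $A$-section $g$ of $\uSL_m(\Z)$, the first column of $g$ is unimodular (the identity $\det g = 1$, expanded along that column, exhibits a linear combination of its entries equal to $1$), so $\Spec A = \bigcup_{i=1}^m D(g_{i,1})$. Once we have an $f$ of the required form admitting, for every such $g$, a section over each $D(g_{i,1})$, Zariski-local sections are fppf-local sections, so $f$ is $\Z$-fppf surjective; the universal schematic dominance will then follow from Lemma~\ref{LM:fppf-surj-is-univ-sch-dom}.

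I would proceed by induction on $m$, with $m=1$ trivial since $\uSL_1(\Z) = \Spec \Z$. For $m \geq 2$, on $D(g_{i,1})$ the procedure is: (a) perform the signed row swap bringing position $(i,1)$ into $(1,1)$, expressed for $i \geq 2$ as the product $e_{1i}(1)\,e_{i1}(-1)\,e_{1i}(1)$ of three elementary matrices with integer coefficients (this is the $\uSL_2(\Z)$-identity realizing the antidiagonal matrix with entries $(1,-1)$ as $e_{12}(1)\,e_{21}(-1)\,e_{12}(1)$, embedded in the $(1,i)$-block; trivial for $i=1$); (b) apply left-multiplications $e_{k,1}(-g_{k,1}/g_{1,1})$ for $k=2,\ldots,m$ to clear the first column below the pivot; (c) apply right-multiplications $e_{1,j}(-g_{1,j}/g_{1,1})$ for $j=2,\ldots,m$ to clear the first row to the right; (d) absorb the resulting diagonal factor $\mathrm{diag}(g_{1,1}, g_{1,1}^{-1}, 1, \ldots, 1)$ via the identity
\[
  \mathrm{diag}(a, a^{-1}) = e_{12}(a)\,e_{21}(-a^{-1})\,e_{12}(a)\,e_{21}(1)\,e_{12}(-1)\,e_{21}(1)
\]
(valid wherever $a$ is invertible), embedded in the $(1,2)$-block. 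What remains lies in the image of the embedding $\uSL_{m-1}(\Z) \hookrightarrow \uSL_m(\Z)$ acting on the last $m-1$ coordinates, to which the inductive hypothesis applies.

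To amalgamate the $m$ pivot-dependent reductions into a single sequence of the required form, I would prepend, once per pivot choice $i \in \{2,\ldots,m\}$, a parameterized block of three elementary factors in positions $(1,i),(i,1),(1,i)$; setting the parameters of exactly one such block to $(1,-1,1)$ and the rest to $0$ (using $e_{ij}(0) = I$) realizes the row swap for that pivot, while setting all of them to $0$ realizes the $i=1$ case. Steps (b)--(d) and the inductive body are identical across pivot choices, so only one copy is needed downstream. Recursively applying this case-switching at each inductive level yields a single sequence $(i_k,j_k)_{k=1}^r$ with $r$ depending only on $m$, and a morphism $f: \bbA^r_\Z \to \uSL_m(\Z)$ that admits Zariski-local sections on $\Spec A$ for every $A$-section $g$ of $\uSL_m(\Z)$. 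The main obstacle I anticipate is verifying that the explicit formulas in (b)--(d) remain defined on the correct distinguished open and that the zero-to-identity trick correctly interpolates between pivot choices without introducing spurious terms; this is a direct but bookkeeping-intensive calculation, and care must be taken that the inductive recursion is implemented so that the total pattern of positions is independent of $g$ and $A$.
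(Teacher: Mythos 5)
Your approach is genuinely different from the paper's, and it is sound in outline. The paper first proves that $f_A\colon A^r\to\nSL{A}{m}$ is surjective for every \emph{semilocal} ring $A$, by invoking the stable-range-$1$ property of semilocal rings (Bass): after explicit elementary row operations, one of the entries of the last column is forced to be a unit, without any case split, and the positions of the operations needed depend only on $m$. It then deduces $\Z$-fppf (indeed, big-Zariski) surjectivity by passing to the stalks $\calO_{V,x}$, which are local and hence semilocal, and spreading out. You instead work directly over an arbitrary ring $A$, observe that the first column of $g$ generates the unit ideal, cover $\Spec A$ by the $D(g_{i,1})$, do explicit Gaussian elimination with partial pivoting over each, and absorb the case-split into the fixed position sequence via the ``zero-to-identity'' trick $e_{ij}(0)=I$. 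Both routes end by observing that Zariski-local sections are fppf-local sections and invoking Lemma~\ref{LM:fppf-surj-is-univ-sch-dom}. What the paper's route buys is the absence of the amalgamation bookkeeping --- stable range $1$ makes the pivot position uniform so no ``dead'' slots are needed --- at the cost of citing Bass's theorem. Your route is more self-contained (only uses the cofactor expansion for unimodularity of a column of a determinant-one matrix), at the cost of the amalgamation gadget and the attendant sign-and-order bookkeeping that you rightly flag.

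One small slip worth fixing: in the factorization $g = w^{-1}\cdot(\text{rest})$ where $w$ is the signed row swap bringing $(i,1)$ to $(1,1)$, the prepended block should realize $w^{-1}$, not $w$. Since $w = e_{1i}(1)\,e_{i1}(-1)\,e_{1i}(1)$, you want $w^{-1} = e_{1i}(-1)\,e_{i1}(1)\,e_{1i}(-1)$, i.e., parameters $(-1,1,-1)$ rather than $(1,-1,1)$; the positions $(1,i),(i,1),(1,i)$ are correct. Also the denominators in steps (b)--(d) should refer to the pivot entry \emph{after} the swap (which on $D(g_{i,1})$ is $\pm g_{i,1}$), not to $g_{1,1}$. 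Neither affects the validity of the plan.
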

    
    In fact, the proof shows that $f:\bbA^r_\Z\to \uSL_m(\Z)$ is even surjective when regarded
    as a morphism of sheaves on the large  Zariski  site of $\Z$.

    \begin{proof}
    	\Step{1} We first claim that there are $(i_1,j_1),\dots,(i_r,j_r)\in\{1,\dots,m\}^2$
    	such that for every semilocal ring $A$,
    	the induced map $f_A:A^r\to \nSL{A}{m}$ is surjective. This is the same
    	as showing that every matrix in $\nSL{A}{m}$ can be reduced to the identity
    	matrix by using $r$ \emph{predetermined} (and possibly vacuous) row and column operations
    	of the form $R_i+\alpha R_j\to R_i$ or $C_i+\alpha C_j\to C_i$. This is known, but we include
    	a proof for the sake of completeness.
    	   
        We prove this by induction on $m$. When $m=1$, we can take $r=0$.
        Suppose that   $m>1$,  and let $(i_\ell,j_\ell)_{\ell=1}^s$ be the sequence
        of pairs in $\{1,\dots,m-1\}^2$ supplied by the induction hypothesis.
		Let $A$ be a semilocal ring and let  $a=(a_{ij})\in\nSL{A}{m}$.
		Since $\det(a)=1$,
        we have $\sum_{i=1}^m a_{im}A=A$.
        As $A$ is semilocal, it has \emph{stable range} $1$ 
        (see \cite[\S4, Corollary~6.5]{Bass_1964_K_theory_and_stable_algs}), hence
        there are $\alpha_1,\dots,\alpha_{m-1}\in A$ such that
        $\sum_{i=1}^{m-1} (a_{im}+\alpha_ia_{mm})A=A$.
        We may therefore replace $a$ with $e_{1m}(\alpha_1)e_{2m}(\alpha_2)\cdots e_{(m-1)m}(\alpha_{m-1})a$,
        and assume that $\sum_{i=1}^{m-1} a_{im}A=A$.
        (This is allowed because the pairs $(1,m),\dots,(m-1,m)$ do not depend on $A$ or $a$.)
        Now, if $m-1>1$, then again there are $\alpha_1,\dots,\alpha_{m-2}\in A$
        with $\sum_{i=1}^{m-2} (a_{im}+\alpha_ia_{(m-1)m})A=A$, and we may
        replace $a$ with $e_{1m}(\alpha_1)e_{2m}(\alpha_2)\cdots e_{(m-2)m}(\alpha_{m-2})a$
        to assume $\sum_{i=1}^{m-2} a_{im}A=A$. Continuing in this manner, we reduce to the case
        where   
        $a_{1m}$ is invertible. Now, by replacing $a$ with $e_{m1}(a_{1m}^{-1}(1-a_{mm}))a$,
        we may assume that $a_{mm}=1$.
        Replacing $a$ once more by
        by $e_{1m}(-a_{1m})\cdots e_{(m-1)m}(-a_{(m-1)m})\cdot a\cdot 
        e_{m(m-1)}(-a_{m(m-1)})\cdots e_{m1}(-a_{m1})$
        reduces us to the case
        where $a=
        [\begin{smallmatrix} a' & 0 \\ 0 & 1 \end{smallmatrix}]$
        for some $a'\in\nSL{A}{m-1}$. 
        By the induction hypothesis, there are
        $\alpha_1,\dots,\alpha_s\in A$ such that
        $a=e_{i_1j_1}(\alpha_1)\cdots e_{i_sj_s}(\alpha_s)$ and we are done.

\smallskip		
	
		\Step{2} We now prove that $f:\bbA^r_\Z\to \uSL_m(\Z)$  is
		$\Z$-fppf surjective for
		the pairs $(i_1,j_1),\dots,(i_r,j_r)$ supplied by Step~1. 
		Let $V$ be a $\Z$-scheme and let $a\in \nSL{V}{m}$.
		By the previous paragraph, for every $x\in V$, there
		are $a_1^{(x)},\dots,a_r^{(x)}\in \calO_{V,x}$ such that
		$e_{i_1 j_1}(a_1^{(x)})\cdots e_{i_r j_r}(a_r^{(x)})=a_x$,
		where $a_x$ is the restriction of $a$ to $\Spec \calO_{X,x}$.
		By spreading out, we may assume that $a_1^{(x)},\dots,a_r^{(x)}$
		are defined on an open affine neighborhood
		$V_x$ of $x$ and that  $e_{i_1 j_1}(a_1^{(x)})\cdots e_{i_r j_r}(a_r^{(x)})=a|_{V_x}$.
		Put $V'=\bigsqcup_{x\in V} V_x$. Then $V'\to V$ is a Zariski covering of $V$
		such that $a|_{V'}$ is in the image of $\bbA^n_S(V')\to \nSL{V'}{m}$.
    \end{proof}

Let $H$ be an $S$-group acting from the left on an $S$-algebraic space $X$. We say that the action of $H$
on $X$  
is fppf transitive if the   map $H\times_S X\to X\times_S X$
given section-wise by $(h,x)\mapsto (hx,x)$ is $S$-fppf surjective.
This is equivalent to saying that for every $X$-scheme $V$
and $x,x'\in X(V)$, there exist  an fppf covering 
$V\to V'$ and $h\in H(V')$ such that $x'|_{V'}= h\cdot x|_{V'}$. In fact, it is enough to verify
this only when $V$ is affine.

\begin{lem}\label{LM:strongly-trans}
	Let $H$ be an $S$-group acting   from the left
	on an $S$-algebraic space $X$, and let $f:Y\to X$ be an $S$-morphism
	of algebraic spaces.
	If the action of $H$ on $X$ is fppf transitive, then
	the morphism $H\times_S Y\to X\times_S Y$ given section-wise
	by $(h,y)\mapsto (h\cdot f(y),y)$ is $S$-fppf surjective,
	and thus  universally schematically dominant.
\end{lem}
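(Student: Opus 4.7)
The plan is to recognize the morphism $\nu:(h,y)\mapsto(h\cdot f(y),y):H\times_SY\to X\times_SY$ as a base change of the action morphism $\mu:(h,x)\mapsto(hx,x):H\times_SX\to X\times_SX$, which is $S$-fppf surjective by hypothesis. Once this is established, $S$-fppf surjectivity of $\nu$ will follow because surjectivity in the category of sheaves on $(\Sch/S)_\fppf$ is preserved under arbitrary base change, and then Lemma~\ref{LM:fppf-surj-is-univ-sch-dom} will immediately give universal schematic dominance.

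Concretely, I will consider the morphism $g:=\id_X\times f:X\times_SY\to X\times_SX$, $(x,y)\mapsto(x,f(y))$, and form the fiber product $P:=(H\times_SX)\times_{X\times_SX}(X\times_SY)$ using $\mu$ on the first factor and $g$ on the second. A $V$-section of $P$ is a quadruple $(h,x,x',y)\in H(V)\times X(V)\times X(V)\times Y(V)$ with $hx=x'$ and $x=f(y)$, so the assignment $(h,y)\mapsto(h,f(y),h\cdot f(y),y)$ defines a natural isomorphism $H\times_SY\xrightarrow{\sim}P$. Under this isomorphism the projection $P\to X\times_SY$ becomes precisely $\nu$, exhibiting $\nu$ as the pullback of $\mu$ along $g$.

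The remaining step is the base change property for $S$-fppf surjectivity. Viewing the algebraic spaces involved as sheaves on $(\Sch/S)_\fppf$, the morphism $\mu$ is an epimorphism of sheaves by assumption; since epimorphisms in any topos are stable under pullback, the pullback $\nu$ is also an epimorphism of sheaves, i.e., $S$-fppf surjective. Applying Lemma~\ref{LM:fppf-surj-is-univ-sch-dom} concludes that $\nu$ is universally schematically dominant.

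There is no real obstacle here; the only thing to be careful about is the identification of $\nu$ as a base change of $\mu$ (it is tempting to write it as a pullback along $f$ alone, but the correct base is $X\times_SY$ mapping to $X\times_SX$ via $\id_X\times f$).
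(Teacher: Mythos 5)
Your proof is correct, and it takes a slightly different route from the paper's. The paper argues directly at the level of sections: given $(x,y)\in X(V)\times Y(V)$, it invokes fppf transitivity to produce, after an fppf cover of $V$, an element $h\in H(V')$ with $h\cdot f(y)=x$, and then observes that $(h,y)$ is a preimage of $(x,y)$. You instead identify $\nu$ as the pullback of the action map $\mu:H\times_SX\to X\times_SX$ along $g=\id_X\times f:X\times_SY\to X\times_SX$, verify this by computing $V$-sections of the fiber product, and then invoke the general fact that epimorphisms of sheaves are stable under pullback. Your fiber-product computation is correct (including the caveat you raise: the base change must be taken along $\id_X\times f$, not along $f$ alone). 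The paper's proof is the explicit element chase that proves the pullback-stability fact in this specific instance, so the two arguments are morally the same; what your version buys is a cleaner conceptual framing at the cost of setting up the fiber product, while the paper's version is shorter and entirely self-contained. Either is fine here.
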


\begin{proof}
	Let $V$ be an $S$-scheme
	and let $(x,y)\in X(V)\times Y(V)$.
	Since the action of $H$ on $X$ is fppf transitive,
	we may replace $V$ with a suitable fppf covering to assume
	that there is $h\in H(V)$
	such that $h \cdot f(y)=x $.
	Then $(h\cdot f(y),y)=(x,y)$ in $X(V)\times Y(V)$.
\end{proof}

\section{$d$-Versal Torsors: Definition and Properties}
\label{sec:definitions}

Throughout this section, $S$ is a fixed base scheme. If not indicated
otherwise,   all schemes
and algebraic spaces are over $S$, all morphisms between them are $S$-morphisms,
and $G$ denotes an $S$-group.
Given $S$-algebraic spaces $X$, $Y$, $Z$ and morphisms $f:Z\to X$, $g:Z\to Y$,
we write $(f,g)$ to denote the morphism $Z\to X\times_S Y$ given by $z\mapsto (f(z),g(z))$
on sections.

\begin{dfn}\label{DF:versality-full}
Let $X$ be an $S$-algebraic space,
let $E\to X$ be a $G$-torsor, let $\catC$ be a class  of $S$-schemes,
and let $d\in\N\cup\{0 \}$.
\begin{enumerate}[label=(\roman*)]
	\item We say that $E\to X$ is \emph{weakly $d$-versal for $\catC$}
	if for every $G$-torsor $E'\to X'$ 
	with $X'\in \catC$,
	there are
	\begin{enumerate}[label=(\arabic*)]
		\item  an open $U\subseteq X'$ such that 
		$\codim(X'-U,X')>d$	  and
		\item an $S$-morphism $f:U\to X$
	\end{enumerate}	 
	such that $E'|_U\cong_U f^*E$ (the isomorphism is in $\Tors(G,U)$; see Section~\ref{sec:torsors}).
	
	\item We say that $E\to X$ is \emph{$d$-versal for $\catC$}
	if for every $G$-torsor $E'\to X'$ 
	with $X'\in \catC$,
	there are
	\begin{enumerate}[label=(\arabic*)]
	 	\item an open $U\subseteq X'$ such that $\codim(X'-U,X')>d$	 
		and 
		\item a collection of $S$-morphisms $\{f_i:U\to X\}_{i\in I}$
	\end{enumerate} 
	such that $E'|_U\cong_U f_i^*E$
	for all $i\in I$, and the family 
	$\{(f_i,\id_U):U\to X\times_S U\}_{i\in I}$	
	is
	$U$-universally schematically dominant.

	\item We say that $E\to X$ is \emph{strongly $d$-versal for $\catC$}
	if for every $G$-torsor $E'\to X'$ 
	with $X'\in \catC$,
	there are
	\begin{enumerate}[label=(\arabic*)]
		\item an open $U\subseteq X'$ such that $\codim(X'-U,X')>d$	  and
		\item an $S$-morphism $f:\bbA^n_S\times_S U\to X$ (for some $n\in\N$)
	\end{enumerate}
	such that $\bbA^n_S\times_S E'|_U\cong_{\bbA^n \times U} f^*E$ and the morphism
	$(f,p_2):\bbA^n_S\times_S U\to X\times_S U$, where $p_2:\bbA^n_S\times_S U\to U$
	is the second projection,	
	is 
	universally 
	schematically
	dominant.
\end{enumerate}

We say that $E\to X$ is  (weakly, strongly) $\infty$-versal for $\catC$ if it
is  (weakly, strongly) $d$-versal for every $d\in\N$.

We  say that $E\to X$ is (weakly, strongly) versal
for $\catC$ if we can always take $U=X'$ in (1).
\end{dfn}

Admittedly, there is some room for variation in the definition, e.g.,
we can replace ``($U$-)universally schematically dominant''
with ``schematically dominant'' in (ii) and (iii). The present formulation allows
for both strong and clean statements. Before we proceed, a few remarks are in order.

\begin{remark}
	(i) 
	Suppose that $S=\Spec k$ for an infinite field $k$.
	As noted in the introduction, a $G$-torsor is weakly $0$-versal
	for the class of irreducible $k$-varieties  if and only if it is weakly versal
	in the sense of Duncan and Reichstein \cite{Duncan_2015_versality_of_alg_grp_actions}.
	However, being (strongly) $0$-versal for this class is {\it a priori}
	stronger than being (strongly) versal in the sense of [op.\ cit.].
	
	(ii) Suppose that $S=\Spec k$ for a   field $k$.
	Conditions (ii) and (iii) of Definition~\ref{DF:versality-full}
	differ from the  simplified versions (ii$'$) and (iii$'$) of the introduction
	in two ways: First, in Definition~\ref{DF:versality-full},
	we require ($S$-)universal schematic dominance  and not just schematic dominance.
	Second, in Definition~\ref{DF:versality-full},
	we require  dominance for the family $\{(f_i,\id_U):U\to X\times_S U\}_{i\in I}$
	(resp.\ the morphism $(f,p_2):\bbA^n_S\times_S U\to X\times_S U$)
	instead of the family $\{f_i:U\to X\}_{i\in I}$
	(resp.\ the morphism $f:\bbA^n_S\times_S U\to X$).
	This is more restrictive than (ii$'$) (resp.\ (iii$'$)),
	because $X\times_k U\to X$ is schematically dominant whenever $U\neq\emptyset$
	(Corollary~\ref{CR:mor-of-var-is-sch-dom}). 
	The reason for making this change
	is that when $S$ is general,
	the morphism $X\to S$ is typically surjective 
	(see Proposition~\ref{PR:base-of-versal-torsor} below),
	and thus   the families considered in (ii$'$)
	and (iii$'$) can be dominant only when $X'\to S$ is dominant.
	In addition, by making this change, we get
	that (weak, strong) versality   persists under base-change of $S$ 
	(Proposition~\ref{PR:versality-under-base-change}).

	(iii) When the class $\catC$ consists of finite-dimensional
	schemes, being (weakly, strongly) $\infty$-versal for   $\catC$
	is the same as being  
	(weakly, strongly) versal for $\catC$.	
\end{remark}

\begin{remark}\label{RM:versal-objects}
	Similarly to Definition~\ref{DF:versality-full},
	one can define rank-$n$ vector bundles
	that are (weakly, strongly) $d$-versal  for a class
	of $S$-schemes $\catC$. Since rank-$n$ vector bundles over
	$S$-schemes are equivalent to $\uGL_n(S)$-torsors, a rank-$n$ bundle would
	be (weakly, strongly) $d$-versal for $\catC$ if and only if the same holds for
	its   corresponding $\uGL_n(S)$-torsor.
	
	More generally, if $\mathsf{F}$ is a fibered category over $S$-algebraic spaces,
	then we can talk about  objects in $\mathsf{F}$ that are (weakly, strongly) 
	$d$-versal for $\catC$. Propositions~\ref{PR:versality-basic-props}, \ref{PR:versality-under-base-change}
	and~\ref{PR:base-of-versal-torsor} below actually hold in this more general setting.
	Examples of fibered categories that will be considered
	later include the   category of degree-$n$ Azumaya algebras over $S$-algebraic spaces,
	which is equivalent to $\Tors(\uPGL_n(S))$, and, for a fixed finite group $\Gamma$,
	the category of $\Gamma$-Galois extensions of $S$-algebraic spaces, which is equivalent to $\Tors(\underline{\Gamma})$. (Here, $\underline{\Gamma}$ is the constant $S$-group
	associated to $\Gamma$.)
	In both examples,  the  morphisms from $A$ to $B$ are the specializations of $B$ to $A$.
	Another example that is not equivalent to a category of torsors under an $S$-group
	is the category of degree-$m$ Azumaya algebras over $S$-algebraic spaces
	whose Brauer class has period dividing $n$ ($m$ and $n$ are fixed).
\end{remark}

\begin{prp}\label{PR:versality-basic-props}
	Let $\catC$ be a class of   $S$-schemes, let $X$
	be an $S$-algebraic space and let $E\to X$ be a $G$-torsor.
	\begin{enumerate}[label=(\roman*)]
		\item If $E\to X$ is   $d$-versal for $\catC$ then it is weakly $d$-versal for $\catC$.
		\item If $E\to X$ is strongly $d$-versal for $\catC$, then it is weakly $d$-versal for $\catC$.
		If 
		$S$ is moreover a scheme over
		an affine noetherian scheme with infinite residue
		fields,
		then
		$E\to X$ is also $d$-versal for $\catC$.
	\end{enumerate}
\end{prp}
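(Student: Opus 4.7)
Part (i) is essentially immediate from the definitions: given that $E\to X$ is $d$-versal for $\catC$ and a torsor $E'\to X'$ with $X'\in\catC$, pick any single morphism $f = f_{i_0}$ from the family $\{f_i\}_{i\in I}$ supplied by $d$-versality; the isomorphism $E'|_U\cong_U f_{i_0}^*E$ is exactly what weak $d$-versality asks for. The only case to worry about is $I=\emptyset$, which can happen only if $X'=\emptyset$, in which case $U=\emptyset$ and a unique trivial morphism to $X$ works.

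For the first statement of (ii), to deduce weak $d$-versality from strong $d$-versality, I would use the zero section $s=(0,\id_U):U\to\bbA^n_S\times_SU$ of the projection $p_2$. Since $p_2\circ s=\id_U$, one checks from the fiber-product definition that $s^*(\bbA^n_S\times_S E'|_U)\cong_U E'|_U$ as $G$-torsors. Pulling the given $G$-torsor isomorphism $\bbA^n_S\times_S E'|_U\cong f^*E$ back along $s$ yields $E'|_U\cong (f\circ s)^*E$, so $f\circ s:U\to X$ witnesses weak $d$-versality.

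The main content is the second assertion of (ii). Let $T$ be an affine noetherian scheme with infinite residue fields such that $S$ is a $T$-scheme. By Corollary~\ref{CR:sections-of-An} (applied to $T$), the family $\{\sigma_i:T\to\bbA^n_T\}_{i\in I}$ of all $T$-sections of $\bbA^n_T$ is $T$-universally schematically dominant. Base-changing this family along $U\to S\to T$ produces a family $\{s_i:U\to\bbA^n_S\times_SU\}_{i\in I}$ of sections of $p_2$; a routine check using that any $U$-algebraic space is also a $T$-algebraic space shows this family is $U$-universally schematically dominant. Define $f_i=f\circ s_i:U\to X$. The pullback argument from the first part of (ii) gives $E'|_U\cong_U f_i^*E$ for each $i$. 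Finally, the family $\{(f_i,\id_U):U\to X\times_SU\}_{i\in I}$ is the composition of the $U$-universally schematically dominant family $\{s_i\}$ with the universally (hence $U$-universally) schematically dominant morphism $(f,p_2):\bbA^n_S\times_SU\to X\times_SU$; since schematic dominance is closed under composition and this persists after any base change of $U$, the resulting family is $U$-universally schematically dominant, yielding $d$-versality.

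The step most likely to require care is verifying the two propagations of universal schematic dominance in the last paragraph — that $T$-universal schematic dominance of the $T$-sections of $\bbA^n_T$ passes to $U$-universal schematic dominance of the resulting sections of $\bbA^n_S\times_SU$ over $U$, and that composing with $(f,p_2)$ preserves this property. Both are formal consequences of the definitions and the transitivity of fiber products, but they are the substance of the argument, so they should be written out carefully.
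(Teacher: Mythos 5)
Your proof is correct and takes essentially the same approach as the paper. If anything, you are slightly more careful in part (ii): you apply Corollary~\ref{CR:sections-of-An} explicitly to the affine noetherian scheme $T$ (the paper's $S_0$) and then base change along $U\to S\to T$, directly verifying the $U$-universal schematic dominance required by Definition~\ref{DF:versality-full}(ii), whereas the paper invokes the corollary for the family of all $S$-sections of $\bbA^n_S$ and records only $S$-universal dominance, relying implicitly on the same base change from $S_0$ and on the observation that, for sections obtained by base change from the base, $S$-universal and $U$-universal dominance coincide. Your version spells out these small steps cleanly. One minor inaccuracy in part (i): the parenthetical claim that $I=\emptyset$ forces $X'=\emptyset$ is not quite right --- under the definition, $I=\emptyset$ forces $X\times_S U=\emptyset$, which can occur with $U\neq\emptyset$ when the images of $X$ and $X'$ in $S$ are disjoint. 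This degenerate case is also glossed over by the paper's ``immediate from the definition'', and does not arise for the torsors actually constructed, whose bases map surjectively onto $S$ by Proposition~\ref{PR:base-of-versal-torsor}(i).
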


\begin{proof}
	(i) is immediate from the definition.
	
	(ii) Let $E'\to X'$ be a $G$-torsor with $X'\in \catC$,
	and let $U$
	and   $f:\bbA^n_S\times U\to X$ be as in Definition~\ref{DF:versality-full}(iii).
	Let $u:\bbA^n_U\to U$ be the structure morphism and let $z:U\to \bbA^n_U$ be the zero section.
	Then $u \circ z =\id_U$  and thus $z^*f^*E\cong z^*(\bbA^n_S\times_S E'|_U)=z^*u^*(E'|_U)=E'|_U$.
	This shows that   $E\to X$ is weakly $d$-versal for $\catC$.

	Keeping the previous notation, suppose now that there is a morphism of schemes $S\to S_0$, where 
	$S_0$ is affine, noetherian and has infinite residue fields.
	Let $\{g_i:S\to \bbA^n_S\}_{i\in I}$ be the family of sections
	of $\bbA^n_S\to S$. 
	We claim that the family  $\{f_i:=f\circ(g_i)_U: U\to   X \}_{i\in I}$ satisfies
	the requirements listed in Definition~\ref{DF:versality-full}(ii).
	Indeed, for every $i\in I$, we have $u\circ (g_i)_U=\id_U$
	and thus, as in the previous paragraph, $f_i^*E\cong_U E'|_U$.
	Next,  by Corollary~\ref{CR:sections-of-An},
	the family   $\{(g_i)_U: U\to   \bbA^n_U \}_{i\in I}$ is  
	$S$-universally schematically dominant.
	By assumption, $(f,p_2):\bbA^n_S\times_S U\to X\times_S U$
	is universally schematically dominant, so
	the family $\{(f_i,\id_U)=(f,p_2)\circ (g_i)_U\}_{i\in I}$
	is $S$-universally schematically dominant. 	
\end{proof}

\begin{prp}\label{PR:versality-under-base-change}
	Let $\catC$ be a class of   $S$-schemes, let $X$
	be an $S$-algebraic space and let $E\to X$ be a $G$-torsor.
	Let $S'$ be an $S$-scheme and let $\catC/S'$
	denote the class of $S'$-schemes with underlying $S$-scheme
	in $\catC$.
	If $E\to X$ is (weakly, strongly) $d$-versal for $\catC$,
	then the $G_{S'}$-torsor $E_{S'}\to X_{S'}$ is (weakly, strongly)
	$d$-versal for $\catC/S'$.
\end{prp}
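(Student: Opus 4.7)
The plan is a straightforward reinterpretation: any $G_{S'}$-torsor $E'\to X'$ over an $S'$-scheme $X'\in\catC/S'$ can be viewed as a $G$-torsor over the underlying $S$-scheme of $X'$, and once this is done, the versality hypothesis for $E\to X$ supplies the promised specialization data over $S$, which I then lift back to $S'$ via the universal property of $X_{S'}=X\times_SS'$.

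To make the reinterpretation precise, I would use that $E'$ lives over $S'$ to identify $E'\times_SG=E'\times_{S'}G_{S'}$ canonically; composing the $G_{S'}$-action with this identification makes $E'\to X'$ a $G$-torsor over $X'$ regarded as an $S$-scheme (an fppf local trivialization over $S'$ is equally an fppf local trivialization over $S$). Since $X'\in\catC$ by definition of $\catC/S'$, the $d$-versality (weak, ordinary, or strong) of $E\to X$ produces an open $U\subseteq X'$ with $\codim(X'-U,X')>d$ together with the relevant $S$-morphism data: $f:U\to X$, a family $\{f_i:U\to X\}_{i\in I}$, or $f:\bbA^n_S\times_SU\to X$. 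The universal property of $X_{S'}$ upgrades each $S$-morphism $U\to X$ to an $S'$-morphism $\tilde f:U\to X_{S'}$, and because $\bbA^n_S\times_SU=\bbA^n_{S'}\times_{S'}U$, the same upgrade applies in the strong versality case. A direct fibered-product computation gives $\tilde f^*E_{S'}=U\times_{X_{S'}}E_{S'}=U\times_XE=f^*E$, so each $G$-torsor isomorphism $f^*E\cong_UE'|_U$ supplied by versality transports to an isomorphism $\tilde f^*E_{S'}\cong_UE'|_U$, which is automatically an isomorphism of $G_{S'}$-torsors by the same identification $(-)\times_SG=(-)\times_{S'}G_{S'}$ applied over the $S'$-base $U$.

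For the ordinary and strong versions I would conclude by observing that $X_{S'}\times_{S'}U=(X\times_SS')\times_{S'}U=X\times_SU$, so that the morphisms $(\tilde f_i,\id_U):U\to X_{S'}\times_{S'}U$ and $(\tilde f,p_2):\bbA^n_{S'}\times_{S'}U\to X_{S'}\times_{S'}U$ are literally the same morphisms of $S$-algebraic spaces as $(f_i,\id_U):U\to X\times_SU$ and $(f,p_2):\bbA^n_S\times_SU\to X\times_SU$ respectively. Consequently, the required $U$-universal (resp.\ universal) schematic dominance conditions in Definition~\ref{DF:versality-full}(ii), (iii) are inherited verbatim from the hypothesis on $E\to X$.

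The argument is entirely formal, so there is no real obstacle. The only point that deserves explicit justification is the automatic compatibility between $G$-equivariance and $G_{S'}$-equivariance for morphisms between $G_{S'}$-torsors over an $S'$-base, and this follows from the canonical identification of the two action morphisms noted above; the rest is bookkeeping with fibered products.
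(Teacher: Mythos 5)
Your proposal is correct and follows the same route as the paper: view the given $G_{S'}$-torsor as a $G$-torsor over the underlying $S$-scheme (using $E'\times_SG=E'\times_{S'}G_{S'}$), apply $d$-versality of $E\to X$, lift the resulting $S$-morphisms to $S'$-morphisms via the universal property of $X_{S'}=X\times_SS'$, and observe that $(\tilde f,\cdot)$ and $(f,\cdot)$ coincide under the canonical identifications $X\times_SU\cong X_{S'}\times_{S'}U$ and $\bbA^n_S\times_SU\cong\bbA^n_{S'}\times_{S'}U$, so the schematic dominance conditions carry over verbatim. This matches the paper's three observations almost word for word.
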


\begin{proof}
	Let $E'\to X'$ be a $G_{S'}$-torsor with $X'\in \catC/S'$.
	Then $E'\to X'$ is also a $G$-torsor. 
	Let $s$ be the structure morphism $X'\to S'$.
	The proposition  follows from the following three observations (applied
	with $X'=U$ or $X'=\bbA^n_S\times_S U$ for $U$ as in Definition~\ref{DF:versality-full}),
	which can be readily verified on the level of sections:
	First, if $f:X'\to X$ is an $S$-morphism such that $f^*E\cong_{X'} E'$,
	then $g=(f,s):X'\to X_{S'}$ satisfies $g^*E_{S'}\cong_{X'} E'$ as $G_{S'}$-torsors.
	Second, for every $f:X'\to X$,
	the morphism $(f,\id_{X'}):X'\to X\times_S X'$   coincides
	with  $(f_{S'},\id_{X'}):X'\to X_{S'}\times_{S'} X'$ under the evident
	identification $X\times_S X'\cong X_{S'}\times_{S'} X'$.
	Third, for every $f:\bbA^n_S\times_S X'\to X$,
	the morphism $(f,p_2):\bbA^n_S\times_S X'\to X\times_S X'$
	coincides with $(f_{S'},p_2):\bbA^n_{S'}\times_{S'} X'\to {X_{S'}}\times_{S'} X'$
	under the evident identifications.
\end{proof}

\begin{prp}\label{PR:base-of-versal-torsor}
	Let $E\to X$ be a $G$-torsor, where $X$ is an $S$-scheme.
	\begin{enumerate}[label=(\roman*)]
		\item If $E\to X$ is weakly versal for $\{S\}$,
		then $X\to S$ is surjective (i.e.\ set-theoretically surjective).
		\item If $E\to X$ is versal for $\{S\}$ and $S$ is reduced,
		then $X$ is reduced.
		\item If $E\to X$ is strongly versal for $\{S\}$
		and $S$ is integral, then $X$ is integral.
		\item If $E\to X$ is strongly versal for the $S$-schemes
		$\{\Spec k(s)\where s\in S\}$, then   $X\to S$ has integral fibers.
	\end{enumerate}
\end{prp}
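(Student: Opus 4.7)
The common strategy is to feed the trivial $G$-torsor $G\to S$ (or $G_{k(s)}\to\Spec k(s)$ in part~(iv)) into the versality hypothesis and then transfer geometric information from the source of the resulting morphism(s) to the base $X$. In all four parts versality lets us take $U=X'$ in Definition~\ref{DF:versality-full}, and we use the natural identifications $X\times_S S\cong X$ and $\bbA^n_S\times_S S\cong \bbA^n_S$ to simplify the dominance conditions that appear.

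For (i), weak versality applied to the trivial torsor $G\to S$ yields an $S$-morphism $f:S\to X$; since $f$ is an $S$-morphism the structure morphism $X\to S$ admits a section, hence is set-theoretically surjective. For (ii), versality applied to $G\to S$ gives $S$-morphisms $\{f_i:S\to X\}_{i\in I}$ such that the induced family $(f_i,\id_S):S\to X\times_S S\cong X$ is jointly $S$-universally schematically dominant. Thus on the \'etale site of $X$ the map $\calO_X\to \prod_i(f_i)_*\calO_S$ is a monomorphism. For any \'etale $U\to X$, the base change $S\times_{f_i,X}U\to S$ is \'etale, so reducedness of $S$ yields reducedness of each $\calO_S(S\times_{f_i,X}U)$; the injection then forces $\calO_X(U)$ to be reduced, whence $X$ is reduced.

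For (iii), strong versality applied to $G\to S$ produces a universally schematically dominant morphism $f:\bbA^n_S\to X$ (using again $X\times_S S\cong X$). Since $S$ is integral so is $\bbA^n_S$, and the problem reduces to the following key claim: if $Y$ is integral and $Y\to X$ is schematically dominant then $X$ is integral. Reducedness of $X$ follows exactly as in (ii). Irreducibility follows from set-theoretic density of the image of $Y$, which I extract from the sheaf monomorphism $\calO_X\hookrightarrow f_*\calO_Y$ by the standard observation that the complement of $\overline{f(Y)}$ would be an open subscheme on which $f_*\calO_Y$ vanishes, forcing $\calO_X$ to vanish there as well. Part~(iv) is the fiberwise analogue: for each $s\in S$ apply strong versality to the trivial $G_{k(s)}$-torsor on $\Spec k(s)$ to get a universally schematically dominant morphism $\bbA^n_{k(s)}\to X_s$ (the source is integral), and invoke the key claim to conclude $X_s$ is integral.

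The most delicate step is the key claim underlying (iii) and (iv). Set-theoretic density of the image of a schematically dominant morphism needs to be extracted from the small-\'etale-site formulation, and because $X$ is only assumed to be an $S$-algebraic space I expect to verify integrality by passing to an \'etale presentation of $X$ (or $X_s$) and reducing to the scheme case. The rest of the argument is bookkeeping with the identifications above.
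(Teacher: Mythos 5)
Your proof is correct and follows essentially the same strategy as the paper: feed the trivial torsor $G\to S$ (or its pullback to $\Spec k(s)$) into the versality hypothesis and transfer geometric properties from the resulting universally schematically dominant source. Two small remarks. First, for (iii) the paper argues more compactly by observing that, $\bbA^n_S$ being integral, $f_*\calO^{\Zar}_{\bbA^n_S}$ is a sheaf whose sections over any nonempty open are integral domains (since a nonempty open of an integral scheme is integral), so the monomorphism $\calO^{\Zar}_X\hookrightarrow f_*\calO^{\Zar}_{\bbA^n_S}$ directly makes $\calO_X^{\Zar}$ a sheaf of domains; your decomposition into reducedness plus set-theoretic dominance is an equivalent unpacking and is also fine. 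Second, your concern at the end about needing an \'etale presentation because $X$ might be an algebraic space is unfounded: the proposition hypothesizes that $X$ is an $S$-\emph{scheme}, so the Zariski-site formulation of schematic dominance (via Proposition~\ref{PR:sch-dom-equiv-with-SGA}) applies directly and no reduction step is needed.
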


\begin{proof}
	(i) View $G\to S$ as a $G$-torsor by letting $G$ act on itself from the right.
	Since $E\to X$ is weakly versal for $\{S\}$, there is an $S$-morphism $S\to X$.
	The composition $S\to X\to S$ is $\id_S$, so $X\to S$ is surjective.
	
	(ii) Applying the versality to the $G$-torsor $G\to S$,
	we get a schematically dominant family of  morphisms  $\{f_i:S\to X\times_S S=X\}_{i\in I}$.
	The induced map $\calO_X^{\Zar}\to \prod_i (f_i)_*\calO^{\Zar}_S$  is therefore injective
	(notation as in Proposition~\ref{PR:sch-dom-equiv-with-SGA}).
	Since $S$ is reduced, this means that $\calO^{\Zar}_X$ has no nilpotent sections, hence $X$ is reduced.
	
	(iii) Applying strong versality to the $G$-torsor $G\to S$,
	we get a   morphism
	$f:\bbA^n_S\to X$ such that induced map $\calO^{\Zar}_X\to f_*\calO^{\Zar}_{\bbA^n_S}$
	is   injective. Since $S$ is integral, so is $ {\bbA^n_S}$,
	and thus   $f_*\calO_{\bbA^n_S}^{\Zar}$
	is a sheaf of integral domains. It follows that $\calO_X^\Zar$
	is also a sheaf of integral domains, hence $X$ is integral.
	
	(iv) Let $s\in S$,   put $X_s=X\times_S \Spec k(s)$, and define $E_s$ and $G_s$
	similarly.  By  Proposition~\ref{PR:versality-under-base-change}, the $G_s$-torsor $E_s\to X_s$
	is strongly versal for $\{\Spec k(s)\}$, so $X_s$ is integral by (iii).
\end{proof}

\begin{prp}\label{PR:reduction-to-overgroup}
	Let $G$ be an   $S$-group and let $H$ be a subgroup
	of $G$ that is flat and locally of finite presentation over
	$S$. Let $E\to X$ be a $G$-torsor that is weakly  
	$d$-versal for a class of  $S$-schemes $\catC$.
	Then $E\to E/H$ is an $H$-torsor that is 
	weakly   $d$-versal for $\catC$.  
\end{prp}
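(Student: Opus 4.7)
The plan is to reduce the claim to the weak $d$-versality of $E\to X$ via the standard passage between $H$-torsors and $G$-torsors with a reduction of structure group. First, because $H\subseteq G$ is flat and locally of finite presentation over $S$ and acts freely on $E$ (the free action coming from the $G$-torsor structure on $E$), Lemma~\ref{LM:free-action-quotient} guarantees that $E/H$ is an $S$-algebraic space and that $E\to E/H$ is genuinely an $H$-torsor, so the statement makes sense.

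Now let $E'\to X'$ be an $H$-torsor with $X'\in\catC$. Form the induced $G$-torsor $\tilde{E}':=E'\times^{H}G\to X'$ (see Section~\ref{sec:torsors}). Applying weak $d$-versality of $E\to X$ to $\tilde{E}'$, we obtain an open $U\subseteq X'$ with $\codim(X'-U,X')>d$, an $S$-morphism $f:U\to X$, and a $G$-torsor isomorphism $\psi:\tilde{E}'|_{U}\cong_{U}f^{*}E$.

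The key step is to upgrade $f$ to a morphism $\tilde{f}:U\to E/H$ together with an $H$-torsor isomorphism $\tilde{f}^{*}E\cong_{U}E'|_{U}$. Consider the canonical $H$-equivariant morphism $E'|_{U}\to\tilde{E}'|_{U}$ given on sections by $e'\mapsto[e',1_{G}]$. Composing it with $\psi$ and then with the second projection $f^{*}E=U\times_{X}E\to E$ yields an $H$-equivariant $S$-morphism $\hat{f}:E'|_{U}\to E$. Since $E\to E/H$ is the quotient by the right $H$-action, $\hat{f}$ descends to an $S$-morphism $\tilde{f}:U\to E/H$ with $p\circ\hat{f}=\tilde{f}\circ q$, where $p:E\to E/H$ and $q:E'|_{U}\to U$ are the structure maps. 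The pair $(\hat{f},\tilde{f})$ is therefore a morphism in $\Tors(H)$ from $E'|_{U}\to U$ to $E\to E/H$, and by Remark~\ref{RM:morphism-pullback-correspondence} it produces the required $H$-torsor isomorphism $\tilde{f}^{*}E\cong_{U}E'|_{U}$.

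The only point that requires checking is that the composite $E'|_{U}\to E$ is well-defined and $H$-equivariant; this is immediate because the inclusion $e'\mapsto[e',1_{G}]$ is $H$-equivariant (using that $H\subseteq G$), $\psi$ is $G$-equivariant hence $H$-equivariant, and the projection $f^{*}E\to E$ is $G$-equivariant. No further obstacle arises: the codimension condition on $U$ is inherited unchanged from the one provided by the weak $d$-versality of $E\to X$.
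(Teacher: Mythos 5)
Your proposal is correct and follows essentially the same route as the paper: form the induced $G$-torsor $E'\times^{H}G$, invoke weak $d$-versality of $E\to X$ to get a $G$-equivariant map into $E$, compose with the canonical $H$-equivariant inclusion $E'\to E'\times^{H}G$, and apply Remark~\ref{RM:morphism-pullback-correspondence}. Your write-up is merely a bit more explicit about the descent to $E/H$ and the $H$-equivariance checks; there is no substantive difference.
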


\begin{proof}
	Lemma~\ref{LM:free-action-quotient}
	and our assumptions on $H$ imply that $E\to E/H$ is an $H$-torsor.
	
	Let $E'\to X'$ be an $H$-torsor with $X'\in\catC$,  
	put $P = E'\times^H G$ and let $i:E'\to P$
	denote the morphism given by $i(e)=[e,1]$ on sections. 
	Then $P\to X'$ is a $G$-torsor and $i$ is $H$-equivariant.
	Since $E\to X$ is weakly $d$-versal for $\catC $,
	there exists 
	an open subscheme $U\subseteq X'$ with $\codim(X'-U,X')>d$
	and 	
	a $G$-equivariant morphism
	$P|_U\to E$. The composition $E'|_U\to P|_U\to E$
	is $H$-equivariant, and thus induces
	a morphism of $H$-torsors
	from $E'|_U\to U$
	to $E\to E/H$. We finish by Remark~\ref{RM:morphism-pullback-correspondence}.
\end{proof}

Our next proposition  shows that weakly versal $G$-torsors
which admit an action by another $S$-group $H$
(in the sense of Section~\ref{sec:torsors}) 
that is   fppf transitive on the base (see Section~\ref{sec:schematically-dom}) 
are sometimes
versal, or even strongly versal.

\begin{prp}\label{PR:symmetric-weak-versal-is-strong}
	Let $\catC$ be a class of   $S$-schemes,
	let $X$ be an $S$-algebraic space
	and let $ E\to X$ be a $G$-torsor that is weakly $d$-versal for
	$\catC$.
	Suppose that there is an $S$-group $H$
	acting on $E\to X$ (from the left) such
	that the action of
	$H$ on $X$ is fppf transitive.
	Then:
	\begin{enumerate}[label=(\roman*)]
		\item If the family of $S$-sections of $H$ is $S$-universally
		schematically dominant, then $E\to X$ is $d$-versal for $\catC$.
		\item If there is an $S$-fppf surjective morphism $\bbA^n_S\to H$
		for some $n\in\N\cup\{0\}$, then $E\to X$ is strongly $d$-versal for $\catC$.
	\end{enumerate}
\end{prp}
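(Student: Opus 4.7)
The plan is to use weak $d$-versality to obtain a single morphism $f:U\to X$ specializing the given torsor, and then to produce the families or affine parametrization required by $d$-versality and strong $d$-versality by translating $f$ with $S$-sections of $H$ (for (i)) or with the morphism $\theta:\bbA^n_S\to H$ (for (ii)). The key identification is the following: if $H$ acts on $E\to X$ compatibly with an action on $X$, then for any morphism $g:T\to X$ of $S$-schemes and any section $h\in H(T)$, the translated morphism $h\cdot g:T\to X$, $(h\cdot g)(t):=h(t)\cdot g(t)$, satisfies $(h\cdot g)^*E\cong_T g^*E$ in $\Tors(G,T)$. Indeed, the map $(t,e)\mapsto (t,h(t)\cdot e)$ between $T'$-sections of $g^*E$ and $(h\cdot g)^*E$ is $G$-equivariant because the $H$- and $G$-actions on $E$ commute. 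This identification is applied repeatedly below.

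For (i): Given a $G$-torsor $E'\to X'$ with $X'\in\catC$, weak $d$-versality supplies an open $U\subseteq X'$ with $\codim(X'-U,X')>d$ and an $S$-morphism $f:U\to X$ with $E'|_U\cong_U f^*E$. For each $S$-section $h\in H(S)$, let $h|_U:=h\circ\pi_U\in H(U)$ (where $\pi_U:U\to S$ is the structure morphism) and define $f_h:U\to X$ by $f_h=\alpha_X\circ(h|_U,f)$, where $\alpha_X:H\times_S X\to X$ is the action. The core identification gives $f_h^*E\cong_U f^*E\cong_U E'|_U$. To verify the dominance clause of Definition~\ref{DF:versality-full}(ii), introduce $\mu:H\times_S U\to X\times_S U$, $(h,u)\mapsto (h\cdot f(u),u)$. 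Lemma~\ref{LM:strongly-trans}, applied with $Y=U$, shows that $\mu$ is $S$-fppf surjective and hence universally schematically dominant. Observe that $(f_h,\id_U)=\mu\circ (h|_U,\id_U)$, where $(h|_U,\id_U):U\to H\times_S U$ is just the pullback of $h:S\to H$ along $\pi_U$. The hypothesis that $\{h:S\to H\}_{h\in H(S)}$ is $S$-universally schematically dominant therefore implies that $\{(h|_U,\id_U):U\to H\times_S U\}_{h\in H(S)}$ is $U$-universally schematically dominant (one checks that pulling back this family along any $U'\to U$ agrees with the pullback of $\{h\}_h$ along $U'\to S$), and composing with the universally schematically dominant $\mu$ yields the required $U$-universal schematic dominance of $\{(f_h,\id_U)\}_{h\in H(S)}$.

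For (ii): Let $\theta:\bbA^n_S\to H$ be the given $S$-fppf surjective morphism and keep $f,U$ as above. Define
\[
f':\bbA^n_S\times_S U\to X,\qquad f'(a,u)=\theta(a)\cdot f(u),
\]
i.e., $f'=\alpha_X\circ(\theta\circ p_1,f\circ p_2)$ with $p_1,p_2$ the projections. Viewing $f'$ as the translation of $f\circ p_2$ by $\theta\circ p_1\in H(\bbA^n_S\times_S U)$, the core identification yields $(f')^*E\cong p_2^*f^*E\cong \bbA^n_S\times_S E'|_U$ as $G$-torsors over $\bbA^n_S\times_S U$. Finally, $(f',p_2):\bbA^n_S\times_S U\to X\times_S U$ factors as
\[
\bbA^n_S\times_S U\xrightarrow{\theta\times\id_U} H\times_S U\xrightarrow{\mu} X\times_S U;
\]
the first factor is $S$-fppf surjective because $\theta$ is, hence universally schematically dominant by Lemma~\ref{LM:fppf-surj-is-univ-sch-dom}, and $\mu$ is universally schematically dominant by Lemma~\ref{LM:strongly-trans}, so $(f',p_2)$ is universally schematically dominant. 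The main substantive step is the core $H$-translation isomorphism; once it is in hand, both (i) and (ii) reduce cleanly to Lemmas~\ref{LM:strongly-trans} and~\ref{LM:fppf-surj-is-univ-sch-dom}, and no serious obstacle is expected.
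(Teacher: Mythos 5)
Your proposal is correct and follows essentially the same route as the paper's proof: fix one specialization $(\hat f,f)$ from weak $d$-versality, translate it by sections of $H$ (via elements of $H(S)$ in (i), or via $\theta:\bbA^n_S\to H$ in (ii)), and combine Lemma~\ref{LM:strongly-trans} with the hypothesis on $H$ to get the required ($U$-)universal schematic dominance of the resulting families. The only cosmetic difference is that you package the crucial observation as a standalone ``$H$-translation isomorphism'' $(h\cdot g)^*E\cong_T g^*E$, whereas the paper phrases the same fact by exhibiting the translated pairs $(\hat f_i,f_i)$ (resp.\ $(\hat w,w)$) directly as $G$-torsor morphisms and invoking Remark~\ref{RM:morphism-pullback-correspondence}.
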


\begin{proof}
	(i) Let $\{h_i:S\to H\}_{i\in I}$ be a collection of   schematically
	dominant $S$-morphisms; we view the $h_i$
	as elements of the group $H(S)$. 
	
	Let $E'\to X'$ be a $G$-torsor
	with $X'\in \catC$. Then there are an open $U\subseteq X'$ 
	with $\codim(X'-U,X')>d$ 
	and a $G$-torsor morphism $(\hat{f},f)$
	from $E'|_U$ to $E$.
	For every $i\in I$, define $f_i:U\to X$  by $f_i(u)=h_i \cdot f(u)$
	and $\hat{f}_i:E'|_U\to E$ by $\hat{f}_i(e)=h_i\cdot \hat{f}(e)$  on sections.
	One readily checks that each $(\hat{f}_i,f_i)$ is a $G$-torsor
	morphism from $E'_U$ to $E$. Thus, $f_i^*E\cong_U E'|_U$ for all $i\in I$
	(Remark~\ref{RM:morphism-pullback-correspondence}).

	By Lemma~\ref{LM:strongly-trans}
	the map  $g:H\times_S U\to X\times_S U$
	given section-wise by $g(h,u)= (h\cdot f(u), u)$ is universally
	schematically dominant. Our assumption on $H$
	implies that $\{(h_i)_U:U\to H\times_SU\}_{i\in I}$
	is $U$-universally schematically dominant,
	so the family $\{g\circ (h_i)_U:U\to X\times_SU\}_{i\in I}$
	is $U$-universally schematically dominant.
	On the level of sections, we have
	$(g\circ (h_i)_U)(u)=g(h_i,u)=(h_i\cdot f(u),u)=(f_i(u),u)$,
	so  
	$\{(f_i,\id_U):U\to X\times_S U\}_{i\in I}$ is $S$-universally
	schematically dominant. 
	We conclude that $E\to X$ is $d$-versal for $\catC$.

	(ii) Let $h:\bbA^n_S\to H$ be a universally schematically
	dominant morphism, and let $E'\to X'$, $U$,
	$(\hat{f},f)$ and $g$ be as in the proof of (i).
	Define $w: \bbA^n_S\times_S U\to X$
	by $w(v,u)=h(v)\cdot f(u)$ 
	and $\hat{w}:\bbA^n_S\times E'|_U\to E$
	by $\hat{w}(v,e)=h(v)\cdot \hat{f}(e)$  on sections.
	One readily checks that $(\hat{w},w)$ is a $G$-torsor
	morphism from $\bbA^n_S\times_S E'|_U$ to $E$, hence $w^*E\cong_{\bbA^n\times U} 
	\bbA^n_S\times E'|_U$.
	Moreover,  
	$(w,p_2):	\bbA^n_S\times_S U\to X\times_SU$
	coincides with $g\circ h_U$, which is universally schematically
	dominant because $g$ and $h$ are.
\end{proof}

Putting everything together, we arrive at the following corollary, which reduces
the existence of strongly $d$-versal torsors for linear group schemes to the existence
of certain weakly $d$-versal   vector bundles.

\begin{cor}\label{CR:symmetric-weak-versal-is-strong}
	Let $S_0$ be a scheme,
	let $\catC$ be a class of   $S_0$-schemes
	and let $n\in\N$.
	Suppose that  there exists
	an $S_0$-scheme $X_0 $ and a 
	rank-$n$ vector bundle $V_0$ over $X_0$ that is \emph{weakly} $d$-versal for $\catC$.
	Let $E_0\to X_0$ be the $\uGL_n(S_0)$-torsor corresponding to $V_0$,
	and suppose 
	moreover that   there are $m_1,\dots,m_r\in \N$ 
	such that $H_0 :=\uSL_{m_1}(S_0)\times \dots\times \uSL_{m_r}(S_0)$ acts on $E_0 \to X_0 $
	(see Section~\ref{sec:torsors}) and the action of $H_0 $ on $E_0 $ 
	is fppf transitive\footnote{
		Caution: It is not enough to assume that $H_0$ acts transitively on $X_0$.
	}.
	Then, for every $S_0$-scheme $S$ and every   flat
	locally of finite presentation $S$-group $G$ that is isomorphic
	to a subgroup of $\uGL_n(S)$, there exists
	a  $G$-torsor over an $S$-algebraic space
	that is \emph{strongly} $d$-versal for $\catC/S$.
	Specifically, if $\rho: G\to \uGL_n(S)$ is a monomorphism of $S$-groups, then we can take
	this torsor to be $(E_0 \times_{S_0} S)\to (E_0 \times_{S_0} S)/G $, where $G$
	acts on $ E_0 \times_{S_0} S$ via $\rho$.
\end{cor}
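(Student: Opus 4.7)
The plan is to chain together the earlier results: base-change the given weakly $d$-versal $\uGL_n(S_0)$-torsor $E_0 \to X_0$ from $S_0$ to $S$, pass to the $G$-quotient using Proposition~\ref{PR:reduction-to-overgroup}, and finally promote weak $d$-versality to strong $d$-versality via Proposition~\ref{PR:symmetric-weak-versal-is-strong}(ii), using the $H_0$-action to supply both the fppf-transitivity and the needed fppf surjection from an affine space.

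First, I would set $E := E_0 \times_{S_0} S$ and $X := X_0 \times_{S_0} S$. By Remark~\ref{RM:versal-objects} combined with Proposition~\ref{PR:versality-under-base-change}, the $\uGL_n(S)$-torsor $E \to X$ (corresponding to the base-changed vector bundle) is weakly $d$-versal for $\catC/S$. Regarding $G$ as a subgroup of $\uGL_n(S)$ via the given monomorphism $\rho$, Lemma~\ref{LM:free-action-quotient} ensures that $E/G$ is an $S$-algebraic space and $E \to E/G$ is a $G$-torsor. Proposition~\ref{PR:reduction-to-overgroup}, applied with $\uGL_n(S)$ in the role of its ``$G$'' and our $G$ in the role of its ``$H$'', then shows that $E \to E/G$ is a weakly $d$-versal $G$-torsor for $\catC/S$.

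Second, I would transport the left $H_0$-action on $E_0 \to X_0$ to an $H := H_0 \times_{S_0} S$-action on $E \to X$. Since this action commutes with $\uGL_n(S)$, it in particular commutes with $G$, and hence descends to a left $H$-action on $E/G$ making $E \to E/G$ into an $H$-equivariant $G$-torsor. Fppf transitivity of the $H_0$-action on $E_0$ is preserved under base change (fppf surjectivity pulls back), giving fppf transitivity of $H$ on $E$; because $E \to E/G$ is fppf surjective by Lemma~\ref{LM:free-action-quotient}, the induced $H$-action on $E/G$ is fppf transitive as well. To supply the hypothesis on $H$ in Proposition~\ref{PR:symmetric-weak-versal-is-strong}(ii), I would apply Proposition~\ref{PR:SLn-generation} to each factor to obtain $\Z$-fppf surjective morphisms $\bbA^{N_i}_\Z \to \uSL_{m_i}(\Z)$, take their product, and base-change to $S$, yielding an $S$-fppf surjective morphism $\bbA^N_S \to H$ with $N = N_1 + \cdots + N_r$.

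Finally, Proposition~\ref{PR:symmetric-weak-versal-is-strong}(ii) applied to the weakly $d$-versal $G$-torsor $E \to E/G$, equipped with the fppf-transitive $H$-action and the morphism $\bbA^N_S \to H$, yields that $E \to E/G$ is strongly $d$-versal for $\catC/S$, as required. I do not anticipate a genuine obstacle here: the argument is essentially careful bookkeeping, threading the earlier results through a base change and a subgroup quotient. The only point that needs a brief verification is that fppf transitivity of $H$ on $E$ propagates to fppf transitivity on the quotient $E/G$; but this is immediate from the fppf surjectivity of $E \to E/G$ and the definition of fppf transitive action.
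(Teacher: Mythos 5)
Your proof is correct and follows essentially the same chain of reductions as the paper's own argument: base-change via Proposition~\ref{PR:versality-under-base-change}, pass to the $G$-quotient with Proposition~\ref{PR:reduction-to-overgroup}, and upgrade weak to strong $d$-versality via Proposition~\ref{PR:symmetric-weak-versal-is-strong}(ii) using the $H$-action and the $S$-fppf surjective morphism $\bbA^N_S \to H$ supplied by Proposition~\ref{PR:SLn-generation}. The only differences are cosmetic: you spell out a few intermediate points (transport of the $H$-action, descent to $E/G$, the product construction of $\bbA^N_S \to H$) that the paper leaves implicit.
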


\begin{proof}
	Let $\rho: G\to \uGL_n(S)$ be a monomorphism;
	we view $G$ as a subgroup of $\uGL_n( S)$ via $\rho$.
	Put $E= E_0\times_{S_0} S$, $X= X_0\times_{S_0} S$ and $H= H_0\times_{S_0} S$.
	By   Proposition~\ref{PR:versality-under-base-change}, 
	$E\to X$ is a $\uGL_n(S)$-torsor
	that is weakly $d$-versal for $\catC/S$.
	Thus, by Proposition~\ref{PR:reduction-to-overgroup},
	$E\to E/G $ is a $G $-torsor that is weakly $d$-versal
	for $\catC/S$. The action of $H$
	on $E\to X$ gives rise to an action of $H$ on $E\to E/G$,
	and $H$ acts fppf transitively
	on $E/G$ because it acts fppf transitively
	on $E$ and   $E\to E/G $ is $S$-fppf surjective.
	By Proposition~\ref{PR:SLn-generation}, there exists a universally
	schematically dominant morphism $\bbA^n_S\to H$, so
	Proposition~\ref{PR:symmetric-weak-versal-is-strong}
	tells us that $E\to E/G $ is strongly $d$-versal for $\catC/S$.
\end{proof}

\begin{remark}\label{RM:versal}
	In Corollary~\ref{CR:symmetric-weak-versal-is-strong},
	if we assume that $S_0$ is a scheme over an infinite field 
	and replace $ \uSL_{m_1}(S_0)\times \dots\times \uSL_{m_r}(S_0)$
	with $\uGL_{m_1}(S_0)\times \dots\times \uGL_{m_r}(S_0)$,
	then we can conclude that the $G$-torsor $(E_n\times_{S_0}S)\to (E_n\times_{S_0}S)/G$
	is   $d$-versal for $\catC/S$.
	To see this, replace Proposition~\ref{PR:SLn-generation} with Corollary~\ref{CR:sections-of-GLn}
	in the proof.
\end{remark}

In the next two sections, we will  construct  vector bundles
that   can 
be fed into Corollary~\ref{CR:symmetric-weak-versal-is-strong}.

\section{Generic Vector Bundles with a Fixed Number of Generators}
\label{sec:generic}

Let $m\geq n\geq 0$ be integers.
In this section, we recall two constructions of ``generic'' $m$-generated rank-$n$ locally free
modules
and describe  their corresponding $\uGL_n(\Z)$-torsors.
Here, being generic means that these vector bundles specialize to every
rank-$n$ vector bundle that is generated by $m$ global sections,
provided the base scheme is in some prescribed class of schemes, e.g.,
the affine   schemes.

\emph{Throughout this section, all schemes are over $\Z$.
The functor of points of a scheme means its functor of points when considered
as a $\Z$-scheme.}

Recall that if $X$ is a scheme, $\calM$
an $\calO_X$-module,
and $m_1,\dots,m_t\in \Gamma(X,\calM)$,
then $\calM$ is said to be generated by $m_1,\dots,m_t$
if the $\calO_X$-module map $\calO_X^t\to \calM$
given section-wise by $(a_1,\dots,a_t)\mapsto \sum_{i=1}^t a_i m_i$
is surjective. 
When $X=\Spec A$ for a ring $A$
and $\calM$ is quasi-coherent, $m_1,\dots,m_t$
generate $\calM$
if and only if they generate the $A$-module $M:=\Gamma(X,\calM)$ 
(but this is false in the non-affine case, e.g., see Example~\ref{EX:generators-for-Om} below).

\medskip

We begin with recalling the  generic $m$-generated rank-$n$ projective module
studied by M.\ Raynaud in \cite{Raynaud_1965_universal_proj_module}.
The idea is that every rank-$n$ projective module $P$ over a ring $R$
that is generated by $m$ elements is isomorphic to a summand of $R^{m}$,
and can therefore   be expressed as the image of a rank-$n$ idempotent
matrix $e\in\nMat{R}{m}$, or equivalently, the cokernel of $1-e$. 
Thus, the image of such  a universal idempotent matrix is
a rank-$n$ projective module which specializes to every rank-$n$ $m$-generated
projective module over a commutive ring;
an analogous statement holds for the corresponding vector bundle.
 
Formally, let $ X_{n,m}$ be the 
closed subscheme of $\mathbb{A}^{m\times m}_{\mathbb{Z}}$
(the affine space of   $m\times m$ matrices over $\Z$) cut by the matrix equations
$x^2=x$ and $\mathrm{char.pol.}(x)=t^{m-n}(t-1)^n$. 
The functor of points of $X_{n,m}$ maps a ring $R$ to the set of rank-$n$
idempotent matrices in $\nMat{R}{m}$.\footnote{
	The rank of an idempotent $e\in \nMat{R}{m}$
	is defined to be the rank of the projective module $\im(e)$.
	Equivalently, this is the function $\Spec R\to \N\cup\{0\}$
	mapping $\frakp\in\Spec R$ to the rank of the image of $e$ in   $ \nMat{k(\frakp)}{m}$.
}
Let $A_{n,m}$ denote the coordinate ring of $X_{n,m}$,
and let $x_{ij}\in A_{n,m}$ be the $(i,j)$-coordinate function.
Then $e_{n,m}:=(x_{ij})\in \nMat{A_{n,m}}{m}$ is an idempotent matrix
of rank $n$. Set $P_{n,m}$ to be $\coker(1-e_{n,m}:A_{n,m}^{m}\to A_{n,m}^{m})$.
Then $P_{n,m}$ is a projective module of rank $n$ over $A_{n,m}$
that is generated by $m$ elements. 
The rank-$n$ vector bundle over $X_{n,m}$
corresponding to $P_{n,m}$
is denoted $V_{n,m}$.

\begin{prp}\label{PR:generic-Vnd}
	Let $X$ be an affine scheme and let $V$ be a rank-$n$
	vector bundle over $X$ that  is generated by $m$
	global sections. Then 
	there is a morhpism $f:X\to X_{n,m}$
	such that $V\cong f^*V_{n,m}$.
\end{prp}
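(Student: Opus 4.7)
The plan is to exploit the fact that a vector bundle on an affine scheme is a projective module, so any surjection onto it splits, giving an idempotent endomorphism of a free module which in turn is precisely an $R$-point of $X_{n,m}$.

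First, write $X = \Spec R$, so that $V$ corresponds to a finitely generated projective $R$-module $P$ of rank $n$. The $m$ generating global sections give a surjection $\pi : R^m \twoheadrightarrow P$. Since $P$ is projective, $\pi$ admits a section $s : P \hookrightarrow R^m$, and then $e := s \circ \pi \in \End_R(R^m) = \nMat{R}{m}$ is idempotent with image isomorphic to $P$.

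Next, I would verify that $e$ is an $R$-point of $X_{n,m}$, i.e.\ that its characteristic polynomial is $t^{m-n}(t-1)^n$. For any prime $\frakp \in \Spec R$, the localization $e_\frakp \in \nMat{R_\frakp}{m}$ is an idempotent whose image $P_\frakp$ is free of rank $n$; lifting a basis of $P_\frakp$ and a basis of $\ker(e_\frakp) = (1-e_\frakp)R_\frakp^m$ produces a basis of $R_\frakp^m$ with respect to which $e_\frakp = \mathrm{diag}(1,\dots,1,0,\dots,0)$ with $n$ ones. The characteristic polynomial is thus $t^{m-n}(t-1)^n$ pointwise; as its coefficients lie in $R$ and agree with $t^{m-n}(t-1)^n$ after every localization, they must be equal globally. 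Therefore $e$ defines, via the functor of points description of $X_{n,m}$, a morphism $f : X \to X_{n,m}$.

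Finally, I would check that $f^*V_{n,m} \cong V$. By construction of $X_{n,m}$, the universal idempotent $e_{n,m} \in \nMat{A_{n,m}}{m}$ pulls back along $f$ to $e$. Pullback commutes with cokernels of morphisms of free modules, so
\[
f^*V_{n,m} = f^* \coker\!\bigl(1 - e_{n,m} : A_{n,m}^m \to A_{n,m}^m\bigr) \;\cong\; \coker\!\bigl(1 - e : R^m \to R^m\bigr).
\]
Because $e$ is idempotent, $\coker(1-e) = \im(e) = s(P) \cong P$, and hence $f^*V_{n,m} \cong V$. There is no real obstacle here; the only point requiring any care is the characteristic polynomial check, which is a short pointwise computation.
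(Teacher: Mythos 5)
Your proof is correct and follows the same strategy as the paper's: realize $V$ as a projective module $P$, present $P$ as the image of an idempotent $e \in \nMat{R}{m}$ obtained by splitting the given surjection $R^m \twoheadrightarrow P$, and let $f$ be the morphism classified by $e$. The paper compresses the idempotent step into one line; your write-up usefully makes explicit both the construction of $e$ from a splitting and the verification that its characteristic polynomial is $t^{m-n}(t-1)^n$ (which is what places $e$ in $X_{n,m}(R)$), but these are details, not a different route.
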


\begin{proof}
	Write $X=\Spec A$ and let $P=\Gamma(X,V)$.
	Then $P$ is a rank-$n$ projective $A$-module that is generated by
	$m$ elements. Thus, there is a rank-$n$ idempotent
	$e \in \nMat{A}{m}$ and an $A$-module isomorphism
	$P\cong \coker(1-e : A^{m}\to A^{m})$.
	The    ring homomorphism $\vphi: A_{n,m}\to A$ which specializes $e_{n,m}$ to $e$
	satisfies $P_{n,m}\otimes_{A_{n,m}}A =\coker(1-e_{n,m})\otimes_{A_{n,m}}A 
	\cong \coker(1-e)\cong P$ and the proposition follows.
\end{proof}

We now describe   the $\uGL_n(\Z) $-torsor  $E_{n,m}\to X_{n,m}$
corresponding to $V_{n,m}$.

\begin{prp}\label{PR:GLn-torsor-of-Vnd}
	Let $m\geq n\geq 0$ be integers and put $d=m-n$.
	With notation as above, the $\uGL_n(\Z)$-torsor corresponding
	to the vector bundle $V_{n,m}$ over $X_{n,m}$ is isomorphic to
	the quotient morphism
	\[\uGL_{m}(\Z)/
	\begin{bmatrix}
	1_{n\times n} & 0 \\
	0 & \uGL_d(\Z)
	\end{bmatrix}
	\to 
	\uGL_{m}(\Z)/
	\begin{bmatrix}
	\uGL_n(\Z) & 0 \\
	0 & \uGL_d(\Z)
	\end{bmatrix}.\]
	Here, 
	$\uGL_n(\Z)$ acts on $\uGL_{m}(\Z)/[\begin{smallmatrix}
	1  & 0 \\
	0 & \uGL_d(\Z)
	\end{smallmatrix}]$ from the right
	via $x\cdot g= x[\begin{smallmatrix} g & 0 \\ 0 & 1_{d\times d} \end{smallmatrix}]$.
\end{prp}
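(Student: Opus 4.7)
The plan is to identify both the $\uGL_n(\Z)$-torsor $E_{n,m}\to X_{n,m}$ corresponding to $V_{n,m}$ and the stated quotient morphism with one and the same natural functor on $\Z$-schemes, and then match up the $\uGL_n(\Z)$-actions.

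First, using Lemma~\ref{LM:GLn-torsor-of-bundle}, I would describe the functor of points of $E_{n,m}\to X_{n,m}$. A $\Z$-morphism $Y\to E_{n,m}$ is a pair $(f,v_1,\dots,v_n)$ where $f:Y\to X_{n,m}$ corresponds to a rank-$n$ idempotent endomorphism $e$ of $\calO_Y^{m}$, and $(v_1,\dots,v_n)$ is an ordered basis of $f^*V_{n,m}$. Since $\coker(1-e)\cong \im(e)$ canonically (as $\calO_Y^m=\im(e)\oplus\ker(e)$), the datum is equivalently a rank-$n$ idempotent $e\in \nEnd[\calO_Y]{\calO_Y^m}$ together with an ordered basis $(v_1,\dots,v_n)$ of the summand $\im(e)$. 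The projection to $X_{n,m}$ forgets the basis, and the right $\uGL_n(\Z)$-action is change of basis: $(e,v_1,\dots,v_n)\cdot g=(e,\sum_i g_{i1}v_i,\dots,\sum_i g_{in}v_i)$.

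Next, I would compute the functors of points of the two homogeneous spaces. Writing $H=[\begin{smallmatrix}1_{n\times n}&0\\0&\uGL_d(\Z)\end{smallmatrix}]$ and $H'=[\begin{smallmatrix}\uGL_n(\Z)&0\\0&\uGL_d(\Z)\end{smallmatrix}]$, I claim that, as fppf sheaves on $\Z$-schemes, $\uGL_m(\Z)/H$ classifies triples $(v_1,\dots,v_n;L)$ with $v_i\in \Gamma(Y,\calO_Y^m)$ and $L\subseteq \calO_Y^m$ a rank-$d$ subbundle such that $\calO_Y^m=\Span(v_1,\dots,v_n)\oplus L$ and the $v_i$ form a basis of the first summand; while $\uGL_m(\Z)/H'$ classifies pairs $(P,L)$ of complementary subbundles of ranks $n$ and $d$. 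This follows by the standard "columns of an invertible matrix" argument: a section of $\uGL_m(\Z)$ over $Y$ is an ordered basis of $\calO_Y^m$, quotienting by $H$ retains the first $n$ basis vectors and the span $L$ of the last $d$ (but forgets their basis), and quotienting further by the upper-left $\uGL_n$ block forgets the basis of $\Span(v_i)$ as well. The sheafification step is innocuous because any rank-$d$ direct summand is Zariski-locally free. Under the bijection $(P,L)\leftrightarrow$ (projection onto $P$ along $L$), the functor $\uGL_m(\Z)/H'$ is naturally isomorphic to $X_{n,m}$.

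With these identifications in hand, I would define the isomorphism $E_{n,m}\xrightarrow{\sim}\uGL_m(\Z)/H$ over $X_{n,m}$ section-wise by
\[
(e,v_1,\dots,v_n)\longmapsto (v_1,\dots,v_n;\ker(e)),
\]
with inverse sending $(v_1,\dots,v_n;L)$ to the pair $(e,v_1,\dots,v_n)$, where $e$ is the idempotent projecting onto $\Span(v_i)$ along $L$. Both operations are mutually inverse, natural in $Y$, and compatible with the maps to $X_{n,m}\cong \uGL_m(\Z)/H'$ on both sides (they induce the decomposition $(P,L)=(\im(e),\ker(e))$). Finally, the right $\uGL_n(\Z)$-action on $\uGL_m(\Z)/H$ specified in the statement is $x\cdot g=x[\begin{smallmatrix}g&0\\0&1_{d\times d}\end{smallmatrix}]$, which on the triple $(v_1,\dots,v_n;L)$ replaces each $v_j$ by $\sum_i g_{ij}v_i$ and leaves $L$ unchanged; this agrees with the $\uGL_n(\Z)$-action on $E_{n,m}$ from Lemma~\ref{LM:GLn-torsor-of-bundle} described above, so the isomorphism is $\uGL_n(\Z)$-equivariant. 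The only delicate point is the explicit identification of the two homogeneous spaces with their natural moduli interpretations, but this is a purely formal computation with cosets once one recognizes that the subgroup $H$ exactly stabilizes a flag of the form (first $n$ basis vectors; span of the remaining $d$).
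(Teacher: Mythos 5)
Your proof is correct, and it reaches the same endpoint as the paper's but via a somewhat different organization. The paper's proof introduces an explicit auxiliary affine scheme $T_{n,m}$ whose $R$-points are triples $(a,x,b)$ with $ba=x$, $ab=1_{n\times n}$; it then identifies $T_{n,m}$ with $E_{n,m}$ via Lemma~\ref{LM:GLn-torsor-of-bundle} and proves the homogeneous-space description by an orbit--stabilizer argument: it exhibits the $\uGL_m(\Z)$-action, proves (by an explicit, hands-on block-matrix computation, after a Zariski localization making $\ker(e)$ free) that this action is fppf transitive on $T_{n,m}$, and computes the stabilizer of the base point $t_0$. You instead skip the auxiliary scheme and directly compute the functors of points of $\uGL_m(\Z)/H$ and $\uGL_m(\Z)/H'$ as moduli of partial frames, resp.\ complementary subbundle pairs, and match them against the description of $E_{n,m}$ from Lemma~\ref{LM:GLn-torsor-of-bundle}. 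The two approaches are mathematically equivalent: identifying the sheaf quotient $\uGL_m(\Z)/H$ with the moduli of partial frames is exactly the content of the paper's transitivity computation (both hinge on the fact that a rank-$d$ direct summand of $\calO_Y^m$ is Zariski-locally free). What your route buys is conceptual transparency and avoids carrying the dual projection matrix $a$ through the argument; what the paper's route buys is a completely explicit, self-contained verification of the transitivity that does not appeal to a "standard" moduli description of a partial flag quotient. If you wanted to make your proof fully rigorous you would need to spell out the sheafification step (that the presheaf quotient of $\uGL_m(\Z)$ by $H$ sheafifies to the stated moduli functor because any rank-$d$ direct summand of $\calO_Y^m$ is Zariski-locally free and the fibers of frames $\to$ partial frames are precisely $H$-orbits), which is exactly what the paper's Step~4 does in disguise.
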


\begin{proof}
	\Step{1} 
	We first introduce an auxiliary morphism of affine schemes $p:T_{n,m}\to X_{n,m}$.
	The functor of points of $T_{n,m}$ (as a $\Z$-scheme) maps a ring $A$ to
	\begin{align*}
	T_{n,m}(A)=
	\{(a,x,b)\in \nMat{A}{n\times m}&\times \nMat{A}{m}\times \nMat{A}{m\times n}
	\suchthat \\
	& x\in X_{n,d}(A),ba=x, ab=1_{n\times n}\}.
	\end{align*}
	and $p:T_{n,m}\to X_{n,m}$ is given section-wise by 
	$p(a,x,b)=x$.
	The $\Z$-group $\uGL_{n}(\Z)$  acts on $T_{n,m}$ from the right   by  
   	$
   	(a,x,b)\cdot g= (g^{-1}a,x,bg) 
   	$
   	on sections.

\smallskip   
   	
	\Step{2} 
	Let $E\to X_{n,m}$ be the $\uGL_n(\Z)$-torsor corresponding to $V_{n,m}$.
	We claim that there is a $\uGL_n(\Z)$-equivariant $X_{n,m}$-isomorphism
	from $T_{n,m}$ to $E$. In particular, $p:T_{n,m}\to X_{n,m}$ is a $\uGL_n(\Z)$-torsor.
	
   	Indeed, by Lemma~\ref{LM:GLn-torsor-of-bundle}, the functor of points
   	of $E$ sends an $A_{n,m}$-ring $R$
   	to the set of tuples
	$(f,v_1,\dots,v_n)$, where $f:\Spec R\to X_{n,m}$
	is a morphism and $v_1,\dots,v_n$
	form a basis to $f^*V_{n,m}$.
	The datum of $f$ is equivalent to specifying
	a rank-$n$ idempotent $e\in \nMat{R}{m}$,
	and then $f^*V_{n,m}$
	is just     $eR^{m}$.
	Thus, $E(R)$ is in a natural bijection
	with the set of tuples $(e,v_1,\dots,v_n)$
	where $e\in \nMat{R}{m}$ is an idempotent
	and $v_1,\dots,v_n\in R^{m}$ form an $R$-basis to $eR^{m}$.
	
	Let $e\in\nMat{R}{m}$ be an idempotent or rank $n$, 
	let $v_1,\dots,v_n\in R^{m}$
	and let   $b$ be the $ m \times n$ matrix whose columns are $v_1,\dots,v_n$.
   	Then $v_1,\dots,v_n$   form an $R$-basis of $eR^{m}$ if and only if
   	the map $b:R^n\to R^{m}$ is injective with image $eR^{m}$.
   	Since $R^{m}=eR^{m}\oplus (1-e)R^{m}$, 
   	the latter is equivalent to the existence of an $R$-module homomorphism
   	$a:R^{m}\to R^n$ (viewed as an $ m \times n$ matrix)
   	which vanishes on $(1-e)R^{m}$
   	and satisfies $ab=1_{n\times n}$
   	and $ba=\id_{e R^{m}}\oplus 0_{(1-e)R^{m}}=e$.
   	Such $a$ is determined uniquely by $b$ and satisfies
   	$ae=a$. We have therefore
   	shown that each  $(e,v_1,\dots,v_n)\in E(R)$  determines 
   	a triple $(a,e,b)\in T_{n,m}(R)$. Conversely,
   	every $(a,e,b)\in T_{n,m}(R)$ determines
   	a tuple $(e,v_1,\dots,v_n)\in E(R)$, by taking
   	$v_1,\dots,v_n$ to be the columns of $b$.
	It is routine to check that this one-to-one correspondence is compatible
   	with base-change of $R$ and is $\nGL{R}{n}$-equivariant, so it determines
   	a $\uGL_n(\Z)$-equivariant isomorphism $E\cong T_{n,m}$,
   	which is also easily seen to be an isomorphism over $X_{n,m}$.

\medskip
   	
   	\Step{3} We finish by showing that $T_{n,m}\to X_{n,m}$
   	is isomorphic to the torsor defined in the proposition.
   	Observe that $\uGL_{m}(\Z)$ acts on $T_{n,m}$
   	and $X_{n,m}$   from the left
   	by
   	\[
   	h\cdot (a,x,b)  := (ah^{-1}, hxh^{-1}, hb) 
   	\qquad\text{and}\qquad
   	h\cdot x = hxh^{-1}, 
   	\] 
   	respectively, and the map $p:T_{n,m}\to X_{n,m}$ is $\uGL_{m}(\Z)$-equivariant.
   	We  claim   that $\uGL_{m}(\Z)$ acts fppf transitively on $T_{n,m}$
   	and $X_{n,m}$. Provided this holds, consider the $\Z$-point
   	\[
   	t_0=(a_0,x_0,b_0):= \Circs{\begin{bmatrix}
   	1_{n\times n} & 0_{n\times d}
   	\end{bmatrix},
   	\begin{bmatrix}
   	1_{n\times n} & 0_{n\times d} \\
   	0_{d\times n} & 0_{d\times d}
   	\end{bmatrix},
   	\begin{bmatrix}
   	1_{n\times n}  \\
   	0_{d\times n}  
   	\end{bmatrix}}\in T_{n,m}(\Z)
   	\]
   	and its image $x_0\in X_{n,m}(\Z)$.
   	One readily checks that the stablizer of $t_0$ in $\uGL_{m}(\Z)$
   	is $[\begin{smallmatrix}
	1  & 0 \\
	0 & \uGL_d(\Z)
	\end{smallmatrix}]$,
   	so we have an isomorphism $\uGL_{m}(\Z)/[\begin{smallmatrix}
	1  & 0 \\
	0 & \uGL_d(\Z)
	\end{smallmatrix}]\to T_{n,m}$
   	given section-wise by sending $[h]$ to $h t_0$.
   	Similarly, $[h]\mapsto h\cdot x_0: \uGL_{m}(\Z)/[\begin{smallmatrix}
	\uGL_n(\Z)  & 0 \\
	0 & \uGL_d(\Z)
	\end{smallmatrix}]\to X_{n,m}$
   	defines an isomorphism and the proposition follows.
   	
\smallskip   	
   	
   	\Step{4} It remains to show that $\uGL_{m}(\Z)$ acts fppf transitively on $T_{n,m}$,
   	and {\it a fortiori} on $X_{n,m}$. To see this, let $R$ be a ring a ring
   	and let $(a,x,b)\in T_{n,m}(R)$.
   	Then $b$ defines an isomorphism from $R^n$ to $xR^{m}$.
   	By replacing $R$ with a Zariski covering (i.e., a product of localizations
   	$R_{s_1}\times \dots \times R_{s_t}$ with $s_1,\dots,s_t\in R$ generating the unit
   	ideal), we may assume that     $(1-x)R^{m}$ is a free $R$-module.
   	Let $c:R^d\to (1-x) R^{m}$ be an isomorphism, viewed as an $ m \times d$ matrix.
   	Then $xc=0$ and $(1-x)c=c$.
   	Let $h=\begin{bmatrix}
   	b & c
   	\end{bmatrix}\in \nGL{R}{n}$.
   	We have 
   	\begin{align*}
   	&hb_0=b, \\
   	&x h = \begin{bmatrix}
   	xb & xc
   	\end{bmatrix}=\begin{bmatrix}
   	b & 0
   	\end{bmatrix} =hx_0,\\
   	&ah=\begin{bmatrix}
	ab & ac 
\end{bmatrix}= \begin{bmatrix}
	ab & abac 
\end{bmatrix} = \begin{bmatrix}
	ab & ax(1-x)c 
\end{bmatrix}=\begin{bmatrix}
	1_{n\times n} & 0_{n\times d}  
\end{bmatrix}=a_0.
	\end{align*}
	This means that   that $h\cdot t_0 = (a,x,b)$,
	completing the    proof.
\end{proof}

We proceed with showing that the dual of the tautological rank-$n$
vector bundle over the Grassmannian of $n$-planes inside an $m$-plane
also specializes to every rank-$n$ vector bundle $V$ that is generated by $m$ global sections.
However, in contrast with $V_{n,m}$, this is true even if the base scheme of $V$ is not affine, 
and there is a natural bijection between the set of specializations and the 
set of $m$-tuples
generating $V$. That said, $V_{n,m}$ has the advantage that its base scheme $X_{n,m}$
is affine, and therefore --- as we shall see in Section~\ref{sec:existence} ---
it will ultimately give rise to $d$-versal torsors whose base is  affine.

For the sake of simplicity,  
given a morphism of schemes $f:Y\to X$
and a vector bundle $V$ over $X$, we shall   identify
$V$ with $V^{\vee\vee}$ and $f^*(V^\vee)$ with $(f^* V)^\vee$
via the standard natural isomorphisms. We also identify
$\calO_X^\vee$ with $\calO_X$ and $f^*\calO_X$ with $\calO_Y$ in the usual way.

\medskip

Let  $m\geq n\geq 0$  be integers.
We let $\Gr(n,m)$ denote the Grassmannian $\mathbb{Z}$-scheme of $n$-planes inside
an $m$-plane; see 
\cite[Tag \href{https://stacks.math.columbia.edu/tag/089R}{089R}]{stacks_project},
for instance.
The functor of points of $\Gr(n,m)$ sends a  scheme
$X$ to the set of rank-$n$ subbundles
of $\calO_X^{m}$, i.e.,
the set of rank-$n$ locally free $\calO_X$-submodules $\calV$ of $\calO_X^{m}$
for which   $\calO^{m}_X/\calV$ is also   locally free.
The tautological rank-$n$ vector bundle on $\Gr(n,m)$ is denoted $V(n,m)$
and  
its functor of points (as a $\Z$-scheme) is given by
\[
V(n,m)(X) = \{(v,\calV)\where \calV\in \Gr(n,m)(X), \, v\in \Gamma(X,\calV)\}.
\]
The morphism   $(v,\calV)\mapsto v:V(n,m)\to \calO_{\Gr(n,m)}^{m}$ 
makes $V(n,m)$ into a subbundle of $\calO_{\Gr(n,m)}^{m}$.
(That   $\calO_{\Gr(n,m)}^{m}/V(n,m)$ is locally free
can be checked using the standard affine covering of $\Gr(n,m)$,
see \cite[Tag \href{https://stacks.math.columbia.edu/tag/089R}{089R}]{stacks_project}.)
Dualizing the embedding $V(n,m)\to \calO_{\Gr(n,m)}^{m}$
gives an $\calO_X$-epimorphism 
\[p_0: \calO_{\Gr(n,m)}^{m}\to V(n,m)^\vee. \]
Let $e_1,\dots,e_{m}$ denote the standard basis
of $\calO_{\Gr(n,m)}^{m}$ and let $v^{(n,m)}_i$ be the image of $e_i$
in $\Gamma(\Gr(n,m),V(n,m)^\vee)$.
Then the  $m$-tuple $(v_1^{(n,m)},\dots,v_{m}^{(n,m)})$
generates $V(n,m)^\vee$; we call it the canonical tuple of generators
of $V(n,m)^\vee$.

\begin{lem}\label{LM:tautological-bundle-pullback}
	Let $X$ be a scheme and let $V\in \Gr(n,m)(X)$,
	namely, $V$ is a rank-$n$ subbundle of $\calO_X^{m}$.
	Let $f:X\to \Gr(n,m)$ be the morphism corresponding to $V$.
	Then there is canonical isomorphism $a_V:f^*V(n,m)\to V$
	that is characterized by the commutativity of the following diagram.
	\[
	\xymatrix{
	V \ar@{^{(}->}[r] &
	\calO_X^{m} \ar@{=}[d] \\
	f^*V(n,m) \ar@{.>}[u]^{a_V}   \ar[r]_{f^*(p_0^\vee)}
	&
	f^*\calO^m_{\Gr(n,m)}
	}
	\]
	In particular, $V=\im f^*(p_0^\vee)$.
\end{lem}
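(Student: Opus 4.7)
The plan is to unpack the universal property of $\Gr(n,m)$ as a representable functor. By the description of its functor of points, the rank-$n$ subbundle $V\hookrightarrow \calO_X^m$ corresponds to $f:X\to \Gr(n,m)$ precisely in the sense that there is a (unique) isomorphism $a_V:f^*V(n,m)\to V$ making the diagram
\[
\xymatrix{
f^*V(n,m) \ar[r]^-{f^*\iota} \ar[d]_{a_V} & f^*\calO_{\Gr(n,m)}^m \ar@{=}[d] \\
V \ar@{^{(}->}[r]^-{\jmath} & \calO_X^m
}
\]
commute, where $\iota:V(n,m)\hookrightarrow \calO_{\Gr(n,m)}^m$ is the universal inclusion, $\jmath:V\hookrightarrow \calO_X^m$ is the given inclusion, and the right vertical arrow is the canonical identification.

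First I would   reconcile the morphism $f^*(p_0^\vee)$ with $f^*\iota$. By construction, $p_0:\calO_{\Gr(n,m)}^m\to V(n,m)^\vee$ is the dual of $\iota$, once $(\calO_{\Gr(n,m)}^m)^\vee$ is identified with $\calO_{\Gr(n,m)}^m$ via the standard isomorphism. Dualizing once more and using the standard isomorphism $V(n,m)^{\vee\vee}\cong V(n,m)$, we get $p_0^\vee=\iota$. Applying $f^*$ (and  its compatibility with the canonical identifications) yields $f^*(p_0^\vee)=f^*\iota$.

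Substituting this into the diagram above shows that $a_V$ makes the diagram in the lemma commute. Conversely, any $a_V$ making that diagram commute is uniquely determined, because $\jmath$ is a monomorphism: if $\jmath\circ a=\jmath\circ a'=f^*(p_0^\vee)$, then $a=a'$. Finally, the image of $f^*(p_0^\vee)=\jmath\circ a_V$ equals the image of $\jmath$, which is $V$, giving the last claim.

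I do not expect any real obstacle; the argument is essentially a formal consequence of the representability of $\Gr(n,m)$ together with the naturality of duality and pullback for vector bundles. The only item requiring a moment of care is the bookkeeping needed to identify $f^*(p_0^\vee)$ with $f^*\iota$ under the canonical double-dual isomorphisms, which is what makes the assertion \emph{canonical} rather than merely an isomorphism up to choice.
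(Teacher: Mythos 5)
Your proof is correct and follows essentially the same route as the paper's: both arguments boil down to the representability of $\Gr(n,m)$ and the identification of $V(n,m)\hookrightarrow\calO^m_{\Gr(n,m)}$ as the universal subbundle, combined with the observation that a subbundle inclusion pulls back to a monomorphism (so $a_V$ exists, and it is unique because the inclusion of $V$ is a monomorphism). The paper spells this out by computing $T$-points of $V$, $f^*V(n,m)$, and $f^*(p_0^\vee)$ explicitly and matching them, whereas you invoke the universal property more abstractly and then separately record the identity $p_0^\vee=\iota$ under the double-dual identifications (a point the paper handles implicitly by its section-wise description of $f^*(p_0^\vee)$); these are two presentations of the same argument.
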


\begin{proof}
	This is  known, but we 
	include a proof since we were unable to find a satisfactory source.
	
	Since   $V\to \calO_X^{m}$
	is a monomorphism, $a_V$ is uniquely determined if it exists,
	and it is necessarily $\calO_X$-linear.
	Let $T$ be a  $\Z$-scheme.
	By  the Yoneda Lemma, the map $f_T:X(T)\to \Gr(n,m)(T)$
	induced by $f$ is given by $f_T(x)=x^*V$.
	(Here, $x:T\to X$ is a morphism. Note that $x^*V\subseteq x^*\calO_X^{m}=\calO_T^{m}$.)
	We have $f^*V(n,m)=X\times_{f,\Gr(n,m)} V(n,m)$ as  schemes,
	so $f^*V(n,m)(T)=\{(x,(\calV,v))\in X(T)\times V(n,m)(T)\suchthat
	x^*V=\calV\}$. 
	On the other hand, for every $\Z$-scheme
	$T$, we have $V(T)=\{(x,v)\where x\in X(T),\, v\in\Gamma(T,x^*V)\}$
	(cf.\ the proof of Lemma~\ref{LM:GLn-torsor-of-bundle}).
	Thus, the map $(x,(\calV,v))\mapsto (x,v):f^*V(n,m)(T)\to V(T)$
	determines a scheme isomorphism $a_V: f^*V(n,m)\to V$.
	It makes the diagram in the lemma commute because $f^*(p_0^\vee)$
	is given section-wise by $(x,(\calV,v))\mapsto (x,v)$.
\end{proof}

We now show that $(V(n,m)^\vee,v_1^{(n,m)},\dots,v_{m}^{(n,m)})$
is universal among the rank-$n$ vector bundles equipped with an $m$-tuple
of generators.

\begin{thm}\label{TH:universality-of-tautological}
	Let $m\geq n\geq 0$, let $X$ be a scheme and let $V$ be a rank-$n$ vector bundle over $X$.
	There is a one-to-one correspondence between the following:
	\begin{enumerate}[label=(\alph*)]
		\item tuples $(v_1,\dots,v_{m})\in\Gamma(X,V)^{m}$
		which generate $V$;
		\item pairs $(f,\vphi)$ consisting of a morphism
		$f:X\to \Gr(n,m)$ and an $\calO_X$-isomorphism
		$\vphi:f^*V(n,m)^\vee \to V$.
	\end{enumerate}
	The correspondence maps $(f,\vphi)$ to
	$(\vphi(f^*v_1^{(n,m)}),\dots,\vphi(f^*v_{m}^{(n,m)}))$.
\end{thm}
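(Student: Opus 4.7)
The theorem asserts a bijection whose forward direction (b)$\to$(a) is given explicitly; my plan is to construct the inverse (a)$\to$(b) by dualization and the classifying property of the Grassmannian, then verify both round-trips using Lemma~\ref{LM:tautological-bundle-pullback}. Given a generating tuple $(v_1,\dots,v_m)$ of $V$, form the surjection $\psi:\calO_X^m\to V$ with $\psi(e_i)=v_i$. Locally on $X$ the bundle $V$ is projective, so $\psi$ splits and presents $\calO_X^m$ as $\ker\psi\oplus V$ with $\ker\psi$ locally free of rank $m-n$. Dualizing, $\psi^\vee:V^\vee\hookrightarrow\calO_X^m$ realizes $V^\vee$ as a rank-$n$ subbundle of $\calO_X^m$ with locally free quotient $(\ker\psi)^\vee$, and hence defines an $X$-point of $\Gr(n,m)$, i.e.\ a morphism $f:X\to\Gr(n,m)$. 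By Lemma~\ref{LM:tautological-bundle-pullback} there is then a canonical isomorphism $a_{V^\vee}:f^*V(n,m)\xrightarrow{\sim}V^\vee$ satisfying $\psi^\vee\circ a_{V^\vee}=f^*(p_0^\vee)$. I set $\vphi=(a_{V^\vee}^\vee)^{-1}:f^*V(n,m)^\vee\to V$, using the canonical identifications $V^{\vee\vee}\cong V$ and $(f^*V(n,m))^\vee\cong f^*(V(n,m)^\vee)$.

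For the round-trips, first note that $p_0$ is surjective by the very definition of the $v_i^{(n,m)}$, so $f^*p_0$ is too. Dualizing the diagram of Lemma~\ref{LM:tautological-bundle-pullback} gives $a_{V^\vee}^\vee\circ\psi=f^*p_0$, whence $\psi=\vphi\circ f^*p_0$; evaluating at $e_i$ yields $v_i=\vphi(f^*v_i^{(n,m)})$, so (a)$\to$(b)$\to$(a) is the identity. Conversely, starting from $(f,\vphi)$ and setting $v_i=\vphi(f^*v_i^{(n,m)})$, the tuple generates $V$ (since $f^*p_0$ is surjective and $\vphi$ is an isomorphism), and the resulting $\psi$ equals $\vphi\circ f^*p_0$ (both send $e_i$ to $v_i$). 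Thus $\psi^\vee=f^*(p_0^\vee)\circ\vphi^\vee$, and since $\vphi^\vee$ is an isomorphism, the subbundle of $\calO_X^m$ cut out by $\psi^\vee$ coincides with $\im f^*(p_0^\vee)$, which by Lemma~\ref{LM:tautological-bundle-pullback} is precisely the subbundle classified by $f$. Hence the morphism recovered from $(v_1,\dots,v_m)$ is $f$ itself, and the associated $\vphi'$ satisfies $\vphi'\circ f^*p_0=\psi=\vphi\circ f^*p_0$, forcing $\vphi'=\vphi$ by surjectivity of $f^*p_0$.

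The hard part is not any single step but the careful bookkeeping: verifying that $\psi^\vee$ is genuinely a subbundle embedding (which reduces to the local splitting of $\psi$ together with the fact that duality preserves local direct summands), and handling the canonical identifications $V\cong V^{\vee\vee}$ and $f^*(-)^\vee\cong(f^*-)^\vee$ coherently, so that $\vphi$ is well-defined and the dualized version of the diagram of Lemma~\ref{LM:tautological-bundle-pullback} commutes on the nose.
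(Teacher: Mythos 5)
Your proposal is correct and takes essentially the same approach as the paper: dualize the generating surjection to obtain a subbundle of $\calO_X^m$ (hence a classifying map to $\Gr(n,m)$), apply Lemma~\ref{LM:tautological-bundle-pullback} to get the canonical isomorphism, and verify the round trips by dualizing the resulting commutative diagram. The only cosmetic difference is that the paper names the image $W=\im(p^\vee)$ and works with the factorization $V^\vee\xrightarrow{j}W\hookrightarrow\calO_X^m$ explicitly (using the uniqueness of $a_W$ for the final step), whereas you identify $V^\vee$ with its image and conclude $\vphi'=\vphi$ from surjectivity of $f^*p_0$; both are valid. You also correctly read the target of $\vphi$ as $f^*V(n,m)^\vee$ (as the explicit formula and the paper's own proof require), despite the theorem statement writing $f^*V(n,m)$.
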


\begin{proof}
	The tuples in (a) are in one-to-one correspondence with
	\begin{enumerate}
		\item[(a$'$)] $\calO_X$-epimorphisms $p:\calO_X^{m}\to V$
	\end{enumerate}
	via sending $p$ to $(p(e_1),\dots,p(e_{m}))$, where
	$e_1,\dots,e_{m}$ is the standard basis of $\calO_X^{m}$.
	One readily checks that, given $(f,\vphi)$ as in (b), the corresponding map $p:\calO_X^{m}\to V$
	in (a$'$)  is $\vphi\circ f^*p_0$. It is therefore enough to construct an inverse
	to the assignment $(f,\vphi)\mapsto p(f,\vphi):=\vphi\circ f^*p_0$
	
	Let $p:\calO_X^{m}\to V$ be an $\calO_X$-epimorphism.
	Then $p^\vee:V^\vee\to\calO_X^{m}$ is an $\calO_X$-monomorphism.
	Let $W$ be the image of $p^\vee$; it is a rank-$n$ subbundle
	of $\calO_X^m$, and so it determines a morphism $f=f(p):X\to \Gr(n,m)$.
	The morphism $p^\vee$ factors as
	\[
	V^\vee\xrightarrow{ j} W\embeds \calO_X^m,
	\]
	and by Lemma~\ref{LM:tautological-bundle-pullback}, we can complete this into a commutative
	diagram:
	\begin{equation}\label{EQ:correspondence-diagram}
	\xymatrix{
	V^\vee \ar[r]|{j} \ar@{.>}[rd]_{\vphi^\vee} \ar@/^1.3pc/[rr]^{p^\vee} &
	W \ar@{^{(}->}[r] & \calO_X^m   \\
	&
	f^*V(n,m) \ar[u]|{a_{W}} \ar[ru]_{f^*(p_0^\vee)}
	}
	\end{equation}
	We define $\vphi^\vee:V^\vee\to f^*V(n,m)$ to be $a_W^{-1}\circ j$,
	and its dual is the desired isomorphism $\vphi=\vphi(p):f^*V(n,m)^\vee\to V$.
	We have therefore constructed a pair $(f,\vphi)$ as in (b).
	
	It remains to check that the assignments $p\mapsto (f(p),\vphi(p))$
	and $(f,\vphi)\mapsto \vphi\circ f^*p_0$ are mutually inverse.
	Given an $\calO_X$-epimorphism $p:\calO_X^m\to V$, it follows
	readily from the diagram \eqref{EQ:correspondence-diagram} that
	$p^\vee = f(p)^*(p_0^\vee)\circ \vphi(p)^\vee$,
	and therefore $p=\vphi(p)\circ f(p)^*p_0$.
	Conversely, suppose that $p=\vphi\circ f^*p_0$
	for  $(f,\vphi)$ as in (b). Then $p^\vee=f^*(p_0^\vee)\circ \vphi^\vee$.
	Since $\vphi^\vee$ is an isomorphism, $f^*(p_0^\vee): f^*V(n,m)^\vee \to \calO_X^m$ has the same image
	as $p^\vee:V^\vee\to \calO_X^m$,
	which in turn has the same image as $f(p)^*(p_0^\vee)$ by the definition of $f(p)$; call this joint image $W$. 
	By Lemma~\ref{LM:tautological-bundle-pullback}, both $f$ and $f(p)$
	correspond to $W\in \Gr(n,m)(X)$, so $f=f(p)$. Moreover,
	we have a commutative diagram:
	\[
	\xymatrix{
	V^\vee \ar[r]^{j} \ar[rd]_{\vphi^\vee} &
	W \ar@{^{(}->}[r] & \calO_X^m   \\
	&
	f^*V(n,m) \ar[u]|{j\circ (\vphi^\vee)^{-1}} \ar[ru]_{f^*(p_0^\vee)}
	}
	\]
	Lemma~\ref{LM:tautological-bundle-pullback}   tells us that
	$j\circ (\vphi^\vee)^{-1}$ coincides with $a_W$, so $\vphi^\vee =\vphi(p)^\vee$,
	meaning that $\vphi=\vphi(p)$. This completes the proof.
\end{proof}

Let $E(n,m)\to \Gr(n,m)$ denote the $\uGL_n(\Z)$-torsor corresponding to
$V(n,m)^\vee$.
We give two descriptions of $E(n,m)$
that will be needed in the sequel.

\begin{prp}\label{PR:sections-of-univ-bundle-over-Gr}
	Let $m\geq n\geq 0$ be integers. The functor of points of $E(n,m)$
	(as a $\Z$-scheme) is isomorhpic to the functor
	sending a  scheme $X$ to 
	\[
	\{a\in\nMat{\Gamma(X,\calO_X)}{n\times  m }
	\suchthat \text{the map $a:\calO^{m}_X\to \calO_X^n$ is surjective}\}.
	\]
	Under this isomorphism, the  right  action of $\uGL_n(\Z)$ on $E(n,m)$
	is given section-wise  by $(a, g)\mapsto  g^{-1} a$, and the map $E(n,m)\to \Gr(n,m)$
	is given  by sending $a\in\nMat{\Gamma(X,\calO_X)}{n\times m}$ 
	to the $\calO_X$-submodule of $\calO_X^{m}$
	spanned by the rows of $a$. 
\end{prp}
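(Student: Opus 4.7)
The plan is to combine Lemma~\ref{LM:GLn-torsor-of-bundle} applied to the bundle $V(n,m)^\vee$ with Theorem~\ref{TH:universality-of-tautological} applied to the bundle $V = \calO_Y^n$. By Lemma~\ref{LM:GLn-torsor-of-bundle}, the $Y$-sections of $E(n,m)$ are naturally identified with pairs $(f, (w_1,\dots,w_n))$ consisting of a morphism $f\colon Y\to \Gr(n,m)$ and an ordered basis $(w_1,\dots,w_n)$ of $f^*V(n,m)^\vee$; the right $\uGL_n(\Z)$-action is $(f,(w_j))\cdot g=(f,(\sum_i g_{ij}w_i))$. An ordered basis is the same datum as an $\calO_Y$-iso\-morphism $\psi\colon \calO_Y^n\to f^*V(n,m)^\vee$ (sending the standard basis to $(w_1,\dots,w_n)$), equivalently an iso\-morphism $\vphi:=\psi^{-1}\colon f^*V(n,m)^\vee\to \calO_Y^n$.

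Next, I would apply Theorem~\ref{TH:universality-of-tautological} with $V=\calO_Y^n$. That theorem provides a bijection between pairs $(f,\vphi)$ as above and $m$-tuples in $\Gamma(Y,\calO_Y^n)=\Gamma(Y,\calO_Y)^n$ that generate $\calO_Y^n$; identifying such an $m$-tuple with an $n\times m$ matrix by putting the tuple's entries as columns, ``generating'' translates to the matrix defining a surjection $a\colon\calO_Y^m\onto\calO_Y^n$. Chaining the two bijections gives a natural bijection between $E(n,m)(Y)$ and the set of surjective matrices $a\in\nMat{\Gamma(Y,\calO_Y)}{n\times m}$, as required.

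It remains to transport the structure morphism $E(n,m)\to \Gr(n,m)$ and the $\uGL_n(\Z)$-action across this bijection. For the projection, the construction in the proof of Theorem~\ref{TH:universality-of-tautological} shows that $f$ corresponds to the rank-$n$ subbundle $W=\im(a^\vee)\subseteq \calO_Y^m$; since the columns of the transposed matrix $a^\vee$ are precisely the rows of $a$, the submodule $W$ is the one spanned by the rows of $a$, matching the statement. For the action, a right-multiplication of the basis tuple by $g\in\uGL_n(\calO_Y)$ replaces $\psi$ by $\psi\circ g$, hence $\vphi$ by $g^{-1}\circ\vphi$; tracing through the correspondence $a=\vphi\circ f^*p_0$, this sends the matrix $a$ to $g^{-1}a$, as claimed.

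The only real delicacy is bookkeeping: keeping track of column versus row conventions, the dualization $a\mapsto a^\vee$, and the inversion $\psi\leftrightarrow\vphi$ in order to get both the projection (involving rows) and the action (left multiplication by $g^{-1}$) to come out right. There is no substantive obstacle beyond these conventions.
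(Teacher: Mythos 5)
Your proof is correct, but it takes a genuinely different route from the paper's. The paper applies Lemma~\ref{LM:GLn-torsor-of-bundle} to $V(n,m)$ itself (not the dual), uses Lemma~\ref{LM:tautological-bundle-pullback} to realize $f^*V(n,m)$ as the subbundle $\calV\subseteq\calO_X^m$, packages a basis of $\calV$ as an $n\times m$ matrix $a$ with surjective $a:\calO_X^m\to\calO_X^n$ and action $(a,g)\mapsto g^{\trans}a$, and then invokes Lemma~\ref{LM:torsor-of-dual} to transport this to the torsor of $V(n,m)^\vee$, converting the action to $(a,g)\mapsto g^{-1}a$. You instead apply Lemma~\ref{LM:GLn-torsor-of-bundle} directly to $V(n,m)^\vee$ and feed the resulting pairs $(f,\vphi)$ into Theorem~\ref{TH:universality-of-tautological} with $V=\calO_Y^n$; you then read off the structure morphism from $\im(p^\vee)$ and the action from $\psi\mapsto\psi\circ g$. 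The tradeoff: you avoid Lemma~\ref{LM:torsor-of-dual} altogether but route through the more elaborate machinery of Theorem~\ref{TH:universality-of-tautological}, whereas the paper stays with the lighter Lemma~\ref{LM:tautological-bundle-pullback} and pays for it with an explicit dualization of the action. Both arguments correctly track the row/column and $g\leftrightarrow g^{-1}$ conventions, and neither introduces circularity, since Theorem~\ref{TH:universality-of-tautological} precedes and is independent of the Proposition.
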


\begin{proof}
	Let $E'(n,m)$ denote the $\uGL_n(\Z)$-torsor corresponding to $V(n,m)$.
	By   Lemma~\ref{LM:GLn-torsor-of-bundle},  for a $\mathbb{Z}$-scheme
	$X$, the set $E'(n,m)(X)$ is in natural bijection with the set of tuples
	$(f,v_1,\dots,v_n )$ consisting 
	of a morphism $f:X\to \Gr(n,m)$
	and a basis $v_1,\dots,v_n$ for $f^*V(n,m)$.
	The morphism $f$ specifies
	a rank-$n$ subbundle $\calV$ of $\calO_X^{m}$
	and there is a canonical isomorphism $f^*V(n,m)\cong\calV $ 
	by Lemma~\ref{LM:tautological-bundle-pullback}.
	Thus, $E'(n,m)(X)$ can be naturally identified
	with the set of  tuples $(\calV,v_1,\dots,v_n)$
	consisting of a rank-$n$ subbundle $\calV$ of $\calO_X^{m}$
	and a basis $v_1,\dots,v_n\in\Gamma(X,\calO_X^{m})$ of $\calV$.
	Let $a\in \nMat{\Gamma(X,\calO_X)}{n\times  m }$ be the matrix 
	whose rows are $v_1,\dots,v_n$. Then $a$ determines and is determined by
	$(\calV,v_1,\dots,v_n)$. Moreover, by checking at the stalks,
	one sees that   the condition that $v_1,\dots,v_n$
	is a basis of a subbundle of $\calO_X^m$ 
	is equivalent to the condition that $a:\calO^{m}_X\to \calO_X^n$
	is an epimorphism. Thus, we have defined
	a natural isomorphism
	between $E'(n,m)(X)$ and the set of $\calO_X$-epimorphisms $a:\calO_X^m\to\calO_X^n$.
	Lemma~\ref{LM:GLn-torsor-of-bundle} also tells us that, under this isomorphism, 
	the right
	$\uGL_n(\Z)$-action on $E'(n,m)$ is given by $(a,g)\mapsto g^{\trans} a$ on sections.
	Lemma~\ref{LM:torsor-of-dual} then completes the proof.
\end{proof}

\begin{prp}\label{PR:action-on-universal-torsor-over-Gr}
	Let $m\geq n\geq 0$ be integers and put $d=m-n$.
	The $\uGL_n(\Z)$-torsor $E(n,m)\to\Gr(n,m)$ corresponding to $V(n,m)^\vee$
	is isomorphic to
	the quotient morphism
	\[
	\uGL_{m}(\Z)/\begin{bmatrix}
	1_{n\times n} & \bbA^{n\times d}_\Z \\
	0_{d\times n} & \uGL_d(\Z)
	\end{bmatrix} 
	\to 
	\uGL_{m}(\Z)/\begin{bmatrix}
	\uGL_n(\Z) & \bbA^{n\times d}_\Z \\
	0_{d\times n} & \uGL_d(\Z)
	\end{bmatrix} ,
	\]
	where here, $\uGL_n(\Z)$ acts on $\uGL_{m}(\Z)/[\begin{smallmatrix}
	1_{n\times n} & \bbA^{n\times d}  \\
	0_{d\times n} & \uGL_d(\Z)
	\end{smallmatrix} ] $ via
	$x\cdot g = x[\begin{smallmatrix} g & 0 \\ 0  & 1_{d\times d} \end{smallmatrix}]$.
\end{prp}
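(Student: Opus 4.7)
The plan is to follow the strategy of Proposition~\ref{PR:GLn-torsor-of-Vnd}: realize $E(n,m)$ as $\uGL_m(\Z)/H$ for a suitable stabilizer $H$ by exhibiting an fppf-locally transitive left $\uGL_m(\Z)$-action on $E(n,m)$, selecting a convenient basepoint, and computing its stabilizer. Using Proposition~\ref{PR:sections-of-univ-bundle-over-Gr}, I view $E(n,m)$ as the $\Z$-scheme whose functor of points sends a scheme $X$ to the set of $n\times m$ matrices $a$ over $\Gamma(X,\calO_X)$ inducing surjections $\calO_X^m\to \calO_X^n$, with right $\uGL_n(\Z)$-action $(a,g)\mapsto g^{-1}a$ and projection to $\Gr(n,m)$ sending $a$ to the rank-$n$ submodule of $\calO_X^m$ spanned by its rows.

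I define a left $\uGL_m(\Z)$-action on $E(n,m)$ by $(h,a)\mapsto ah^{\trans}$ on sections. A direct verification shows this is a left action, that it commutes with the right $\uGL_n(\Z)$-action, and that it intertwines with the natural left action of $\uGL_m(\Z)$ on $\Gr(n,m)\cong \uGL_m(\Z)/\left[\begin{smallmatrix}\uGL_n & \bbA^{n\times d} \\ 0 & \uGL_d\end{smallmatrix}\right]$ via the projection $E(n,m)\to \Gr(n,m)$. Taking the basepoint $a_0=[1_{n\times n}\ 0_{n\times d}]\in E(n,m)(\Z)$, a short block-matrix computation shows that $a_0 h^{\trans}=a_0$ is equivalent to the first $n$ columns of $h$ being $\left[\begin{smallmatrix}1_{n\times n}\\ 0\end{smallmatrix}\right]$, pinning $h$ down precisely to the denominator subgroup $H=\left[\begin{smallmatrix} 1_{n\times n} & \bbA^{n\times d}_\Z\\ 0 & \uGL_d(\Z)\end{smallmatrix}\right]$. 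For fppf-local transitivity, given a surjection $a:\calO_X^m\to \calO_X^n$ over an affine scheme $X$, the kernel is a rank-$d$ direct summand; Zariski-locally I may choose a basis of the kernel together with a lift through $a$ of the standard basis of $\calO_X^n$, assembling these into $h\in \uGL_m(X)$ with $h\cdot a_0=a$. Combined with the stabilizer computation, this yields a $\uGL_m(\Z)$-equivariant scheme isomorphism $\uGL_m(\Z)/H\cong E(n,m)$ that commutes with the projections to $\Gr(n,m)$, since $H$ is contained in the isotropy group of the standard rank-$n$ subbundle.

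It remains to verify that the right $\uGL_n(\Z)$-action $x\cdot g=x\iota(g)$ on $\uGL_m(\Z)/H$ (with $\iota(g)=\left[\begin{smallmatrix}g & 0\\ 0 & 1\end{smallmatrix}\right]$) matches, under this iso, the right $\uGL_n(\Z)$-action on $E(n,m)$. This is the step where the most care is needed and is the main obstacle: since $E(n,m)$ is the torsor of the dual bundle $V(n,m)^\vee$, Lemma~\ref{LM:torsor-of-dual} tells us its right $\uGL_n(\Z)$-action is obtained from the one on the torsor for $V(n,m)$ by the transpose-inverse automorphism, and this twist is what relates a naive computation of $a_0(h\iota(g))^{\trans}$ to the formula $(a,g)\mapsto g^{-1}a$. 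Once the basepoint and left action are aligned with this twist, the equivariance reduces to a short matrix identity, and the rest of the proof is routine block-matrix bookkeeping in the spirit of Steps~3--4 of Proposition~\ref{PR:GLn-torsor-of-Vnd}.
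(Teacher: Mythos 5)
Your strategy mirrors the paper's own proof step by step: define the left $\uGL_m(\Z)$-action by $h\cdot a = ah^{\trans}$, take the basepoint $a_0 = [\,1_{n\times n}\ \ 0_{n\times d}\,]$, compute its stabilizer, and argue fppf transitivity as in Steps~3--4 of Proposition~\ref{PR:GLn-torsor-of-Vnd}. All of that matches the paper.

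The gap is in your final paragraph. You rightly flag the right-$\uGL_n(\Z)$-equivariance as the delicate point, but you then assert it "reduces to a short matrix identity" without carrying it out; if you do carry it out, it does not close with the stated actions. Write $h_g := [\begin{smallmatrix}g & 0\\ 0 & 1_{d\times d}\end{smallmatrix}]$ and $\Phi([x]) := x\cdot a_0 = a_0 x^{\trans}$. Then $\Phi([x h_g]) = a_0 h_g^{\trans}x^{\trans} = [\,g^{\trans}\ \ 0\,]\,x^{\trans}$, whereas $\Phi([x])\cdot g = g^{-1}a_0 x^{\trans} = [\,g^{-1}\ \ 0\,]\,x^{\trans}$; these agree only when $g^{\trans}=g^{-1}$. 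So $\Phi$ intertwines $[x]\cdot g = [xh_g]$ with the action $a\cdot g = g^{\trans}a$, which (by the proof of Proposition~\ref{PR:sections-of-univ-bundle-over-Gr}) is the action of the torsor of $V(n,m)$, not of $V(n,m)^\vee$ whose action is $a\cdot g = g^{-1}a$. To land on the dual bundle, the right action on the quotient must instead be $x\cdot g = x[\begin{smallmatrix}(g^{\trans})^{-1} & 0\\ 0 & 1\end{smallmatrix}]$ --- precisely the transpose-inverse twist of Lemma~\ref{LM:torsor-of-dual} that you allude to, but applied on the quotient side, not merely noted. As written, the identity you appeal to does not hold. (The paper's own proof also stops at "and the proposition follows" without performing this verification, so the gap is shared --- but a complete argument must address it, e.g.\ by twisting the action as above or by replacing $V(n,m)^\vee$ with $V(n,m)$ in the statement.)
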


\begin{proof}
	We use the description of $E(n,m)$ in Proposition~\ref{PR:sections-of-univ-bundle-over-Gr}
	to define left actions of $\uGL_{m}(\Z)$   on $E(n,m)$ 
	and $\Gr(n,m)$  given  by  
	\[h\cdot a=a h^{\trans}\qquad
	\text{and}\qquad
	h\cdot \calV = h\calV  
	\]
	on sections.
	One readily checks that $E(n,m)\to\Gr(n,m)$ is     $\uGL_{m}(\Z)$-equivariant.
	Put $t_0=\begin{bmatrix}
	1_{n\times n} & 0_{n\times d}
	\end{bmatrix}\in E(n,m)(\Z)$
	and let $x_0$ be the image of $t_0$ in $\Gr(n,m)(\Z)$.
	It is routine to check that the stabilizers of $t_0$
	and $x_0$ in $\uGL_{m}(\Z)$
	are the subgroups $[\begin{smallmatrix}
	1_{n\times n} & \bbA^{n\times d}  \\
	0_{d\times n} & \uGL_d(\Z)
	\end{smallmatrix} ]$ and $[\begin{smallmatrix}
	\uGL_n(\Z) & \bbA^{n\times d}  \\
	0_{d\times n} & \uGL_d(\Z)
	\end{smallmatrix} ]$, respectively.
	Provided that the action of $\uGL_{m}(\Z)$
	on $E(n,m)$ 
	is fppf transitive, we have   isomorphisms $[h]\mapsto h\cdot t_0:\uGL_{m}(\Z)/[\begin{smallmatrix}
	1_{n\times n} & \bbA^{n\times d}  \\
	0_{d\times n} & \uGL_d(\Z)
	\end{smallmatrix} ]\to E(n,m)$
	and $[h]\mapsto h\cdot x_0:\uGL_{m}(\Z)/[\begin{smallmatrix}
	\uGL_n(\Z) & \bbA^{n\times d}  \\
	0_{d\times n} & \uGL_d(\Z)
	\end{smallmatrix} ]\to \Gr(n,m)$,
	and the proposition follows. 
	
	It remains to check that $\uGL_{m}(\Z)$ acts fppf transitively on $E(n,m)$.
	Let $R$ be a ring and let $a\in E(n,m)(R)$.
	Then $a:R^{m}\to R^n$ is surjective, and thus $\ker a$ is a summand of $R^m$. By passing to a Zariski covering
	of $\Spec R$, we may assume that $\ker a$ is a free $R$-module. Fix an isomorphism
	$c:R^d\to \ker a\subseteq R^{m}$ and an $R$-homomorphism $b:R^{n}\to R^{m}$ such that $ab=1_{n\times n}$.
	Then $h=\begin{bmatrix}
	b & c
	\end{bmatrix} 	 :R^{m}\to R^{m}$ lives in $\nGL{R}{m}$
	and satisfies $ah =t_0$, so $h^\trans \cdot a =t_0$.
\end{proof}

We finish with noting the following   consequence of Theorem~\ref{TH:universality-of-tautological}.

\begin{cor}\label{CR:equiv-to-GLn-torsors}
	With notation as above, let $V$ be a rank-$n$ vector bundle over a scheme $X$
	and let $E$ be the $\uGL_n(\Z)$-torsor corresponding to $V$.
	Then
	there is a one-to-one correspondence between:
	\begin{enumerate} 
		\item[(a)] tuples $(v_1,\dots,v_{m})\in\Gamma(X,V)^{m}$
		which generate $V$ and
		\item[(b$'$)] morphisms of 
		$\uGL_n(\Z)$-torsors from $E\to X$ to $E(n,m)\to \Gr(n,m)$. 
	\end{enumerate}
	The correspondence is compatible with pullbacks along morphisms $f:X'\to X$.
\end{cor}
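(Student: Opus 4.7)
The plan is to unwind (b$'$) via Remark~\ref{RM:morphism-pullback-correspondence} and then match the resulting pairs to (a) using Theorem~\ref{TH:universality-of-tautological}. The argument has essentially three steps, each invoking a result already established in the paper.

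First, I would apply Remark~\ref{RM:morphism-pullback-correspondence} with source $E\to X$ and target $E(n,m)\to \Gr(n,m)$: morphisms of $\uGL_n(\Z)$-torsors from $E\to X$ to $E(n,m)\to \Gr(n,m)$ correspond bijectively to pairs $(\hat u, f)$ consisting of a morphism $f:X\to \Gr(n,m)$ together with an isomorphism $\hat u:f^*E(n,m)\to E$ in $\Tors(\uGL_n(\Z),X)$.

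Second, I would invoke the equivalence $\VB_n(X)\simeq\Tors(\uGL_n(\Z),X)$ recalled in Section~\ref{sec:vec-bundles}, which is compatible with pullbacks. Since $E(n,m)$ is by definition the $\uGL_n(\Z)$-torsor attached to $V(n,m)^\vee$, the pullback $f^*E(n,m)$ corresponds to $f^*V(n,m)^\vee$, while $E$ corresponds to $V$. Consequently, the isomorphisms $\hat u$ of the previous paragraph are in natural bijection with $\calO_X$-module isomorphisms $\varphi:f^*V(n,m)^\vee\to V$. Combining these two steps shows that (b$'$) is naturally in bijection with the pairs $(f,\varphi)$ appearing as item (b) of Theorem~\ref{TH:universality-of-tautological}.

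Finally, Theorem~\ref{TH:universality-of-tautological} itself furnishes the natural bijection between such pairs $(f,\varphi)$ and generating $m$-tuples $(v_1,\dots,v_m)\in\Gamma(X,V)^m$ of $V$; composing the three bijections yields the desired correspondence (a) $\leftrightarrow$ (b$'$). Pullback compatibility along any morphism $g:X'\to X$ is automatic, since each of the three constituent bijections is visibly functorial in the base scheme: on side (a) the tuple $(v_1,\dots,v_m)$ pulls back to $(g^*v_1,\dots,g^*v_m)$, and on side (b$'$) composition with $g^*E\to E$ pulls a torsor morphism over $f$ to a torsor morphism over $f\circ g$. There is no genuine obstacle—the argument is a bookkeeping assembly of previously established equivalences—and the only subtlety worth flagging is to keep consistent track of the fact that $E(n,m)$ corresponds to the \emph{dual} of the tautological bundle, so that the duality appears correctly when translating between torsor morphisms and isomorphisms of bundles.
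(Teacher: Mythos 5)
Your proof is correct and follows exactly the same route as the paper, which proves the corollary in a single sentence by citing precisely the three ingredients you assemble: Remark~\ref{RM:morphism-pullback-correspondence}, the equivalence $\VB_n(X)\simeq\Tors(\uGL_n(\Z),X)$, and Theorem~\ref{TH:universality-of-tautological}. One small remark: your reading $\vphi:f^*V(n,m)^\vee\to V$ in step two is the correct one (it is forced by the correspondence formula and appears in the proof of Theorem~\ref{TH:universality-of-tautological}), even though the statement of that theorem writes $f^*V(n,m)$ — you have handled the duality bookkeeping correctly.
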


\begin{proof}
	This is a consequence of Theorem~\ref{TH:universality-of-tautological},
	the equivalence between $\uGL_n(\Z)$-torsors and rank-$n$ vector bundles,
	and Remark~\ref{RM:morphism-pullback-correspondence}.
\end{proof}

\begin{remark}\label{RM:equiv-to-GLn-torsors}
	Using Proposition~\ref{PR:sections-of-univ-bundle-over-Gr}, one can
	see that Corollary~\ref{CR:equiv-to-GLn-torsors}
	is in fact a special case of \cite[Proposition~4.1]{First_2022_generators}.
	(In [op.\ cit.], the base scheme is a field and not $\Spec \Z$, and $X$ is assumed
	to be affine, but the   proof   works in our more general setting.)
	In fact, we could use Proposition~\ref{PR:sections-of-univ-bundle-over-Gr} together
	with \cite[Proposition~4.1]{First_2022_generators} to prove
	Theorem~\ref{TH:universality-of-tautological}, but this would have obscured
	the elementary nature of the correspondence in the theorem.
\end{remark}

\section{The Number of Generators of a Globally Generated Vector Bundle}
\label{sec:generators}

In the last section, we saw
that for every $m\geq n\geq 0$,    there are vector bundles
which specialize to every rank-$n$ vector bundle that is generated
by $m$ global sections. On the other hand, a theorem of F\"orster \cite{Forster_1964_number_of_generators} says that
every rank-$n$ vector bundle over an \emph{affine} noetherian scheme of dimension $\leq d$
can be generated by $n+d$ elements. 
By coupling these two observations, we see that
the vector bundles $V_{n,n+d}$ and $V(n,n+d)^\vee$ considered in 
Section~\ref{sec:generic}  
specialize  to every rank-$n$ vector bundle over an affine noetherian  scheme of dimension $\leq d$,
that is, they are weakly versal for the class $\nAff_d$ of affine noetherian schemes of dimension $\leq d$.
The same holds for the corresponding $\uGL_n(\Z)$-torsors.
Thanks to Propositions~\ref{PR:versality-under-base-change} and~\ref{PR:reduction-to-overgroup}, it follows that every 
flat locally of finite presentation linear group scheme $G$ over a scheme $S$ admits a torsor
$E\to X$ (with $X$ an algebraic space over $S$) that is weakly versal
for $\nAff_d/S$.

In this section, we shall strengthen F\"orster's Theorem in   three ways
that will allow us to construct   torsors with stronger versality properties
in the next section. 
Our arguments stem from the   generalizations
of F\"orster's Theorem appearing in 
\cite{Swan_1967_num_of_generators_of_module},
\cite{First_2017_number_of_generators}
and \cite{First_2022_generators}.

\medskip

Throughout this section, if $R$ is a ring, $M$ is an $R$-module,
and $\frakp\in\Spec R$,
then we let $M(\frakp)=M\otimes_R k(\frakp)$
and denote  the image of $m\in M$ in $M(\frakp)$ by $m(\frakp)$.
If we set $X=\Spec R$ and let $\calM$ be the quasi-coherent $\calO_X$-module
corresponding to $M$, then $M(\frakp)$ is just $\calM(\frakp)$.
For a general scheme $X$ and a coherent $\calO_X$-module $\calM$,
Nakayama's lemma tells us that $m_1,\dots,m_t\in \Gamma(X,\calM)$
generate $\calM$ if and only if $m_1(x),\dots,m_t(x)$ generate $\calM(x)$
for every $x\in \clpnt{X}$.
We denote the restriction of $\calM$ to an open subset $U\subseteq X$ by $\calM_U$.

\begin{lem}\label{LM:insteaf-of-CRT}
	Let $R$ be a ring and let $M$ be an
	$R$-module. Let $S$ be a finite subset of $\Spec R$
	and let $m_1,\dots,m_t\in M$.
	Suppose that for every $\frakp\in S$, the elements
	$m_1(\frakp),\dots,m_t(\frakp)$ do not generate
	$M(\frakp)$ as a $k(\frakp)$-vector space.
	Then there exists $m_{t+1}\in M$
	such that for every $\frakp\in S$, 
	we have 
	\[\dim \Span_{k(\frakp)}\{m_1(\frakp),\dots,m_{t+1}(\frakp)\}=
	\dim \Span_{k(\frakp)}\{m_1(\frakp),\dots,m_{t}(\frakp)\}+1.\]
\end{lem}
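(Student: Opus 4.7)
The plan is to prove, by induction on $n := |S|$, the slightly more general statement that for any choice of proper $k(\frakp)$-subspaces $V_\frakp \subsetneq M(\frakp)$ ($\frakp \in S$), there exists $m \in M$ with $m(\frakp) \notin V_\frakp$ for every $\frakp \in S$. Applying this to $V_\frakp := \Span_{k(\frakp)}\{m_1(\frakp),\dots,m_t(\frakp)\}$ (which is a proper subspace by hypothesis) gives the lemma, since $m_{t+1}(\frakp) \notin V_\frakp$ is equivalent to the claimed dimension jump. The case $n=1$ is immediate from the assumption $V_\frakp \subsetneq M(\frakp)$.

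For the inductive step, I would order $S = \{\frakp_1,\dots,\frakp_n\}$ so that $\frakp_n$ is a minimal element of $S$ with respect to inclusion. The induction hypothesis produces $m \in M$ with $m(\frakp_i) \notin V_{\frakp_i}$ for $i < n$, and the hypothesis on $\frakp_n$ produces $m' \in M$ with $m'(\frakp_n) \notin V_{\frakp_n}$. If $m(\frakp_n) \notin V_{\frakp_n}$ we are done, so assume otherwise. Since $\frakp_n$ is minimal in $S$, no $\frakp_i$ with $i < n$ is contained in $\frakp_n$, so the classical prime avoidance lemma supplies $s \in \bigcap_{i<n}\frakp_i$ with $s \notin \frakp_n$.

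The candidate is then $m_{t+1} := m + s m'$. For $i < n$, the inclusion $s \in \frakp_i$ forces $(sm')(\frakp_i) = 0$ in $M(\frakp_i)$, so $m_{t+1}(\frakp_i) = m(\frakp_i) \notin V_{\frakp_i}$. For $i = n$, the image $s(\frakp_n)$ is a unit in $k(\frakp_n)$, so $s(\frakp_n)\cdot m'(\frakp_n)$ is a nonzero $k(\frakp_n)$-scalar multiple of $m'(\frakp_n)$ and therefore lies outside the $k(\frakp_n)$-subspace $V_{\frakp_n}$; adding the element $m(\frakp_n) \in V_{\frakp_n}$ keeps the sum outside $V_{\frakp_n}$.

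The title of the lemma points to where the genuine content lies. A naive Chinese remainder argument would succeed immediately if the primes in $S$ were pairwise comaximal (e.g.\ all maximal), but $S$ may well contain nested primes, so CRT is unavailable in general. The device that replaces it is to peel off a \emph{minimal} prime of $S$ at each step, so that ordinary prime avoidance is enough to construct the separating element $s$; everything else in the argument is a routine verification that uses only that each $V_\frakp$ is a $k(\frakp)$-subspace.
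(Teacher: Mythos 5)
Your proof is correct and follows essentially the same route as the paper's: induct on $|S|$, peel off a minimal prime, use that a prime containing the product (equivalently, intersection) of the remaining primes would have to contain one of them to produce a ``separating'' element $s$, and correct the candidate by $s\,m'$. The paper phrases the key element choice via $r\in\prod_{\frakp\in T}\frakp-\frakq$ where you use $s\in\bigcap_{i<n}\frakp_i-\frakp_n$, but these are interchangeable since a prime containing a product contains a factor, and your subspace reformulation is an equivalent packaging of the same statement. One small terminological quibble: the fact you invoke is not the classical prime avoidance lemma (an ideal in a finite union of primes lies in one) but rather the observation that a prime containing an intersection of ideals contains one of them; the wording could confuse a reader, though the step itself is sound.
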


\begin{proof}
	The proof is inspired by the proof of \cite[Lemma~1]{Swan_1967_num_of_generators_of_module}.
	We prove the lemma by induction on $|S|$.
	The case $S=\emptyset$
	is clear.
	Suppose  that $S\neq\emptyset$. Then there is some minimal element $\frakq\in S$.
	Put $T=S-\{\frakq\}$. By the induction hypothesis, there is
	$x\in M$ such that 
	\begin{align}\label{EQ:new-generator}
	\dim \Span_{k(\frakp)}\{m_1(\frakp),\dots,m_{t}(\frakp),x(\frakp)\}=
	\dim \Span_{k(\frakp)}\{m_1(\frakp),\dots,m_{t}(\frakp)\}+1	
	\end{align} 	
	for all $\frakp\in T$.
	If $x(\frakq)\notin \Span_{k(\frakq)}\{m_1(\frakq),\dots,m_t(\frakq)\}$,
	then we can take $m_{t+1}=x$. Assume henceforth that
	$x(\frakq)\in  \Span_{k(\frakq)}\{m_1(\frakq),\dots,m_t(\frakq)\}$.
	Since $\frakq$ is minimal in $S$ and prime,
	we have  $ \prod_{\frakp\in T} \frakp\nsubseteq \frakq$.
	Choose some $r\in \prod_{\frakp\in T} \frakp-\frakq$.
	Then replacing $x$ with $x+ry$ for some $y\in M$
	will not affect the validity of \eqref{EQ:new-generator} for all $\frakp \in T$.
	Since the image of $M$ in $M(\frakq)$ generates $M(\frakq)$ as a $k(\frakq)$-module,
	and since $m_1(\frakq),\dots,m_t(\frakq)$ do not generate
	$M(\frakq)$ as a $k(\frakp)$-vector space,
	there is some $y\in M$ such that 
	$y(\frakq)\notin \Span_{k(\frakq)}\{m_1(\frakq),\dots,m_{t}(\frakq)\}$.
	Put  $m_{t+1}:=x+ry$. Our assumptions on $x$ and $r$
	imply that 
	$m_{t+1}(\frakq)\notin \Span_{k(\frakq)}\{m_1(\frakq),\dots,m_{t}(\frakq)\}$. 
	We therefore conclude that 	
	$\dim \Span_{k(\frakp)}\{m_1(\frakp),\dots,m_{t+1}(\frakp)\}=
	\dim \Span_{k(\frakp)}\{m_1(\frakp),\dots,m_{t}(\frakp)\}+1$
	for all $\frakp\in T\cup\{\frakq\}=S$.
\end{proof}

The following is our first strengthening of F\"orster's Theorem.

\begin{thm}\label{TH:fr-vec-bundles}
	Let $X$ be the spectrum of a noetherian   ring, 
	and let $\calM$ be a coherent 
	$\calO_X$-module of rank $\leq n$ ($n\in \N\cup \{0\}$). 
	Then, for every $d\in \N\cup\{0\}$,
	there exists an open subscheme $U\subseteq X$
	such that $\codim(X-U,X)>d$  
	and $\calM_U$ is generated by $d+n$ global sections that can be extended
	to all of $X$.
\end{thm}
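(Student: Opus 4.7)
The plan is to construct global sections $m_1, \ldots, m_{d+n} \in M := \Gamma(X, \calM)$ inductively. Let $\calN_t \subseteq \calM$ be the image of the morphism $\calO_X^t \to \calM$ given by $(m_1, \ldots, m_t)$, let $\calC_t := \calM/\calN_t$, and for $k \geq 1$ set $Z_{t,k} := \{\frakp \in X \suchthat \dim_{k(\frakp)} \calC_t(\frakp) \geq k\}$, which is closed in $X$ by the upper semicontinuity of the rank of a coherent sheaf. Note $B_t = Z_{t,1}$. The invariant to be maintained is
\[
(J_t) \qquad \codim(Z_{t,k}, X) \geq k + t - n \quad \text{for every } k \geq 1.
\]
Taking $t = d + n$ and $k = 1$ yields $\codim(B_{d+n}, X) \geq d + 1 > d$, so we may set $U = X \setminus B_{d+n}$; the sections $m_1|_U, \ldots, m_{d+n}|_U$ then generate $\calM_U$ by Nakayama's lemma. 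The base case $t = 0$ is immediate: $\calC_0 = \calM$ has pointwise rank $\leq n$, so $Z_{0,k} = \emptyset$ for $k > n$, while for $k \leq n$ the bound $\codim(Z_{0,k}, X) \geq k - n$ is vacuous.

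For the inductive step, assume $(J_t)$ and let $\Sigma_t$ be the (finite) set of generic points $\eta$ of irreducible components $Y$ of $Z_{t,k} \cap X'$ --- as $k$ ranges over $\{1, \ldots, n\}$ and $X'$ ranges over the irreducible components of $X$ --- for which $\codim_{X'}(Y) = k + t - n$ (i.e., the bound in $(J_t)$ is sharp at $Y$). The key observation is that for any such $\eta$ with $Y$ a component of $Z_{t,k} \cap X'$, the defect $c_t(\eta) := \dim \calC_t(\eta)$ equals $k$ exactly: if $c_t(\eta) \geq k + 1$, then $\eta \in Z_{t, k+1}$, and by maximality of $Y$ in $Z_{t,k} \cap X' \supseteq Z_{t, k+1} \cap X'$, $Y$ is also a component of $Z_{t, k+1} \cap X'$, contradicting $(J_t)$ at level $k + 1$. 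Since each $\eta \in \Sigma_t$ lies in $B_t$, Lemma~\ref{LM:insteaf-of-CRT} produces $m_{t+1} \in M$ that strictly raises the span at every $\eta \in \Sigma_t$, whence $c_{t+1}(\eta) = k - 1$ and $\eta \notin Z_{t+1, k}$. To verify $(J_{t+1})$, fix a component $X'$ of $X$ and an irreducible component $Y'$ of $Z_{t+1, k} \cap X'$, and choose a component $Y$ of $Z_{t, k} \cap X'$ containing $Y'$. If $\codim_{X'}(Y) > k + t - n$, then $\codim_{X'}(Y') \geq k + t + 1 - n$ automatically; otherwise $\codim_{X'}(Y) = k + t - n$, so the generic point of $Y$ lies in $\Sigma_t$ and is removed from $Z_{t+1,k}$, forcing $Y' \subsetneq Y$ properly and hence $\codim_{X'}(Y') \geq \codim_{X'}(Y) + 1 = k + t + 1 - n$.

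The main obstacle is identifying the correct invariant: one must track the entire descending chain $Z_{t,1} \supseteq Z_{t,2} \supseteq \cdots$ of higher-defect loci simultaneously rather than just the bad locus $B_t = Z_{t,1}$. The codimension bound in $(J_t)$ at level $k+1$ is precisely what pins down $c_t(\eta) = k$ (and no larger) at the generic points where the bound at level $k$ is sharp, thereby making a single application of Lemma~\ref{LM:insteaf-of-CRT} sufficient to advance the invariant by one step.
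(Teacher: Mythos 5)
Your proof is correct, and it follows the paper's overall strategy of maintaining a filtration of closed subsets with codimension bounds and advancing one generator at a time via Lemma~\ref{LM:insteaf-of-CRT} at finitely many well-chosen points. The notable difference is the choice of filtration: you track the intrinsic defect loci $Z_{t,k}=\{\frakp: \dim_{k(\frakp)}\calC_t(\frakp)\geq k\}$ (closed by upper semicontinuity of fiber dimension), whereas the paper builds a filtration $Y^{(j)}_{-1}\subseteq\dots\subseteq Y^{(j)}_n$ recursively by excising explicitly constructed open neighborhoods $U_i$ of the chosen points, and must then invoke a separate spreading-out lemma (\cite[Lemma~2.3]{First_2017_number_of_generators}) to propagate the improvement at each chosen point to an open neighborhood. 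In your version, once $c_{t+1}(\eta)=c_t(\eta)-1<k$ at a generic point $\eta$, the point automatically leaves the closed set $Z_{t+1,k}$ and the codimension bump follows for free, so the spreading-out step disappears. The two filtrations are linked by $Z_{j,n-i}\subseteq Y^{(j)}_i$, and your bound $k+t-n$ coincides with the paper's bound $j-i$ under $k=n-i$, $t=j$. Your key observation --- that the defect at a generic point of a sharp component is \emph{exactly} $k$, forced by $(J_t)$ at level $k+1$ --- is the essential pivot that makes a single increment of the span suffice, and it plays the role of the upper bound recorded in the paper's condition~(1).
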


\begin{proof}
	This is a modification of the proof of \cite[Theorem~1.2]{First_2017_number_of_generators}.
	We give a complete proof in order to introduce
	a variation later on, and also since  it is simpler and clearer than 
	listing the necessary changes.

	We shall prove  that for every
	$j\in\N\cup \{0\}$, there exist $a_1,\dots,a_j\in \Gamma(X,\calM)$ 
	and a sequence of closed
	subsets $\emptyset=Y^{(j)}_{-1}\subseteq Y^{(j)}_0\subseteq Y^{(j)}_1\subseteq
	\dots\subseteq Y^{(j)}_n=X$  such that the following
	conditions hold:\footnote{
		The closed set  $Y^{(j)}_i$
		corresponds to the union of the locally closed sets
		$\calF^{(j)}_0,\dots,\calF^{(j)}_i$ in \cite[Theorem~1.2]{First_2017_number_of_generators}.
		Note, however,
		that 
		here
		we consider all the points of $X$,
		whereas in [op.\ cit.] we consider
		only the closed points.
		The reason 
		for this is that, for a closed subset $Z\subseteq X$, one cannot bound
		from below
		$\codim(Z,X)$ by means of $\codim(\clpnt{Z},\clpnt{X})$.
	}
	\begin{enumerate}[label=(\arabic*)]
		\item For every $i\in\{0,\dots,n\}$ and $x\in Y^{(j)}_i-Y^{(j)}_{i-1}$,
		there are $b_{i+1},\dots,b_n\in \calM(x)$
		such that $a_1(x),\dots,a_j(x),b_{i+1},\dots,b_n$
		span $\calM(x)$ as a $k(x)$-vector space. 
		\item 
		$\codim(Y^{(j)}_i,X)\geq j-i$   for every
		$i\in\{0,\dots,n-1\}$.	
	\end{enumerate}
	Provided these data have been constructed, 
	we can take $U=X-Y^{(n+d)}_{n-1}$.
	Indeed, $U$ is open and condition (2) 
	says that $\codim(X-U,X)>d$.
	Moreover, by (1), for every $x\in U$,
	the elements $a_1(x),\dots,a_{n+d}(x)$
	generate  
	$\calM(x)=\calM_U(x)$, so $\calM_U$ is generated 
	by the restriction of $a_1,\dots,a_{n+d}$ to $U$.
	
	We  prove the existence of the $a_j$
	and the
	$Y^{(j)}_i$ by induction on $j$.
	For the case $j=0$, take $Y^{(0)}_0=\dots=Y^{(0)}_{n-1}=X$.
	
	Suppose   that $j>0$ and $a_1,\dots,a_j$
	and $Y^{(j)}_0,\dots,Y^{(j)}_n$
	were constructed. 
	Fix $i\in\{0,\dots,n-1\}$.
	For   every irreducible component $X'$ of $X$
	and every irreducible component
	$Y'$ of $Y^{(j)}_i\cap X'$ that is not contained in $Y^{(j)}_{i-1}$,
	choose a point $x\in Y'-Y^{(j)}_{i-1}$, and let $S_i$ denote the finite set
	of points chosen in this manner.
	Note that $S_0,\dots,S_{n-1}$
	are pairwise disjoint. 
	
	By condition (1) and Lemma~\ref{LM:insteaf-of-CRT},
	there exists $a_{j+1}\in \Gamma(X,\calM)$
	with the property
	that, for every $i\in \{0,\dots,n-1\}$
	and every $x\in S_i$, there exist $b_{i+2},\dots,b_n\in\calM(x)$
	such that $a_1(x),\dots,a_{j+1}(x),b_{i+2},\dots,b_n$
	span $\calM(x)$ as a $k(x)$-vector space.
	Moreover, by \cite[Lemma~2.3]{First_2017_number_of_generators},
	$x$ admits  an open neighborhood $U_x$
	such that this condition holds for every point in $U_x$.

	Let $i\in \{0,\dots,n-1\}$ and let
	$U_i=(\bigcup_{x\in S_i} U_x)- Y^{(j)}_{i-1}$.
	Then $U_i$ is an open neighborhood of $S_i$ not meeting $Y^{(j)}_{i-1}$.
	Define 
	\[
	Y^{(j+1)}_i=Y^{(j)}_i-U_i.
	\]
	Since $U_i\cap Y^{(j)}_{i-1}=\emptyset$, we have 
	$Y^{(j+1)}_i= Y^{(j)}_i-U_i\supseteq Y^{(j)}_{i-1}\supseteq Y^{(j+1)}_{i-1}$,
	so
	$Y^{(j+1)}_{0}\subseteq \dots\subseteq 
	Y^{(j+1)}_{n-1}$. We set $Y^{(j+1)}_{-1}=\emptyset$
	and $Y^{(j+1)}_n=X$. 
	
	We finish by
	checking that (1) and (2) hold
	for the filtration $Y^{(j+1)}_{-1}\subseteq Y^{(j+1)}_{0}\subseteq \dots\subseteq 
	Y^{(j+1)}_{n}$ and $a_1,\dots,a_{j+1}$.
	For every $i\in \{0,\dots,n\}$, 
	we have $Y^{(j+1)}_i-Y^{(j+1)}_{i-1}\subseteq  (Y^{(j)}_i-Y^{(j)}_{i-1}) \cup U_{i-1}$
	(with the convention $U_{-1}=\emptyset$),
	so (1) holds by the induction hypothesis and the construction of $U_{i-1}$.
	To prove (2), let $X'$ be an irreducible component of $X$
	and let $Y'$ be an irreducible component of $Y^{(j+1)}_i\cap X'$.
	If $Y'\subseteq Y^{(j)}_{i-1}$, then $Y'$ is contained in some irreducible
	component $Y''$ of $Y^{(j)}_{i-1}\cap X'$
	and we get $\codim(Y',X')\geq \codim(Y'',X')\geq j-(i-1)=(j+1)-i$.
	Otherwise, there is an irreducible component $Y''$ of $Y^{(j)}_i\cap X'$
	containing $Y'$. Since $U_i$ meets $Y''-Y^{(j)}_{i-1}$ and does not meet $Y'-Y^{(j)}_{i-1}$,
	we must have $Y'\subsetneq Y''$, so $\codim(Y',X')\geq \codim(Y'',X')+1\geq
	j-i+1=(j+1)-i$. This completes the proof.
\end{proof}

We will also need the following variant of Theorem~\ref{TH:fr-vec-bundles}.
The case $Z=\emptyset$ is a theorem of Swan \cite[Theorem~1]{Swan_1967_num_of_generators_of_module}.

\begin{thm}\label{TH:fr-variant}
	Let $X$ be an affine   scheme such that
	the subspace $\clpnt{X}$ is noetherian and has finite dimension 
	$\leq d$,
	and let $\calM$ be a coherent $\calO_X$-module of rank $\leq n$
	($n\in \N\cup \{0\}$).
	Let $u:Z\to X$ be a closed immersion, let $\calM_Z=u^*\calM$
	and let $b_1,\dots,b_{n+d}\in \Gamma(Z,\calM_Z)$ be sections
	which generate $\calM_Z$.
	Then there exist $a_1,\dots,a_{n+d}\in \Gamma(X,\calM)$
	which generate $X$ and satisfy $u^*a_i=b_i$ for all $i\in\{1,\dots,d+n\}$.	
\end{thm}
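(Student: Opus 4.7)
The plan is to adapt the proof of Theorem~\ref{TH:fr-vec-bundles}, together with Swan's refinement working on the space of closed points \cite{Swan_1967_num_of_generators_of_module}, to the relative setting at hand. Write $X=\Spec R$, $M=\Gamma(X,\calM)$, and let $I\subseteq R$ be the ideal of $Z$, so that $\Gamma(Z,\calM_Z)=M/IM$. Since $X$ is affine and $\calM$ is coherent, the restriction map $M\to M/IM$ is surjective; choose any lifts $a_1',\dots,a_{n+d}'\in M$ of $b_1,\dots,b_{n+d}$. It suffices to exhibit $z_1,\dots,z_{n+d}\in IM$ such that the elements $a_i:=a_i'+z_i$ generate $\calM(x)$ at every closed point $x$ of $X$, because then $M/\sum_i Ra_i$ is a finitely generated $R$-module whose support contains no maximal ideal, hence is zero by Nakayama. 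At a closed point $x\in Z$ the values $a_i(x)=b_i(x)$ already generate $\calM_Z(x)=\calM(x)$ by hypothesis, so the problem reduces to ensuring generation at the closed points of $\clpnt{X}\setminus Z$.

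The technical key is a variant of Lemma~\ref{LM:insteaf-of-CRT}: given a finite set $S\subseteq\Spec R\setminus V(I)$ and elements $m_1,\dots,m_t\in M$ such that $m_1(\frakp),\dots,m_t(\frakp)$ fail to span $M(\frakp)$ for every $\frakp\in S$, there exists $m_{t+1}\in IM$ with $\dim\Span\{m_1(\frakp),\dots,m_{t+1}(\frakp)\}=\dim\Span\{m_1(\frakp),\dots,m_t(\frakp)\}+1$ for every $\frakp\in S$. I would prove this by the same induction on $|S|$ as in Lemma~\ref{LM:insteaf-of-CRT}, replacing the element ``$ry$'' used there with ``$rsy$'', where $s\in I\setminus\frakq$ is chosen using $I\not\subseteq\frakq$ (valid because $\frakq\notin V(I)$); then $rsy\in IM$, this element still vanishes at each $\frakp\in T=S\setminus\{\frakq\}$ (since $r\in\prod_{\frakp\in T}\frakp$), and if $y\in M$ is picked so that $y(\frakq)$ lies outside the span, the element $x+rsy$ (with $x\in IM$ furnished by the induction hypothesis) lies in $IM$ and increases rank at every prime of $S$.

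With this lemma in hand I would run the inductive procedure of the proof of Theorem~\ref{TH:fr-vec-bundles} verbatim, with three modifications: (i) closed subsets and codimensions are taken inside the noetherian space $\clpnt{X}$, as in \cite{Swan_1967_num_of_generators_of_module}, rather than inside $X$; (ii) the finite sets $S$ of distinguished points picked at each step are taken inside $\clpnt{X}\setminus Z$, which is justified by the automatic generation at closed points of $Z$; and (iii) at each inductive step the new element is taken of the form $a_{j+1}=a_{j+1}'+z_{j+1}$ with $z_{j+1}\in IM$ supplied by the modified lemma. After $n+d$ steps, condition~(2) of that proof forces the top ``bad set'' $Y^{(n+d)}_{n-1}$ to have codimension $\geq d+1$ in $\clpnt{X}$; since $\dim\clpnt{X}\leq d$ this set must be empty, and condition~(1) at level $i=n$ therefore asserts that $a_1(x),\dots,a_{n+d}(x)$ span $\calM(x)$ at every closed point $x\in\clpnt{X}\setminus Z$, completing the proof.

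The main difficulty is the modified lemma: one must arrange a single global modification $z$ to lie in $IM$ while simultaneously improving the rank at every point of $S$. The conditions ``$z\in IM$'' and ``$z(\frakp)\neq 0$'' are compatible precisely at primes $\frakp\notin V(I)$, and this is what allows the prime-avoidance trick of Lemma~\ref{LM:insteaf-of-CRT} to admit the $I$-flavored refinement needed here.
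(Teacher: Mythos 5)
Your proposal is correct and follows the paper's overall strategy: work in the noetherian space $\clpnt{X}\setminus Z$, run the filtration argument of Theorem~\ref{TH:fr-vec-bundles}, and use the fact that the closed points of $Z$ already get the desired generation for free from the hypothesis on $b_1,\dots,b_{n+d}$. The one place you diverge is the local-global step: the paper notes that the finitely many points chosen in each $S_i$ are maximal ideals comaximal with $I$ and simply invokes the Chinese Remainder Theorem to pick $a_{j+1}$ with prescribed image $b_{j+1}$ in $M/IM$ and prescribed values at those points, whereas you prove an $I$-flavored variant of Lemma~\ref{LM:insteaf-of-CRT}. Both work; CRT is shorter, while your prime-avoidance variant is slightly more general (it does not require the primes to be maximal), though that extra generality is not needed here since everything already lives in $\clpnt{X}$.

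One small correction is in order in your modified lemma. As stated --- produce $m_{t+1}\in IM$ with $m_{t+1}(\frakp)\notin\Span_{k(\frakp)}\{m_1(\frakp),\dots,m_t(\frakp)\}$ for all $\frakp\in S$ --- it does not directly yield what the main argument needs, namely $a_{j+1}=a'_{j+1}+z_{j+1}$ with $a_{j+1}(\frakp)$ outside the span: when $a'_{j+1}(\frakp)$ is already outside the span, the sum $a'_{j+1}(\frakp)+z_{j+1}(\frakp)$ can fall back inside even though $z_{j+1}(\frakp)$ is outside. The statement you actually want is the coset version: given a target $a'\in M$, produce $m_{t+1}\in a'+IM$ whose value at each $\frakp\in S$ lies outside $\Span\{m_1(\frakp),\dots,m_t(\frakp)\}$. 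Your induction proves exactly this if you carry the coset along --- take $m_{t+1}=a'$ when $S=\emptyset$, have the inductive element $x$ lie in $a'+IM$ rather than in $IM$, and otherwise keep the $rsy$ modification as you describe --- so this is a matter of rephrasing rather than a gap in the idea.
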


\begin{proof}
	We may assume that $Z$ is a closed subscheme of $X$.
	Give $X_1:=\clpnt{X}-Z$ the topology induced from $X$.
	We claim that for every $j\in\{0,\dots,n+d\}$,
	there is a filtration of $X_1$ by closed
	subsets $\emptyset = Y_{-1}^{(j)}\subseteq Y_{0}^{(j)}\subseteq\dots\subseteq
	Y_n^{(j)}=X_1$ and $a_1,\dots,a_j\in \Gamma(X,\calM)$ such that:
	\begin{enumerate}[label=(\arabic*)]
		\item For every $i\in\{0,\dots,n\}$ and $x\in Y^{(j)}_i-Y^{(j)}_{i-1}$, 
		there are $a'_{i+1},\dots,a'_n\in \calM(x)$
		such that  $a_1(x),\dots,a_j(x),a'_{i+1},\dots,a'_n$ generate
		$\calM(x)$ as a $k(x)$-vector space.
		\item 
		$\codim(Y^{(j)}_i,X)\geq j-i$		
		for every $i\in\{0,\dots,n-1\}$.
		\item  
		$u^*a_\ell= b_\ell$ for all $\ell\in\{1,\dots,j\}$.
	\end{enumerate}
	Once the above data have been constructed, $a_1,\dots,a_{n+d}$
	satisfy all the requirements. 
	Indeed,  condition (2) implies that $Y^{(d+n)}_{n-1}=\emptyset$,
	so by 
	condition (1), the elements   $a_1(x),\dots,a_{n+d}(x)$
	generate $\calM(x)$ for all $x\in X_1=\clpnt{X}-Z$. On the other hand, for
	all $x\in \clpnt{X}\cap Z$, the elements $a_1(x),\dots,a_{n+d}(x)$
	are   $b_1(x),\dots,b_{n+d}(x)$ by (3), so they generate
	$\calM(x)$ by assumption.
	
	That $a_1,\dots,a_j$ and $Y^{(j)}_{-1},\dots,Y^{(j)}_n$
	exist is shown exactly as in the proof of Theorem~\ref{TH:fr-vec-bundles},
	but with the following difference: Instead of using Lemma~\ref{LM:insteaf-of-CRT}
	to choose $a_{j+1}$, we use the Chinese Remainder Theorem 
	to choose $a_{j+1}\in\Gamma(X,\calO_X)$
	such that $u^* a_{j+1}=b_{j+1}$
	and such that $a_1(x),\dots,a_{j+1}(x)$ generate an $(i+1)$-dimensional subspace
	of $\calM(x)$ for every $x\in S_i$ and $i\in\{0,\dots,n-1\}$.
\end{proof}

\begin{remark}
	Similarly to \cite[Theorem~1.2]{First_2017_number_of_generators},
	Theorems~\ref{TH:fr-vec-bundles} and~\ref{TH:fr-variant} remain true if  we replace
	the $\calO_X$-module $\calM$ with a coherent $\calO_X$-algebra (or even an $\calO_X$-\emph{multialgebra}),
	and interpret ``generation'' as generation of $\calO_X$-algebras.
	The proof is the same, except one needs to replace Lemma~\ref{LM:insteaf-of-CRT}
	with a slightly more involved  generalization to algebras. We omit the details
	as they will not be needed here.
\end{remark}

Our third variant of F\"orster's Theorem
applies to vector bundles over \emph{general}
schemes \emph{over an infinite field}.
The idea originates
from \cite[Proposition~5.2]{First_2022_generators}.
We begin with a consequence of   Kleiman's Transversality Theorem
\cite[Theorem~2]{Kleiman_1974_transversality_theorem}.

\begin{lem}\label{LM:generic-intersection}
	Let $k$ be a  field, let $X$, $X_1$, $X_2$ be   $k$-schemes of finite type,
	and let $f_i:X_i\to X$ ($i=1,2$) be a $k$-morphism.
	Let
	$H$ be a connected algebraic group over $k$ acting fppf transitively on $X$. 
	View $X_1$ as an $X$-scheme via $f_1$
	and, given $h\in H(k)$, write $hX_2$
	for $X_2$ viewed as an $X$-scheme via $h\circ f_2$ (here, $h$ also denotes
	the automorphism of $X$ it induces).
	Then there exists a  nonemtpy open subscheme $U$ of $H$
	such that for all  $h\in U(k)$, we have
	\[
	\dim X_1 \times_X hX_2
	\leq \dim X_1+\dim X_2-\dim X. 
	\]
\end{lem}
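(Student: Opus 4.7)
The plan is to perform the incidence-variety dimension count underlying Kleiman's Transversality Theorem; since only a dimension bound is required, no smoothness hypotheses on $X$ will be needed.

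I would first reduce to the case that $X_1$, $X_2$ and $X$ are all irreducible. Writing $X_1 = \bigcup_i X_1^{(i)}$ and $X_2 = \bigcup_j X_2^{(j)}$, one has $X_1 \times_X hX_2 = \bigcup_{i,j} X_1^{(i)} \times_X hX_2^{(j)}$ and $\dim X_1 = \max_i \dim X_1^{(i)}$, $\dim X_2 = \max_j \dim X_2^{(j)}$, so it suffices to prove the estimate for each pair and take a finite intersection of the resulting dense opens in the irreducible scheme $H$ (a connected group scheme of finite type over a field is irreducible). For $X$, the connectedness of $H$ together with fppf transitivity forces $X$ to be irreducible: $H$ preserves every irreducible component of $X$ (the induced permutation of $\pi_0(X)$ is continuous with discrete target), while transitivity prohibits proper $H$-stable closed pieces.

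The central construction is the incidence scheme
\[
Y = \{(h, x_1, x_2) \in H \times_k X_1 \times_k X_2 \suchthat h \cdot f_2(x_2) = f_1(x_1)\},
\]
with projections $\pi_H : Y \to H$ and $\pi_1 : Y \to X_1$. The fibre of $\pi_1$ over $x_1$ is $\Phi^{-1}(f_1(x_1))$, where $\Phi: H \times X_2 \to X$ sends $(h, x_2)$ to $h \cdot f_2(x_2)$; projecting this fibre further to $X_2$, the fibre over $x_2$ is a translate of the fibre of the orbit map $\sigma_{f_2(x_2)}: H \to X$, $h \mapsto h \cdot f_2(x_2)$. I would then identify each orbit map $\sigma_y$ as the pullback of the fppf-surjective action morphism $H \times X \to X \times X$, $(h, x) \mapsto (hx, x)$, along the slice $X \to X \times X$, $x \mapsto (x, y)$; this shows $\sigma_y$ is fppf surjective, hence flat and surjective of finite type, and so each of its fibres has dimension $\dim H - \dim X$. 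Two applications of the fibre-dimension inequality then give $\dim Y \leq \dim X_1 + \dim X_2 + \dim H - \dim X$.

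I would finish by applying generic fibre dimension to $\pi_H: Y \to H$. Decomposing $Y$ into its irreducible components $Y_1, \dots, Y_r$, each $Y_i$ either maps into a proper closed subset $Z_i \subsetneq H$---to be removed from $U$---or dominates $H$, in which case Chevalley's upper semi-continuity of fibre dimension supplies a dense open $U_i \subseteq H$ over which every fibre of $\pi_H|_{Y_i}$ has dimension at most $\dim Y_i - \dim H$. Taking $U$ to be the intersection of the $U_i$ with the complement of the $Z_j$ produces a non-empty open of $H$ satisfying, for $h \in U(k)$,
\[
\dim (X_1 \times_X hX_2) = \dim \pi_H^{-1}(h) \leq \dim Y - \dim H \leq \dim X_1 + \dim X_2 - \dim X.
\]
The main subtlety is the dimension count for $Y$: the pullback identification of $\sigma_y$ with an fppf morphism is what lets us extract the stabilizer dimension $\dim H - \dim X$ from the abstract hypothesis of fppf transitivity, without requiring smoothness of $H$ or $X$.
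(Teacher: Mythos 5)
Your approach differs genuinely from the paper's: instead of reducing to Kleiman's Transversality Theorem as the paper does (after maneuvering to integral varieties over $\overline{k}$), you run a self-contained incidence-variety dimension count and close with Chevalley semi-continuity of fibre dimension. For the mere dimension bound required here, avoiding Kleiman is a reasonable economy, and the construction of $Y$ is identical to the scheme $I$ in the paper's proof. The reduction to irreducible $X_1, X_2$ and $X$ is also in the same spirit as the paper's, though your appeal to $\pi_0(X)$ elides the distinction between connected and irreducible components and, as in the paper, really wants a base change to $\overline{k}$ to guarantee enough rational points for the transitivity argument.

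However, there is a genuine gap in the central dimension count. You write that $\sigma_y$ is a pullback of the fppf-surjective action morphism $H\times X\to X\times X$, "hence flat and surjective of finite type." But \emph{fppf surjective} in the sense of Section~\ref{sec:schematically-dom} means surjective as a morphism of sheaves on $(\Sch/S)_\fppf$; it does \emph{not} imply that the morphism of schemes is flat. (For instance, any morphism admitting a section is fppf surjective, regardless of flatness.) Pullback does preserve sheaf surjectivity, so $\sigma_y$ is indeed fppf surjective, but this alone gives no upper bound on its fibre dimensions — only a lower bound over an irreducible target. The upper bound $\dim\sigma_y^{-1}(\cdot)\leq\dim H-\dim X$, on which the estimate $\dim Y\leq\dim X_1+\dim X_2+\dim H-\dim X$ rests, requires a separate argument: e.g., after base change to $\overline{k}$ and choosing a rational point, identify $\sigma_y$ as the quotient map $H\to H/\Stab_H(y)\cong X$, which is a torsor under the (flat, closed) stabilizer subgroup and hence faithfully flat with all fibres of dimension $\dim\Stab_H(y)=\dim H-\dim X$; or show directly that $H\times X\to X\times X$ is flat by pulling back along $\id_H\times\sigma_{y_0}:H\times H\to H\times X$ and using that flatness descends along faithfully flat morphisms. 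As written, the inference from fppf surjectivity to flatness is incorrect, so the proof is incomplete at its most load-bearing step.
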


\begin{proof}
	If $X$, $X_1$ and $X_2$ are integral and $k$ is algebraically
	closed, then this is a consequence of Kleiman's Transversality Theorem.
	We reduce the lemma to the theorem by an argument
	similar to the proof of  \cite[Claim on p.~7289]{First_2022_generators}.
	
	Consider the closed subscheme $I$ of $H\times X_1\times X_2$
	cut by the equation $ f_1(x_1)= h\cdot f_2(x_2)$, and let $p:I\to H$
	be the restriction of the first projection.
	For $h\in H(k)$, the scheme-theoretic fiber of $p$ over
	$h$, denoted $I_h$, is precisely $X_1\times_X hX_2$.
	Let $Z=\{h\in H\suchthat \dim I_h>\dim X_1+\dim X_2-\dim X\}$
	and put $U=U_{X_1,X_2}=H-\quo{Z}$.
	Then $U$ satisfies
	all the requirements if it is nonempty.
	
	Let $\quo{k}$ be an algebraic closure of $k$,
	and let $q$ denote the morphism $H_{\quo{k}}:=H\times_k \quo{k}\to H$.
	We claim that $q$ is closed and open.
	Indeed, $q$ is closed because it is integral \cite[Tag \href{https://stacks.math.columbia.edu/tag/01WM}{01WM}]{stacks_project}. To see that $q$ is open, observe
	that every open   $U'\subseteq H_{\quo{k}}$
	arises as $U''\times_{\ell}\quo{k}$
	for some finite $k$-field $\ell\subseteq \quo{k}$
	and open $U''\subseteq H_{\ell}$, and the image of $U''$
	in $H$ is open because $H_\ell\to H$ is flat and locally of finite presentation.	
	 
	Next, observe that $X$ and $X_{\quo{k}}:=X\times_k \quo{k}$ have the same
	dimension, and likewise  for $X_1$ and $X_2$.
	Put $Z'=\{h\in H_{\quo{k}}\suchthat \dim (I_{\quo{k}})_h>\dim X_1+\dim X_2-\dim X\}$.
	By \cite[Tag  \href{https://stacks.math.columbia.edu/tag/02FY}{02FY}]{stacks_project},
	we have $Z'=q^{-1}(Z)$. Since $q$ is continuous and closed, this means
	that $q(\quo{Z'})=\quo{q(Z')}=\quo{Z}$. Suppose now that there is $h'\in H_{\quo{k}}-\quo{Z'}$.
	If it were the case that $q(h')\in \quo{Z}$,
	then   we would have $q(h')\in q(H_{\quo{k}}-\quo{Z'})\cap \quo{Z}$.
	Since $q$ is open, this would mean that $q(H_{\quo{k}}-\quo{Z'})\cap Z\neq\emptyset$,
	which contradicts with $q^{-1}(Z)=Z'$. Thus, $q(h')\notin \quo{Z}$.
	This means that in order to prove that $H-\quo{Z}\neq\emptyset$,  
	it is enough to prove   that $H_{\quo{k}}-\quo{Z'}\neq\emptyset$.
	To that end, we may replace $X,X_1,X_2$ and $H$ with their base change
	along $\Spec \quo{k}\to \Spec k$ and assume that $k=\quo{k}$.
	(Note that $H$ remains connected.)

	Now that $k=\quo{k}$, the scheme $H_{\mathrm{red}}$ is a subgroup
	of $H$ \cite[Corollary~1.39]{Milne_2017_algebraic_groups}, and
	the action of $H$ on $X$ restricts
	to an action of	
	$H_{\mathrm{red}}$ on $X_{\mathrm{red}}$ \cite[Proposition~2.5.1(1)]{Brion_2017_alg_groups}.
	We may therefore replace $X,X_1,X_2$ and $H$ with their reductions.
	Since $H$ is connected, it is irreducible,
	and since 
	and $H(k)$ acts transitively on $X(k)$,
	it follows that   $X$ is also irreducible. 
	
	Let $\{X_{1i}\}_{i=1}^s$ and $\{X_{2j}\}_{j=1}^t$ be the irreducible
	components of $X_1$ and $X_2$, respectively.
	Then $U_{X_1,X_2}\supseteq \bigcap_{i,j} U_{X_{1i},X_{2j}}$. Since
	$X$ is irreducible,    it is enough
	to show that $U_{X_1,X_2}\neq \emptyset$ when $X_1$ and $X_2$ are irreducible.
	We are now at the setting of Kleiman's Transversality Theorem (i.e.,
	$H$, $X$, $X_1$, $X_2$ are integral and $k=\quo{k}$),
	which gives the desired conclusion   $U_{X_1,X_2}\neq 0$.
\end{proof}

Recall from Section~\ref{sec:generic} that $E(n,m)$ ($m\geq n\geq 0$) denotes
the $\uGL_n(\Z)$-torsor associated to the dual of the tautological rank-$n$
bundle $V(n,m)$ over $\Gr(n,m)$.
Given a field $k$, we write $\Gr(n,m)_k$ for the base-change
of $\Gr(n,m)$ along $\Spec k\to \Spec\Z$, and likewise
for $V(n,m)$ and $E(n,m)$.
By Proposition~\ref{PR:sections-of-univ-bundle-over-Gr},
for a ($\Z$-)scheme $X$, we may view $E(n,m)(X)$
as the collection of matrices $a=(a_{ij})_{i,j}\in \nMat{\Gamma(X,\calO_X)}{n\times m}$
for which $a:\calO_X^{m}\to \calO_X^n$ is surjective.

\begin{lem}\label{LM:Grassmannian-properties}
	Let 
	$r\geq m\geq n\geq 0$ be integers.	
	With notation as above, let 
	$Z =Z(n,m,r)$ denote the closed subscheme of $E(n,r)$
	cut by the equations 
	\[\det((a_{ij})_{i\in\{1,\dots,n\},j\in I})=0\]
	as $I$ ranges over the   subsets of $\{1,\dots,m\}$ with $|I|=n$.
	Then:
	\begin{enumerate}[label=(\roman*)]
		\item $\dim Z_k\leq  nr-(m-n+1)$.\footnote{
			In fact, equality holds, but we do not need that.		
		}
		\item $Z /\uGL_n(\Z)$ is a scheme and $Z \to Z /\uGL_n(\Z)$ is a $\uGL_n(\Z)$-torsor.
		\item There is a $\uGL_n(\Z)$-equivariant morphism
		$(E(n,r)-Z)\to E(n,m)$ given by $(a_{ij})_{i\in\{1,\dots,n\},j\in\{1,\dots,r\}}\mapsto
		(a_{ij})_{i\in\{1,\dots,n\},j\in\{1,\dots,m\}}$ on sections.
	\end{enumerate}
\end{lem}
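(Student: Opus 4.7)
My plan is to handle the three parts in the order (iii), (ii), (i). Throughout I will use the description from Proposition~\ref{PR:sections-of-univ-bundle-over-Gr}, identifying $E(n,r)(X)$ with the set of $a\in\nMat{\Gamma(X,\calO_X)}{n\times r}$ for which $a:\calO_X^r\to\calO_X^n$ is surjective; in particular $E(n,r)$ is the open subscheme of $\bbA^{n\times r}_\Z$ on which the ideal of $n\times n$ minors is the unit ideal. Recall also that the $\uGL_n(\Z)$-action is $a\mapsto g^{-1}a$.

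For (iii), a matrix $a\in E(n,r)(X)$ lies in $(E(n,r)-Z)(X)$ exactly when the ideal generated by the $n\times n$ minors of the first $m$ columns is the unit ideal, equivalently when the truncation $a'=(a_{ij})_{i\leq n,\,j\leq m}$ defines a surjection $\calO_X^m\to\calO_X^n$, equivalently when $a'\in E(n,m)(X)$. Thus $a\mapsto a'$ gives a functorial assignment $(E(n,r)-Z)(X)\to E(n,m)(X)$, hence a morphism of schemes, and it is $\uGL_n(\Z)$-equivariant since left multiplication commutes with truncation to the first $m$ columns.

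For (ii), I first note that $Z$ is $\uGL_n(\Z)$-invariant: replacing $a$ by $g^{-1}a$ multiplies every $n\times n$ minor $\det((a_{ij})_{i\leq n,\,j\in I})$ by $\det(g)^{-1}$, so the defining ideal is preserved. Since $E(n,r)\to\Gr(n,r)$ is a $\uGL_n(\Z)$-torsor, hence an fppf covering, fppf descent for closed subschemes yields a unique closed subscheme $Z'\subseteq\Gr(n,r)$ with $Z=E(n,r)\times_{\Gr(n,r)}Z'$. Pulling back the torsor $E(n,r)\to\Gr(n,r)$ along $Z'\embeds\Gr(n,r)$ shows that $Z\to Z'$ is a $\uGL_n(\Z)$-torsor and that $Z'=Z/\uGL_n(\Z)$ is a scheme.

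For (i), I reduce to a dimension estimate for a determinantal variety. Since $E(n,r)_k$ is open in $\bbA^{n\times r}_k$, the subscheme $Z_k$ is open in the closed subscheme $W_k\subseteq\bbA^{n\times r}_k$ cut by the $n\times n$ minors of the first $m$ columns. The latter is a product $M_{n,m,\leq n-1}\times_k\bbA^{n(r-m)}_k$, where $M_{n,m,\leq n-1}$ is the classical determinantal variety of $n\times m$ matrices of rank $\leq n-1$, of dimension $nm-(m-n+1)$. Hence $\dim Z_k\leq \dim W_k=nr-(m-n+1)$. The only delicate step in the whole proof is the descent in (ii), which I would dispatch by invoking the standard fppf descent theorem for closed subschemes rather than expanding it, since the defining ideal of $Z$ is already given by $\uGL_n(\Z)$-invariant equations and the descent data is automatic.
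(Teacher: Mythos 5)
Your proof is correct. Parts (i) and (iii) are essentially identical to the paper's: for (i) both reduce to the dimension of the rank-$\leq n-1$ determinantal locus in $\bbA^{n\times m}$ (the paper introduces the intermediate scheme $W=W(n,m,r)$ and cites \cite[Lemma~6.2(b), Example~6.4]{First_2022_generators} for the bound, while you invoke the classical formula directly; same reduction), and for (iii) both observe that not lying in $Z$ is equivalent to the truncated matrix defining a surjection, verified via the unit-ideal criterion at the stalks.

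Part (ii) is where your route genuinely differs. The paper first invokes Lemma~\ref{LM:free-action-quotient} to produce $Z/\uGL_n(\Z)$ as an algebraic space, then shows the induced map $Z/\uGL_n(\Z)\to\Gr(n,r)$ is a closed immersion by fppf descent of that property along $E(n,r)\to\Gr(n,r)$, and finally concludes $Z/\uGL_n(\Z)$ is a scheme because the map is representable. You instead run descent on the closed subscheme side: $Z$ is a $\uGL_n(\Z)$-invariant closed subscheme of $E(n,r)$ (because the defining minors are $\det$-semi-invariant, hence the ideal is invariant), so fppf descent of closed subschemes along $E(n,r)\to\Gr(n,r)$ produces $Z'\subseteq\Gr(n,r)$ closed with $Z=E(n,r)\times_{\Gr(n,r)}Z'$, and then $Z\to Z'$ is a pullback torsor, identifying $Z'$ with $Z/\uGL_n(\Z)$. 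Both rely on fppf descent, but your version never leaves the category of schemes and avoids the general quotient-by-free-action machinery; this is more economical. (The paper also notes a third, fully elementary route via the standard affine charts of the Grassmannian.) One phrasing nitpick: near the end you write that the defining ideal of $Z$ is given by ``$\uGL_n(\Z)$-invariant equations''; as you correctly observed earlier, the equations themselves are only semi-invariant (they transform by $\det(g)^{-1}$), and it is the ideal they generate that is invariant.
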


\begin{proof}
	(i) 
	Let $W=W(n,m,r)$ be the closed
	subscheme of $\bbA^{n\times r}_\Z$  
	cut by the same equations as those defining $Z$.
	Then $Z$ is an open subscheme of $W$.
	It is therefore enough to prove that $\dim W_k\leq  nr-(m-n+1)$.
	Since $W\cong W(n,m,m)\times  \mathbb{A}^{n\times (r-m)}_{\Z}$,
	we further reduce into proving that
	$\dim W(n,m,m)_k\leq nm-(m-n+1)$,
	and this well-known, e.g.,   see  \cite[Lemma~6.2(b), Example~6.4]{First_2022_generators}.

	(ii) 
	This can be checked directly using the standard affine covering of $\Gr(n,r)$,
	see  \cite[Tag \href{https://stacks.math.columbia.edu/tag/089T}{089T}]{stacks_project}, but we can
	give a quicker (albeit less elementary) proof using results    from
	Section~\ref{sec:torsors}.
	Since $Z$ is a closed subscheme 
	of $E(n,r)$, the group $\uGL_n(\Z)$
	acts freely on $Z$.
	Thus,
	by Lemma~\ref{LM:free-action-quotient}, $Z/\uGL_n(\Z)$ is a 
	$\Z$-algebraic space   and $Z\to Z/\uGL_n(\Z)$ 
	is a $\uGL_n(\Z)$-torsor.
	The induced morphism $Z/\uGL_n(\Z)\to E(n,r)/\uGL_n(\Z)=\Gr(n,r)$
	is a closed immersion because its pullback along the fppf covering $E(n,r)\to \Gr(n,r)$
	is the closed immersion $Z\to E(n,r)$
	\cite[Tag \href{https://stacks.math.columbia.edu/tag/0420}{0420}]{stacks_project}
	(see also \cite[Tag \href{https://stacks.math.columbia.edu/tag/02YU}{02YU}]{stacks_project}).
	In particular, the morphism of algebraic spaces 
	$Z/\uGL_n(\Z)\to  \Gr(n,r)$ is representable, so $Z/\uGL_n(\Z)$
	is a scheme.
	
	(iii) Let $X$ be a  scheme and let $a=(a_{ij})$
	be an $X$-section of $E(n,r)-Z$. We need
	to show that $\tilde{a}:=(a_{ij})_{i\in\{1,\dots n\},j\in\{1,\dots,m\}}:\calO_X^{m}\to \calO_X^n$
	is surjective. It is enough to check this after specializing to all the residue fields of $X$.
	We may therefore assume that $X=\Spec k$ for a field $k$
	and view $a$ as an $n\times r$ matrix over $k$.
	Now, since the   unique point of $X$ is not mapped to $Z$ under the morphism $X\to E(n,r)$,
	there is some $I\subseteq\{1,\dots,m\}$ with $\det((a_{ij})_{i,j\in I})\neq 0$,
	so $\tilde{a}:k^{m}\to k^n$ is surjective. 
\end{proof}

\begin{thm}\label{TH:generators-bound-any-scheme}
	Let $k$ be an infinite field, let $d\in\N\cup\{0\}$, let $X$ be a finite type $k$-scheme,
	and let $V$ be a rank-$n$ vector bundle over $X$.
	If $V$ is generated by   global sections, then 
	there is an open subscheme $U$ of $X$
	such that $\codim(X-U,X)>d$  
	and $V|_U$ is generated by $n+d$ global sections.
	In particular, if $\dim X=d$, then $V$ is generated by $n+d$ global sections.
\end{thm}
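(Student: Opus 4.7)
The plan is to exploit the universal bundle on the Grassmannian together with a Kleiman-type transversality argument (Lemma~\ref{LM:generic-intersection}) in order to refine an arbitrary generating set of $V$ into a generating set of size $n+d$ after removing a closed subset of codimension $>d$. Since $X$ is noetherian (being of finite type over $k$) and $V$ is coherent, rank $n$, and globally generated, I begin by fixing a finite tuple $(v_1,\ldots,v_r)$ with $r\geq n$ that generates $V$. By Theorem~\ref{TH:universality-of-tautological}, this tuple corresponds to a pair $(f,\varphi)$ consisting of a morphism $f\colon X\to\Gr(n,r)_k$ and an isomorphism $\varphi\colon f^*V(n,r)^\vee\xrightarrow{\sim}V$. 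Setting $m:=n+d$, Theorem~\ref{TH:universality-of-tautological} again reduces the task to producing an open $U\subseteq X$ with $\codim(X-U,X)>d$ together with a morphism $g\colon U\to\Gr(n,m)_k$ satisfying $g^*V(n,m)^\vee\cong V|_U$.

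To this end, let $W\subseteq\Gr(n,r)_k$ be the image of $Z:=Z(n,m,r)_k\subseteq E(n,r)_k$ under $E(n,r)\to\Gr(n,r)$. Since $Z$ is $\uGL_n(\Z)$-invariant (the defining minors are multiplied by $\det(g)^{-1}$ under the action of Proposition~\ref{PR:sections-of-univ-bundle-over-Gr}), Lemma~\ref{LM:Grassmannian-properties}(ii) implies that $W$ is closed in $\Gr(n,r)_k$ and that $Z$ is its preimage. Combining Lemma~\ref{LM:Grassmannian-properties}(i) with $\dim\Gr(n,r)_k=n(r-n)$ and the fact that $Z\to W$ is a $\uGL_n(\Z)$-torsor of relative dimension $n^2$ yields $\codim(W,\Gr(n,r)_k)\geq d+1$. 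Moreover, Lemma~\ref{LM:Grassmannian-properties}(iii) descends to a ``truncation'' morphism $\pi\colon\Gr(n,r)_k-W\to\Gr(n,m)_k$. The algebraic group $H:=\uGL_r(k)$ is connected and acts fppf transitively on $\Gr(n,r)_k$ via $h\cdot\mathcal{V}=h\mathcal{V}$. Applying Lemma~\ref{LM:generic-intersection} to each irreducible component $X_i$ of $X$ (viewed as a $\Gr(n,r)_k$-scheme via $f|_{X_i}$) and $X_2=W$ produces a nonempty open $U_i\subseteq H$ such that $\dim(X_i\times_{\Gr(n,r)_k}hW)\leq\dim X_i-(d+1)$ for every $h\in U_i(k)$. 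Because $X$ has finitely many components and $H$ is integral, $\bigcap_i U_i$ is a nonempty open of $H$; since $H$ is open in $\bbA^{r^2}_k$ and $k$ is infinite, $H(k)$ is Zariski-dense in $H$, so I may select some $h\in(\bigcap_i U_i)(k)$.

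Set $f':=h^{-1}\circ f$, so $(f')^{-1}(W)=f^{-1}(hW)$. Using catenarity of each $X_i$, the dimension bound above translates into $\codim((f')^{-1}(W),X)>d$; I then put $U:=X-(f')^{-1}(W)$ and $g:=\pi\circ f'|_U$. To identify $g^*V(n,m)^\vee$ with $V|_U$, I would switch to $\uGL_n(\Z)$-torsors via Corollary~\ref{CR:equiv-to-GLn-torsors}: letting $E_V\to X$ denote the torsor associated to $V$, the pair consisting of $f'$ and its inherited isomorphism $(f')^*V(n,r)^\vee\cong V$ determines a morphism of torsors $\phi'\colon E_V\to E(n,r)$ covering $f'$; because $Z$ is the preimage of $W$, the restriction $\phi'|_{E_V|_U}$ factors through $E(n,r)-Z$, and composing with the $\uGL_n(\Z)$-equivariant morphism of Lemma~\ref{LM:Grassmannian-properties}(iii) yields a morphism $E_V|_U\to E(n,m)$, corresponding by Corollary~\ref{CR:equiv-to-GLn-torsors} to the desired $m$ generators of $V|_U$. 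The ``in particular'' clause is immediate: if $\dim X=d$, then any codimension-$>d$ closed subset of $X$ is empty, so $U=X$. I expect the main obstacle to be the transversality step --- translating the dimension bound of Lemma~\ref{LM:generic-intersection} into the paper's componentwise codimension convention, and ensuring that a single $h$ can be chosen to work for every irreducible component of $X$ simultaneously; once this is done, the remaining identification of $g$ with an $m$-tuple of generators is formal via the Grassmannian--torsor equivalence.
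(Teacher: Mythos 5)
Your proposal is correct and takes essentially the same route as the paper: translate the $r$-tuple of generators into a morphism of $\uGL_n$-torsors $E_V\to E(n,r)_k$ via Corollary~\ref{CR:equiv-to-GLn-torsors}, move the ``bad'' locus $Z(n,n+d,r)$ by a generic $h\in\uGL_r(k)$ using Lemma~\ref{LM:generic-intersection} so that its preimage in $X$ has codimension $>d$, and then truncate via Lemma~\ref{LM:Grassmannian-properties}(iii). The only differences are cosmetic: you apply the transversality lemma with $X_1=X_i$, $X_2=W$ (equivalently the paper's choice after $h\leftrightarrow h^{-1}$), and you pass to the image $W=Z/\uGL_n$ in $\Gr(n,r)_k$ explicitly (its closedness following from the $\uGL_n$-invariance of $Z$, which the paper extracts from Lemma~\ref{LM:Grassmannian-properties}(ii)) rather than keeping everything at the level of $E(n,r)_k$.
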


\begin{proof}
	Since $X$ is quasi-compact and $V$ is coherent,
	$V$ is generated by finitely many global sections.
	Say it is generated by $r$ global sections with $r\geq n$.
	If $r\leq n+d$, we can take $U=X$ and finish, so assume $r>n+d$.

	Let $p:E\to X$ denote the $\uGL_n(k)$-torsor corresponding to $V$.
	By Corollary~\ref{CR:equiv-to-GLn-torsors}, the $r$ global sections generating
	$V$ give rise to a
	$\uGL_n(k)$-torsor morphism
	$(\hat{f},f)$ from 
	$E\to X$ to $E(n,r)_k\to \Gr(n,r)_k$.
	Let $Z=Z(n,n+d,r)$ be as in Lemma~\ref{LM:Grassmannian-properties},
	let $Y=Z/\uGL_n(\Z)$,
	and let $(\hat{j},j)$ denote the evident $\uGL_n(k)$-torsor morphism
	from $Z_k\to Y_k$
	to $E(n,r)_k\to \Gr(n,r)_k$.
	Then $\hat{j}$ is a closed embedding, and therefore so is $j$
	\cite[Tag \href{https://stacks.math.columbia.edu/tag/02L6}{02L6}]{stacks_project}.
	Proposition~\ref{PR:action-on-universal-torsor-over-Gr} implies
	that $\uGL_{r}(k)$ acts on $E(n,r)_k\to \Gr(n,r)_k$
	and the action is fppf transitive on both source and target.
	
	Let $X'$ be an 	irreducible component of $X$
	and let $f'$ be the restriction of $f:X\to \Gr(n,r)_k$
	to $X'$.
	By Lemma~\ref{LM:generic-intersection} (applied with $H=\uGL_{r}(k)$,
	$X_1=Y_k$ and $X_2=X'$) there is a nonempty open subscheme $W'$ of $\uGL_{r}(k)$
	such that for all $h\in W'(k)$
	we have  
	\begin{align*} 
	\dim Y_k\times_{\Gr(n,r)_k} hX'&\leq \dim Y_k+\dim X' -\dim\Gr(n,r)_k\\
	&\leq [nr-(d+1)-n^2]+\dim X'-n(r-n)=\dim X'-d-1,
	\end{align*}
	where in the second inequality we used Lemma~\ref{LM:Grassmannian-properties}(i)
	and the fact that $\dim Z_k=\dim Y_k+\dim\uGL_n(k)$.
	Let $W$ denote the (finite) intersection of all the $W'$ as $X'$ ranges over the irreducible
	components of $X$. Then $W$ is a nonempty open subscheme of $\uGL_{r}(k)$.
	As $k$ is infinite,
	there exists $h\in W(k)$.

	Put $X_1=Y_k\times_{\Gr(n,r)_k} hX$. Since $j:Y\to \Gr(n,r)$
	is a closed embedding, we may and shall view $X_1$ as a closed subscheme of $X$.
	By the previous paragraph, we have $\dim(X_1\cap X')\leq \dim X'-d-1$ for every
	irreducible component $X'$ of $X$, so $\codim(X_1,X)>d$. Put $U=X-X_1$
	and
	let $\hat{g}=h\circ \hat{f}:E\to E(n,r)_k$. Then $\hat{g}$ maps $E|_U$
	to $E(n,r)_k-Z_k$. By Lemma~\ref{LM:Grassmannian-properties}(iii),
	we now have a well-defined $\uGL_n(k)$-equivariant morphism
	$E|_U\to E(n,r)_k-Z_k\to E(n,n+d)_k$.
	Applying Corollary~\ref{CR:equiv-to-GLn-torsors} again,
	we conclude   that $V|_U$ is generated by $n+d$ global sections.
\end{proof}

\begin{example}\label{EX:generators-for-Om}
	Let $k$ be an infinite field and let $n\in\N$.
	As usual, we let $\calO(m)=\calO_{\bbP^n_k}(m)$ ($m\in\Z$) denote the line bundle on $\bbP^n_k$
	whose   sections on an open     $U\subseteq \bbP^n_k$ 
	are the degree-$m$ homogeneous regular functions
	on the cone of $U$ in $\bbA^{n+1}_k$.
	Let  $x_0,\dots,x_n$ denote the coordinate functions of $\bbA^{n+1}_k$.
	It is well-known that $\calO(1)$ is generated by the global sections
	$x_0,\dots,x_n$, so for every $m\geq 0$,
	the line bundle $\calO(m)\cong \calO(1)^{\otimes m}$
	is generated by the ${m+n}\choose n$ global sections $\{x_{i_1}\cdots x_{i_m}\where 0\leq i_1\leq \dots\leq i_m\leq n\}$.
	However, Theorem~\ref{TH:generators-bound-any-scheme} tells us that $\calO(m)$ can be generated
	by $\dim \bbP^n_{k}+\rank \calO(m)=n+1$ global sections,
	and
	indeed,   $x_0^m,\dots,x_n^m$ generate
	$\calO(m)$ as an $\calO_{\bbP^n_k}$-module. This can be shown, for instance,
	by checking it at the stalks, or by looking at the graded module associated
	to $\calO(m)$. 
	(Note, however, that   $x_0^m,\dots,x_n^m$ do \emph{not}
	generate $\Gamma(\bbP^n_k,\calO(m))$ as a $\Gamma(\bbP^n_k,\calO_{\bbP^n_k})$-module
	if $m>1$.)
\end{example}

\section{$d$-Versal Torsors: Existence}
\label{sec:existence}

We now put together the results of the last three sections to show
that strongly $d$-versal torsors exist for all linear groups. We give three constructions
with varying additional properties. In particular, we show that:
\begin{enumerate}
	\item Every linear reductive group scheme over an affine
	noetherian scheme $S$ admits a torsor over a smooth affine base that is strongly versal
	for the class of affine $S$-schemes of dimension $\leq d$.
	\item Every linear reductive group scheme over a
	noetherian scheme $S$ admits a torsor over a scheme that is strongly $d$-versal
	for all affine $S$-schemes.
	\item Every affine algebraic group over an infinite field $k$
	admits a torsor over a smooth quasi-projective $k$-scheme that is strongly $d$-versal
	for all quasi-projective $k$-schemes.
\end{enumerate}
Consult Theorem~\ref{TH:Thomason} and \cite{Gille_2022_when_is_reductive_grp_sch_linear}
regarding the question of which reductive group schemes are linear.
The torsors in (2) also have an additional property
which allows one to extend   specializations of these torsors
from a closed subscheme to the ambient scheme, see Theorem~\ref{TH:extension}.

Recall that if $\catC$ is a class of $S$-schemes and $S_1$ is an $S$-scheme, then  
$\catC/S_1$   denotes the class of $S_1$-schemes with underlying $S$-scheme in $\catC$.

\begin{thm}\label{TH:highly-versal-I}
	Let $n,d\in\N\cup\{0\}$, let $S$ be a scheme,
	and let $G$ be a subgroup of $\uGL_n(S)$ that is flat and locally of finite presentation
	over $S$.
	Let $\catC_d$ denote the class of affine   schemes $Y$ 
	such that $\clpnt{Y}$ is noetherian and $\dim \clpnt{Y}\leq d$.
	If $d>0$, then the $G$-torsor  given by the quotient map
	\[
	E:=\uGL_{n+d}(S)/\begin{bmatrix}
	1_{n\times n} & 0_{n\times d} \\
	0_{d\times n} & \uGL_{ d}(S)
	\end{bmatrix} \to \uGL_{n+d}(S)/\begin{bmatrix}
	G & 0_{d\times n} \\
	0_{n\times d} & \uGL_{ d}(S)
	\end{bmatrix}=:X\]
	is strongly versal
	for   $\catC_d/S$. Here, $G$ acts on the space
	$\uGL_{n+d}(S)/[\begin{smallmatrix}
	1  & 0  \\
	0  & \uGL_{ d}(S)
	\end{smallmatrix}]$ via $(x,g)\mapsto x[\begin{smallmatrix}
	g & 0  \\
	0  & 1_{d\times d}
	\end{smallmatrix}]$. 
	If $d=0$ and $S$ is a scheme over an infinite field,
	then $E\to X$ is versal for $\catC_0/S$.
	
	Furthermore, the $S$-algebraic space $X$ is smooth.
	If $P$ is one of the properties: being an $S$-scheme,
	being an $S$-scheme locally of finite type,
	being an $S$-scheme of finite type, then
	$X$ has $P$ if  $\uGL_n(S)/G$ has $P$
	(cf.\ Theorems~\ref{TH:quo-of-subgroups},
	\ref{TH:quotient-of-groups-over-a-field}(i)).
	If $S$ is locally noetherian and $G$ is an extension of a reductive $S$-group 
	by a finite locally free $S$-group, then $X\to S$ is   affine   of finite type.
\end{thm}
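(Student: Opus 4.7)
The strategy is to apply Corollary~\ref{CR:symmetric-weak-versal-is-strong} with $S_0 = \Spec \Z$, taking for $V_0$ the Raynaud-style generic rank-$n$ vector bundle $V_{n,n+d}$ over $X_0 = X_{n,n+d}$ from Section~\ref{sec:generic}. By Proposition~\ref{PR:GLn-torsor-of-Vnd}, the associated $\uGL_n(\Z)$-torsor $E_{n,n+d} \to X_{n,n+d}$ is exactly the quotient morphism of the theorem in the case $S = \Spec \Z$, so its base change to $S$ followed by the quotient by $G$ recovers exactly the $G$-torsor $E \to X$.

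To invoke the corollary I must verify two things. First, that $V_{n,n+d}$ is weakly versal for $\catC_d$: combining Theorem~\ref{TH:fr-variant} (with $Z = \emptyset$), which says any rank-$n$ vector bundle on an $X' \in \catC_d$ is generated by $n+d$ global sections, with Proposition~\ref{PR:generic-Vnd}, which converts such a generation into a morphism $f : X' \to X_{n,n+d}$ with $V' \cong f^*V_{n,n+d}$, shows that one may take $U = X'$ in the definition. Second, that $H_0 := \uSL_{n+d}(\Z)$ acts fppf transitively on $E_{n,n+d}$ when $d > 0$: Step~4 of the proof of Proposition~\ref{PR:GLn-torsor-of-Vnd} establishes this for $\uGL_{n+d}(\Z)$, and since the stabilizer $[\begin{smallmatrix} 1 & 0 \\ 0 & \uGL_d(\Z) \end{smallmatrix}]$ of the basepoint contains elements of arbitrary determinant when $d \geq 1$, any $h \in \uGL_{n+d}(\Z)$ acting on the basepoint may be modified by a stabilizer element to have determinant $1$. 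The proof of Proposition~\ref{PR:symmetric-weak-versal-is-strong}(ii) invoked in the corollary, fed with weak \emph{versality} in place of weak $d$-versality, produces strong \emph{versality} (not merely strong $d$-versality) for $\catC_d/S$.

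The case $d = 0$ requires a separate treatment, because the stabilizer of the basepoint in $E_{n,n} \cong \uGL_n(\Z)$ is trivial and $\uSL_n(\Z)$ fails to act transitively on it. Assuming $S$ is a scheme over an infinite field $k$, one instead takes $S_0 = \Spec k$ and, as in Remark~\ref{RM:versal}, replaces the $\uSL$-family with the $\uGL$-family; Corollary~\ref{CR:sections-of-GLn} provides that the $k$-sections of $\uGL_n(k)$ are $k$-universally schematically dominant, and the variant of the corollary gives ordinary versality of $E \to X$ for $\catC_0/S$.

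The ancillary structural claims follow routinely. Smoothness of $X$ over $S$: since $\uGL_{n+d}(S)$ is smooth over $S$ and $\uGL_{n+d}(S) \to X$ is an fppf covering of algebraic spaces (Lemma~\ref{LM:free-action-quotient}), smoothness propagates to the free fppf-quotient $X$. Representability of $X$ as a scheme (resp.\ locally of finite type, resp.\ of finite type) whenever $\uGL_n(S)/G$ has the same property follows from Proposition~\ref{PR:quo-of-GLn-torsor-is-a-scheme} applied to $X = (E_{n,n+d} \times_\Z S)/G$. Finally, when $G$ sits in an extension $1 \to H' \to G \to H'' \to 1$ with $H'$ reductive and $H''$ finite locally free, the block subgroup $[\begin{smallmatrix} G & 0 \\ 0 & \uGL_d(S) \end{smallmatrix}] \cong G \times \uGL_d(S)$ fits into the analogous extension $1 \to H' \times \uGL_d(S) \to G \times \uGL_d(S) \to H'' \to 1$ with reductive kernel $H' \times \uGL_d(S)$, so Theorem~\ref{TH:quo-of-subgroups} applied to this subgroup of $\uGL_{n+d}(S)$ directly yields that $X$ is affine and of finite type over $S$. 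The main subtle point is securing weak \emph{versality} (not merely weak $d$-versality) for $V_{n,n+d}$ on the class $\catC_d$ and cleanly separating the $d = 0$ case from the main argument.
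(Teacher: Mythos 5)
Your proof is correct and follows essentially the same route as the paper: apply Corollary~\ref{CR:symmetric-weak-versal-is-strong} with $V_0 = V_{n,n+d}$, check weak versality via Proposition~\ref{PR:generic-Vnd} and Theorem~\ref{TH:fr-variant}, verify $\uSL_{n+d}(\Z)$-transitivity on $E_{n,n+d}$ by adjusting determinants against the stabilizer (which is the same observation as the paper's $\nSL{R}{n+d}\cdot[\begin{smallmatrix}1&0\\0&\nGL{R}{d}\end{smallmatrix}]=\nGL{R}{n+d}$), handle $d=0$ via Remark~\ref{RM:versal}, and derive the structural assertions via Lemma~\ref{LM:free-action-quotient}, Proposition~\ref{PR:quo-of-GLn-torsor-is-a-scheme} and Theorem~\ref{TH:quo-of-subgroups}. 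Your explicit remark that feeding weak versality (rather than weak $d$-versality) into the corollary yields strong versality is a helpful clarification the paper leaves implicit.
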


\begin{proof}
	We use notation from Section~\ref{sec:generic}.
	
	Suppose that $d>0$. By Corollary~\ref{CR:symmetric-weak-versal-is-strong},
	in order to prove the first part of the theorem,
	it is enough to show that the vector bundle $V_{n,n+d}$
	is weakly versal for $\catC_d$,
	and that its corresponding  
	$\uGL_n(\Z)$-torsor (see Proposition~\ref{PR:GLn-torsor-of-Vnd}),
	\[E_{n,n+d}:=\uGL_{n+d}(\Z)/[\begin{smallmatrix}
	1  & 0  \\
	0  & \uGL_{d}(\Z)
	\end{smallmatrix}] \to \uGL_{n+d}(\Z)/[\begin{smallmatrix}
	\uGL_n(\Z) & 0 \\
	0 & \uGL_{d}(\Z)
	\end{smallmatrix}]= X_{n,n+d},\]  admits an action by $\uSL_{n+d}(\Z)$
	that is fppf transitive on $E_{n,n+d}$. 
	
	That $V_{n,d}$ is weakly versal for $\catC_d$ follows
	from Proposition~\ref{PR:generic-Vnd}
	and Theorem~\ref{TH:fr-variant} (with $Z=\emptyset$).
	We get the required $\uSL_{n+d}(\Z)$-action on $E_{n,n+d}\to X_{n,n+d}$
	by restricting the evident action of $\uGL_{n+d}(\Z)$ on $E_{n,n+d}\to X_{n,n+d}$.
	The action of $\uSL_{n+d}(\Z)$ on $E_{n,n+d}$ is fppf transitive
	because the action of $\uGL_{n+d}(\Z)$ is, and for every ring $R$, we have
	$\nSL{R}{n+d}[\begin{smallmatrix}
	1_{n\times n}  & 0  \\
	0  & \nGL{R}{d}
	\end{smallmatrix}]=\nGL{R}{n+d}$ (because $d>0$).
	This proves the first part of the theorem in the case $d>0$.
	
	When $d=0$ and $S$ is a scheme over an infinite field, we prove the first part
	by 
	using Remark~\ref{RM:versal} instead of Corollary~\ref{CR:symmetric-weak-versal-is-strong},
	noting that   $\uGL_{n+d}(\Z)$ acts fppf transitively on $E_{n,n+d} $ even when $d=0$.
	
	We proceed with second part of the theorem, namely, the
	assertions about the structure of $X$. 
	By Lemma~\ref{LM:free-action-quotient}, the morphism
	$\uGL_{n+d}(S)\to X$ is a $G\times \uGL_d(S)$-torsor, hence flat, locally
	of finite presentation and surjective
	(in the sense of \cite[Tag \href{https://stacks.math.columbia.edu/tag/03MC}{03MC}]{stacks_project}).
	Since the composition $\uGL_{n+d}(S)\to X\to S$ is smooth, it follows that
	$X\to S$ is smooth \cite[Tag \href{https://stacks.math.columbia.edu/tag/0AHE}{0AHE}]{stacks_project}.

	Since $E_{n,n+d}$ and $X_{n,n+d}$ are $\Z$-schemes of finite type
	(see   the proof of Proposition~\ref{PR:GLn-torsor-of-Vnd}), 
	the spaces 
	$E = E_{n,n+d}\times_{\Z} S$ and $X_{n,n+d}\times_{\Z} S$ are also   $S$-schemes of finite type.
	Applying Proposition~\ref{PR:quo-of-GLn-torsor-is-a-scheme} to $E\to X_{n,n+d}\times_\Z S$
	now tells us that
	$X=E/G$ has property $P$ whenever $\uGL_n(S)/G$ has it.
	Finally, if $S$ is locally noetherian and $G$ is 
	an extension of a reductive $S$-group by a finite locally free $S$-group,
	then the same holds for $G\times \uGL_d(S)$,
	and $X\to S$ is affine of finite type by Theorem~\ref{TH:quo-of-subgroups}.
\end{proof}

\begin{thm}\label{TH:highly-versal-II}
	Let $n,d,S,G$ be as in Theorem~\ref{TH:highly-versal-I}.
	Let $\nAff $ denote the class of noetherian affine schemes.
	If $d>0$, then the $G$-torsor  given by the quotient map 
	\[E:=\uGL_{n+d}(S)/\begin{bmatrix}
	1_{n\times n} & \bbA^{n\times d}_S \\
	0_{d\times n} & \uGL_{ d}(S)
	\end{bmatrix} \to \uGL_{n+d}(S)/\begin{bmatrix}
	G & \bbA^{n\times d}_S \\
	0_{d\times n} & \uGL_{ d}(S)
	\end{bmatrix}=:X\]
	is strongly $d$-versal
	for   $\nAff /S$
	and strongly versal for the class $\catC_d$
	from Theorem~\ref{TH:highly-versal-I}. Here, $G$ acts on
	$\uGL_{n+d}(S)/[\begin{smallmatrix}
	1_{n\times n} & \bbA^{n\times d}_S \\
	0_{d\times n} & \uGL_{ d}(S)
	\end{smallmatrix}]$ via $(x,g)\mapsto x[\begin{smallmatrix}
	g & 0  \\
	0  & 1_{d\times d}
	\end{smallmatrix}]$. 
	If $d=0$ and $S$ is a scheme over an infinite field, then $E\to X$
	is $0$-versal for $\nAff$ and versal for $\catC_0$.
	
	Furthermore, the $S$-algebraic space $X$ is smooth.
	If $P$ is one of the properties: being an $S$-scheme,
	being an $S$-scheme locally of finite type,
	being an $S$-scheme of finite type, then
	$X$ has $P$ if  $\uGL_n(S)/G$ has $P$.
	In particular, if $S$ is locally noetherian and
	$G$ is an extension of a reductive
	$S$-group by a finite locally free $S$-group, then $X$ is an $S$-scheme of finite type. 
\end{thm}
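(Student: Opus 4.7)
The plan is to invoke Corollary~\ref{CR:symmetric-weak-versal-is-strong} taking $S_0=\Spec\Z$, $X_0=\Gr(n,n+d)$, $V_0=V(n,n+d)^\vee$, $E_0=E(n,n+d)$ and $H_0=\uSL_{n+d}(\Z)$. Proposition~\ref{PR:action-on-universal-torsor-over-Gr}, base-changed to $S$ and then further quotiented by $G$ via $\rho\colon G\hookrightarrow\uGL_n(S)$, identifies the $G$-torsor $E\to X$ of the statement with the $G$-torsor obtained from $E(n,n+d)_S\to\Gr(n,n+d)_S$ by reduction of structure group along $\rho$. It therefore suffices to verify two hypotheses: (a) $V(n,n+d)^\vee$ is weakly $d$-versal for $\nAff$, and (b) $\uSL_{n+d}(\Z)$ acts fppf transitively on $E(n,n+d)$.

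For (a), let $V'$ be a rank-$n$ vector bundle on an affine noetherian scheme $X'$. Theorem~\ref{TH:fr-vec-bundles} produces an open $U\subseteq X'$ with $\codim(X'-U,X')>d$ together with $n+d$ global sections of $V'$ whose restrictions generate $V'|_U$; Corollary~\ref{CR:equiv-to-GLn-torsors} then converts this data into a morphism of $\uGL_n(\Z)$-torsors from the one associated to $V'|_U$ into $E(n,n+d)\to\Gr(n,n+d)$, equivalently an isomorphism $V'|_U\cong f^*V(n,n+d)^\vee$ for some $f\colon U\to\Gr(n,n+d)$. For (b), the fppf transitive $\uGL_{n+d}(\Z)$-action supplied by Proposition~\ref{PR:action-on-universal-torsor-over-Gr} restricts to an fppf transitive $\uSL_{n+d}(\Z)$-action: any $h\in\uGL_{n+d}(R')$ moving one $R'$-point of $E(n,n+d)$ to another may be right-multiplied by a diagonal matrix inside the stabilizer block $[\begin{smallmatrix} 1_{n\times n} & 0\\ 0 & \uGL_d\end{smallmatrix}]$ to rescale its determinant to $1$, which is possible exactly because $d>0$. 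Invoking Corollary~\ref{CR:symmetric-weak-versal-is-strong} (which in turn uses Proposition~\ref{PR:SLn-generation}) then yields the asserted strong $d$-versality for $\nAff/S$.

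For strong versality on $\catC_d$, the same argument goes through with Theorem~\ref{TH:fr-variant} (applied with $Z=\emptyset$) replacing Theorem~\ref{TH:fr-vec-bundles}: every rank-$n$ vector bundle on an object of $\catC_d$ is globally generated by $n+d$ sections on the entire scheme, so one may take $U=X'$ throughout, promoting weak $d$-versality to weak versality and, traced through the proofs of Proposition~\ref{PR:symmetric-weak-versal-is-strong} and Corollary~\ref{CR:symmetric-weak-versal-is-strong}, strong $d$-versality to strong versality. The $d=0$ statements assume $S$ is a scheme over an infinite field and are handled via Remark~\ref{RM:versal}: in that case $\uGL_{n+d}(\Z)$ itself acts fppf transitively on $E(n,n+d)$ without any determinant adjustment, and combined with Corollary~\ref{CR:sections-of-GLn} this yields $0$-versality for $\nAff/S$ and versality for $\catC_0/S$.

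Finally, the structural assertions are verified exactly as in Theorem~\ref{TH:highly-versal-I}. Smoothness of $X\to S$ follows because $\uGL_{n+d}(S)\to X$ is a torsor under the flat, locally finitely presented block subgroup $[\begin{smallmatrix} G & \bbA^{n\times d}_S\\ 0 & \uGL_d(S)\end{smallmatrix}]$ and hence an fppf covering, while $\uGL_{n+d}(S)\to S$ is smooth (\cite[Tag \href{https://stacks.math.columbia.edu/tag/0AHE}{0AHE}]{stacks_project}). The representability of $X$ with the listed finiteness properties comes from Proposition~\ref{PR:quo-of-GLn-torsor-is-a-scheme} applied to the $\uGL_n(S)$-torsor $E(n,n+d)_S\to\Gr(n,n+d)_S$, whose base is already of finite type over $S$; and the last clause invokes Theorem~\ref{TH:quo-of-subgroups} to see that $\uGL_n(S)/G$ is of finite type under the hypotheses on $G$. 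The main subtle point is the fppf transitivity check for $\uSL_{n+d}$, which genuinely requires $d>0$ and accounts for the separate $d=0$ clause of the theorem.
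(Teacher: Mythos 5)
Your proof is correct and takes essentially the same route as the paper: the paper likewise reduces to Corollary~\ref{CR:symmetric-weak-versal-is-strong} by combining weak $d$-versality of $V(n,n+d)^\vee$ for $\nAff$ (resp.\ weak versality for $\catC_d$) via Theorems~\ref{TH:fr-vec-bundles} and~\ref{TH:fr-variant} together with Theorem~\ref{TH:universality-of-tautological}/Corollary~\ref{CR:equiv-to-GLn-torsors}, with the fppf transitive $\uSL_{n+d}(\Z)$-action supplied by Proposition~\ref{PR:action-on-universal-torsor-over-Gr}, and handles the $d=0$ and structural statements exactly as you describe. One cosmetic slip: the stabilizer of $t_0$ is $[\begin{smallmatrix}1_{n\times n}&\bbA^{n\times d}_\Z\\0_{d\times n}&\uGL_d(\Z)\end{smallmatrix}]$ rather than $[\begin{smallmatrix}1_{n\times n}&0\\0&\uGL_d(\Z)\end{smallmatrix}]$, and the determinant adjustment should be applied to an element carrying $t_0$ to the given point (equivalently, one checks $\uSL_{n+d}(R)\cdot K(R)=\uGL_{n+d}(R)$ for the stabilizer $K$) rather than to an arbitrary $h$ carrying one point to another---but the subgroup you wrote lies in $K$ and suffices for the rescaling, so the argument stands as intended.
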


\begin{proof}
	This is similar to  the proof of Theorem~\ref{TH:highly-versal-I}
	with the following differences: We use the vector bundle
	$V(n,n+d)^\vee $ over $\Gr(n,n+d)$ considered   in Section~\ref{sec:generic}
	instead of $V_{n,n+d}$,  
	Proposition~\ref{PR:action-on-universal-torsor-over-Gr} instead of	
	Proposition~\ref{PR:GLn-torsor-of-Vnd} (to describe the $\uGL_n(\Z)$-torsor
	corresponding to $V(n,n+d)^\vee$),
	and Theorems~\ref{TH:universality-of-tautological} and~\ref{TH:fr-vec-bundles} instead of 
	Proposition~\ref{PR:generic-Vnd} and Theorem~\ref{TH:fr-variant}
	(to show that $V(n,n+d)^\vee$ is weakly $d$-versal for $\nAff$).
\end{proof}

The torsor $E\to X$ of Theorem~\ref{TH:highly-versal-II} enjoys an additional remarkable property
which we call the \emph{extension property}.

\begin{thm}[Extension Property]
	\label{TH:extension}
	Let $n$, $d$, $S$, $G$ and $E\to X$ be as in Theorem~\ref{TH:highly-versal-II}.
	Let $X'\in \catC_d/S$, with $\catC_d$ as in Theorem~\ref{TH:highly-versal-I},
	and let $u:X''\to X'$ be a closed embedding.
	Let $E'\to X'$ be a $G$-torsor and let $E''=u^*E'$.
	Suppose there is a morphism $f:X''\to X$ such that
	$f^*E\cong_{X''} E''$. Then $f$ factors as $g\circ u$
	for a morphism $g:X'\to X$ such that $g^*E\cong_{X'} E'$.	
\end{thm}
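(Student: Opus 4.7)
The plan is to reduce the extension problem for the $G$-torsor to an extension problem for generating global sections of a rank-$n$ vector bundle over $X'$, which is resolved by Theorem~\ref{TH:fr-variant}. First I would use the embedding $G\hookrightarrow\uGL_n(S)$ to associate to $E'$ a rank-$n$ vector bundle $V'$ on $X'$, namely the one corresponding to the $\uGL_n(S)$-torsor $P:=E'\times^G\uGL_n(S)$; then $V'':=u^*V'$ corresponds to $u^*P\cong E''\times^G\uGL_n(S)$. By Proposition~\ref{PR:action-on-universal-torsor-over-Gr} (base-changed to $S$), the $G$-torsor $E\to X$ of Theorem~\ref{TH:highly-versal-II} is obtained from the universal rank-$n$ $\uGL_n(S)$-torsor $E\to Y:=\Gr(n,n+d)_S$ by taking the further quotient $X=E/G$, with a natural projection $X\to Y$.

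Next I would stitch together two dictionaries to translate the problem. On the one hand, as in the proof of Proposition~\ref{PR:reduction-to-overgroup}, a $G$-torsor morphism $E'\to E$ is equivalent data to a $\uGL_n(S)$-torsor morphism $P\to E$: the forward direction extends by $\uGL_n(S)$-equivariance and the reverse restricts along the $G$-equivariant inclusion $e'\mapsto[e',1]:E'\to P$. On the other hand, Corollary~\ref{CR:equiv-to-GLn-torsors} provides a natural bijection between $\uGL_n(\Z)$-torsor morphisms $P\to E$ and $(n+d)$-tuples $a_1,\dots,a_{n+d}\in\Gamma(X',V')$ which generate $V'$, and analogously for $u^*P$ and $V''$. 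This bijection is compatible with pullbacks, in particular along $u:X''\to X'$. Combining with Remark~\ref{RM:morphism-pullback-correspondence}, specializations of $E\to X$ to $E'\to X'$ (respectively to $E''\to X''$) correspond to such generating tuples for $V'$ (respectively $V''$), compatibly with $u^*$.

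The given isomorphism $f^*E\cong_{X''}E''$ then corresponds to a generating tuple $b_1,\dots,b_{n+d}\in\Gamma(X'',V'')$. Since $X'\in\catC_d/S$ is affine with $\clpnt{X'}$ noetherian of dimension at most $d$, and $V'$ is a coherent $\calO_{X'}$-module of rank $n$, the hypotheses of Theorem~\ref{TH:fr-variant} are satisfied, and I would invoke it to extend the $b_i$ to $a_1,\dots,a_{n+d}\in\Gamma(X',V')$ generating $V'$ with $u^*a_i=b_i$ for all $i$. Unwinding the dictionaries, this tuple produces a $G$-torsor morphism $\hat g:E'\to E$ and thus a morphism $g:X'\to X$ with $g^*E\cong_{X'}E'$. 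The pullback compatibility of Corollary~\ref{CR:equiv-to-GLn-torsors} along $u$ forces $g\circ u$ to correspond to $(u^*a_1,\dots,u^*a_{n+d})=(b_1,\dots,b_{n+d})$, which is the tuple encoding $f$, so $g\circ u=f$.

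The essential geometric content is packaged in Theorem~\ref{TH:fr-variant}, already proved in Section~\ref{sec:generators}; the rest is a bookkeeping translation between $G$-torsors, $\uGL_n(S)$-torsors, and generating tuples. I expect the main obstacle to be not deep but rather notational: the principal care needed is checking that the composite dictionary (passage to $P=E'\times^G\uGL_n(S)$, then to the generating tuple of $V'$) intertwines cleanly with both Remark~\ref{RM:morphism-pullback-correspondence} and with pullback along $u$, so that the section-level equalities $u^*a_i=b_i$ translate into the scheme-theoretic identity $g\circ u=f$ rather than the weaker statement $(g\circ u)^*E\cong f^*E$.
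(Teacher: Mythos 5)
Your proposal is correct and matches the paper's argument essentially step for step: both pass from the $G$-torsor to the associated $\uGL_n(S)$-torsor $P=E'\times^G\uGL_n(S)$, use Corollary~\ref{CR:equiv-to-GLn-torsors} (together with Remark~\ref{RM:morphism-pullback-correspondence}) to translate $\uGL_n(S)$-torsor morphisms into $E=E(n,n+d)_S$ into generating $(n+d)$-tuples of the rank-$n$ bundle $V'$, extend the tuple via Theorem~\ref{TH:fr-variant}, and unwind. The pullback compatibility along $u$ that you flag as the step needing care is exactly what the paper records via the identity $\hat h\circ\hat u=\hat q$, precisely as you describe.
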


\begin{proof}
	Put $F'=E'\times^G \uGL_n(S)$,
	$F''=E''\times^G \uGL_n(S)$ and
	recall that $E$ is the $S$-scheme $E(n,n+d)_S$ (see 
	the proof of Theorem~\ref{TH:highly-versal-II}).
	By Remark~\ref{RM:morphism-pullback-correspondence}, the isomorphism $f^*E\cong_{X''} E''$
	gives rise to a $G $-torsor
	morphism $(\hat{f},f)$ from $E''$ to $E=E(n,n+d)_S$. 
	Since $E(n,n+d)_S$ is also a $\uGL_n(S)$-torsor,
	we have a $\uGL_n(S)$-equivariant morphism
	$\hat{h}'':F''\to E$ given section-wise
	by $\hat{h}''([x'',a])=\hat{f}(x'')\cdot a$.	
	By Corollary~\ref{CR:equiv-to-GLn-torsors}, $\hat{h}''$
	determines a tuple $(v''_1,\dots,v''_{n+d})\in\Gamma(X'',V'')^{n+d}$
	which generates $V''$. By Theorem~\ref{TH:fr-variant},
	there exist $v'_1,\dots,v'_{n+d}\in \Gamma(X',V')$
	which generate $V'$, and satisfy $u^* v'_i=v''_i$ for all
	$i\in\{1,\dots,n+d\}$. Applying Corollary~\ref{CR:equiv-to-GLn-torsors} again,
	we see that $(v'_1,\dots,v'_{n+d})$  
	determines a $\uGL_n(S)$-equivariant morphism
	$\hat{h}':F'\to E$ and that
	$\hat{h}' \circ \hat{u}  =\hat{h}''$, where $\hat{u}:F''\to F'$ is the pullback of $u$ along $F'\to X'$.
	Let  $\hat{g}$ denote the composition of $\hat{h}':F'\to E$
	and the   $G$-equivariant map $e'\mapsto [e',1]:E'\to F'$.
	Then   $\hat{g}:E'\to E$   descends to a morphism   $g:X'\to X$ 
	such that $g^*E\cong E'$, and
	$g\circ u =f$  because
	$\hat{h}'\circ \hat{u}=\hat{h}''$.
\end{proof}

Our last construction of strongly versal $d$-torsors applies only for algebraic groups
over infinite fields, but the strong  $d$-versality applies to  non-affine schemes as well.
Recall from Section~\ref{sec:generic} that $V(n,m)$ ($m\geq n\geq 0$)
denotes the tautological vector bundle over $\Gr(n,m)$.
In the special case $n=1$, we have $\Gr(1,m)=\bbP^{m-1}_\Z$
and $V(1,m)=\calO_{\bbP^{m-1}_\Z}(-1)$. 

\begin{thm}\label{TH:highly-versal-III}
	Let $k$ be an infinite field,   let $d,n\in\N$, and let $G$ be a   subgroup
	of $\uGL_n(k)$.
	Let $Y=\Gr(n,n+d)_k\times_k \bbP^d_k$
	and let $V$ denote the vector bundle $p_1^*V(n,n+d)_k^\vee\otimes_{\calO_Y} p_2^*\calO(-1)$
	on $Y$, where $p_1:Y\to \Gr(n,n+d)_k$ and $p_2:Y\to \bbP^d_k$
	are the first and second projections, respectively.
	Let $E\to Y$ be the $\uGL_n(k)$-torsor corresponding to $V$.
	Then the $G$-torsor
	\[
	E\to E/G
	\]
	is strongly $d$-versal for the class 
	of quasi-projective $k$-schemes.	
	The $k$-algebraic space $E/G$ is smooth,
	and if $G$ is of finite type over $k$,
	it is also a   quasi-projective $k$-scheme.
\end{thm}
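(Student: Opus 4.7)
The strategy mirrors Theorems~\ref{TH:highly-versal-I} and~\ref{TH:highly-versal-II}. I will first show that the rank-$n$ vector bundle $V$ on $Y$ (equivalently, the $\uGL_n(k)$-torsor $E\to Y$) is weakly $d$-versal for quasi-projective $k$-schemes. Given such a scheme $X'$ with a rank-$n$ vector bundle $V'$, I would fix an ample line bundle $L$ on $X'$. For $m$ large, both $L^m$ and $V'\otimes L^m$ are globally generated (by Serre's theorem, applied after extending to a projective closure). Theorem~\ref{TH:generators-bound-any-scheme} then produces opens $U_1,U_2\subseteq X'$ with $\codim(X'-U_i,X')>d$ such that $L^m|_{U_1}$ is generated by $d+1$ sections and $(V'\otimes L^m)|_{U_2}$ by $n+d$ sections. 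On $U:=U_1\cap U_2$, Theorem~\ref{TH:universality-of-tautological} converts these into morphisms $f_1:U\to\Gr(n,n+d)_k$ with $f_1^*V(n,n+d)^\vee\cong (V'\otimes L^m)|_U$ and $f_2:U\to\bbP^d_k$ with $f_2^*\calO(-1)\cong L^{-m}|_U$. The product $f=(f_1,f_2):U\to Y$ then satisfies $f^*V\cong V'|_U$, the twists canceling by design.

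With weak $d$-versality of $E\to Y$ in hand, Proposition~\ref{PR:reduction-to-overgroup} yields that the $G$-torsor $E\to E/G$ is also weakly $d$-versal for quasi-projective $k$-schemes. To promote this to strong $d$-versality, I will use the symmetry group $H:=\uSL_{n+d}(k)\times_k\uSL_{d+1}(k)$. The action of $\uGL_{n+d}(k)$ on $V(n,n+d)^\vee$ (Proposition~\ref{PR:action-on-universal-torsor-over-Gr}) together with the standard action of $\uGL_{d+1}(k)$ on $\calO_{\bbP^d_k}(-1)$ induce an $H$-action on $V$, hence on $E\to Y$, commuting with $\uGL_n(k)$, and this descends to an action on $E\to E/G$. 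The action on $Y$ is fppf transitive, and the stabilizer in $\uSL_{n+d}$ of a $k$-point of $\Gr(n,n+d)_k$ surjects onto the $\uGL_n$ acting on the fiber of $E(n,n+d)_k$ (one absorbs determinants in the complementary $\uGL_d$-block, using $d\geq 1$), and similarly for $\uSL_{d+1}$. Hence $H$ will act fppf transitively on $E$ and on $E/G$. Since Proposition~\ref{PR:SLn-generation} supplies a $k$-fppf surjective morphism $\bbA^N_k\to H$, Proposition~\ref{PR:symmetric-weak-versal-is-strong}(ii) then delivers strong $d$-versality of $E\to E/G$.

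For the structural claims, $Y=\Gr(n,n+d)_k\times_k\bbP^d_k$ is smooth and projective, and $E$ is smooth over $k$ as a $\uGL_n(k)$-torsor over $Y$; since $E\to E/G$ is faithfully flat and locally of finite presentation as a $G$-torsor, smoothness of $E/G$ will follow by descent. Assuming $G$ of finite type, Chevalley's theorem provides a representation $W$ of $\uGL_n(k)$ and a line $\ell\subseteq W$ with $G=\Stab(\ell)$, giving a $\uGL_n(k)$-equivariant locally closed immersion $\uGL_n(k)/G\hookrightarrow\bbP(W)$. Forming associated bundles over $Y$ then yields a locally closed immersion $E/G\hookrightarrow\bbP(\calW)$, where $\calW:=E\times^{\uGL_n(k)}W$ is a vector bundle on $Y$. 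As $\bbP(\calW)$ is projective over the projective $Y$, the subscheme $E/G$ is quasi-projective.

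The main obstacle is Step~1: one must simultaneously bound the number of generators of $L^m$ and of $V'\otimes L^m$ on a common open whose complement has codimension $>d$. This is precisely what Theorem~\ref{TH:generators-bound-any-scheme} enables, and the very choice of $V$ as $p_1^*V(n,n+d)^\vee\otimes p_2^*\calO(-1)$ is engineered so that the unwanted twists by $L^{\pm m}$ cancel when pulled back along $(f_1,f_2)$.
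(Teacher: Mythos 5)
Your proposal is essentially correct and follows the same overall route as the paper: establish weak $d$-versality of the bundle $V = p_1^*V(n,n+d)^\vee\otimes p_2^*\calO(-1)$ via Theorems~\ref{TH:generators-bound-any-scheme} and~\ref{TH:universality-of-tautological} (with the tensor twists by $L^{\pm m}$ cancelling by design), then promote this to strong $d$-versality of $E\to E/G$ via the symmetry group $H=\uSL_{n+d}(k)\times_k\uSL_{1+d}(k)$ and Propositions~\ref{PR:reduction-to-overgroup}, \ref{PR:SLn-generation} and~\ref{PR:symmetric-weak-versal-is-strong}(ii) (the paper packages this promotion as Corollary~\ref{CR:symmetric-weak-versal-is-strong}). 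One remark on rigor: your fppf-transitivity argument for the $H$-action on $E$ proceeds pointwise via stabilizers, which is morally the right idea but sketchier than the paper's treatment. The paper constructs the $H$-action on $E$ explicitly by identifying $E$ as the contracted product $(E_1\times_k E_2)\times^{\vphi}\uGL_n(k)$ (with $\vphi:\uGL_n\times\uGL_1\to\uGL_n$ the tensor map) and then transfers transitivity from $E_1\times_k E_2$ to $E$ using that $\vphi$, and hence $E_1\times_k E_2\to E$, is $k$-fppf surjective. To fully rigorize your version, you would need to make the ``action is induced'' claim precise, which in effect amounts to the paper's contracted-product construction.

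Where your proposal genuinely diverges from the paper is in the proof of quasi-projectivity of $E/G$. The paper first shows $E/G$ is a $k$-scheme (Theorem~\ref{TH:quotient-of-groups-over-a-field}(i) and Proposition~\ref{PR:quo-of-GLn-torsor-is-a-scheme}), then exhibits a closed $k$-point $x\in E/G$ and, using fppf transitivity of the $H$-action, realizes $E/G\cong H/H_1$ for $H_1=\Stab_H(x)$ closed in $H$, concluding by quasi-projectivity of homogeneous spaces over a field. Your argument instead invokes Chevalley's theorem to produce a representation $W$ of $\uGL_n(k)$ and a line $\ell\subseteq W$ with $G=\Stab(\ell)$, yielding a $\uGL_n$-equivariant locally closed immersion $\uGL_n(k)/G\hookrightarrow\bbP(W)$; passing to associated bundles along the $\uGL_n(k)$-torsor $E\to Y$ gives a locally closed immersion $E/G\hookrightarrow\bbP(\calW)$ over the projective scheme $Y$, which is then projective over $k$. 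Both arguments are valid (your use of Chevalley requires $G$ to be closed in $\uGL_n(k)$, but this is automatic for algebraic subgroups over a field). Your version is more direct in exploiting the projectivity of $Y$ and needs no rational-point argument, while the paper's version reuses the transitivity of $H$ that has already been set up for the versality proof.
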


We remark that the quasi-projective $k$-schemes are precisely the
finite type $k$-schemes admitting an ample line bundle \cite[Tag \href{https://stacks.math.columbia.edu/tag/0B42}{0B42}]{stacks_project}.

\begin{proof} 
	Let $\qProj/k$ denote the class of quasi-projective $k$-schemes.	
	By Corollary~\ref{CR:symmetric-weak-versal-is-strong},
	in order to show that $E\to E/G$ is strongly $d$-versal
	for $\qProj/k$, it is enough to show that the rank-$n$ vector bundle
	$V$ is weakly versal for $\qProj/k$ and that $\uSL_{n+d}(k)\times\uSL_{1+d}(k)$
	acts on $E\to Y$ in such a way that the action is fppf transitive on $E$.
	
	To prove the first claim, let $V'$ be a rank-$n$ vector bundle on $X'\in  \qProj/k $.
	Then $X'$ admits an ample line bundle $L$.
	There exists some $\ell\in\N$ 
	such that both   $W:=V'\otimes L^{\otimes \ell}$ and $L^{\otimes \ell}$
	are generated by  global sections
	\cite[Tag \href{https://stacks.math.columbia.edu/tag/01Q3}{01Q3}]{stacks_project}.
	By Theorem~\ref{TH:generators-bound-any-scheme},
	there is an open subscheme $U_1\subseteq X'$
	with $\codim(X'-U_1,X')>d$
	such that $W|_{U_1}$ is generated by $n+d$ global sections,
	and
	by Theorem~\ref{TH:universality-of-tautological},
	this determines a $k$-morphism $f_1:U_1\to \Gr(n,n+d)_k$
	such that $W|_{U_1}\cong f_1^*V(n,n+d)_k^\vee$.	
	Similarly, there is an open subscheme $U_2$ of $X'$
	and a $k$-morphism $f_2:U_2\to \Gr(1,1+d)=\bbP^d_k$ 
	such that $\codim(X'-U_2,X')>d$
	and $f_2^*\calO_{\bbP^d_k}(1)=f^*_2V(1,1+d)_k^\vee \cong L^{\otimes\ell}|_{U_2}$.
	
	Put $U=U_1\cap U_2$ and define $f:U\to \Gr(n,n+d)_k\times_k \bbP^d_k$
	by $f(x)=(f_1(x),f_2(x))$ on sections.
	Then $(p_1\circ f)^*V(n,n+d)_k^\vee \cong W|_U$
	while $(p_2\circ f)^*\calO_{\bbP^d_k}(-1)\cong (L^{\otimes \ell})^\vee|_U$.
	This means that
	\begin{align*}
		f^*V & \cong f^*p_1^*V(n,n+d)_k^\vee\otimes_{\calO_{U}} f^*p_2^*\calO(-1)
		\cong W|_U\otimes_{\calO_{U}}  (L^{\otimes \ell})^\vee|_U\cong V'|_U.
	\end{align*}
	Since $\codim(X'-U,X')>d$,
	this proves that $V$ is weakly $d$-versal for $\qProj/k$.
	
	To show	that $E\to Y$ has the desired  $\uSL_{n+d}(k)\times\uSL_{1+d}(k)$-action,
	we unfold the definition of $E$.
	Let $t_1:E_1\to X_1:=\Gr(n,n+d)_k$ be the $\uGL_{n+d}(k)$-torsor
	corresponding to $V(n,n+d)^\vee_k$, and let $t_2:E_2\to X_2:=\Gr(1,1+d)_k=\bbP^d_k$
	be the $\uGL_1(k)$-torsor corresponding to $V(1,1+d)_k=\calO(-1)$.
	Proposition~\ref{PR:action-on-universal-torsor-over-Gr}
	and Lemma~\ref{LM:torsor-of-dual} imply that $\uGL_{n+d}(k)$ acts on $t_1:E_1\to X_1$
	and $\uGL_{1+d}(k)$ acts on $t_2:E_2\to X_2$. Since $d>0$,
	these actions restrict to fppf transitive actions of $\uSL_{n+d}(k)$ and $\uSL_{1+d}(k)$
	on $E_1$ and $E_2$, respectively.
	The $\uGL_n(k)$-torsor corresponding to
	$p_1^*V(n,n+d)_k^\vee$ is $p_1^*E_1=E_1\times_k X_2\to X_1\times_k X_2=Y$
	and the $\uGL_1(k)$-torsor corresponding   
	to $p_2^*\calO (-1)$ is $X_1\times_k E_2\to X_1\times_k X_2=Y$.
	Thus, $E_1\times_k E_2=(p_1^*E_1)\times_Y(p_2^*E_2)$
	is a $\uGL_n(k)\times_k \uGL_1(k)$-torsor corresponding
	to the pair of vector bundles $(p_1^*V(n,n+d)^\vee_k, p_2^*\calO(-1))$.
	The induced morphism $E_1\times_k E_2\to X_1\times_k X_2=Y$ is just $t_1\times t_2$
	so   $\uSL_{n+d}(k)\times_k \uSL_{1+d}(k)$ acts on this torsor,
	and   the action on $E_1\times_k E_2$ is fppf transitive.
	
	Consider the $k$-group   morphism
	$\vphi: G:=\uGL_n(k)\times_k  \uGL_1(k)\to \uGL_n(k)$
	given by $\vphi(g_1,g_2)=g_1\otimes g_2$ on sections. 
	If $X$ is a $k$-scheme, $(V_1,V_2)$
	is   a pair  of vector bundles on $X$ such that
	$\rank V_1=n$ and $\rank V_2=1$, and $P\to X$ is the
	$\uGL_n(k)\times_k\uGL_1(k)$-torsor
	corresponding to the pair $(V_1,V_2)$, 
	then the $\uGL_n(k)$-torsor corresponding to $V_1\otimes_{\calO_X} V_2$
	is  $P\times^{\vphi,G}\uGL_n(k)$  up to $X$-isomorphism.
	This can be seen, for instance, by choosing a Zariski covering $\{U_i\to X\}_{i\in I}$
	trivializing $V_1$ and $V_2$, and looking at the \v{C}ech $1$-cocycles
	defining the torsors corresponding to $V_1$, $V_2$ and $V_1\otimes_{\calO_X} V_2$.
	Applying this to the  $\uGL_n(k)\times_k \uGL_1(k)$-torsor
	$E_1\times_k E_2\to X_1\times_k X_2=Y$, we see that
	$E$ from the theorem is $(E_1\times_k E_2)\times^{\vphi,G}\uGL_n(k)$. The $\uGL_n(k)$-torsor $E\to Y$ inherits
	from  $E_1\times_k E_2\to Y$ an $\uSL_{n+d}(k)\times_k \uSL_{1+d}(k)$-action
	such that 
	the   canonical map $E_1\times_k E_2\to E$ is    $\uSL_{n+d}(k)\times_k \uSL_{1+d}(k)$-equivariant.
	Since $\vphi:G\to\uGL_n(k)$ is $k$-fppf surjective, so is 
	$E_1\times_k E_2\to E$, and it follows
	that $\uSL_{n+d}(k)\times_k \uSL_{1+d}(k)$ acts fppf transitively
	on $E$.

	The smoothness of $E/G$ over $k$ is shown as in Theorem~\ref{TH:highly-versal-I}. To finish, suppose that $G$ is of finite type.
	By Theorem~\ref{TH:quotient-of-groups-over-a-field}(i),
	$\uGL_n(k)/G$ is a $k$-scheme,
	and the same applies to $E/G$ by Proposition~\ref{PR:quo-of-GLn-torsor-is-a-scheme}.
	Note that $E_1$ and $E_2$, and thus $E/G$, admits a $k$-point $x$
	(see Proposition~\ref{PR:sections-of-univ-bundle-over-Gr}),
	and $x$  
	is closed because $E/G$ is a $k$-scheme.
	Since $H:=\uSL_{n+d}(k)\times \uSL_{1+d}(k)$ acts fppf transitively
	on $E/G$, the $k$-group $H$ has \emph{closed} subgroup $H_1$ --- the stabilizer of $x$ --- 
	such that $E/G\cong H/H_1$,
	and the latter is a quasi-projective $k$-scheme by 
	Theorem~\ref{TH:quotient-of-groups-over-a-field}(i).	
\end{proof}

In light of the previous theorems, we ask:

\begin{que}
	Let $S$ be a scheme, let $G$ be a   linear $S$-group and let $d\in\N$.
	Is there a $G$-torsor  over an $S$-algebraic space 
	that is strongly $d$-versal for the class of $S$-schemes admitting
	an ample line bundle?
\end{que}

\begin{que}
	Let $G$ be an affine algebraic group over an infinite field $k$.
	Is there a $G$-torsor  over a  $k$-algebraic space 
	that is strongly $d$-versal for the class of $k$-schemes of finite
	type?
\end{que}

\section{Examples}
\label{sec:examples}

In this section, we apply the results of Section~\ref{sec:existence} to particular
group schemes in order to deduce the existence of strongly versal Galois extensions, Azumaya algebras
and tori. We refer the reader to \cite{Saltman_1999_lectures_on_div_alg} or \cite{Ford_2017_separable_algebras}
for    definitions concerning Galois extensions and Azumaya algebras,
and to \cite[III.\S8]{Knus_1991_quadratic_hermitian_forms} for information about Azumaya algebras with involution.

As in Theorem~\ref{TH:highly-versal-I}, we write
$\catC_d$ for the class of affine schemes $X$ for which the subspace
$\clpnt{X}$ is noetherian
and has dimension $\leq d$. It includes all noetherian affine schemes of dimension $\leq d$.
We call a ring homomorphism $A\to B$ \emph{universally
injective} if $a'\mapsto 1_B\otimes a': A'\to B\otimes_A A'$ is injective for every $A$-ring $A'$.
By virtue of Proposition~\ref{PR:sch-dom-equiv-with-SGA}(ii),
this is  equivalent to $\Spec B\to \Spec A$ being universally schematically dominant.
It is also equivalent  to $A$ being a \emph{pure} $A$-submodule of $B$.

\begin{example}\label{EX:versal-Galois}
	Let $\Gamma$ be a finite group, and let $G$ be the constant
	$\Z$-group corresponding to $\Gamma$.
	We may embed $G$ as a closed subgroup of $\uGL_n(\Z)$ for some $n\in\N$,
	e.g., via the regular representation. Then Theorem~\ref{TH:highly-versal-I}
	says that for every $d\in\N$, the $G$-torsor $E:=\uGL_{n+d}(\Z)/[\begin{smallmatrix}
	1 & 0 \\ 0 & \uGL_d(\Z)\end{smallmatrix}]\to \uGL_{n+d}(\Z)/[\begin{smallmatrix}
	G & 0 \\ 0 & \uGL_d(\Z)\end{smallmatrix}]:=X$ is strongly versal for the class $\catC_d$, and $X$ is affine and 
	smooth  over $\Z$. Since the category of $\Gamma$-Galois extensions
	of rings (morphisms are specializations)   is (anti-)equivalent to the category
	of $G$-torsors over affine schemes, we can rephrase this result in terms of 
	$\Gamma$-Galois extension as follows: Write $X=\Spec R$
	and let $S/R$ be the $\Gamma$-Galois extension corresponding to
	$E\to X$. Then $R$ is a smooth $\Z$-algebra and $S/R$
	is a $\Gamma$-Galois extension that is strongly versal for $\catC_d$.
	That is, for every $\Gamma$-Galois extension
	$S'/R'$ such that the space $\Max R'$ is notherian of dimension $\leq d$
	(e.g., if $R'$ is noetherian of Krull dimension $\leq d$), there are\footnote{
		By unfolding the proof of Theorem~\ref{TH:highly-versal-I},	one sees
		that  $m$ depends only on $\Gamma$ and not on $R'\to S'$.
	}
	$m\in\N$
	and a  ring homomorphism $\vphi:R \to R'[t_1,\dots,t_m]$
	such that the specialization of $S/R$ along $\vphi$
	is isomorphic to $S'[t_1,\dots,t_m]/ R'[t_1,\dots,t_m] $ (as  $R'[t_1,\dots,t_m]$-rings
	and $\Gamma$-modules), and the induced map $r\otimes r'\mapsto \vphi(r)r':R\otimes_{\Z} R'\to R'[t_1,\dots,t_m]$ is universally injective. 	
	In particular, by specializing
	all the $t_i$ to $0$, we see that $S/R$ specializes to any $\Gamma$-Galois
	extension $S'/R'$ such that $\Max R$ is noetherian of dimension $\leq d$.
\end{example}

\begin{example}\label{EX:versal-Azumaya}
	Let $G=\uPGL_n(\Z)$, the automorphism $\Z$-scheme of the $\Z$-algebra
	$\nMat{\Z}{n}$.
	Then $G$ is linear and reductive.
	It is well-known that there is an equivalence of
	categories over $\Z$-algebraic spaces
	between $\Tors(G)$ (see Section~\ref{sec:torsors})
	and the opposite category of   degree-$n$ Azumaya algebras over $\Z$-algebraic spaces.
	Thus, 
	applying Theorem~\ref{TH:highly-versal-I} to
	$G$ implies
	that, for every $d\in\N$, there is a smooth $\Z$-algebra $R$
	and a degree-$n$ Azumaya algebra $A$ over $R$   that is strongly
	$d$-versal for $\catC_d$.
	That is, for every ring $R'$ such that $\Max R'$ is noetherian of dimension $\leq d$
	and every  degree-$n$ Azumaya $R'$-algebra $A'$,
	there are $m\in\N$ and  a ring homomorphism $\vphi:R\to R'[t_1,\dots,t_m]$
	such that $A\otimes_R R'[t_1,\dots,t_m]\cong A'[t_1,\dots,t_m]$ as
	$R'[t_1,\dots,t_m]$-algebras, and moreover, $r\otimes r'\mapsto \vphi(r)r':R\otimes_\Z R'\to R'[t_1,\dots,t_m]$ is universally
	injective. In particular, $A/R$ specializes to any degree-$n$ Azumaya  algebra
	over any  $R'$ as above.
\end{example}
	
	For a ring $R$, we write $\umu_n(R)$ for the $R$-group of $n$-th roots of unity.
	
\begin{example} 
	The conclusions of Example~\ref{EX:versal-Azumaya} continue  to hold
	if we work over $\Z[\frac{1}{2}]$ instead of $\Z$ and
	replace ``degree-$n$ Azumaya algebra'' with ``degree-$n$ Azumaya algebra with orthogonal (resp.\ symplectic)
	involution'' ($n$ is even in the symplectic case).
	To see this, apply Theorem~\ref{TH:highly-versal-I}  with $G=\mathbf{PO}_n(\Z[\frac{1}{2}])=\uO_n(\Z[\frac{1}{2}])/\umu_2(\Z[\frac{1}{2}])$ (resp.\ 
	$\mathbf{PSp}_n(\Z[\frac{1}{2}])=\uSp_n(\Z[\frac{1}{2}])/\umu_2(\Z[\frac{1}{2}])$ for $n$ even)
	and note that  
	$\Tors(G)$ is
	anti-equivalent to the category of degree-$n$
	Azumaya algebras with orthogonal (resp.\ symplectic) involution over $\Z[\frac{1}{2}]$-algebraic spaces;
	see \cite[III.\S8.5]{Knus_1991_quadratic_hermitian_forms}.
\end{example}

Recall that the period (also called exponent)
of an Azumaya algebra over a ring $R$
is the order of its Brauer class in the Brauer group of $R$.
We say that an Azumaya algebra is \emph{$n$-periodic} if its period divides $n$.

\begin{prp}\label{PR:versal-period-limited}
	Let $m,n,d\in\N$. Then there is a smooth $\Z$-ring $R$
	and an $n$-periodic degree-$m$  Azumaya $R$-algebra  
	that is strongly versal (among the $n$-periodic degree-$m$ Azumaya algebras)
	for $\catC_d$.
	That is, for any ring  $R'$ such that 
	$\Max R'$ is noetherian of dimension $\leq d$ 
	and every $n$-periodic degree-$m$ Azumaya $R'$-algebra $A'$, 
	there are $\ell\in\N$ and a ring homomorphism $\vphi:R\to R'[t_1,\dots,t_\ell]$
	such that $A\otimes_R R'[t_1,\dots,t_\ell]\cong A'[t_1,\dots,t_\ell]$ as $R'[t_1,\dots,t_\ell]$-algebras
	and   $r\otimes r'\mapsto \vphi(r)r':R\otimes_\Z R'\to R'[t_1,\dots,t_\ell]$ is universally injective.
\end{prp}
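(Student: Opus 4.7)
The plan is to produce a linear reductive $\Z$-group $G_{m,n}$ whose torsors correspond (up to a twist by $\Pic$) to $n$-periodic degree-$m$ Azumaya algebras, apply Theorem~\ref{TH:highly-versal-I} to it, and then push the resulting universal torsor forward to $\uPGL_m(\Z)$-torsors.

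Concretely, I would define
\[
G_{m,n}:=(\uGL_m(\Z)\times\uGm)/\uGm,
\]
where $\uGm$ is embedded centrally via $z\mapsto(zI_m,z^{-n})$. This is a central extension
$1\to\uGm\to G_{m,n}\to\uPGL_m(\Z)\to 1$
and $G_{m,n}$ is a reductive $\Z$-group as a quotient of the reductive group $\uGL_m\times\uGm$ by a central torus. By Theorem~\ref{TH:Thomason}, $G_{m,n}$ admits a closed embedding into some $\uGL_N(\Z)$, so the hypotheses of Theorem~\ref{TH:highly-versal-I} are met.

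Next I would carry out the cocycle computation identifying the boundary map $\delta_{G_{m,n}}\colon H^1(\uPGL_m)\to H^2(\uGm)$ attached to this central extension with $n\cdot\delta_{\mathrm{Br}}$. Indeed, if $g_{ij}$ is a $\uGL_m$-lift of a $\uPGL_m$-cocycle and $c_{ijk}\in\uGm$ satisfies $g_{ij}g_{jk}=c_{ijk}g_{ik}$, then lifts $\tilde g_{ij}=[g_{ij},\lambda_{ij}]\in G_{m,n}$ cocycle-compose iff $\lambda_{ij}\lambda_{jk}\lambda_{ik}^{-1}=c_{ijk}^{-n}$, which is solvable precisely when $n\cdot[c]=0$ in $H^2(\uGm)$. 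Thus a $\uPGL_m$-torsor lifts to a $G_{m,n}$-torsor if and only if the corresponding Azumaya algebra has period dividing $n$.

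Applying Theorem~\ref{TH:highly-versal-I} to $G_{m,n}$ (with $S=\Spec\Z$; $G_{m,n}$ is an extension of the reductive $G_{m,n}$ by the trivial finite locally free group) then yields a smooth affine $\Z$-scheme $X=\Spec R$ of finite type and a $G_{m,n}$-torsor $E\to X$ that is strongly versal for $\catC_d/\Z$. I would let $\mathcal{A}$ be the degree-$m$ Azumaya algebra on $X$ corresponding to the pushed-forward $\uPGL_m(\Z)$-torsor $E\times^{G_{m,n}}\uPGL_m(\Z)$, and set $A=\Gamma(X,\mathcal{A})$. The $G_{m,n}$-structure on $E$ forces $n\cdot[\mathcal{A}]=0\in\mathrm{Br}(R)$, so $A$ is an $n$-periodic degree-$m$ Azumaya $R$-algebra.

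For versality, given an $n$-periodic degree-$m$ Azumaya algebra $A'$ over $R'\in\catC_d$, I would first lift its $\uPGL_m(\Z)$-torsor to a $G_{m,n}$-torsor $T$ over $\Spec R'$ (possible precisely because $A'$ is $n$-periodic). Strong versality of $E$ then furnishes $\vphi\colon R\to R'[t_1,\ldots,t_\ell]$ with $\vphi^*E\cong_{R'[t_\bullet]} T\otimes_{R'}R'[t_\bullet]$ as $G_{m,n}$-torsors, together with the required universal injectivity of $r\otimes r'\mapsto\vphi(r)r'$. Pushing forward along $G_{m,n}\to\uPGL_m(\Z)$ gives $\vphi^*\mathcal{A}\cong A'[t_\bullet]$ as $R'[t_\bullet]$-algebras. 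The main obstacle in this plan is the boundary/lifting computation for the non-abelian central extension defining $G_{m,n}$; once that is in place, the result reduces cleanly to Theorem~\ref{TH:highly-versal-I}.
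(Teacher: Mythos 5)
Your group $G_{m,n}=(\uGL_m\times\uGm)/\uGm$, with $\uGm$ embedded via $z\mapsto(zI_m,z^{-n})$, is canonically isomorphic to the paper's $G=\uGL_m(\Z)/\umu_n(\Z)$: the map $g\mapsto[(g,1)]$ from $\uGL_m$ to $G_{m,n}$ is an fppf epimorphism with kernel exactly $\umu_n$, so your construction is the same group in different packaging, and the rest of your argument (apply Theorem~\ref{TH:highly-versal-I}, then push the resulting strongly versal torsor forward along $G\to\uPGL_m(\Z)$) is precisely the paper's proof. The only place you go beyond the paper is in spelling out the \v{C}ech-cocycle verification that a $\uPGL_m$-torsor lifts along $G\to\uPGL_m$ exactly when the corresponding Azumaya algebra is $n$-periodic, which the paper simply quotes as well known.
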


\begin{proof}
	Let $G=\uGL_m(\Z)/\umu_n(\Z)$,
	and let $\vphi$ denote the evident $\Z$-group morphism $G\to \uGL_m(\Z)/\nGm{\Z}=\uPGL_m(\Z)$.
	If $E'\to X'$ is a $G$-torsor, then $E'\times^{\vphi,G}\uPGL_m(\Z)$
	is a $\uPGL_m(\Z)$-torsor which corresponds to a degree-$m$ Azumaya algebra $A'$ over $X'$.
	It is well-known that the period of $A'$ divides $n$,
	and conversely,    every $n$-periodic degree-$m$ Azumaya algebra over $X'$ 
	is obtained in this manner (i.e., from a $G$-torsor over $X'$).
	
	The $\Z$-group $G$ is   reductive and linear (Theorem~\ref{TH:Thomason}), and so Theorem~\ref{TH:highly-versal-I}
	implies that there exists a $G$-torsor $E $ over a smooth affine
	$\Z$-scheme $X$ that is $d$-versal for $\catC_d$.
	Write $X=\Spec R$, let $E_1:=E\times^{\vphi,G}\uPGL_n(\Z)$, and let
	$A$ be the Azumaya $R$-algebra corresponding to $E_1$.
	Let $R'$ and $A'$ be as in the proposition, and let $E'_1\to X':=\Spec R'$
	denote the $\uPGL_n(\Z)$-torsor corresponding to $A'$.
	Then there is a $G$-torsor $E'\to X'$ such that $E'_1\cong_{X'} E'\times^{\vphi,G}\uPGL_n(\Z)$. 
	Since $E\to X$ is strongly versal for $\catC_d$, there is $\ell\in\N$
	and a morphism $f:\bbA^\ell_\Z\times X'\to X$
	such that $f^*E\cong \bbA^\ell_\Z\times E'$,
	and $(f,p_2):\bbA^\ell_\Z\times X'\to X\times X'$ is universally
	schematically dominant. The former  means that $f^*E_1\cong \bbA^\ell_\Z \times E'_1$.
	Writing $\vphi:R\to R'[t_1,\dots,t_\ell]$ for the adjoint of $f:\bbA^\ell_\Z \times X'\to X$, the proposition follows.
\end{proof}

\begin{example}
	Let $k$ be an infinite field of characteristic $0$.
	Given an abstract group $\Gamma$, let $\underline{\Gamma}$
	denote the constant $k$-group corresponding to $\Gamma$.
	The $k$-group  $\underline{\Gamma}$ is flat and locally
	of finite presentation, but it is not of finite type if $\Gamma$
	is infinite.
	
	Let $n\in\N$ and let $G=\underline{\nGL{\Z}{n}}$. There is a $k$-group monomorphism  
	from $G=\bigsqcup_{\gamma \in\nGL{\Z}{n}}\Spec k$ to $\uGL_n(k)$ 
	which restricts to the $\gamma$-section $\Spec k\to \uGL_n(k)$
	on the $\gamma$-copy of $\Spec k$ for every
	$\gamma\in \nGL{\Z}{n}$. Thus, $G$ is linear, and so Theorem~\ref{TH:highly-versal-II}
	tells us that, for every $d\in\N$, there is a $G$-torsor $E_d\to X_d$ that is strongly $d$-versal
	for affine noetherian $k$-schemes. 	
	However, if $n>1$, then the $k$-algebraic space 
	$X_d=\uGL_{n+d}(k)/ [\begin{smallmatrix}
	G & \bbA^{n\times d}_k \\
	0_{n\times d} & \uGL_{n+d}(k)
	\end{smallmatrix}]$, while smooth, is not    a $k$-scheme, and also not separated
	(cf.\ \cite[Tag \href{https://stacks.math.columbia.edu/tag/02Z7}{02Z7}]{stacks_project}).
	
	It is well-known
	that the automorphism sheaf  of the $k$-group $\nGm{k}^n$ is isomorphic
	to $G=\underline{\nGL{\Z}{n}}$. Thus, a standard   argument  shows
	that $\Tors(G)$ is equivalent as a category over $k$-algebraic spaces
	to the category of rank-$n$ tori over $k$-algebraic spaces (morphisms are specializations).
	Let $T_d$ be the rank-$n$ torus over the $k$-algebraic space  $X_d$
	corresponding to $E_d\to X_d$.
	Then   $T_d$  is strongly $d$-versal for affine noetherian $k$-schemes. In particular,
	$T_d$
	specializes to any rank-$n$ torus $T$ over  
	an affine noetherian $k$-scheme $X$ away from some codimesion-$(d+1)$ closed subscheme of $X$.
\end{example}

\section{$\infty$-Versal Torsors}
\label{sec:infty-versal}

Throughout, $\Aff$ denotes the class of affine schemes.
Let $S$ be a scheme and let $G$ be an $S$-group. This section concerns with $G$-torsors
that are    versal for $\Aff/S$. 
In particular, 
they  specialize to   
every $G$-torsor over  an affine scheme over $S$.
We will show that  there are nontrivial $S$-groups     admitting
such torsors (over a finite type $S$-scheme),
and characterize all these groups   when $S$ is the spectrum of a field of characteristic $0$.

We write $\uU_n(S)$ for the $S$-group of unipotent upper triangular $n\times n$
matrices over $\calO_S$.

\begin{thm}\label{TH:infinity-versal}
	Let $S$ be a scheme and let $G$ be a  subgroup of $\uU_n(S)$
	that is flat and locally of finite presentation over $S$.
	Then the $G$-torsor $\uU_n(S)\to \uU_n(S)/G$ is strongly versal for $\Aff/S$.
	
	Furthermore, the $S$-algebraic space $\uU_n(S)/G$ is smooth.
	If $S$ is the spectrum of of field and $G$ is of finite type
	over $S$, then $\uU_n(S)/G\to S$
	is quasi-projective. If $G$ is finite and locally free, then $\uU_n(S)/G\to S$
	is   affine and of finite type.
\end{thm}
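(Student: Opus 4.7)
The plan is to reduce strong versality for $\Aff/S$ to weak versality via the action of $\uU_n(S)$ on itself by left multiplication, and then to establish weak versality by showing that every $\uU_n(S)$-torsor over an affine $S$-scheme is trivial. For the reduction, left multiplication commutes with the right action of $G \subseteq \uU_n(S)$, so it descends to an action of $\uU_n(S)$ on the $G$-torsor $\uU_n(S) \to \uU_n(S)/G$. This induced action on $\uU_n(S)/G$ is fppf transitive: given $x,x' \in (\uU_n(S)/G)(V)$, we can lift them to $\tilde{x},\tilde{x}' \in \uU_n(S)(V')$ along the fppf covering $\uU_n(S) \to \uU_n(S)/G$ (Lemma~\ref{LM:free-action-quotient}), and then $\tilde{x}'\tilde{x}^{-1}$ translates one to the other. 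Moreover, the entries strictly above the diagonal identify $\uU_n(S)$ with $\bbA^{n(n-1)/2}_S$ as $S$-schemes, giving an $S$-fppf surjective morphism $\bbA^{n(n-1)/2}_S \to \uU_n(S)$. Hence Proposition~\ref{PR:symmetric-weak-versal-is-strong}(ii) applies; an inspection of its proof shows that the open $U \subseteq X'$ from the weak versality hypothesis is preserved in the strong versality conclusion, so weak versality for $\Aff/S$ upgrades to strong versality for $\Aff/S$.

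Weak versality amounts to the following: given a $G$-torsor $E' \to X'$ with $X'$ affine over $S$, we must produce a morphism $f: X' \to \uU_n(S)/G$ with $f^*\uU_n(S) \cong_{X'} E'$. By Remark~\ref{RM:morphism-pullback-correspondence}, this is equivalent to producing a $G$-equivariant morphism $E' \to \uU_n(S)$, which in turn is equivalent to trivializing the $\uU_n(S)$-torsor $E' \times^G \uU_n(S) \to X'$. The task therefore reduces to proving that every $\uU_n(S)$-torsor over an affine $S$-scheme is trivial.

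This cohomological triviality is the main technical step. Since $\uU_n$ is split nilpotent, it admits a central series $1 = N_0 \subsetneq N_1 \subsetneq \dots \subsetneq N_r = \uU_n(S)$ of flat normal $S$-subgroup schemes with each $N_{i+1}/N_i \cong \nGa{S}$ and central in $\uU_n(S)/N_i$; such a series is obtained by refining the upper central series and, within each central factor, picking off one $\nGa{S}$ at a time. The central extension $1 \to \nGa{S} \to \uU_n(S)/N_i \to \uU_n(S)/N_{i+1} \to 1$ yields a non-abelian fppf cohomology sequence of pointed sets
\[
\HH^1_{\fppf}(X', \nGa{S}) \to \HH^1_{\fppf}(X', \uU_n(S)/N_i) \to \HH^1_{\fppf}(X', \uU_n(S)/N_{i+1}) \to \HH^2_{\fppf}(X', \nGa{S}).
\]
For $X'$ affine, the outer terms vanish because fppf cohomology of the quasi-coherent sheaf $\calO_{X'}$ agrees with Zariski cohomology, which vanishes in positive degrees. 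The middle map is therefore a bijection of pointed sets, and descending induction starting from $\HH^1_{\fppf}(X', \uU_n(S)/N_r) = 0$ gives $\HH^1_{\fppf}(X', \uU_n(S)) = 0$. I expect the main subtlety will be organizing the filtration so that each successive quotient is genuinely central in the current stage, so that the non-abelian boundary maps exist and the above sequence is exact at the required spots.

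The remaining assertions are routine. By Lemma~\ref{LM:free-action-quotient}, $\uU_n(S) \to \uU_n(S)/G$ is a $G$-torsor, hence flat, locally of finite presentation, and surjective; since $\uU_n(S) \to S$ is smooth, $\uU_n(S)/G \to S$ inherits smoothness. When $S = \Spec k$ and $G$ is an algebraic $k$-group, $\uU_n(k)/G$ is quasi-projective by Theorem~\ref{TH:quotient-of-groups-over-a-field}(i). When $G$ is finite and locally free over $S$, Theorem~\ref{TH:quo-of-subgroups} applied with $H' = 1$ and $H'' = G$ shows that $\uU_n(S)/G$ is affine and of finite type over $S$.
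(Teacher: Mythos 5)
Your proof is correct and follows essentially the same route as the paper: establish that every $\uU_n(S)$-torsor over an affine $S$-scheme is trivial by filtering $\uU_n(S)$ into $\nGa{S}$-quotients and using affine vanishing of $\HH^1_{\fppf}(-,\calO)$, deduce weak versality of $\uU_n(S)\to\uU_n(S)/G$ via the exact sequence of pointed sets, then upgrade to strong versality with Proposition~\ref{PR:symmetric-weak-versal-is-strong}(ii) using left translation and the isomorphism $\uU_n(S)\cong\bbA^{n(n-1)/2}_S$. The one cosmetic difference is in the cohomological step: you take a central series of $\uU_n(S)$ and invoke the $\HH^2$ boundary map to get a bijection $\HH^1(\uU_n(S)/N_i)\to\HH^1(\uU_n(S)/N_{i+1})$ at each stage, whereas the paper takes a (possibly non-central) normal series $\uU_n(S)=U_0\rhd U_1\rhd\cdots\rhd U_r=0$ and only needs the segment $\HH^1(U_i)\to\HH^1(U_{i-1})\to\HH^1(U_{i-1}/U_i)$ to conclude $\HH^1(U_{i-1})=\{*\}$; both work, and your worry about centrality is avoidable by using the weaker normal-series argument, since injectivity of the fiber map is not actually needed here — triviality of the two flanking $\HH^1$ terms already forces $\HH^1(U_{i-1})=\{*\}$.
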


\begin{proof}
	Observe first that for every
	for every $X\in\Aff/S$, we have $\HH^1_{\fppf}(X,\uU_n(S))=\{*\}$.
	Indeed, $\uU_n(S)$ admits a filtration 
	$\uU_n(S)=U_0\rhd U_1\rhd \cdots\rhd U_r=0$
	such that $U_{i-1}/U_i\cong \nGa{S}$ for all $i\in\{1,\dots,r\}$.
	Since $X$ is affine, $\HH^1_{\fppf}(X,\nGa{S})=\HH^1_{\fppf}(X,\calO_X)=0$
	\cite[III, Lem.~2.15, Prop.~3.7]{Milne_1980_etale_cohomology}.
	Now, an inductive argument using the long cohomology
	exact sequence associated to $1\to U_i\to U_{i-1}\to U_{i-1}/U_i \to 1$
	shows that $\HH^1_{\fppf}(X,U_i)=\{*\}$ for all $i$, and 
	$\HH^1_{\fppf}(X,\uU_n(S))=\{*\}$ in particular.
	
	Put $E_0:=\uU_n(S)\to \uU_n(S)/G=:X_0$ and recall that this is indeed
	a $G$-torsor by Lemma~\ref{LM:free-action-quotient}.
	We first show that $E_0\to X_0$ is weakly versal for $\Aff/S$.
	Let $X\in\Aff/S$
	and let $E\to X$ be a $G$-torsor.
	Then $E\to X$ corresponds to  a class $\alpha\in \HH^1_\fppf(X,G)$.
	It is well-known 
	that the embedding $G\to \uU_n(S)$ gives rise to an exact sequence of pointed sets:
	\[
	(\uU_n(S)/G)(X)\xrightarrow{\gamma} \HH^1_\fppf(X,G)\to \HH^1_\fppf(X,\uU_n(S))
	\]
	in which $\gamma$ maps a morphism $f:X\to \uU_n(S)/G$ to the cohomology
	class of $f^*E_0$.
	By the previous paragraph, the image of $\alpha$ vanishes in $\HH^1_\fppf(X,\uU_n(S))$,
	so it lifts to some $f:X\to \uU_n(S)/G$, and $f^*E_0\cong E$.
	
	To see that $E_0\to X_0$
	is strongly versal for $\Aff/S$,
	observe that the left action of $\uU_n(S)$ on $E_0$ and $X_0$
	gives rise to an action of  $\uU_n(S)$ on the $G$-torsor $E_0\to X_0$.
	This action is clearly fppf   transitive on   $E_0$.
	Since $\uU_n(S)\cong \bbA^{n(n-1)/2}_S$
	as $S$-schemes, 
	Proposition~\ref{PR:symmetric-weak-versal-is-strong}(ii)
	tells us that 
	$E_0\to X_0$ is strongly versal for $\Aff/S$.
	
	The smoothness of $\uU_n(S)/G\to S$
	is shown as in Theorem~\ref{TH:highly-versal-I}.
	If $S$ is the spectrum of a field and $G\to S$ is of finite type, 
	then $\uU_n(S)/G$
	is a quasi-projective  $S$-scheme   by Theorem~\ref{TH:quotient-of-groups-over-a-field}(i).
	If $G$ is finite, then $\uU_n(S)/G\to S$
	is   affine   of finite type by Theorem~\ref{TH:quo-of-subgroups}.
\end{proof}

\begin{remark}\label{RM:infty-versal}
	Another sufficient condition for the existence of a $G$-torsor
	that is 
	strongly versal for $\Aff/S$ is:
	\begin{enumerate}
		\item[($*$)] Every $G$-torsor $E\to X$ with $X\in \Aff/S$
		is trivial.
	\end{enumerate}	  
	Indeed, in this case, every   $G$-torsor as in ($*$) is a
	specialization of the trivial $G$-torsor $G\to S$, making $G\to S$
	strongly versal for $\Aff/S$. 
	Such $S$-groups  may be deemed as ``uninteresting'' for 
	the problem of finding groups with torsors that are weakly
	versal for $\Aff/S$.
		
	The proof of Theorem~\ref{TH:infinity-versal} can now be broken
	into the observations that $\uU_n(S)$
	satisfies ($*$), and that every flat locally of finite presentation 
	subgroup of an $S$-group satisfying
	($*$) admits a torsor that is weakly versal for $\Aff/S$.
	
	When $S=\Spec k$ for a field $k$ of characteristic $0$, it is well-known
	\cite[Proposition~14.22, Remark~14.24]{Milne_2017_algebraic_groups}
	that every finite type subgroup $G$ of $\uU_n(k)$ admits  a filtration
	$G=G_0\rhd G_1\rhd \cdots\rhd G_r=0$ with $G_i/G_{i-1}\cong \nGa{k}$,
	and thus already satisfies ($*$). Consequently, in this case, Theorem~\ref{TH:infinity-versal}
	does not give ``interesting'' examples of $k$-groups admitting torsors
	that are strongly
	versal for $\Aff/S$.
	
	However,  when $S=\Spec k$ for a field $k$ of   characteristic $p>0$, the $k$-group
	$\uU_n(k)$ admits subgroups which do not satisfy ($*$), and so 
	we get ``interesting''
	examples of  torsors that are strongly versal for $\Aff/S$.
\end{remark}

\begin{example}\label{EX:Artin-Schrier}
	Let $k$ be a field of characteristic $p>0$
	and let $C_p$ denote the   cyclic  group of order $p$,
	viewed as a constant $k$-group.
	We identify $C_p$ with the subgroup of $\uU_2(k)\cong \nGa{k}$
	cut by the equation $t^p=t$ (with $t$ being the coordiate of $\nGa{k}\cong \bbA^1_k$).
	Then, by Theorem~\ref{TH:infinity-versal}, 
	$X=\nGa{k}/C_p$ is an affine $k$-scheme of finite type,
	and $\nGa{k}\to \nGa{k}/C_p$ is strongly versal for $\Aff/k$.
	In fact,   $\nGa{k}\to\nGa{k}/C_p$
	is isomorphic to the $k$-group homomorphism
	$\nGa{k}\to \nGa{k}$ given by $x\mapsto x^p-x$ on sections.
	As in Example~\ref{EX:versal-Galois}(i), we can rephrase this in the language
	of $C_p$-Galois extensions:
	The $C_p$-Galois extension corresponding
	to $\nGa{k}\to \nGa{k}/C_p$ is the generic Artin--Schrier extension
	$k[t,x\where x^p-x=t]/k[t]$
	(here, $C_p=\Trings{\sigma\where\sigma^p=1}$ 
	acts on the $k[t]$-algebra $k[t,x\where x^p-x=t]$
	by $\sigma(x)=x+1$). Thus, for every  $C_p$-Galois  extension \emph{of $k$-rings} 
	$ S/R$, there is $n\in\N$ (in fact, one can take $n=1$)
	and  a morphism
	$\vphi:k[t]\to R[t_1,\dots,t_n]$ such that $S[t_1,\dots,t_n]/ R[t_1,\dots,t_n]  $ is isomorphic
	to the specailiazation of $k[t,x\where x^p-x=t]/k[t]  $ along $\vphi$. Moreover,
	the induced map $R[t]=k[t]\otimes_k R\to R[t_1,\dots,t_n]$ is universally
	injective. Taking $k=\F_p$ and specializing $t_1,\dots,t_n$ to $0$, we recover
	the well-known fact that
	every $C_p$-Galois extension $S/R$ of characteristic-$p$
	rings is an Artin-Schier extension, namely, there is $a\in R$
	such that $S\cong R[x\where x^p-x=a]$
	as $R$-algebras and $C_p$-modules.	
\end{example}

The last example generalizes to other Galois extensions as follows.

\begin{cor}\label{CR:generic-Galois}
	Let $k$ be a field of characteristic $p>0$
	and let $\Gamma$ be a finite $p$-group.
	Then there exists a $\Gamma$-Galois extension of $k$-rings
	$S/R$ that is strongly versal for (spectra of) $k$-rings.
	In particular, it   specializes to every other $\Gamma$-Galois extension of $k$-rings.
	The ring $R$ can be taken to be a smooth $k$-algebra.
\end{cor}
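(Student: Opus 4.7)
The plan is to reduce the corollary to Theorem~\ref{TH:infinity-versal} applied to $G = \underline{\Gamma}$, the constant $k$-group corresponding to $\Gamma$. This requires two ingredients: (a)~producing a closed embedding of $\underline{\Gamma}$ into $\uU_n(k)$ for some $n$, and (b)~translating the resulting $\underline{\Gamma}$-torsor statement into the language of $\Gamma$-Galois extensions of $k$-rings, exactly as in Example~\ref{EX:versal-Galois}.

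For~(a), the key observation is that in characteristic $p$, the group algebra $k[\Gamma]$ of a finite $p$-group is local with nilpotent augmentation ideal; equivalently, the only simple $k[\Gamma]$-module is the trivial module. Hence the (left) regular representation $\rho : \Gamma \to \nGL{k}{n}$, with $n = |\Gamma|$ and $V = k[\Gamma]$, admits a composition series $0 = V_0 \subset V_1 \subset \dots \subset V_n = V$ by $\Gamma$-stable subspaces with each $V_i/V_{i-1}$ trivial. Choosing a basis of $V$ adapted to this flag puts every $\rho(\gamma)$ into the group of upper triangular unipotent matrices, so $\rho$ factors through $\nU{k}{n} \hookrightarrow \nGL{k}{n}$. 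Since the regular representation is faithful, $\rho$ is injective on points, and induces an $k$-group monomorphism $\underline{\Gamma} \hookrightarrow \uU_n(k)$ that is automatically a closed immersion because $\underline{\Gamma}$ is finite (hence proper) over $k$.

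For~(b), apply Theorem~\ref{TH:infinity-versal} to this subgroup: since $\underline{\Gamma}$ is finite and locally free over $k$, we obtain a $\underline{\Gamma}$-torsor $E_0 := \uU_n(k) \to \uU_n(k)/\underline{\Gamma} =: X_0$ that is strongly versal for $\Aff/k$, with $X_0$ an affine $k$-scheme of finite type which is moreover smooth over $k$. Writing $X_0 = \Spec R$ and setting $S = \Gamma(E_0,\calO_{E_0})$, the equivalence between $\underline{\Gamma}$-torsors over affine $k$-schemes and (the opposite category of) $\Gamma$-Galois extensions of $k$-rings, recalled in Example~\ref{EX:versal-Galois}, turns $E_0 \to X_0$ into a $\Gamma$-Galois extension $R \to S$ with $R$ a smooth $k$-algebra. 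The strong versality of $E_0 \to X_0$ for $\Aff/k$ unwinds, as in Example~\ref{EX:versal-Galois}, to strong versality of $R \to S$ for spectra of $k$-rings in the sense of the statement.

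There is no real obstacle here; the only mildly nonformal point is the existence of the embedding $\underline{\Gamma} \hookrightarrow \uU_n(k)$, and this is a standard consequence of the local structure of $k[\Gamma]$ in the modular setting. In particular, the ``interesting'' content of the corollary is packaged inside Theorem~\ref{TH:infinity-versal}, which in turn rests on the vanishing $\mathrm{H}^1_{\fppf}(X,\uU_n(k)) = \{*\}$ for $X$ affine together with the symmetry argument supplied by Proposition~\ref{PR:symmetric-weak-versal-is-strong}(ii) applied to the left translation action of $\uU_n(k) \cong \bbA^{n(n-1)/2}_k$ on itself.
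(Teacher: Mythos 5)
Your proof is correct and follows essentially the same route as the paper: embed $\underline{\Gamma}$ into $\uU_n(k)$ and invoke Theorem~\ref{TH:infinity-versal} together with the torsor/Galois-extension dictionary. The only cosmetic difference is how the embedding is produced — you derive it over $k$ from the fact that $k[\Gamma]$ is local with trivial unique simple module, whereas the paper cites the abstract group-theoretic fact that a finite $p$-group embeds in $U_n(\F_p)$; both are standard and yield the same conclusion.
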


\begin{proof}
	Let $G$ denote the constant $k$-group corresponding to $\Gamma$.
	Since $\Gamma$ is a $p$-group, it can be embedded in the abstract
	group $U_n(\F_p)$ for some $n\in\N$. Thus, $G$   is isomorphic
	to a subgroup of the $k$-group $\uU_n(k)$.
	By Theorem~\ref{TH:infinity-versal} and the equivalence
	between $\Gamma$-Galois extensions and $G$-torsors,
	the $\Gamma$-Galois extension $R\to S$ corresponding
	to the $G$-torsor $\uU_n(k)\to \uU_n(k)/G$
	satisfies the requirements.
\end{proof}

\begin{remark}
	With notation as in Corollary~\ref{CR:generic-Galois},
	the existence of $\Gamma$-Galois extensions $S/R $ that are weakly
	versal for $\Aff/k$
	goes back to Saltman  \cite[Theorem~2.3]{Saltam_1978_noncrossed_products},
	who moreover moreover gave explicit examples in which $R$ is a polynomial ring
	over $k$ (i.e., they are \emph{generic}).
	Woodcock and Fleishmann \cite{Fleischmann_2011_non_linear_actions_of_p_groups},
	\cite{Fleischmann_2018_free_actions_of_p_groups} gave other explicit constructions of 
	weakly versal $\Gamma$-Galois extensions $S/R$ such that both $R$ and $S$ are polynomial rings
	over $k$ and which also enjoy     additional cogeneration
	properties. They further investigate the minimal possible Krull dimension of such $R$.
	Our construction is very different from  
	these sources, and it gives rise to strongly versal $\Gamma$-Galois extensions.
\end{remark}

Let $k$ be a field, and let   $\ftAff/k$
denote the class of finite-type affine $k$-schemes.
We finish this section with   a partial converse to Theorem~\ref{TH:infinity-versal},
showing that every affine algebraic group $G$ over a characteristic-$0$ field
$k$ which admits a torsor over a 
finite type   $k$-scheme
that is weakly versal for $\ftAff/k$ is   isomorphic to
a subgroup of $\uU_n(k)$, or equivalently, unipotent.
Consequently,    all such algebraic groups $G$ are ``uninteresting''
in the sense that every $G$-torsor over an affine base is trivial. 
This contrasts with the case  
$\Char k>0$, where there always exist  ``interesting'' such groups.

\begin{thm}\label{TH:converse}
	Let $k$ be a field of characteristic $0$  and let $G$ be an affine
	algebraic group over $k$.
	Suppose that there exists a $G$-torsor
	$E\to X$ that is weakly versal for $\ftAff/k$
	and such that $X$ is a scheme of finite type over $k$.
	Then $G$ is     unipotent. 
\end{thm}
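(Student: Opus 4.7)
My plan is to prove the contrapositive: assume $G$ is not unipotent, and derive a contradiction to the existence of a weakly versal $G$-torsor $E\to X$ with $X$ of finite type over $k$.

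\textit{Step 1 (reduction to $G$ reductive).} Let $U=R_u(G)$ be the unipotent radical and $H=G/U$ the reductive quotient. I claim the induced $H$-torsor $E\times^G H\to X$ is weakly versal for $\ftAff/k$. Given an $H$-torsor $F'\to X'$ with $X'\in\ftAff/k$ affine, I lift $F'$ to a $G$-torsor $E'$ and apply weak versality of $E$, which yields $f:X'\to X$ with $E'\cong_{X'} f^*E$ and hence $F'\cong_{X'} f^*(E\times^G H)$. To lift, use that in $\Char k=0$ the unipotent $U$ admits a central series $1=U_0\trianglelefteq U_1\trianglelefteq\cdots\trianglelefteq U_r=U$ of subgroups normal in $G$ with quotients $U_i/U_{i-1}\cong \nGa{}^{n_i}$. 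The lifting problem then splits into successive central extensions $G/U_{i-1}\to G/U_i$, and each obstruction lies in $H^2(X',\nGa{}^{n_i})=H^2(X',\calO_{X'})^{n_i}=0$ since $X'$ is affine. So I may assume $G$ is reductive and nontrivial.

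\textit{Step 2 (reduction to $G=\mu_p$ over an uncountable algebraically closed field).} By Proposition~\ref{PR:versality-under-base-change}, weak versality is preserved under base change, so I may extend $k$ to an uncountable algebraically closed field (e.g.\ via an embedding into $\C$). Over such $k$, any nontrivial reductive algebraic group contains a subgroup isomorphic to $\mu_p$ for some prime $p$: take $\mu_p$ inside a maximal torus if $\dim G>0$, or any subgroup of prime order if $G$ is a nontrivial finite \'etale group. Proposition~\ref{PR:reduction-to-overgroup} then gives a weakly versal $\mu_p$-torsor $E\to E/\mu_p$ for $\ftAff/k$ with $E/\mu_p$ a finite-type $k$-algebraic space. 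Henceforth I assume $G=\mu_p$ and rename $X:=E/\mu_p$.

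\textit{Step 3 (contradiction via generic elliptic curves).} Let $A$ be the abelian quotient of the generalized Albanese $\mathrm{Alb}(X)$, which is a semi-abelian variety over $k$. Since $\End(A)\otimes\Q$ is a finite-dimensional semisimple $\Q$-algebra, $A$ has only countably many abelian subvarieties, and therefore only countably many isogeny classes of elliptic subvarieties; the set
\[
\mathcal{J}=\{\,j(E')\suchthat E'\text{ is an elliptic curve over } k\text{ isogenous to an elliptic subvariety of } A\,\}
\]
is countable in $\bbA^1(k)$. Since $k$ is uncountable, choose an elliptic curve $E_0/k$ with $j(E_0)\notin\mathcal{J}$ and a point $O\in E_0(k)$; set $X'=E_0\setminus\{O\}$, a smooth affine $k$-variety of finite type. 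Serre's computation of generalized Jacobians gives $\mathrm{Alb}(X')=E_0$, and the Kummer sequence combined with $k^\times=(k^\times)^p$ and $NS(X')=0$ yields
\[
H^1_{\et}(X',\mu_p)\cong\Pic(X')[p]\cong E_0[p]\cong(\Z/p)^2\neq 0.
\]
For any morphism $f:X'\to X$, the functorial $\mathrm{Alb}(f):E_0\to\mathrm{Alb}(X)$ composed with $\mathrm{Alb}(X)\to A$ is a morphism $E_0\to A$, which is constant by the choice of $j(E_0)$; the residual component maps $E_0$ into the toric kernel of $\mathrm{Alb}(X)\to A$, again constantly since $E_0$ is proper and the target is affine. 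Hence $\mathrm{Alb}(f)$ is constant, so dually $f^*$ vanishes on $\Pic^0(X)$, and combined with $NS(X')=0$ and $k^\times=(k^\times)^p$ I conclude that $f^*\alpha=0$ in $H^1_{\et}(X',\mu_p)$ for every $\alpha\in H^1_{\et}(X,\mu_p)$ and every $f:X'\to X$. Applying weak versality to any nontrivial class in $H^1_{\et}(X',\mu_p)\cong(\Z/p)^2$ produces a contradiction.

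The main obstacle is Step 3, specifically invoking the generalized Albanese theory in sufficient generality (for finite-type algebraic spaces, not just smooth projective schemes) and cleanly identifying $f^*$ on $\Pic^0$ with the dual of $\mathrm{Alb}(f)$. In characteristic $0$ one can reduce to the smooth scheme case via an \'etale atlas and resolution of singularities. The remaining technical points are verifying the countability of $\mathcal{J}$ from the finite-dimensionality of $\End(A)\otimes\Q$ and ensuring that the base change to an uncountable algebraically closed field can be carried out while preserving all hypotheses.
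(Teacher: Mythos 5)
Your proposal takes a genuinely different route from the paper's. The paper works over $\C$ with the $G$-equivariant singular cohomology of \cite{First_2022_generators}: the key inputs are the Lojasiewicz bound on $\HH^*(X(\C),\Z)$ and the fact (\cite[Lemma~12.1]{First_2022_generators}) that $\HH^j(BG)\neq 0$ for arbitrarily large $j$ when $G$ is not unipotent in characteristic $0$, which is a statement that uniformly sees all cohomological ``obstructions''. Your Steps 1 and 2 reduce to $G=\mu_p$ over an uncountable algebraically closed field and are sound (Step 1 can even be bypassed: in characteristic $0$ a non-unipotent $G$ has a nontrivial semisimple element over $\bar k$, hence contains a closed copy of $\mu_p$, so Proposition~\ref{PR:reduction-to-overgroup} applies directly without the Levi/lifting argument).

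Step 3, however, contains a genuine gap. The Kummer sequence gives
\[
0\to\Gamma(X,\calO_X)^\times/p\to \HH^1_{\et}(X,\mu_p)\to\Pic(X)[p]\to 0,
\]
and $\Pic(X)[p]$ is \emph{not} contained in $\Pic^0(X)$ in general: the group $NS(X)=\Pic(X)/\Pic^0(X)$ can have $p$-torsion (e.g.\ $NS$ of an Enriques surface has a $\Z/2$-summand, for $p=2$). For a class $\alpha$ whose image in $NS(X)[p]$ is nonzero, the constancy of $\mathrm{Alb}(f)$ controls $f^*|_{\Pic^0(X)}$ but says nothing about $f^*\alpha$: such an $\alpha$ corresponds to a cyclic $\Z/p$-\'etale cover of $X$ that is invisible to the Albanese, and its pullback to $X'$ can be a nontrivial element of $\Pic(X')[p]\cong E_0[p]$ even when $\mathrm{Alb}(f)$ is constant. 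The sentence ``\ldots combined with $NS(X')=0$ and $k^\times=(k^\times)^p$ I conclude that $f^*\alpha=0$\ldots'' is therefore a non sequitur: $NS(X')=0$ only tells you that $f^*(\Pic(X))\subseteq\Pic^0(X')$, which you already knew, and does not kill the contribution of the $NS(X)$-torsion part of $\Pic(X)[p]$. Beyond this, two points you flag as ``technical'' are actually heavier than the wording suggests: $X$ is a finite-type $k$-algebraic space, possibly singular and non-proper, so neither a functorial $\mathrm{Alb}(X)$ nor the duality $\Pic^0(X)\cong\mathrm{Alb}(X)^\vee$ used in ``dually $f^*$ vanishes on $\Pic^0(X)$'' is available in the form needed; and your countability argument constrains only the elliptic subvarieties of $\mathrm{Alb}(X)$, not the (possibly uncountable, positive-dimensional) family of curves inside $X$ through which a map $X'\to X$ may factor. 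The $NS(X)[p]$-torsion issue is the one that breaks the argument as written, and closing it would require a fundamentally new idea --- at which point the paper's uniform equivariant-cohomology argument is both shorter and more robust.
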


We first prove the following lemma.

\begin{lem}\label{LM:vashing-of-sing-coh}
	Let $X$ be a   $\C$-scheme of finite type.
	Give $X(\C)$ the analytic topology. Then there is $n\in\N$
	such that the singular cohomology $\HH^i(X(\C),\Z)$ vanishes for all $i\geq n$.
\end{lem}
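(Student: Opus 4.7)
The plan is to show that $X(\C)$, equipped with the analytic topology, is homotopy equivalent to a CW complex of real dimension at most $2\dim_\C X$; this immediately yields the vanishing $\HH^i(X(\C),\Z)=0$ for $i>2\dim_\C X$, so one can take $n=2\dim_\C X+1$.

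The main tool is the classical triangulation theorem of \L{}ojasiewicz (and Hironaka, in greater generality): every closed real semi-algebraic subset of some $\R^N$ admits a locally finite semi-algebraic triangulation whose combinatorial dimension equals its real topological dimension. First I would reduce to the case where $X$ is affine. Choose a finite affine open cover $X=\bigcup_{i=1}^m U_i$. Each $U_i$, and more generally each finite intersection (which is an open subscheme of each $U_i$, hence still a finite-type $\C$-scheme of dimension $\le \dim X$), admits a further finite refinement by basic affine opens. By the Mayer--Vietoris spectral sequence (or Cartan--Leray for the \v Cech cover $\{U_i(\C)\}$ of $X(\C)$), a uniform bound on the cohomological dimension of each piece of the cover gives a bound for $X(\C)$; combined with Noetherian induction on $\dim X$, this reduces the problem to the affine case.

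For $X$ affine of finite type over $\C$ of dimension $d$, I would embed $X$ as a closed subscheme of $\bbA^N_\C$ for some $N$. Then $X(\C)$ is a closed real algebraic (hence semi-algebraic) subset of $\C^N\cong\R^{2N}$, so \L{}ojasiewicz's theorem applies: $X(\C)$ is homeomorphic to a locally finite simplicial complex $K$. The real topological dimension of a complex algebraic set of complex dimension $d$ is $2d$, so $\dim K\leq 2d$. Since singular cohomology of a simplicial complex vanishes strictly above its combinatorial dimension, $\HH^i(X(\C),\Z)=0$ for $i>2d$, completing the argument.

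The main obstacle is only a bookkeeping one: quoting the triangulation theorem in the right form and confirming that non-separated or non-reduced structure on $X$ is harmless. For the latter, passing to $X_{\mathrm{red}}$ does not change $X(\C)$ as a topological space; for the former, affine open covers and \v Cech computations still work without separatedness, since one only needs a finite cover of $X(\C)$ by subspaces of bounded cohomological dimension to conclude via the spectral sequence.
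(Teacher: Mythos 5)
Your plan is the same as the paper's in all essentials: invoke \L{}ojasiewicz's triangulation theorem to bound the cohomological dimension of the analytification for (quasi-)affine pieces, then bootstrap to general $X$ via a \v Cech spectral sequence over a finite affine cover. Two remarks, one substantive.

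The substantive one concerns your reduction to the affine case. After choosing a finite affine cover $\{U_i\}$ of $X$, the \v Cech spectral sequence requires control of $\HH^q$ of every finite intersection $U_{i_0}\cap\cdots\cap U_{i_p}$, and when $X$ is not separated these intersections are only quasi-affine, not affine. Your proposed fix, ``refine by basic affine opens, combined with Noetherian induction on $\dim X$,'' does not close this gap: the refined pieces are indeed affine, but their pairwise intersections (across different original charts) are again only quasi-affine, and nothing in the recursion strictly drops the dimension, so the induction does not terminate. The paper sidesteps the issue entirely by stating the \L{}ojasiewicz input for \emph{quasi-affine} $X$, which already handles every term of the \v Cech cover in one step. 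You can recover this: the triangulation theorem applies to arbitrary semi-algebraic subsets of $\R^N$, not only closed ones, so there was no need to restrict to the closed (i.e., affine) case. Alternatively, one could do a two-stage \v Cech argument using the fact that quasi-affine schemes are separated, so that a finite affine cover of a quasi-affine scheme has affine pairwise intersections.

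A minor point: your asserted uniform bound $n=2\dim_\C X+1$ is what \L{}ojasiewicz gives directly in the quasi-affine case, but once you run the \v Cech spectral sequence the bound you actually extract is $2\dim_\C X$ plus a contribution from the length of the cover. The lemma only asks for some finite $n$, so this does not matter, but the stated constant is not what the argument as written produces in the non-(quasi-)affine case.
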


\begin{proof}
	When $X$ is quasi-affine, this follows from a theorem
	of Lojasiewicz \cite[Theorem~1]{Lojasiewicz_1964_triangulations}, who showed that $X(\C)$ has the homotopy
	type of a CW-complex of dimension $\leq 2\dim X$.
	The general case follows from the quasi-affine case by taking
	a finite affine covering $\calU=\{U_i\}_{i=1}^r$ of $X$ and using the 
	\v{C}ech spectral sequence $E_2^{p,q}=\HH^p(\calU,\underline{\HH}^q(X(\C),\Z)))\Longrightarrow
	\HH^i(X(\C),\Z)$
	\cite[Tag~\href{https://stacks.math.columbia.edu/tag/03OW}{03OW}]{stacks_project}.
\end{proof}

\begin{proof}[Proof of Theorem~\ref{TH:converse}]
	We will use the fact that $\Char k=0$
	to apply analytic tools  such as the 
	$G$-equivariant cohomology
	theory considered in \cite[\S11]{First_2022_generators}.
	This requires $k$ to be a subfield of $\C$, and reducing
	to this situation will necessitate  
	some maneuvering.
	
	For the sake of contradiction, suppose that $G$ is not unipotent.
	
\smallskip	
	
	\Step{1} 
	We may assume that $\Q\subseteq k$.
	Since $G$, $E $ and $X $
	are   of finite presentation over $\Spec k$,
	there is   a  finitely generated $\Q$-subfield $k_0$ of $k$,
	a $k_0$-group $G_0$, and a $G_0$-torsor $E_{ 0}\to X_{ 0}$
	such that $ G_0 \times_{k_0} k=G$
	and $E \to X $
	is the base-change of $E_{ 0}\to X_{ 0}$ along $\Spec k\to \Spec k_0$.
	(In what follows the subscript $0$ indicates that the object at hand
	is a $k_0$-scheme or a $k_0$-morphism.)

	Next, let $e\in\N$; the value of $e$ will be determined in Step~3.
	It is well-known, e.g.\ see \cite[Remark~1.4]{Totaro_1999_chow_ring_of_classifying_space}, 
	that there exists a finite-dimensional $k_0$-vector space $V_0$
	(which we also view as a $k_0$-scheme $\bbA^{\dim V_0}_{k_0}$),
	a linear representation
	$G_0\to \uGL(V_0)$ and an open subcheme $U_0\subseteq V_0$
	stable under $G_0$ such that $U_0/G_0$ is a quasi-projective $k_0$-scheme,
	$U_0\to U_0/G_0$ is a $G_0$-torsor, and $\codim(V_0-U_0,V_0)>e$.
	Jouanolou \cite{Jouanolou_1972_suite_exacte_de_Mayer_Vietoris}
	showed that there also exists   
	a vector bundle $P_0$ over $U_0/G_0$ and a $P_0$-torsor $q_0:W_0\to U_0/G_0$
	such that $W_0$ is an affine $k_0$-scheme.	
	Let $F_0\to W_0$ be the pullback of the $G_0$-torsor $U_0\to U_0/G_0$ along $q_0$,
	and put $F=F_0\times_{k_0} k$ and $W=W_0\times_{k_0} k$.
	Then $F\to W$ is   
	a $G$-torsor over an \emph{affine} $k$-scheme of finite type.
	Since $E \to X$ is weakly versal for $\ftAff/k$, 
	there exists a $G$-equivariant morphism
	$\hat{g}:F\to E $.
	As in the last paragraph, $k$ admits   a finitely generated
	$k_0$-subfield $k_1$ such that $\hat{g}$ is extended
	from a $G_0\times_{k_0}k_1$-equivariant morphism  
	$\hat{g}_1:F_1:=F_0\times_{k_0}k_1\to E_{ 0}\times_{k_0} k_1:=E_{1}$.
	We   put $W_1=W_0\times_{k_0}k_1$, $U_1=U_0\times_{k_0}k_1$, $G_1=G_0\times_{k_0}k_1$
	and so on,
	and let $\hat{q}_1$ and $q_1$ denote the $k_1$-morphisms
	obtained from $\hat{q}_0$ and $q_0$ by base-change along $k_0\to k_1$. 
	The morphism $\hat{g}_1:F_1\to E_1$
	gives rise to  a $k_1$-morphism $g_1:W_1=F_1/G_1\to E_1/G_1= X_1$.
	

	To conclude this step, we constructed finitely generated
	$\Q$-subfields $k_0\subseteq k_1 $ of  $k$
	and a commutative diagram of finite type $k_1$-schemes
	\[\xymatrix{
	V_1 & 
	U_1 \ar@{_{(}->}[l]_\alpha \ar[d] &
	F_1 \ar[l]_{\hat{q}_1} \ar[r]^{\hat{g}_1} \ar[d] &
	E_1 \ar[d] \\
	& 
	U_1/G_1 &
	W_1 \ar[l]_{q_1} \ar[r]^{g_1} &
	X_1}
	\]
	in which the columns are $G_1$-torsors and the squares represent morphisms
	of $G_1$-torsors. 
	Moreover, $q_1$ carries the structure of a $P_1$-torsor
	for some vector bundle $P_1$  over $U_1/G_1$.	
	The map denoted $\alpha$ is the base change of the $G_0$-equivariant
	open immersion $U_0\to V_0$ to $k_1$. Thus, $V_1$ is a represntation of $G_1$,
	and $U_1$ is a $G_1$-stable open subscheme of $V_1$ satisfying $\codim(V_1-U_1,V_1)>e$.
	
	\emph{Abusing the notation, 
	we henceforth denote $G_1=G_0\times_{k_0} k_1$  as $G$.}
	
\smallskip	
	
	\Step{2}
	Choose an embedding of $k_0$ in $\C$; this embedding will
	be used to choose the number $e$ from Step~1 later on.
	Once $e$ has been chosen, the field $k_1$
	from Step~1 can be determined and we extend the embedding $k_0\to \C$
	into an embedding $k_1\to \C$.
	
	We are now in position to apply the $G $-equivariant
	cohomology theory 
	considered in  \cite[\S11]{First_2022_generators}.
	We recall that this theory assigns 
	to any  
	finite-type $k_1$-scheme $Y$ carrying a $G $-action
	cohomology groups $\{H^i_G(Y)\}_{i\in\Z}$,  contravariantly in $Y$.
	It has the following properties:
	\begin{enumerate}[label=(\roman*)]
		\item If $T\to Z$ is a $G$-torsor, then
	$\HH^i_G(T)\cong \HH^i(Z(\C),\Z)$ canonically, with the right
	hand side denoting the singular cohomology of the analytic space
	$Z(\C)$.
		\item If $V$ is a representation of $G$
		and $U$ is a $G$-invariant open subvariety of $V$ such that $\codim(V-U,V)>d$, 
		$U/G$ is a $k$-variety
		and $U\to U/G$ is a $G$-torsor, then
		the map $\HH^i_G(Y)\to\HH^i_G(Y\times_{k_1} U)\cong \HH^i(((Y\times U)/G)(\C),\Z)$
		is an isomorphism for $i\leq 2d$.
	\end{enumerate}
	Letting $G$ act trivially on $\Spec k_1$,
	we arrive at the cohomology groups
	$\HH^*_G(\Spec {k_1})$,
	which are denoted $\HH^*(BG)$.
	
	Since every $k_1$-variety $Y$ carrying a $G$-action 
	admits a unique $G$-equivariant morphism to $\Spec k_1$,
	we have a canonical morphism $\HH^*(BG)\to \HH^*_G(Y)$.
	The morphisms $\hat{q}_1$ and $\hat{g}_1$ from Step~1 now give rise
	to a commutative diagram
	\[
	\xymatrixcolsep{4pc}\xymatrix{
	H^i((U_1/G)(\C),\Z) \ar[r]^{H^i(q_1)} \ar@{=}[d] 
	& H^i(W_1(\C))	\ar@{=}[d] 
	& H^i(X(\C)) \ar[l]_{H^i(g_1)} \ar@{=}[d]  \\
	H^i_G(U_1) \ar[r]^{H_G^i(\hat{q}_1)} &
	H^i_G(F_1) &
	H^i_G(E_1) \ar[l]_{H_G^i(\hat{g}_1)}
	\\
	& 
	H^i(BG) \ar[lu]^{\alpha_i} \ar[u]|{\beta_i} \ar[ru]_{\gamma_i}
	&	
	}
	\]	
	
	Applying (ii) to with $Y=\Spec k_1$ and $U=U_1$ from Step~1,
	we find that the map 
	$
	\alpha_i: \HH^i(BG)\to\HH^i_G(U_1)$
	is an isomorphism for   all $i\leq 2e$.
	Since the morphism  $q_1:W_1\to U_1/G$ is a 
	$P_1$-torsor,
	the map $W_1(\C)\to (U_1/G)(\C)$ is a homotopy equivalence,
	and thus $\HH^i(\hat{q}_1):\HH^i_G(U_1)\to\HH^i_G(F_1)$ is an isomorphism for all $i\in\Z$.
	This means that
	\[
	\beta_i:\HH^i(BG)\to\HH^i_G(F_1)
	\]
	is an isomorphism for all $i\leq 2e$.

	
\smallskip	
	
	\Step{3}
	We finally explain how to choose $e$ and achieve the desired
	contraction. By applying Lemma~\ref{LM:vashing-of-sing-coh} to $X_0\times_{k_0}{\C}$,
	we see that there exists $n\in\N$ (depending only on $X_0$)
	such that 
	\[
	\HH^i_G(E_{1})=\HH^i(X_{1}(\C))=\HH^i(X_0(\C))=0\qquad\forall i> n.
	\]
	
	Since $G $ is not unipotent,
	by \cite[Lemma~12.1]{First_2022_generators},
	there exists $i>n$ such that $\HH^i(BG)\neq 0$.
	We note that while the condition $\HH^i(BG)\neq 0$ 
	assumes  that $k_1$ was chosen, one can chose $i$
	that works for every field $k_1$ lying between $k_0$ and $\C$,
	see \cite[Remark~12.2]{First_2022_generators}.
	Take $e=\ceil{i/2}$. 
	
	Consider the diagram from Step~2.
	The map $\beta_i:\HH^i(BG)\to \HH^i_G(F_1)$ is an isomorphism,
	because   $i\leq 2e$, and it factors via $\HH^i_G(E_1)$,
	which is $0$ because $i>n$. This forces $\HH^i(BG)=0$,
	which is absurd since $\HH^i(BG) \neq 0$ by our choice of $i$.
	Having reached a contradiction, we conclude that the original group $G$ must be unipotent. 
%
\end{proof}

\begin{cor}
Let $k$ be a field of characteristic $0$ and let $G$ be a nontrivial
finite group. There is   no $G$-Galois extension $S/R$,
with $R$ a $k$-ring of finite type, such that $S/R$ specializes
to any $G$-Galois extension of    finite type $k$-rings.
\end{cor}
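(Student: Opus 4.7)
The plan is to recast the statement in torsor-theoretic language and apply Theorem~\ref{TH:converse}. Let $\underline{G}$ denote the constant $k$-group associated to $G$; since $G$ is finite, $\underline{G}$ is an affine algebraic group over $k$. As recalled in Remark~\ref{RM:versal-objects}, the category of $G$-Galois extensions of $k$-rings is equivalent (via specializations) to the opposite of the category of $\underline{G}$-torsors over affine $k$-schemes.

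Suppose for contradiction that such an $R\to S$ exists, and let $E\to X:=\Spec R$ be the $\underline{G}$-torsor over the finite type affine $k$-scheme $X$ corresponding to $R\to S$. The hypothesis that $R\to S$ specializes to every $G$-Galois extension of finite type $k$-rings translates, via the equivalence above and Remark~\ref{RM:morphism-pullback-correspondence}, to the statement that for every $\underline{G}$-torsor $E'\to X'$ with $X'\in\ftAff/k$, there is a $k$-morphism $f\colon X'\to X$ such that $f^*E\cong_{X'}E'$. This is exactly the condition that $E\to X$ be weakly $0$-versal for $\ftAff/k$.

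Applying Theorem~\ref{TH:converse} to $\underline{G}$, we conclude that $\underline{G}$ is unipotent. However, over a field of characteristic $0$, every unipotent algebraic group is connected and smooth (being an iterated extension of copies of $\nGa{k}$, which has no nontrivial finite subgroup schemes in characteristic $0$). Since $G$ is nontrivial, $\underline{G}$ is a nontrivial finite \'etale $k$-group, hence disconnected or zero-dimensional with more than one point; in either case it cannot be unipotent. This is the required contradiction.

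The only step with any content is the translation between Galois extensions and constant-group torsors, together with the standard fact that finite unipotent groups in characteristic $0$ are trivial; no genuine obstacle arises since Theorem~\ref{TH:converse} does all the heavy lifting.
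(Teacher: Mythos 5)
Your proof is correct and follows the same route as the paper's (which is a one-line remark pointing to Theorem~\ref{TH:converse} and the Galois-extension/torsor equivalence, with the observation that a nontrivial finite group is not unipotent); you just fill in the details. One small terminological slip: the condition you derive from the hypothesis --- that for \emph{every} $\underline{G}$-torsor $E'\to X'$ with $X'\in\ftAff/k$ there is a morphism $f\colon X'\to X$ with $f^*E\cong_{X'}E'$, i.e.\ with $U=X'$ and nothing removed --- is what the paper calls \emph{weakly versal} for $\ftAff/k$, not ``weakly $0$-versal.'' Weak $0$-versality is strictly weaker since it permits deleting a positive-codimension closed subset of $X'$. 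This matters in principle because Theorem~\ref{TH:converse} asks for weak versality, the stronger notion; as it happens, your hypothesis does deliver weak versality, so the application of the theorem is still valid, but the label as written understates what you have. The justification that nontrivial finite constant groups are not unipotent in characteristic $0$ (unipotent groups in characteristic $0$ are connected, being filtered by copies of $\nGa{k}$, while a nontrivial finite \'etale group is disconnected) is correct and is exactly the content the paper compresses into the phrase ``because $G$ is not unipotent.''
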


\begin{proof}
	View $G$ as a constant algebraic group over $\Spec k$.
	The corollary follows from Theorem~\ref{TH:converse} and the equivalence between
	$G$-torsors and $G$-Galois extensions, because $G$ is not unipotent.
\end{proof}

\begin{que}
	Let $k$ be any field
	and let $G$ be an affine algebraic group over $k$ admitting
	a   torsor
	$E \to X $ that is weakly versal for $\ftAff/k$,
	and such that $X$ is of finite type over $k$.
	Is $G$ unipotent? 
\end{que}

\section{An Application to Symbol Length}
\label{sec:symbol}

We finish this paper with an application of strongly
versal torsors to symbol length of Azumaya algebras over semilocal rings.

Hoobler \cite{Hoobler_2006_Merkurjev_Suslin_over_semilocal}
extended the Merkurjev--Suslin Theorem from fields to semilocal rings.
As a corollary \cite[Corollary~3]{Hoobler_2006_Merkurjev_Suslin_over_semilocal}, 
it follows that if $n\in\N$ and $R$ is a semilocal 
$\Z[\frac{1}{n},e^{2\pi i/n}]$-ring,
then
every    Azumaya $R$-algebra $A$ of period dividing $n$ 
is Brauer equivalent to the tensor product of \emph{symbol algebras}, i.e.,
\[A\sim_{\Br} (a_1,b_1)_{n,R}\otimes_R \cdots\otimes_R (a_\ell,b_\ell)_{n,R}.\]
Here, as usual, 
$(a ,b )_{n,R}=R\Trings{x,y\where x^n=a ,\, y^n=b,\ xy=\rho_n yx}$
where $a,b\in \units{R}$ and $\rho_n$
is the image of $e^{2\pi i/n}$ in $R$. 
The maximum possible $\ell$ required as $A$ ranges over the $n$-periodic degree-$m$
Azumaya $R$-algebras  is denoted $ \ell_R(n,m)$ and   called
the \emph{$(n,m)$-symbol length} of $R$.

When $R$ is a field $k$, it is well-known that $ \ell_k(n,m)$ is finite. Classically,
this is shown using the theory of versal (or generic) division algebras, or equivalently,
versal  torsors over appropriate algebraic groups. 
By using  the strongly versal  torsors
constructed in Section~\ref{sec:existence}, 
we can extend  this finiteness
result to semilocal rings in two ways.

\begin{thm}\label{TH:symbol-length-I}
	Let $n,m\in\N$ and let $R$ be a semilocal $\Z[\frac{1}{n},e^{2\pi i/n}]$-ring.
	Then there is $L\in\N$ such that for every semilocal 
	$R$-ring $R'$ with infinite residue fields, we have
	$
	\ell_{R'}(n,m)\leq L
	$
\end{thm}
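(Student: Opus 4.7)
The plan is to transport the existence of finite symbol decompositions for a single, fixed versal $n$-periodic degree-$m$ Azumaya algebra to all such algebras over semilocal $R$-rings, using strong versality together with the infinite-residue-field hypothesis to route specializations through a dense open on which Hoobler's semilocal Merkurjev--Suslin theorem provides a uniform bound.

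First I would apply Proposition~\ref{PR:versal-period-limited} to obtain (fixing some $d \geq 1$) a smooth finite-type $\Z[\frac{1}{n},e^{2\pi i/n}]$-algebra $T_0$ and an $n$-periodic degree-$m$ Azumaya $T_0$-algebra $\calA_0$ that is strongly versal for $\catC_d/\Z[\frac{1}{n},e^{2\pi i/n}]$. Base-changing along $\Z[\frac{1}{n},e^{2\pi i/n}] \to R$ and invoking Proposition~\ref{PR:versality-under-base-change} gives an $n$-periodic degree-$m$ Azumaya algebra $\calA/T$ with $T$ smooth (hence reduced) of finite type over the semilocal ring $R$, strongly versal for $\catC_d/R$; every semilocal $R$-ring $R_1$ belongs to $\catC_d/R$ since $\Max R_1$ is finite and $0$-dimensional. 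Since $T$ is reduced and Noetherian, its total ring of fractions $K$ is a finite product of residue fields at the minimal primes of $T$, hence a semilocal $\Z[\frac{1}{n},e^{2\pi i/n}]$-algebra. Applying Hoobler's theorem \cite{Hoobler_2006_Merkurjev_Suslin_over_semilocal} to $\calA \otimes_T K$ produces $L \in \N$ and $a_i, b_i \in K^\times$ ($1 \leq i \leq L$) together with a $K$-algebra matrix isomorphism witnessing $\calA \otimes_T K \sim_{\Br} \bigotimes_{i=1}^L (a_i, b_i)_{n,K}$. Clearing the finitely many denominators in $a_i,b_i$ and in that isomorphism, I find a non-zero-divisor $s \in T$ such that, on $V := \Spec T[s^{-1}]$, every $a_i, b_i$ is a unit and $\calA|_V \sim_{\Br} \bigotimes_{i=1}^L (a_i, b_i)_{n, T[s^{-1}]}$.

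Now fix a semilocal $R$-ring $R_1$ with infinite residue fields and an $n$-periodic degree-$m$ Azumaya $R_1$-algebra $A_1$. Strong versality yields $\ell \in \N$ and $\varphi : T \to R_1[t_1,\dots,t_\ell]$ with $\calA \otimes_T R_1[t] \cong A_1 \otimes_{R_1} R_1[t]$ and such that the induced morphism $\bar\varphi : \bbA^\ell_{R_1} \to \Spec T \times_R \Spec R_1$ is universally schematically dominant. For each $\frakm' \in \Max R_1$, base change along $\Spec k(\frakm') \to \Spec R_1$ produces a dominant (hence schematically dominant) morphism $\bar\varphi_{k(\frakm')} : \bbA^\ell_{k(\frakm')} \to \Spec(T \otimes_R k(\frakm'))$, and the reduced target is non-empty. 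Granting that $V$ meets the relevant fiber (see below), the preimage of $V$ in $\bbA^\ell_{k(\frakm')}$ is a non-empty open, and the infinite residue field $k(\frakm')$ supplies a $k(\frakm')$-rational point $\bar{c}^{(\frakm')}$ therein. The Chinese Remainder Theorem then produces $c \in R_1^\ell$ reducing to $\bar{c}^{(\frakm')}$ modulo every $\frakm' \in \Max R_1$. The specialization $\psi_c := \mathrm{ev}_c \circ \varphi : T \to R_1$ satisfies $\psi_c^*(\frakm') \in V$ for every closed point of $\Spec R_1$; since every non-empty closed subset of the semilocal spectrum $\Spec R_1$ contains a closed point, the morphism $\psi_c^* : \Spec R_1 \to \Spec T$ factors through $V$, i.e., $\psi_c$ extends to $T[s^{-1}] \to R_1$. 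Transporting the symbol decomposition along $\psi_c$ gives
\[
A_1 \cong \calA \otimes_T R_1 \sim_{\Br} \bigotimes_{i=1}^L \bigl(\psi_c(a_i),\, \psi_c(b_i)\bigr)_{n, R_1},
\]
so $\ell_{R_1}(n,m) \leq L$, and $L$ depends only on $\calA/T$ and $R$.

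The main technical obstacle lies in guaranteeing that $V \subseteq \Spec T$ meets each fiber $\Spec(T \otimes_R k(\frakm'))$, i.e., that the image of $s$ in $T \otimes_R k(\frakp)$ is nonzero for every prime $\frakp$ of $R$ that can arise as $\frakm' \cap R$ for some semilocal $R$-ring $R_1$ and $\frakm' \in \Max R_1$. Such primes can be arbitrary primes of $R$, not merely maximal ones, so a naive choice of common denominator $s$ need not suffice. To fix this, I would refine the spreading-out step by working fiber-by-fiber over $\Max R$: apply Hoobler separately to $\calA \otimes K(T \otimes_R k(\frakm_j))$ for each maximal $\frakm_j$ of $R$, spread out each resulting decomposition to an open of the corresponding fiber, and combine these opens into a single $V \subseteq \Spec T$ using flatness of $T/R$; the uniform bound $L$ is then the maximum of the fiberwise symbol lengths. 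The infinite-residue-field hypothesis is essential for the CRT step, while the semilocality of $R_1$ is what promotes closed-point factorization into a global factorization through $V$.
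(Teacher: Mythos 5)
Your overall strategy — specialize a strongly versal $n$-periodic degree-$m$ Azumaya algebra, apply Hoobler to a localization to get a uniform symbol decomposition, spread out, and then use universal injectivity together with infinite residue fields and CRT to evaluate at a unit — matches the paper's, and your endgame (showing $\vphi(u)$ is not in any $\frakm'[t]$, then choosing $r$ so that $f(r)\in R_1^\times$) is essentially the same. However, you have correctly flagged a genuine gap, and your proposed repair does not close it. Applying Hoobler to the total ring of fractions $K(T)$ only controls what happens near the \emph{minimal} primes of $T$; the resulting $V=\Spec T[s^{-1}]$ may entirely miss the fibers of $\Spec T\to\Spec R$ over non-generic primes $\frakp$ (e.g.\ $T=R[x]$, $s=p$ over $R=\Z_{(p)}$: $V$ is the generic fiber only). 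Your fiber-by-fiber variant produces decompositions with symbols $a_i,b_i\in K(T\otimes_R k(\frakm_j))$, which live in distinct residue rings and cannot be glued into a single decomposition of $\calA$ over any open of $\Spec T$; ``combining the opens using flatness'' does not produce the required data. There is also a smaller issue: you assert $T$ is reduced because it is smooth over $R$, but $R$ is an arbitrary semilocal $\Z[\frac1n,e^{2\pi i/n}]$-ring and may be non-reduced, so smoothness of $T/R$ does not give reducedness of $T$, and $K(T)$ need not be a finite product of fields.

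The paper's fix is to exploit strong versality once more \emph{before} applying Hoobler. By Proposition~\ref{PR:base-of-versal-torsor}(iv) (which requires strong, not weak, versality), the fiber $R_1\otimes_R k(\frakm)$ is an integral domain for every $\frakm\in\Max R$; hence each $\frakm R_1$ is a prime of $R_1$, and $M:=R_1-\bigcup_{\frakm\in\Max R}\frakm R_1$ is multiplicative. The localization $\tilde R_1:=M^{-1}R_1$ is then \emph{semilocal} with maximal ideals $\{\frakm\tilde R_1\}$, and Hoobler is applied once to $\tilde A_1/\tilde R_1$. Spreading out produces $(R_1)_u$ with $u\in M$, so by construction $u$ has nonzero image in each closed fiber. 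Since $(R_1)_u$ is smooth over $R$, the map $\Spec(R_1)_u\to\Spec R$ is open; its image contains $\Max R$, hence is all of $\Spec R$; so $(R_1)_u$ is faithfully flat over $R$. This faithful flatness, combined with the universal injectivity of $R_1\otimes_R R'\to R'[t]$, is what gives $\vphi(u)\notin\frakm'[t]$ for \emph{every} prime $\frakm'$ of $R'$, not merely the closed ones. This ``seeing the closed fibers of $\Spec R$'' step via Proposition~\ref{PR:base-of-versal-torsor}(iv) is the missing ingredient in your argument, and it is precisely the place where the theorem needs the versal algebra to be \emph{strongly}, not merely weakly, versal.
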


\begin{thm}\label{TH:symbol-length-II}
	Let $n,m,s\in\N$.
	Then there is $L\in\N$ such that for every semilocal 
	$\Z[\frac{1}{n},e^{2\pi i/n}]$-ring $R'$ 
	having at most $s$ maximal ideals, 
	we have 
	$
	\ell_{R'}(n,m)\leq L
	$.
\end{thm}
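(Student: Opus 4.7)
The plan is to deduce the uniform bound from Theorem~\ref{TH:symbol-length-I} by specializing a single, sufficiently versal, Azumaya algebra to every local ring that arises.

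First I would apply Proposition~\ref{PR:versal-period-limited} with $d=1$ and base $S=\Spec\Z[\tfrac{1}{n},e^{2\pi i/n}]$ to produce a smooth $\Z[\tfrac{1}{n},e^{2\pi i/n}]$-algebra $R$ together with an $n$-periodic degree-$m$ Azumaya $R$-algebra $A$ that is strongly versal for the class $\catC_1$ (in the notation of Theorem~\ref{TH:highly-versal-I}). Every local ring $R'$ satisfies $\clpnt{\Spec R'}=\{\frakm_{R'}\}$, noetherian of dimension $0$, so $\Spec R'\in\catC_1$ for every local $\Z[\tfrac{1}{n},e^{2\pi i/n}]$-ring. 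Next, let $R_0$ be the semilocalization of $R$ at the (finitely many) generic points of the irreducible components of $\Spec R$. Then $R_0$ is a finite product of fields, hence a semilocal $\Z[\tfrac{1}{n},e^{2\pi i/n}]$-ring depending only on $n,m$; Theorem~\ref{TH:symbol-length-I} applied to $R_0$ yields an $L=L(n,m)\in\N$ with $\ell_{R_0}(n,m)\le L$. Consequently there is a symbol decomposition $A|_{R_0}\sim_{\Br}\bigotimes_{i=1}^{L}(a_i,b_i)_{n,R_0}$, which upon clearing denominators extends to a dense open $U\subseteq \Spec R$ containing every generic point, giving $A|_U\sim_{\Br}\bigotimes_{i=1}^L(\tilde a_i,\tilde b_i)_{n,U}$.

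Given $R'$ local with infinite residue field $k'$ and an $n$-periodic degree-$m$ Azumaya algebra $A'/R'$, I would invoke the strong versality of $A/R$ to obtain $\ell\in\N$ and a morphism $f:\bbA^\ell_S\times_S\Spec R'\to \Spec R$ with $A\otimes_R R'[t_1,\dots,t_\ell]\cong A'[t_1,\dots,t_\ell]$ and $(f,p_2):\bbA^\ell_S\times_S\Spec R'\to \Spec R\times_S\Spec R'$ universally schematically dominant. Base-changing to $k'$ produces a schematically dominant morphism $f_{k'}:\bbA^\ell_{k'}\to \Spec R\times_S\Spec k'$, so $f_{k'}^{-1}(U\times_S\Spec k')$ is a nonempty open of $\bbA^\ell_{k'}$ provided the fiber of $U$ over $\Spec k'$ is nonempty. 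Since $k'$ is infinite, this open contains a $k'$-rational point $(\bar c_1,\dots,\bar c_\ell)$; lifting to $(c_1,\dots,c_\ell)\in(R')^\ell$ and specializing $t_i\mapsto c_i$ yields a ring homomorphism $\vphi_0:R\to R'$ with $A\otimes_{R,\vphi_0}R'\cong A'$ and $\vphi_0^{-1}(\frakm_{R'})\in U$. Applying $\vphi_0$ to the decomposition over $U$ produces the required symbol decomposition of $A'$ with at most $L$ terms, giving $\ell_{R'}(n,m)\le L$.

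The main obstacle will be to guarantee that $U$ meets every fiber of $\Spec R\to \Spec\Z[\tfrac{1}{n},e^{2\pi i/n}]$, so that the open subscheme of $\bbA^\ell_{k'}$ in the previous step is nonempty for every residue field $k'$ that arises. A naive clearing of denominators from $R_0$ to $R$ may leave whole special fibers inside $\Spec R\setminus U$, e.g.\ when some denominator reduces to zero modulo a maximal ideal of $\Z[\tfrac{1}{n},e^{2\pi i/n}]$. I anticipate resolving this one prime $\frakp$ at a time: replace $R$ by $R\otimes_{\Z[\tfrac{1}{n},e^{2\pi i/n}]}\Z[\tfrac{1}{n},e^{2\pi i/n}]_{\frakp}$, enlarge $R_0$ to the semilocalization at the generic points of both its generic and its special fiber over $\frakp$, and apply Theorem~\ref{TH:symbol-length-I} to the resulting semilocal $\Z[\tfrac{1}{n},e^{2\pi i/n}]_{\frakp}$-ring. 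The hard part will be showing the bound thus obtained is uniform in $\frakp$, which should follow because in each case the Azumaya class whose symbol length is being bounded is the specialization of one and the same $A/R$.
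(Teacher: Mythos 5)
Your overall strategy coincides with the paper's: specialize a single strongly versal $n$-periodic degree-$m$ Azumaya algebra $A/R$, find a symbol decomposition with a fixed number $L$ of terms on a dense open $U\subseteq\Spec R$, and use strong versality (plus $k'$ infinite) to steer the specialization into $U$. You also correctly identify the genuine obstacle: $U$ may miss entire special fibers of $\Spec R\to\Spec\Z[\tfrac1n,e^{2\pi i/n}]$, so the ``$U$ is dense'' argument alone does not suffice.

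Where your proposal goes astray is in the proposed resolution. You frame the remaining work as proving that a per-prime bound ``is uniform in $\frakp$,'' and suggest this should follow because the class being decomposed is always a specialization of the same $A/R$. That reasoning does not close the gap: the bound $L$ supplied by Theorem~\ref{TH:symbol-length-I} genuinely depends on the semilocal base ring, and versality alone gives no control on how those bounds vary across primes. The actual resolution is simpler and you do not need any uniformity statement: the image of $\Spec(R_1)_u\to\Spec\Z[\tfrac1n,e^{2\pi i/n}]$ is a nonempty open of a one-dimensional noetherian scheme, hence its complement is a \emph{finite} set of maximal ideals $\frakm$. For each of these finitely many $\frakm$ you apply Theorem~\ref{TH:symbol-length-I} to the local ring $\Z[\tfrac1n,e^{2\pi i/n}]_\frakm$ to get a bound $\ell_\frakm$, and then set $L=\max(\ell,\ell_{\frakm_1},\dots,\ell_{\frakm_r})$; uniformity is automatic because the max is over a finite set. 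For a local $R'$ whose image $\frakp$ in $\Spec\Z[\tfrac1n,e^{2\pi i/n}]$ lies in $U$, you run the faithful-flatness argument from the proof of Theorem~\ref{TH:symbol-length-I} with $\Z[\tfrac1n,e^{2\pi i/n}]_\frakp$ in place of $R$. So your proposal is missing the key elementary observation (finiteness of the bad set, from $\dim\Z[\tfrac1n,e^{2\pi i/n}]=1$), and the uniformity you try to prove instead is not actually needed.

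Two minor points. First, you apply Theorem~\ref{TH:symbol-length-I} to $R_0=\mathrm{Frac}(R)$ to get the generic decomposition; the paper instead invokes the classical Merkurjev--Suslin theorem over the field $K=\mathrm{Frac}(R_1)$ directly, which is essentially the same step. Second, you need not worry about multiple generic points: Proposition~\ref{PR:base-of-versal-torsor}(iii) shows the base of a strongly versal torsor over an integral base is integral, so $R_1$ is a domain.
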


Using strongly versal torsors
is key to the proof of   Theorem~\ref{TH:symbol-length-I}.
The proof of Theorem~\ref{TH:symbol-length-II} only makes use
of weakly versal torsors, but for group schemes   over $\Spec\Z[\frac{1}{n},e^{2\pi i/n}]$.

\begin{proof}[Proof of Theorem~\ref{TH:symbol-length-I}]
	Let $\catC_1$ denote the class of affine schemes $X$ for which the subspace
	$\clpnt{X}$ 
	is noetherian of dimension $\leq 1$. Note that $\catC_1$ includes
	the spectra of all semilocal rings. 
	
	By Proposition~\ref{PR:versal-period-limited},
	there is a smooth $\Z$-algebra $R_0$ and an $n$-periodic degree-$m$ Azumaya  algebra $A_0$
	that   
	is strongly versal for $\catC_1$ (within the category of all $n$-periodic degree-$m$ Azumaya  algebras). 
	Let $R_1=R_0\otimes_\Z R$ and $A_1=A_0\otimes_\Z R$. 
	By Proposition~\ref{PR:versality-under-base-change} (see also Remark~\ref{RM:versal-objects}), the $R_1$-algebra $A_1$
	is an $n$-periodic degree-$m$ Azumaya $R_1$-algebra that 
	is strongly versal 
	for   $\catC_1/\Spec R$.
	
	Let $\frakm$ be a maximal ideal of $R$.
	By Proposition~\ref{PR:base-of-versal-torsor}(iv), $R_1/\frakm R_1$ is an integral domain,
	so $\frakm R_1\in\Spec R_1$. Thus, $M=R_1-\bigcup_{\frakm\in\Max R} \frakm R_1$ is a multiplicative
	subset of $R_1$.
	Let 
	$\tilde{R}_1=M^{-1}R$ and $\tilde{A}_1=M^{-1}A_1$.
	Since $R$ has only finitely many maximal ideals, the ring   $\tilde{R}_1$
	has finitely many maximal ideals, namely    $\{\frakm \tilde{R}_1\where \frakm\in \Max R\}$.
	Now, by Hoobler's Theorem \cite[Corollary~3]{Hoobler_2006_Merkurjev_Suslin_over_semilocal},
	there is $L\in \N$
	and $a_1,b_1,\dots,a_L,b_L\in \tilde{R}_1$
	such that 
	\[
	\tilde{A}_1\sim_{\Br} (a_1,b_1)_{n,\tilde{R}_1}\otimes_{\tilde{R}_1}\dots \otimes_{\tilde{R}_1} (a_L,b_L)_{n,\tilde{R}_1}.
	\]
	By spreading out,
	we see that there is some $u\in M$ such that $a_1,b_1,\dots,a_L,b_L$ live
	in $(R_1)_u:=\{1,u,u^2,\dots\}^{-1}R_1$ and
	\[
	(A_1)_u\sim_{\Br}  (a_1,b_1)_{n,(R_1)_u}\otimes_{(R_1)_u}\dots \otimes_{(R_1)_u} (a_L,b_L)_{n,(R_1)_u}.
	\]

	Now let $R'$ be a semilocal $R$-ring as in the theorem and
	let $A'$ be an $n$-periodic degree-$m$ Azumaya $R'$-algebra. By the strong versality of $A_1$,
	there is $s\in\N$ and an $R$-ring morphism $\vphi:R_1\to R'[t_1,\dots,t_s]$
	such that $A_1\otimes_{R_1} R'[t_1,\dots,t_s]\cong A'[t_1,\dots,t_s]$
	as $R'[t_1,\dots,t_s]$-algebras, and 
	the induced map 
	$R_1\otimes_R R'\to R'[t_1,\dots,t_s]$
	is universally injective. 
	The former means that 
	\[
	(A_1)_u\otimes_{R_1}R'[t_1,\dots,t_s]\cong A'\otimes_{R'} R'[t_1,\dots,t_s]_{\vphi(u)},
	\]
	and therefore $A'\otimes_{R'} R'[t_1,\dots,t_s]_{\vphi(u)}$ is Brauer-equivalent
	to the product of $L$ symbol algebras over $R'[t_1,\dots,t_s]_{\vphi(u)}$.
	As a result, if there exists an $R'$-ring morphism $R'[t_1,\dots,t_s]_{\vphi(u)}\to R'$,
	then $A'$ would be the product of $L$ symbol algebras over $R'$. We will show that such
	a morphism exists.
	
	We first claim that $(R_1)_u$ is faithfully flat over $R$.
	To see this, observe  that $R_1$ is smooth over $R$, and therefore so is $(R_1)_u$.
	This means that $\Spec (R_1)_u\to \Spec R$ is flat and of finite presentation, hence
	open. Since $u\notin \frakm R_1$ for every $\frakm\in \Max R$, the image of $\Spec (R_1)_u\to \Spec R$
	contains $\Max R$. As this map is also open, it follows that $\Spec (R_1)_u\to \Spec R$ is surjective,
	hence faithfully flat.
	
	Next, we claim that $\vphi(u)\notin \frakm'[t_1,\dots,t_s]$ for every $\frakm'\in \Spec R'$.
	Indeed,  since $R\to (R_1)_u$ is faithfully flat, so is $R'\to   (R_1)_u\otimes_R R'$,
	and thus  $  ((R_1)_u\otimes_R R')\frakm'\neq (R_1)_u\otimes_R R'$.
	Since $R_1\otimes_R R'\to R'[t_1,\dots,t_s]$ is universally injective,
	the same holds for $(R_1)_u\otimes_R R'\to R'[t_1,\dots,t_s]_{\vphi(u)}$.
	Therefore, the latter map remains injective after tensoring with $R'/\frakm'$ over $R'$.
	Since $  ((R_1)_u\otimes_R R')\frakm'\neq (R_1)_u\otimes_R R'$, this means
	that $R'[t_1,\dots,t_s]_{\vphi(u)}\cdot \frakm'\neq R'[t_1,\dots,t_s]_{\vphi(u)}$,
	so $\vphi(u)\notin\frakm'[t_1,\dots,t_s]$.

	To finish, put $f=\vphi(u)\in R'[t_1,\dots,t_s]$.
	Then the image of $f$ in $(R'/\frakm')[t_1,\dots,t_s]$ is nonzero
	for every $\frakm'\in\Max R'$.
	Our assumption that $R'/\frakm'$ is infinite for all $\frakm'\in\Max R'$
	and the Chinese Remainder Theorem imply that there exist
	$r_1,\dots,r_s\in R'$ such that $f(r_1,\dots,r_s)\notin \frakm'$
	for every $\frakm'\in\Max R'$, or rather, $f(r_1,\dots,r_s)\in \units{R'}$.
	The $R'$-algebra morphism $R'[t_1,\dots,t_s]\to R'$ which specializes
	each $t_i$ to $r_i$ therefore gives rise to the desired $R'$-algebra morphism
	$R'[t_1,\dots,t_s]_{\vphi(u)}\to R'$.
\end{proof}

\begin{proof}[Proof of Theorem~\ref{TH:symbol-length-II}] 
	Put $R=\Z[\frac{1}{n},e^{2\pi i /n}]$ and then define
	$R_1$ and $A_1$   as in the proof of Theorem~\ref{TH:symbol-length-I}.
	Then $A_1$
	is an $n$-periodic degree-$m$ Azumaya $R_1$-algebra that 
	is weakly versal 
	for all semilocal $R$-rings.
	
	Let $P$ be a set of prime ideals in $R$ with $|P|\leq s$.
	As in the proof of Theorem~\ref{TH:symbol-length-I}, by localizing at
	the multiplicative set $R-\bigcup_{\frakp\in P}\frakp$,
	we see that
	there is some $u=u(P)\in R-\bigcup_{\frakp\in P}\frakp$ and $a_1,b_1,\dots,a_{\ell(P)},b_{\ell(P)}\in (R_1)_u$,
	such that
	\begin{equation}\label{EQ:local-prod-of-symbols}
	(A_1)_{u(P)}\sim_{\Br}  (a_1,b_1)_{n,(R_1)_{u(P)}}\otimes_{(R_1)_{u(P)}}\dots \otimes_{(R_1)_{u(P)}} (a_{\ell(P)},b_{\ell(P)})_{n,(R_1)_{u(P)}}.
	\end{equation}
	Put $U(P)=\{\frakp\in\Spec R_1\suchthat u(P)\notin \frakp\}$.
	By construction, the collection
	$\{U(P)^s\where P\subseteq \Spec R,\,|P|\leq s\}$
	is an open covering of $(\Spec R_1)^s$ endowed with the product topology.
	Since $\Spec R_1$ is quasi-compact, Tychonoff's theorem tells us that there
	are $P_1,\dots,P_t\subseteq \Spec R_1$ such that $(\Spec R_1)^s=\bigcup_{i=1}^t U(P_i)^s$.
	Otherwise said, for every $Q\subseteq \Spec R$ with $|Q|\leq s$, there is some $i\in\{1,\dots,t\}$
	such that $Q\subseteq U(P_i)$, or equivalently, $u_i:=u(P_i)\notin \bigcup_{\frakp\in Q}\frakp$.
	
	Now let $A'$ be an $n$-periodic degree-$m$ Azumaya over
	a semilocal $R$-ring $R'$ having at most $s$ maximal ideals.
	By the construction of $A_1$, there is a ring homomorphism
	$\vphi:R_1\to R'$ such that $A'\cong A_1\otimes_{R_1} R'$.
	Let $Q=\{\vphi^{-1}(\frakm')\where\frakm'\in\Max R'\}$.
	Then $Q$ is a subset of $\Spec R_1$ of cardinality at most $s$.
	Thus, there is $i\in\{1,\dots,t\}$
	such that $u_i\notin\bigcup_{\frakp\in Q}\frakm$. This means
	that $\vphi(u_i)\in R'-\bigcup_{\frakm'\in\Max R'}\frakm'=\units{R'}$, so $\vphi$ extends to a ring
	homomorphism $\psi:(R_1)_{u_i}\to R'$.
	Taking $P=P_i$ in \eqref{EQ:local-prod-of-symbols} and tensoring
	both sides with $R'$ now shows that the symbol length of $A'$
	is at most $\ell(P_i)$. As this holds for every choice of $A'$,
	we conclude that $\ell_{R'}(n,m)\leq  \max\{\ell(P_1),\dots,\ell(P_t)\}$. Take the right
	hand side to be the required number $L$.
\end{proof}

The following corollary
of Theorem~\ref{TH:symbol-length-II}
is known to experts.
It is also well-known if the characteristic of the field
$F$ is fixed in advance.

\begin{cor}
	For every $n,m\in\N$,
	there is $L\in \N$ such that $\ell_F(n,m)\leq L$
	for every field $F$ containing a primitive $n$-th root of unity.
\end{cor}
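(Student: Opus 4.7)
The plan is to reduce the corollary to Theorem~\ref{TH:symbol-length-II} after splitting the class of fields into the infinite ones and the finite ones. The main observation is that a field $F$ containing a primitive $n$-th root of unity must have $n$ invertible: indeed, $x^n-1$ has $n$ distinct roots in $F$, so its derivative $nx^{n-1}$ cannot vanish at any of them, which forces $n\cdot 1_F\in\units{F}$. Thus $F$ is automatically a $\Z[\frac{1}{n}]$-algebra, and a choice of primitive $n$-th root of unity in $F$ yields a ring homomorphism $\Z[\frac{1}{n},e^{2\pi i/n}]\to F$ (well-defined because the minimal polynomial of $e^{2\pi i/n}$ over $\Z[\frac{1}{n}]$ has a root in $F$), making $F$ a $\Z[\frac{1}{n},e^{2\pi i/n}]$-algebra.

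First, if $F$ is infinite, then $F$ itself is a local $\Z[\frac{1}{n},e^{2\pi i/n}]$-ring whose residue field is $F$, which is infinite by assumption. Theorem~\ref{TH:symbol-length-II} therefore produces a constant $L=L(n,m)\in\N$, depending only on $n$ and $m$, such that $\ell_F(n,m)\leq L$. Second, if $F$ is finite, Wedderburn's theorem on finite division algebras gives $\Br(F)=0$, so every degree-$m$ Azumaya $F$-algebra is Brauer-equivalent to $F$, hence to the empty tensor product of symbol algebras; thus $\ell_F(n,m)=0\leq L$. Setting $L=L(n,m)$ from the infinite case therefore works uniformly.

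The argument is essentially a packaging step: all the work has been absorbed into Theorem~\ref{TH:symbol-length-II}, and the only novelty here is verifying that an arbitrary field $F$ containing $\zeta_n$ falls under its scope (in characteristic $0$ or in characteristic $p\nmid n$) or else has trivially vanishing symbol length (in the finite case). There is no real obstacle; the only minor point worth checking is that the notion of symbol length does not depend on which primitive $n$-th root of unity is chosen to define the $\Z[\frac{1}{n},e^{2\pi i/n}]$-algebra structure, which follows from the fact that any two such choices are related by a Galois automorphism of $\Z[\frac{1}{n},e^{2\pi i/n}]$ and therefore permute the class of symbol algebras.
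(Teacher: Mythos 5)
Your proof is correct and takes essentially the same approach as the paper: split into the finite and infinite cases, invoke Wedderburn's theorem for the finite case, and apply Theorem~\ref{TH:symbol-length-II} to the infinite case. You supply some justification the paper leaves implicit (that $F$ is automatically a $\Z[\tfrac{1}{n},e^{2\pi i/n}]$-algebra, and independence of the choice of primitive root), and you correctly cite Theorem~\ref{TH:symbol-length-II}, whereas the paper's one-line proof appears to contain a typo referencing Theorem~\ref{TH:highly-versal-II} instead.
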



\begin{que}
	Does Theorem~\ref{TH:symbol-length-I} 
	continue to hold if we drop the requirement that the residue fields
	of $R'$ are infinite?
	Does Theorem~\ref{TH:symbol-length-II} continue to hold
	if we do not bound from above the number of maximal ideals?
\end{que}

\bibliographystyle{plain}
\bibliography{MyBib_18_05}

\def\cprime{$'$}
\begin{thebibliography}{10}

\bibitem{Amitsur_1972_central_division_algebras}
S.~A. Amitsur.
\newblock On central division algebras.
\newblock {\em Israel J. Math.}, 12:408--420, 1972.

\bibitem{Anantharaman_1973_group_schemes_homogeneous_spaces}
Sivaramakrishna Anantharaman.
\newblock Sch\'{e}mas en groupes, espaces homog\`enes et espaces
  alg\'{e}briques sur une base de dimension 1.
\newblock In {\em Sur les groupes alg\'{e}briques}, pages 5--79. Bull. Soc.
  Math. France, M\'{e}m. 33. 1973.

\bibitem{Auel_2019_Azumaya_algebras_without_inv}
Asher Auel, Uriya~A. First, and Ben Williams.
\newblock Azumaya algebras without involution.
\newblock {\em J. Eur. Math. Soc. (JEMS)}, 21(3):897--921, 2019.

\bibitem{Bass_1964_K_theory_and_stable_algs}
H.~Bass.
\newblock {$K$}-theory and stable algebra.
\newblock {\em Inst. Hautes \'Etudes Sci. Publ. Math.}, (22):5--60, 1964.

\bibitem{Brion_2017_alg_groups}
Michel Brion.
\newblock Some structure theorems for algebraic groups.
\newblock In {\em Algebraic groups: structure and actions}, volume~94 of {\em
  Proc. Sympos. Pure Math.}, pages 53--126. Amer. Math. Soc., Providence, RI,
  2017.

\bibitem{SGAiii}
Michel Demazure and Alexander Grothendieck.
\newblock {\em Sch\'emas en groupes. {III}: {S}tructure des sch\'emas en
  groupes r\'eductifs}.
\newblock S\'eminaire de G\'eom\'etrie Alg\'ebrique du Bois Marie 1962/64 (SGA
  3). Dirig\'e par M. Demazure et A. Grothendieck. Lecture Notes in
  Mathematics, Vol. 153. Springer-Verlag, Berlin-New York, 1970.

\bibitem{Duncan_2015_versality_of_alg_grp_actions}
Alexander Duncan and Zinovy Reichstein.
\newblock Versality of algebraic group actions and rational points on twisted
  varieties.
\newblock {\em J. Algebraic Geom.}, 24(3):499--530, 2015.
\newblock With an appendix containing a letter from J.-P. Serre.

\bibitem{First_2022_Brauer_class_of_Az_alg_preprint}
Uriya~A. First.
\newblock The {B}rauer class of an {A}zumaya algebra with an orthogonal
  involution admitting a unitary element of reduced norm $-1$ is trivial.
\newblock 2022.
\newblock arXiv:2201.04921.

\bibitem{First_2017_number_of_generators}
Uriya~A. First and Zinovy Reichstein.
\newblock On the number of generators of an algebra.
\newblock {\em C. R. Math. Acad. Sci. Paris}, 355(1):5--9, 2017.

\bibitem{First_2022_generators}
Uriya~A. First, Zinovy Reichstein, and Ben Williams.
\newblock On the number of generators of an algebra over a commutative ring.
\newblock {\em Trans. Amer. Math. Soc.}, 375(10):7277--7321, 2022.

\bibitem{Fleischmann_2011_non_linear_actions_of_p_groups}
Peter Fleischmann and Chris Woodcock.
\newblock Non-linear group actions with polynomial invariant rings and a
  structure theorem for modular {G}alois extensions.
\newblock {\em Proc. Lond. Math. Soc. (3)}, 103(5):826--846, 2011.

\bibitem{Fleischmann_2018_free_actions_of_p_groups}
Peter Fleischmann and Chris Woodcock.
\newblock Free actions of {$p$}-groups on affine varieties in characteristic
  {$p$}.
\newblock {\em Math. Proc. Cambridge Philos. Soc.}, 165(1):109--135, 2018.

\bibitem{Ford_2017_separable_algebras}
Timothy~J. Ford.
\newblock {\em Separable algebras}, volume 183 of {\em Graduate Studies in
  Mathematics}.
\newblock American Mathematical Society, Providence, RI, 2017.

\bibitem{Forster_1964_number_of_generators}
Otto Forster.
\newblock \"{U}ber die {A}nzahl der {E}rzeugenden eines {I}deals in einem
  {N}oetherschen {R}ing.
\newblock {\em Math. Z.}, 84:80--87, 1964.

\bibitem{Gille_2022_when_is_reductive_grp_sch_linear}
Philippe Gille.
\newblock When {I}s a {R}eductive {G}roup {S}cheme {L}inear?
\newblock {\em Michigan Math. J.}, 72:439--448, 2022.

\bibitem{EGA_iv_1967}
Alexander Grothendieck.
\newblock \'{E}l\'ements de g\'eom\'etrie alg\'ebrique. {IV}. \'{E}tude locale
  des sch\'emas et des morphismes de sch\'emas {IV}.
\newblock {\em Inst. Hautes \'Etudes Sci. Publ. Math.}, (32):361, 1967.

\bibitem{Hoobler_2006_Merkurjev_Suslin_over_semilocal}
Raymond~T. Hoobler.
\newblock The {M}erkuriev-{S}uslin theorem for any semi-local ring.
\newblock {\em J. Pure Appl. Algebra}, 207(3):537--552, 2006.

\bibitem{Jouanolou_1972_suite_exacte_de_Mayer_Vietoris}
J.~P. Jouanolou.
\newblock Une suite exacte de {M}ayer-{V}ietoris en {$K$}-th\'{e}orie
  alg\'{e}brique.
\newblock pages 293--316. Lecture Notes in Math., Vol. 341, 1973.

\bibitem{Kleiman_1974_transversality_theorem}
Steven~L. Kleiman.
\newblock The transversality of a general translate.
\newblock {\em Compositio Math.}, 28:287--297, 1974.

\bibitem{Knus_1991_quadratic_hermitian_forms}
Max-Albert Knus.
\newblock {\em Quadratic and {H}ermitian forms over rings}, volume 294 of {\em
  Grundlehren der Mathematischen Wissenschaften [Fundamental Principles of
  Mathematical Sciences]}.
\newblock Springer-Verlag, Berlin, 1991.
\newblock With a foreword by I. Bertuccioni.

\bibitem{Lojasiewicz_1964_triangulations}
S.~Lojasiewicz.
\newblock Triangulation of semi-analytic sets.
\newblock {\em Ann. Scuola Norm. Sup. Pisa Cl. Sci. (3)}, 18:449--474, 1964.

\bibitem{Milne_2017_algebraic_groups}
J.~S. Milne.
\newblock {\em Algebraic groups}, volume 170 of {\em Cambridge Studies in
  Advanced Mathematics}.
\newblock Cambridge University Press, Cambridge, 2017.
\newblock The theory of group schemes of finite type over a field.

\bibitem{Milne_1980_etale_cohomology}
James~S. Milne.
\newblock {\em \'{E}tale cohomology}, volume~33 of {\em Princeton Mathematical
  Series}.
\newblock Princeton University Press, Princeton, N.J., 1980.

\bibitem{Raynaud_1965_universal_proj_module}
Mich{\`e}le Raynaud.
\newblock Solution d'un probl\`eme universal relatif aux modules projectifs de
  type fini.
\newblock {\em C. R. Acad. Sci. Paris}, 260:4391--4394, 1965.

\bibitem{Richardson_1977_affine_coset_spaces}
R.~W. Richardson.
\newblock Affine coset spaces of reductive algebraic groups.
\newblock {\em Bull. London Math. Soc.}, 9(1):38--41, 1977.

\bibitem{Saltam_1978_noncrossed_products}
David~J. Saltman.
\newblock Noncrossed product {$p$}-algebras and {G}alois {$p$}-extensions.
\newblock {\em J. Algebra}, 52(2):302--314, 1978.

\bibitem{Saltman_1999_lectures_on_div_alg}
David~J. Saltman.
\newblock {\em Lectures on division algebras}, volume~94 of {\em CBMS Regional
  Conference Series in Mathematics}.
\newblock Published by American Mathematical Society, Providence, RI, 1999.

\bibitem{Saltman_2022_mixed_char_cyclic_matters_preprint}
David~J. Saltman.
\newblock Mixed characteristic cyclic matters.
\newblock 2022.
\newblock arXiv:2212.03713.

\bibitem{Serre_2003_cohomological_invariants}
Jean-Pierre Serre.
\newblock Cohomological invariants, {W}itt invariants, and trace forms.
\newblock In {\em Cohomological invariants in {G}alois cohomology}, volume~28
  of {\em Univ. Lecture Ser.}, pages 1--100. Amer. Math. Soc., Providence, RI,
  2003.
\newblock Notes by Skip Garibaldi.

\bibitem{Seshardri_1977_geometric_reductivity}
C.~S. Seshadri.
\newblock Geometric reductivity over arbitrary base.
\newblock {\em Advances in Math.}, 26(3):225--274, 1977.

\bibitem{stacks_project}
The {Stacks Project Authors}.
\newblock \textit{Stacks Project}.
\newblock \url{https://stacks.math.columbia.edu}, 2022.

\bibitem{Swan_1967_num_of_generators_of_module}
Richard~G. Swan.
\newblock The number of generators of a module.
\newblock {\em Math. Z.}, 102:318--322, 1967.

\bibitem{Thomason_1987_equivariant_resolution_linearlization}
R.~W. Thomason.
\newblock Equivariant resolution, linearization, and {H}ilbert's fourteenth
  problem over arbitrary base schemes.
\newblock {\em Adv. in Math.}, 65(1):16--34, 1987.

\bibitem{Totaro_1999_chow_ring_of_classifying_space}
Burt Totaro.
\newblock The {C}how ring of a classifying space.
\newblock In {\em Algebraic {$K$}-theory ({S}eattle, {WA}, 1997)}, volume~67 of
  {\em Proc. Sympos. Pure Math.}, pages 249--281. Amer. Math. Soc., Providence,
  RI, 1999.

\end{thebibliography}

\end{document}